\newtheorem{Theorem}{Theorem}[section]
\newtheorem{Corollary}{Corollary}[section]
\newtheorem{Lemma}{Lemma}[section]
\newtheorem{Proposition}{Proposition}[section]
\newtheorem{Remark}{Remark}[section]
\newtheorem{Definition}{Definition}[section]
\numberwithin{equation}{section}
\newcommand{\Sp}{S^{+}}
\begin{document}

	\title[]{Nonlinear Dirac equations on noncompact quantum graphs with potentials: Multiplicity and Concentration}

	% --- 作者 1 ---
	\author[]{Guangze Gu}
	\address{
		Guangze Gu:
		\endgraf
		Department of Mathematics, Yunnan Normal University, Kunming, China
		\endgraf
		Yunnan Key Laboratory of Modern Analytical Mathematics and Applications, Kunming, China }
	\email{guangzegu@163.com}
	
	% --- 作者 2 ---
	\author[]{Ziwei Li}
	\address{
		Ziwei Li:
		\endgraf
		Department of Mathematics, Yunnan Normal University, Kunming, China
		\endgraf
		Yunnan Key Laboratory of Modern Analytical Mathematics and Applications, Kunming, China }
	\email{ziweili1024@163.com}

	% --- 作者 3 ---
	\author[]{Michael Ruzhansky}
	\address{
		Michael Ruzhansky:
		\endgraf
		Department of Mathematics: Analysis, Logic and Discrete Mathematics, Ghent University, Belgium
		\endgraf
		School of Mathematical Sciences, Queen Mary University of London, United Kingdom }
	\email{michael.ruzhansky@ugent.be}
	
	% --- 作者 4（通讯作者示例） ---
	\author[]{Zhipeng Yang}
	\address{
		Zhipeng Yang:
		\endgraf
		Department of Mathematics, Yunnan Normal University, Kunming, China
		\endgraf
		Department of Mathematics: Analysis, Logic and Discrete Mathematics, Ghent University, Belgium }
	\email{yangzhipeng326@163.com}
	
	\thanks{Corresponding author: Z. Yang.}
	
	\subjclass[2010]{35R02; 35Q41; 81Q35}
	\keywords{Nonlinear Dirac equation; Variational methods; Quantum graph}

	\date{}

	\begin{abstract}
		In this paper, we study the existence and multiplicity of solutions to the following class of nonlinear Dirac equations (NLDE) on noncompact quantum graphs:
		\[
		-i\,\varepsilon c\,\sigma_1\,\partial_x u + m c^2 \sigma_3 u + V(x)\,u = f(|u|)\,u, \quad x\in \mathcal{G}, \tag{P}
		\]
		where \(V:\mathcal{G}\to\mathbb{R}\) and \(f:\mathbb{R}\to\mathbb{R}\) are continuous, \(\varepsilon>0\) is a semiclassical parameter, \(m>0\) denotes the mass, and \(c>0\) the speed of light. Here \(\sigma_1,\sigma_3\) are Pauli matrices, and \(\mathcal{G}\) is a noncompact quantum graph. We prove that when \(\varepsilon\) is sufficiently small, the number of solutions to \((P)\) is at least the number of global minima of \(V\).
		Moreover, these solutions exhibit semiclassical concentration: as \(\varepsilon\to0\), their concentration points approach the set of global minima of \(V\).
	\end{abstract}
	
	\maketitle
	
	%\tableofcontents
\section{Introduction and main results}

This paper concerns the existence and multiplicity of solutions to the following class of nonlinear Dirac equations (NLDE) on noncompact quantum graphs:
\begin{equation}\label{1.1}
	-i\,\varepsilon c\,\sigma_1\,\partial_x u + m c^2 \sigma_3 u + V(x)\,u = f(|u|)\,u, \quad x\in \mathcal{G},
\end{equation}
where \(V:\mathcal{G}\to\mathbb{R}\) and \(f:\mathbb{R}\to\mathbb{R}\) are continuous functions. Here \(\varepsilon>0\) is a semiclassical parameter, \(m>0\) denotes the mass, \(c>0\) the speed of light, and \(\sigma_1,\sigma_3\) are the Pauli matrices
\[
\sigma_1=\begin{pmatrix}0&1\\[2pt]1&0\end{pmatrix},
\qquad
\sigma_3=\begin{pmatrix}1&0\\[2pt]0&-1\end{pmatrix}.
\]
We impose the following conditions on the potential \(V\):
\begin{itemize}
	\item[\((V_1)\)] \(V:\mathcal{G}\to\mathbb{R}\) is continuous and
	\[
	\lim_{d(x,0)\to\infty} V(x)=V_\infty \ge V_0:=\min_{x\in\mathcal{G}} V(x),
	\]
	with
	\[
	-mc^2< V_0 \le V(x) \le V_\infty < mc^2 \quad \text{for all } x\in\mathcal{G}.
	\]
	\item[\((V_2)\)] There exist \(k\) distinct points \(z_1,z_2,\dots,z_k\in\mathcal{G}\) with \(z_1=0\) such that
	\[
	V(z_i)=V_0 \quad \text{for } 1\le i\le k .
	\]
\end{itemize}
Next, we consider the nonlinearity \(f:\mathbb{R}\to\mathbb{R}\) is a continuous function satisfying:
\begin{itemize}
	\item[\((f_1)\)] \(f(0)=0\), \(f\in C^1((0,\infty))\), \(f'(t)\ge 0\) for \(t>0\), and there exist \(p\in(2,\infty)\) and \(c_1>0\) such that
	\[
	0\le f(t)\le c_1\,(1+t^{p-2}) \quad \text{for all } t\ge 0 .
	\]
	\item[\((f_2)\)] There exists \(\theta>2\) such that
	\[
	0< \theta\,F(t)\le f(t)\,t^2 \quad \text{for all } t>0 ,
	\]
	where \(F(t):=\int_0^t f(s)\,s\,ds\).
\end{itemize}

The stationary equation \eqref{1.1} arises from the time-dependent NLDE
\begin{equation}\label{1.2}
	-i\,\varepsilon\,\partial_t \psi
	= i\,c\,\varepsilon\,\sigma_1\,\partial_x \psi - m c^2 \sigma_3 \psi - V(x)\,\psi + f(|\psi|)\,\psi,
	\quad (t,x)\in\mathbb{R}\times\mathcal{G},
\end{equation}
whose solutions are two-component spinors \(\psi=(\psi^1,\psi^2)^{\!T}\).
Using the standing-wave ansatz \(\psi(t,x)=e^{-i\omega t/\varepsilon}u(x)\), one obtains the stationary problem with \(V(x)\) replaced by \(V(x)-\omega\). The external potential \(V(x)\) and the nonlinear term \(f(|\psi|)\) are standard in models from particle physics with nonlinear interactions on networks and graphs.

In the simplified setting of the infinite 3-star graph (three half-lines meeting at a vertex), Sabirov et al. \cite{MR3871194} proposed the study of NLDE on networks with the Dirac operator
\begin{equation}\label{1.3}
	\mathcal{D}:= -i\,c\,\sigma_1\,\partial_x + m c^2 \sigma_3 .
\end{equation}
Subsequently, Borrelli et al. \cite{MR3934110} investigated bound states and the nonrelativistic limit of NLDE on noncompact quantum graphs.
To handle more intricate graph topologies, they considered Kirchhoff-type vertex conditions for extensions of the Dirac operator and introduced a localized nonlinearity, leading to the model
\begin{equation}\label{1.4}
	\mathcal{D}\psi - \chi_{\mathcal K}\,|\psi|^{p-2}\psi = \omega\,\psi,
\end{equation}
where \( \chi_{\mathcal K} \) is the characteristic function of the compact core \( \mathcal K\subset\mathcal G \).
They established existence and multiplicity of bound states as critical points of the associated action functional.

It is worth noting that in \eqref{1.4}, as well as in the equation analyzed in \cite{MR4200758}, the nonlinearity is a pure power term, hence not Lorentz covariant.
Such nonlinearities are common in nonlinear optics and effective models on networks.
From a theoretical standpoint this is acceptable, as the nonlinear Dirac equation is employed here as an effective model rather than a fully Lorentz-covariant theory.
In this direction, Yang and Zhu \cite{Yang-Zhu} considered an NLDE with a more general nonlinear term:
\begin{equation}\label{1.5}
	\mathcal{D}\,u + \omega\,u \;=\; \chi_{\mathcal K}\,\partial_u F(x,u)
	\qquad \text{on }\mathcal{G},
\end{equation}
under suitable assumptions on \(F\), and proved the existence of infinitely many geometrically distinct bound states.

In fact, the nonlinear Dirac equation on quantum graphs extends the Euclidean NLDE to metric graphs, that is, systems constrained to one-dimensional edges (intervals or half-lines) connected at vertices and endowed with suitable vertex (Kirchhoff-type) conditions.
In Euclidean space \(\mathbb{R}^3\), the standard NLDE reads
\begin{equation}\label{1.6}
	-\,i\,\varepsilon c\,\boldsymbol{\alpha}\!\cdot\!\nabla \psi \;+\; m c^2 \beta\,\psi \;+\; V(x)\,\psi \;=\; f(|\psi|)\,\psi,
	\qquad x\in\mathbb{R}^3,
\end{equation}
where \(\psi:\mathbb{R}^3\to\mathbb{C}^4\), \(\boldsymbol{\alpha}=(\alpha_1,\alpha_2,\alpha_3)\) and \(\beta\) are the Dirac matrices satisfying
\[
\{\alpha_j,\alpha_k\}=2\delta_{jk}I_4,\qquad \{\alpha_j,\beta\}=0,\qquad \beta^2=I_4,
\]
for instance in the standard representation
\(\displaystyle \alpha_k=\begin{pmatrix}0&\sigma_k\\ \sigma_k&0\end{pmatrix},\
\beta=\begin{pmatrix}I_2&0\\ 0&-I_2\end{pmatrix}\),
with \(\sigma_k\) the Pauli matrices.
Here \(m>0\) is the mass, \(c>0\) the speed of light, \(\varepsilon>0\) a semiclassical parameter, and \(V:\mathbb{R}^3\to\mathbb{R}\) is an external potential.
In Euclidean settings one typically studies \eqref{1.6} on the whole space or on bounded domains with suitable boundary conditions.
In recent years, the existence of solutions to \eqref{1.6} under various assumptions on \(V\) and \(f\) has been extensively investigated; see, e.g., \cite{MR2232435, MR2652161, MR2999413, MR2891353, MR2434900, MR2450893, MR3317806, MR3567492, MR3134574, MR3198134}.

In \cite{MR2652161}, Ding studied the existence and concentration of solutions for the following class of nonlinear Dirac equations in \(\mathbb{R}^3\):
\[
-\,i\,\varepsilon c\,\boldsymbol{\alpha}\!\cdot\!\nabla u \;+\; m c^2 \beta\,u
\;=\; P(x)\,|u|^{p-2}u, \qquad x\in\mathbb{R}^3,
\]
for \(p\in(2,3)\), where the concentration occurs around points at which \(P(x)\) attains its maximum.
In \cite{MR2891353}, Ding and Liu further established existence and concentration for problem \eqref{1.6} with the nonlinearity \(f(t)=|t|^{p-2}t\).
Their results are particularly relevant near the minimum points of the potential \(V\), which is assumed to satisfy the potential-well condition
\begin{equation}\label{1.7}
	0<V_0:=\min_{x\in\mathbb{R}^3}V(x)
	\;<\;
	\liminf_{|x|\to\infty} V(x).
\end{equation}
This condition, already used by Rabinowitz \cite{MR1162728} in the study of nonlinear Schr\"odinger equations in \(\mathbb{R}^N\), ensures the existence of bound states.

Subsequently, Ding and Xu \cite{MR3317806} investigated \eqref{1.6} under localized assumptions on \(V\), proving existence and concentration of solutions around local minima of \(V\).
Inspired by del Pino and Felmer \cite{MR1379196}, they imposed the refined geometric hypothesis: there exists a bounded domain \(\Lambda\subset\mathbb{R}^3\) such that
\[
\min_{x\in\overline{\Lambda}} V(x)\;<\;\min_{x\in\partial\Lambda} V(x),
\]
which guarantees that the minimum of \(V\) attained inside \(\overline{\Lambda}\) is strictly below its boundary values, a key ingredient for the construction of concentrated solutions.

Recently, Alves et al.~\cite{MR4604830}, building on ideas of Cao and Noussair~\cite{MR1409663}, showed that the number of global minimum points of $V$ is directly related to the number of standing wave solutions of the nonlinear Dirac equation \eqref{1.6}. Their proof adapts the generalized Nehari manifold approach of Szulkin and Weth~\cite{MR2557725,MR2768820} together with a barycenter scheme to detect multiple critical points.
\par
The purpose of this paper is to extend these multiplicity and concentration results to the noncompact quantum graph setting for \eqref{1.1}. One of the key challenges in this field is understanding how the graph's topology and boundary conditions affect the existence and multiplicity of solutions to nonlinear Dirac equations.
\par
Our main result is the following:
\begin{Theorem}\label{Theorem1.1}
	Assume \((V_{1})\)-\((V_{2})\) and \((f_{1})\)-\((f_{2})\) hold.
	Then there exists \(\varepsilon_{0}>0\) such that, for all \(\varepsilon \in (0,\varepsilon_{0})\), problem \eqref{1.1} admits at least \(k\) mutually distinct nontrivial solutions. Moreover, there are points \(x_{\varepsilon}^{(i)}\in\mathcal G\) and solutions \(u_{\varepsilon}^{(i)}\) \((i=1,\dots,k)\) such that, up to relabeling,
	\[
	\lim_{\varepsilon\to0} d\!\left(x_{\varepsilon}^{(i)},\,z_i\right)=0,
	\qquad
	\lim_{\varepsilon\to0} I_{\varepsilon}\!\left(u_{\varepsilon}^{(i)}\right)=d_{V_0}.
	\]
\end{Theorem}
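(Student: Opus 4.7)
The plan is to combine a generalized Nehari--Pankov reduction for the strongly indefinite Dirac functional, a strict energy comparison between two autonomous limit problems, and a barycenter localization in the spirit of Cao--Noussair \cite{MR1409663} and Alves et al.~\cite{MR4604830}, all adapted to the metric-graph setting with Kirchhoff vertex conditions. After rescaling $x=\varepsilon y$ edgewise (which is compatible with the Kirchhoff matching), \eqref{1.1} becomes $\mathcal{D}_c u + V(\varepsilon y)u = f(|u|)u$ on the dilated graph $\mathcal{G}_\varepsilon := \mathcal{G}/\varepsilon$. Since $(V_1)$ forces $\|V\|_\infty < mc^2$, the self-adjoint operator $\mathcal{D}_c + V$ has $0$ in its spectral gap, so the energy space splits as $E = E^+ \oplus E^-$ and the functional
\[
I_\varepsilon(u) = \tfrac12\|u^+\|^2 - \tfrac12\|u^-\|^2 + \tfrac12\int_{\mathcal{G}_\varepsilon} V(\varepsilon y)|u|^2\,dy - \int_{\mathcal{G}_\varepsilon} F(|u|)\,dy
\]
is strongly indefinite. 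Under $(f_1)$--$(f_2)$ this is the exact setting of Szulkin--Weth \cite{MR2557725,MR2768820}, so the least critical value coincides with $c_\varepsilon := \inf_{\mathcal{N}_\varepsilon} I_\varepsilon$ on the generalized Nehari--Pankov manifold.

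Next I would study the two autonomous problems on $\R$ obtained by freezing $V \equiv V_0$ and $V \equiv V_\infty$, producing ground state levels $d_{V_0}\le d_{V_\infty}$; a comparison argument (inserting a $V_\infty$-minimizer into the $V_0$ functional and re-projecting onto the Nehari--Pankov set) yields the strict inequality $d_{V_0} < d_{V_\infty}$. Coupling this with a concentration--compactness analysis along $\mathcal{G}_\varepsilon$, where $L^2$-mass can only escape outward along an edge, shows that every $(PS)_c$ sequence with $c<d_{V_\infty}$ is relatively compact modulo translations and, in particular, that $c_\varepsilon \to d_{V_0}$ as $\varepsilon\to 0$.

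For the multiplicity, fix $r>0$ so small that the balls $B_r(z_1),\dots,B_r(z_k)$ are pairwise disjoint in $\mathcal{G}$. Cutting off a $V_0$-ground state translated to $z_i/\varepsilon$ produces $k$ test functions $\Phi_\varepsilon^{(i)}\in\mathcal{N}_\varepsilon$ with $I_\varepsilon(\Phi_\varepsilon^{(i)}) = d_{V_0}+o(1)$. I would then define a barycenter map $\beta_\varepsilon$ on the sublevel $\{u\in\mathcal{N}_\varepsilon : I_\varepsilon(u)\le c_\varepsilon+\delta\}$ as the weighted centroid of $|u|^2$ in $\mathcal{G}$. The compactness threshold above forces $\beta_\varepsilon(u)$ to land in an arbitrarily small neighbourhood of $\{z_1,\dots,z_k\}$ for every admissible $u$ once $\varepsilon,\delta$ are small, while $\beta_\varepsilon(\Phi_\varepsilon^{(i)})$ is close to $z_i$. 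A Lusternik--Schnirelmann / category count on this sublevel then delivers at least $k$ mutually distinct critical points $u_\varepsilon^{(i)}$, one naturally associated with each $z_i$. Concentration is then extracted by choosing a maximum point $x_\varepsilon^{(i)}$ of $|u_\varepsilon^{(i)}|$: after rescaling around $x_\varepsilon^{(i)}$, the sequence converges to a ground state of the autonomous $V_0$-problem on $\R$, which forces $\varepsilon x_\varepsilon^{(i)} \to z_i$ and $I_\varepsilon(u_\varepsilon^{(i)}) \to d_{V_0}$.

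The hard part, I expect, is the concentration--compactness step on a noncompact metric graph: the strongly indefinite nature of $I_\varepsilon$ combined with the vertex topology requires a decomposition of $(PS)$ sequences that rules out bubbles concentrating at (or slipping through) a vertex of $\mathcal{G}_\varepsilon$. Exploiting the Kirchhoff conditions to show that any nontrivial profile escaping to infinity is supported on a single half-line (hence governed by the $V_\infty$-problem on $\R$) is the technical heart of the argument. Once this Kirchhoff-compatible splitting is established, the strict inequality $d_{V_0}<d_{V_\infty}$ and the barycenter scheme carry over from the Euclidean framework of \cite{MR4604830} with only routine modifications.
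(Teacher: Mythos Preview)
Your outline matches the paper's strategy closely: Szulkin--Weth reduction of the strongly indefinite functional to the sphere $S^{+}$, the two autonomous levels $d_{V_0}<d_{V_\infty}$, the convergence $c_\varepsilon\to d_{V_0}$, a compactness threshold at $d_{V_0}+\tfrac12(d_{V_\infty}-d_{V_0})$, and a barycenter scheme \`a la Cao--Noussair/Alves et al.\ to separate the $k$ wells.

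Two technical points where the paper proceeds differently from what you sketch. First, the multiplicity is not obtained via a Lusternik--Schnirelmann category count on a sublevel set but by the more direct Cao--Noussair device: one defines open barycenter sectors $\Omega_\varepsilon^{i}=\{w\in S^{+}:|Q_\varepsilon(w)-z_i|<\rho_0\}$, proves the strict gap $\inf_{\Omega_\varepsilon^{i}}\Upsilon_\varepsilon<\inf_{\partial\Omega_\varepsilon^{i}}\Upsilon_\varepsilon$, and runs Ekeland inside each $\Omega_\varepsilon^{i}$ separately; compactness below the threshold then yields one critical point per sector. Second, the paper does \emph{not} reduce the profile at infinity to a problem on a single half-line of $\mathbb{R}$ as you anticipate. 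Instead, it keeps the autonomous limit problem on the whole graph $\mathcal{G}$ and recenters escaping mass by metric-graph isometries $U_\tau$ that commute with the spectral projections of $\mathcal{D}$ (their Lemma~3.1 and Proposition~3.4). This bypasses entirely the vertex analysis you flag as the hard part, at the price of tacitly assuming $\mathcal{G}$ carries enough such isometries; your half-line reduction would be a genuinely different (and in some respects more robust) way to handle this step, but it is not what the paper does.
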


\par
Our approach to obtaining at least \(k\) solutions is inspired by Cao and Noussair \cite{MR1409663}, who considered the semilinear elliptic problem
\[
-\Delta u + u = A(\varepsilon x)\,|u|^{p-2}u \quad \text{in }\mathbb{R}^N .
\]
Combining Ekeland's variational principle with Lions' concentration-compactness method \cite{MR956083}, they proved that if the potential \(A\) has \(k\) distinct (but equal) maximum points, then for sufficiently small \(\varepsilon>0\) the problem admits at least \(k\) positive and \(k\) sign-changing solutions. Their argument heavily exploits the strong definiteness of the associated energy functional, which enjoys mountain pass geometry and a well structured Nehari manifold.

\par
In contrast, the variational structure of the nonlinear Dirac equation on noncompact quantum graphs is strongly indefinite, which brings additional technical difficulties. Adapting the above ideas to our setting, we work with a generalized Nehari-type constraint and introduce the minimax level
\[
c_\varepsilon \;:=\; \inf_{u\in\mathcal{M}_\varepsilon} I_\varepsilon(u),
\]
over the set
\[
\mathcal{M}_\varepsilon
:= \Big\{\, u\in E\setminus E^- \;:\; I'_\varepsilon(u)[u]=0 \ \text{and}\ I'_\varepsilon(u)[v]=0 \ \text{for all } v\in E^- \,\Big\},
\]
where \(E\) is the natural energy space endowed with the spectral splitting \(E=E^+\oplus E^-\) of the Dirac operator. A detailed analysis of the behavior of \(c_\varepsilon\) as \(\varepsilon\to0\) is crucial for multiplicity: in particular, we show that \(c_\varepsilon\to d_{V_0}\) and, for \(\varepsilon>0\) small, \(c_\varepsilon<d_{V_\infty}\); see Section~4.

\par
Our framework is also influenced by the variational techniques of Szulkin and Weth \cite{MR2557725,MR2768820}, who developed generalized Nehari manifold and linking methods for strongly indefinite problems of the form
\[
-\Delta u + V(x)\,u = f(x,u) \quad \text{in }\mathbb{R}^N,
\]
with periodic coefficients and subcritical nonlinearities. While some ideas carry over, substantial modifications are required here due to the spectral structure of the Dirac operator on graphs, the presence of Kirchhoff-type vertex conditions, and the lack of translation invariance on noncompact quantum graphs.
\par
This article is organized as follows: In Section 2, we introduce the function space that we will work with and recall some important embeddings involving that space, while in Section 3, we study the autonomous problem. In Section 4, we prove our main result.
\par
\textbf{Notation.}~In this paper we make use of the following notations.
\begin{itemize}
	\item[$\bullet$] For $q \in[1,+\infty), q^{\prime}$ denotes the conjugate exponent of $q$, that is, $q^{\prime}=\frac{q}{q-1}$.
	\item[$\bullet$] The usual norm of the Lebesgue spaces $L^{t}(Q)$ for $t \in[1, \infty)$, will be denoted by $\|.\|{ }_{t}$.
	\item[$\bullet$]  $C$ and $C_{i}$ denote (possibly different) any positive constants, whose values are not relevant.
	\item[$\bullet$] If $A \subset \mathcal{G}$ is a measurable set, we denote by $|A|$ its Lebesgue measure.
	\item[$\bullet$] $\sigma\left(Y, Y^{\prime}\right)$ is the weak topology of the space $Y$.
	\item[$\bullet$] $\bar{X}^{\sigma\left(Y, Y^{\prime}\right)}$ is the closure of the set $X \subset Y$ according to the weak topology $\sigma\left(Y, Y^{\prime}\right)$.
\end{itemize}

\section{ Variational framework}

In this section we will introduce the metric graph $\mathcal{G}$ and the function space that will work with and some properties that are crucial in our approach, whose the proofs can be found for example in \cite{MR3013208} and \cite{MR3934110}.
\par

\subsection{Quantum graphs and functional setting}

A graph $\mathcal{G}=(\mathcal V,\mathcal E)$ consists of a finite or countably infinite set of vertices $\mathcal V$ and a set $\mathcal E$ of edges connecting the vertices.
Two vertices $\mathrm{u}$ and $\mathrm{v}$ are called adjacent (denoted $\mathrm{u}\sim\mathrm{v}$) if there is an edge connecting them.
A quantum graph $\mathcal{G}$ is a connected multigraph, where multiple edges and loops are allowed.
Each edge is a finite interval or a half line in $\mathbb{R}$, and the edges are joined at their endpoints (the vertices of $\mathcal{G}$) according to the topology of the graph.

Unbounded edges are identified with copies of $\mathbb{R}^{+}=[0,+\infty)$ and are called half lines, while bounded edges $e\in\mathcal E$ are identified with closed bounded intervals $I_e=[0,\ell_e]$ with $\ell_e>0$.
In each case a coordinate $x_e$ is chosen on the corresponding interval, with arbitrary orientation if the interval is bounded, and with the natural orientation in the case of a half line.
In this way $\mathcal{G}$ becomes a locally compact metric space equipped with the path metric induced by the shortest distance along the edges.
A metric graph is compact if and only if it has finitely many edges and contains no half lines.

\begin{Definition}\label{def2.1}
	We define the compact core $\mathcal{K}$ of $\mathcal{G}$ as the metric subgraph consisting of all its bounded edges.
	We denote by $\ell$ the total length of $\mathcal{K}$, namely
	\[
	\ell \;=\; \sum_{e\in\mathcal{K}} \ell_e .
	\]
\end{Definition}

A function $u:\mathcal{G}\to\mathbb{C}$ can be regarded as a family $\{u_e\}_{e\in\mathcal E}$, where $u_e:I_e\to\mathbb{C}$ is the restriction of $u$ to the edge $I_e$.
The usual $L^p$ spaces are defined over $\mathcal{G}$ in the natural way, with norm
\[
\|u\|_{L^p(\mathcal{G})}^p \;=\; \sum_{e\in\mathcal E} \|u_e\|_{L^p(I_e)}^p ,
\]
while $H^1(\mathcal{G})$ is the space of continuous functions $u:\mathcal{G}\to\mathbb{C}$ such that $u_e\in H^1(I_e;\mathbb{C})$ for every edge $e$, endowed with the norm
\[
\|u\|_{H^1(\mathcal{G})}^2 \;=\; \|u'\|_{L^2(\mathcal{G})}^2 + \|u\|_{L^2(\mathcal{G})}^2 .
\]
Continuity at a vertex $\mathrm{v}\in\mathcal V$ means that the traces $u_e(\mathrm{v})$ coincide for all edges $e$ incident to $\mathrm{v}$.

A spinor $u=(u_{1},u_{2})^{T}:\mathcal{G}\to\mathbb{C}^{2}$ is a family of two component spinors
\[
u_{e}=\begin{pmatrix}u_{e}^{1}\\u_{e}^{2}\end{pmatrix} : I_{e}\to\mathbb{C}^{2} \quad \text{for all } e\in \mathcal{E},
\]
and we set
\[
L^{p}(\mathcal{G},\mathbb{C}^{2}) \;:=\; \bigoplus_{e\in \mathcal{E}} L^{p}(I_{e};\mathbb{C}^{2}),
\qquad
\|u\|_{L^{p}(\mathcal{G},\mathbb{C}^{2})}^{p} \;:=\; \sum_{e\in\mathcal{E}} \|u_{e}\|_{L^{p}(I_{e};\mathbb{C}^{2})}^{p},
\]
\[
H^{1}(\mathcal{G},\mathbb{C}^{2}) \;:=\; \bigoplus_{e\in\mathcal{E}} H^{1}(I_{e};\mathbb{C}^{2}),
\qquad
\|u\|_{H^{1}(\mathcal{G},\mathbb{C}^{2})}^{2} \;:=\; \sum_{e\in\mathcal{E}} \|u_{e}\|_{H^{1}(I_{e};\mathbb{C}^{2})}^{2}.
\]
Continuity across vertices in the vector valued case is understood componentwise.

\subsection{The Dirac operator with Kirchhoff type conditions}
Let
\[
\mathcal{D}:=-i c\,\sigma_1\,\frac{d}{dx} + m c^{2} \sigma_3,
\]
denote the Dirac operator. Then, by a simple change of variables, equation \eqref{1.1} is equivalent to
\begin{equation}\label{2.1}
	\mathcal{D} u + V_{\varepsilon}(x)\,u = f(|u|)\,u \quad \text{in } \mathcal{G},
\end{equation}
where $V_{\varepsilon}(x)=V(\varepsilon x)$.

The expression \eqref{1.3} for the Dirac operator on a metric graph is purely formal, since it does not clarify what happens at the vertices of the graph, given that the derivative $\frac{d}{dx}$ is well defined only in the interior of the edges.

As in the case of the Laplacian for the Schr\"odinger equation, a rigorous meaning of \eqref{1.3} is given by suitable self adjoint realizations of the operator. A complete discussion of all possible self adjoint realizations of the Dirac operator on graphs is beyond the scope of this paper. Throughout, we limit ourselves to Kirchhoff type conditions (introduced in \cite{MR3934110}), which represent the free case for the Dirac operator.

\begin{Definition}\label{def2.2}
	Let $\mathcal{G}$ be a quantum graph and let $m,c>0$. We define the Dirac operator with Kirchhoff type vertex conditions $\mathcal{D}: L^{2}(\mathcal{G},\mathbb{C}^{2}) \to L^{2}(\mathcal{G},\mathbb{C}^{2})$ by
	\begin{equation}\label{2.2}
		\mathcal{D}|_{I_{e}} u \;=\; \mathcal{D}_{e} u_{e} \;:=\; - i c\,\sigma_1\, u_{e}' + m c^{2} \sigma_3 u_{e}, \quad \text{for all } e \in \mathcal{E},
	\end{equation}
	with domain
	\begin{equation}\label{2.3}
		\operatorname{dom}(\mathcal{D}) \;:=\; \Big\{ u \in H^{1}(\mathcal{G},\mathbb{C}^{2}) \,:\, u \text{ satisfies } \eqref{2.4} \text{ and } \eqref{2.5} \Big\},
	\end{equation}
	where for every vertex $\mathrm{v}\in\mathcal V$
	\begin{equation}\label{2.4}
		u_{e}^{1}(\mathrm{v}) \;=\; u_{f}^{1}(\mathrm{v}) \quad \text{for all } e,f \succ \mathrm{v},
	\end{equation}
	\begin{equation}\label{2.5}
		\sum_{e \succ \mathrm{v}} \big(u_{e}^{2}(\mathrm{v})\big)_{\pm} \;=\; 0 .
	\end{equation}
	Here $e\succ \mathrm{v}$ means that the edge $e$ is incident to the vertex $\mathrm{v}$, and $\big(u_{e}^{2}(\mathrm{v})\big)_{\pm}$ stands for $u_{e}^{2}(0)$ or $-u_{e}^{2}(\ell_{e})$ according to whether $x_{e}$ equals $0$ or $\ell_{e}$ at $\mathrm{v}$.
\end{Definition}

\begin{Remark}\label{rem2.2}
	The operator $\mathcal{D}$ depends on the parameters $m$ and $c$, which represent the mass and the speed of light. Unless otherwise stated, we omit this dependence.
\end{Remark}

Moreover, the basic properties of the operator in \eqref{2.2} are as follows.
\begin{Proposition}[\cite{MR3934110}]\label{pro2.1}
	The Dirac operator $\mathcal{D}$ introduced in Definition \ref{def2.2} is self adjoint on $L^{2}(\mathcal{G},\mathbb{C}^{2})$. In addition, its spectrum is
	\begin{equation}\label{2.6}
		\sigma(\mathcal{D}) \;=\; \left(-\infty,-m c^{2}\right] \cup \left[m c^{2},+\infty\right).
	\end{equation}
\end{Proposition}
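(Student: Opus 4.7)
The plan is to split the statement into two independent parts: self-adjointness of $\mathcal D$ on the domain \eqref{2.3}, and the identification of its spectrum. The Kirchhoff conditions \eqref{2.4}-\eqref{2.5} are designed precisely so that the boundary terms produced by integration by parts collapse, which is the key structural observation behind both parts.

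For symmetry, I would fix $u,v\in\operatorname{dom}(\mathcal D)$ and compute $\langle\mathcal D u,v\rangle-\langle u,\mathcal D v\rangle$ edge by edge. Since $\sigma_3$ is Hermitian, the mass contributions cancel pointwise, while the kinetic part integrates by parts to
\[
-ic\int_{I_e}\!\bigl[(\sigma_1 u_e',\bar v_e)+(u_e,\sigma_1 \bar v_e')\bigr]\,dx = -ic\,\bigl[u_e^2\bar v_e^1+u_e^1\bar v_e^2\bigr]_{\partial I_e}.
\]
Summing over all edges and collecting terms at each vertex $\mathrm v$, the continuity \eqref{2.4} lets one factor out the common value of the first component, and the Kirchhoff relation \eqref{2.5} applied to $u$ and $v$ annihilates each vertex contribution once the $\pm$ signs are tracked according to the orientation of $x_e$ at $\mathrm v$. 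To upgrade symmetry to self-adjointness I would verify the criterion $\operatorname{Ran}(\mathcal D\pm i)=L^2(\mathcal G,\mathbb C^2)$. Given $f\in L^2$, solving $(\mathcal D\pm i)u=f$ on each edge is a linear first-order ODE system with an explicit resolvent; matching the resulting free boundary values through \eqref{2.4}-\eqref{2.5} at each vertex reduces to a finite linear system, which is uniquely solvable because the identity $\|(\mathcal D\pm i)u\|^2=\|\mathcal D u\|^2+\|u\|^2$ (a consequence of symmetry) forces the associated homogeneous problem to have only the trivial solution. This is essentially the construction of \cite{MR3934110}, to which one may ultimately defer.

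For the spectrum, the upper inclusion $\sigma(\mathcal D)\subset(-\infty,-mc^2]\cup[mc^2,+\infty)$ follows by squaring. Using $\sigma_1^2=\sigma_3^2=I$ and the anticommutation $\{\sigma_1,\sigma_3\}=0$, edge-by-edge one has
\[
\mathcal D^2 u=-c^2 u''+m^2c^4 u,
\]
so $\sigma(\mathcal D^2)\subset[m^2c^4,+\infty)$ and the spectral mapping theorem gives the desired inclusion. For the reverse inclusion, since $\mathcal G$ is noncompact it contains at least one half-line; on $\mathbb R^+$ any $\lambda$ with $|\lambda|\ge mc^2$ admits a bounded generalized plane-wave eigenfunction of the one-dimensional free Dirac operator. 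Multiplying such an eigenfunction by a sequence of $C^\infty$ cut-offs supported far from the pendant vertex produces approximate eigenfunctions that trivially satisfy \eqref{2.4}-\eqref{2.5}; normalizing them yields a singular Weyl sequence witnessing $\lambda\in\sigma(\mathcal D)$.

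The main obstacle is not the spectral computation but the careful bookkeeping of boundary terms at vertices with mixed edge orientations. Making the signs in \eqref{2.5} compatible with the outward-normal convention produced by integration by parts, so that the vertex sum genuinely equals the Kirchhoff quantity (and hence vanishes), requires a precise orientation-dependent definition of $(u_e^2(\mathrm v))_\pm$; any sign mismatch would simultaneously invalidate symmetry, break the deficiency-index argument, and break the spectral mapping step, so this is the piece that deserves the most scrutiny.
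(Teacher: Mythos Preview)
The paper does not give a proof of this proposition at all: it is quoted verbatim from \cite{MR3934110} and used as a black box, so there is no ``paper's own proof'' to compare against. Your outline is a correct and standard sketch of how the result is established (integration by parts plus the vertex conditions for symmetry, a range/deficiency argument for self-adjointness, anticommutation of the Pauli matrices for the spectral gap, and Weyl sequences supported on a half-line for the reverse inclusion), and you already defer to \cite{MR3934110} for the details.

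One small tightening is worth noting. The step ``edge-by-edge $\mathcal D^{2}u=-c^{2}u''+m^{2}c^{4}u$, so $\sigma(\mathcal D^{2})\subset[m^{2}c^{4},\infty)$'' is not quite self-contained: the operator domain of $\mathcal D^{2}$ is $\{u\in\operatorname{dom}(\mathcal D):\mathcal D u\in\operatorname{dom}(\mathcal D)\}$, and it is not immediate that the Laplacian piece is nonnegative there (the second spinor component is \emph{not} continuous across vertices). It is cleaner to bypass $\mathcal D^{2}$ and compute $\|\mathcal D u\|_{L^{2}}^{2}$ directly for $u\in\operatorname{dom}(\mathcal D)$: the anticommutation $\{\sigma_{1},\sigma_{3}\}=0$ turns the cross term into a pure boundary expression, and exactly the vertex bookkeeping you highlight at the end (continuity \eqref{2.4} plus the Kirchhoff sum \eqref{2.5}) makes it vanish, giving
\[
\|\mathcal D u\|_{L^{2}}^{2}=c^{2}\|u'\|_{L^{2}}^{2}+m^{2}c^{4}\|u\|_{L^{2}}^{2}\ \ge\ m^{2}c^{4}\|u\|_{L^{2}}^{2},
\]
from which the gap $(-mc^{2},mc^{2})\cap\sigma(\mathcal D)=\emptyset$ follows immediately.
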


\subsection{The associated quadratic form}

The standard cases of the Dirac operator on $\mathbb{R}^d$ do not require further remarks on the associated quadratic form, which can be defined by means of the Fourier transform (see, for example, \cite{MR1344729}). In the setting of noncompact metric graphs this tool is not available, so we rely on the spectral theorem, which provides a classical but more abstract way to diagonalize the operator and to define the associated quadratic form and its domain. For a self adjoint operator $\mathcal{D}$ with spectral measure $\mu_{u}^{\mathcal{D}}$ we set
\[
\text{dom}(\mathcal{Q}_{\mathcal{D}}):=\Big\{\,u\in L^{2}(\mathcal{G},\mathbb{C}^{2})\,:\,\int_{\sigma(\mathcal{D})} |v|\, d\mu_{u}^{\mathcal{D}}(v)<\infty\,\Big\},
\qquad
\mathcal{Q}_{\mathcal{D}}(u):=\int_{\sigma(\mathcal{D})} |v|\, d\mu_{u}^{\mathcal{D}}(v).
\]
Equivalently, $\mathcal{Q}_{\mathcal{D}}(u)=\|\,|\mathcal{D}|^{1/2}u\|_{L^{2}}^{2}$, so that $\mathcal{Q}_{\mathcal{D}}$ is nonnegative and closed.

An alternative and convenient description of the form domain makes use of real interpolation theory \cite{MR2424078,ameur2019interpolation}. Define
\begin{equation}\label{2.7}
	Y:=[\,L^{2}(\mathcal{G},\mathbb{C}^{2}),\,\text{dom}(\mathcal{D})\,]_{1/2},
\end{equation}
the interpolation space of order $1/2$ between $L^{2}$ and the operator domain. First, $Y$ is a closed subspace of
\[
H^{1/2}(\mathcal{G},\mathbb{C}^{2}):=\bigoplus_{e\in \mathcal{E}} H^{1/2}(I_{e};\mathbb{C}^{2}),
\]
with respect to the norm induced by $H^{1/2}(\mathcal{G},\mathbb{C}^{2})$. Indeed, $\text{dom}(\mathcal{D})$ is a closed subspace of $H^{1}(\mathcal{G},\mathbb{C}^{2})$, and arguing edge by edge one has
\[
H^{1/2}(\mathcal{G},\mathbb{C}^{2})= [\,L^{2}(\mathcal{G},\mathbb{C}^{2}),\,H^{1}(\mathcal{G},\mathbb{C}^{2})\,]_{1/2},
\]
so the closedness of $Y$ follows from the definition of interpolation spaces. As a consequence, by Sobolev embeddings,
\begin{equation}\label{2.8}
	Y \hookrightarrow L^{p}(\mathcal{G},\mathbb{C}^{2}) \quad \text{for all } p\in[2,\infty),
\end{equation}
and, in addition, the embedding into $L^{p}(\mathcal{K},\mathbb{C}^{2})$ is compact, due to the compactness of $\mathcal{K}$.

On the other hand,
\begin{equation}\label{2.9}
	\text{dom}(\mathcal{Q}_{\mathcal{D}})=Y,
\end{equation}
so the form domain inherits the above properties, which are crucial in the rest of the paper.

Finally, for simplicity we denote the form domain by $Y$ in view of \eqref{2.9}, and we write the quadratic form and its polarization as
\[
\mathcal{Q}_{\mathcal{D}}(u)= \langle\,|\mathcal{D}|^{1/2}u,\,|\mathcal{D}|^{1/2}u\,\rangle_{L^{2}},
\qquad
\mathcal{Q}_{\mathcal{D}}(u,v)= \langle\,|\mathcal{D}|^{1/2}u,\,|\mathcal{D}|^{1/2}v\,\rangle_{L^{2}} .
\]
When $u$ and $v$ belong to $\text{dom}(\mathcal{D})$, these expressions admit the usual spectral meaning and coincide with the Lebesgue integrals defined through the spectral resolution of $\mathcal{D}$.

Recall that according to \eqref{2.6} we can decompose the form domain \(Y\) as the orthogonal sum of the positive and negative spectral subspaces for \(\mathcal{D}\),
\[
Y = Y^{+} \oplus Y^{-}.
\]
As a consequence, every \(u \in Y\) can be written as \(u = P^{+}u + P^{-}u := u^{+} + u^{-}\), where \(P^{\pm}\) are the orthogonal projectors onto \(Y^{\pm}\).
In addition, we use the equivalent norm
\begin{equation}\label{2.10}
	\|u\| := \||\mathcal{D}|^{1/2}u\|_{L^{2}} \quad \text{for all } u \in Y,
\end{equation}
and we denote by \((\cdot,\cdot)\) the inner product on \(Y\) given by
\((u,v) := \langle |\mathcal{D}|^{1/2}u, |\mathcal{D}|^{1/2}v \rangle_{L^{2}}\).

In view of the previous remarks and using the spectral theorem, the energy functional associated with \eqref{2.1} is defined on \(Y\) by
\begin{equation}\label{2.11}
	\begin{aligned}
		I_{\varepsilon}(u)
		&= \frac{1}{2}\big(\|u^{+}\|^{2}-\|u^{-}\|^{2}\big)
		+ \frac{1}{2}\int_{\mathcal{G}} V_{\varepsilon}(x)|u|^{2}\,dx
		- \int_{\mathcal{G}} F(|u|)\,dx \\
		&= \frac{1}{2}\int_{\mathcal{G}}\langle u, \mathcal{D}u \rangle\,dx
		+ \frac{1}{2}\int_{\mathcal{G}} V_{\varepsilon}(x)|u|^{2}\,dx
		- \int_{\mathcal{G}} F(|u|)\,dx \quad \forall u \in Y.
	\end{aligned}
\end{equation}
By standard arguments one has \(I_{\varepsilon}\in C^{1}(Y,\mathbb{R})\), and its derivative satisfies
\begin{equation}\label{2.12}
	\begin{aligned}
		I_{\varepsilon}^{\prime}(u)[v]
		&= (u^{+}, v^{+}) - (u^{-}, v^{-})
		+ \operatorname{Re}\int_{\mathcal{G}} V_{\varepsilon}(x)\, u \cdot \bar{v}\,dx
		- \operatorname{Re}\int_{\mathcal{G}} f(|u|)\, u \cdot \bar{v}\,dx \\
		&= \int_{\mathcal{G}}\langle u, \mathcal{D}v \rangle\,dx
		+ \int_{\mathcal{G}} V_{\varepsilon}(x)\, \langle u, v \rangle\,dx
		- \int_{\mathcal{G}} f(|u|)\, \langle u, v \rangle\,dx
		\quad \forall u, v \in Y.
	\end{aligned}
\end{equation}

\begin{Lemma}\label{lem2.1}
	Let $m>0$ and $c>0$. Since $\sigma(\mathcal{D})=\mathbb{R}\setminus(-mc^{2},mc^{2})$, one has
	\[
	mc^{2}\,\|u\|_{L^{2}}^{2}\leq \|u\|^{2} \quad \text{for all } u\in Y .
	\]
\end{Lemma}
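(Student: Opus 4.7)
The plan is to apply the spectral theorem directly to the self-adjoint Dirac operator $\mathcal{D}$. By Proposition \ref{pro2.1}, the spectrum satisfies $\sigma(\mathcal{D}) \subset (-\infty,-mc^{2}] \cup [mc^{2},+\infty)$, so on the spectrum the scalar function $v \mapsto |v|$ is bounded below by the constant $mc^{2}$. This pointwise bound is the only analytic input needed; everything else is integration against the scalar spectral measure $\mu_{u}^{\mathcal{D}}$.

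Concretely, I would first recall the two spectral representations already introduced in the excerpt. On one hand, the standard normalization of the spectral measure gives
\[
\|u\|_{L^{2}}^{2} \;=\; \int_{\sigma(\mathcal{D})} d\mu_{u}^{\mathcal{D}}(v),
\]
and on the other hand, by the definition of the form domain and the identification \eqref{2.9} together with the norm convention \eqref{2.10},
\[
\|u\|^{2} \;=\; \big\| |\mathcal{D}|^{1/2} u \big\|_{L^{2}}^{2} \;=\; \int_{\sigma(\mathcal{D})} |v|\, d\mu_{u}^{\mathcal{D}}(v).
\]
Using $|v| \ge mc^{2}$ on $\sigma(\mathcal{D})$ and monotonicity of the integral then immediately yields
\[
\|u\|^{2} \;\ge\; mc^{2} \int_{\sigma(\mathcal{D})} d\mu_{u}^{\mathcal{D}}(v) \;=\; mc^{2}\,\|u\|_{L^{2}}^{2},
\]
which is the claim.

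There is essentially no obstacle here; the statement is a direct manifestation of the spectral gap of the Dirac operator. The one point worth double-checking is that the abstract functional calculus definition of $\|\,|\mathcal{D}|^{1/2}u\,\|_{L^{2}}$ coincides with the norm on $Y$ chosen in \eqref{2.10}, but this coincidence is exactly the content of \eqref{2.7}--\eqref{2.9}, so no further work is required.
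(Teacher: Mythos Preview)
Your proposal is correct and follows essentially the same approach as the paper's own proof: both use the spectral representations $\|u\|_{L^{2}}^{2}=\int_{\sigma(\mathcal D)}d\mu_{u}^{\mathcal D}(v)$ and $\|u\|^{2}=\int_{\sigma(\mathcal D)}|v|\,d\mu_{u}^{\mathcal D}(v)$, then apply the pointwise bound $|v|\ge mc^{2}$ on $\sigma(\mathcal D)$.
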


\begin{proof}
	By the spectral theorem, for $u\in Y=\operatorname{dom}(|\mathcal D|^{1/2})$ one has
	\[
	\|u\|^{2} \;=\; \int_{\sigma(\mathcal D)} |v|\, d\mu_{u}^{\mathcal D}(v),
	\qquad
	\|u\|_{L^{2}}^{2} \;=\; \int_{\sigma(\mathcal D)} 1 \, d\mu_{u}^{\mathcal D}(v),
	\]
	where $\mu_{u}^{\mathcal D}$ is the spectral measure of $\mathcal D$ at $u$.
	Since $|v|\ge mc^{2}$ for all $v\in\sigma(\mathcal D)=(-\infty,-mc^{2}]\cup[mc^{2},\infty)$, it follows that
	\[
	\|u\|^{2} \;=\; \int |v|\, d\mu_{u}^{\mathcal D}(v)
	\;\ge\; mc^{2}\int 1\, d\mu_{u}^{\mathcal D}(v)
	\;=\; mc^{2}\,\|u\|_{L^{2}}^{2}.
	\]
\end{proof}

\begin{Lemma}\label{lem2.2}
	Under assumptions $(f_{1})$-$(f_{2})$, there exist $A,B>0$ and $\theta>2$ such that
	\[
	F(|t|)\ge A\,|t|^{\theta}-B\,|t|^{2} \quad \text{for all } t\ge 0 .
	\]
\end{Lemma}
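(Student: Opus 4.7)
The plan is to use $(f_2)$ to show that the quotient $t \mapsto F(t)/t^{\theta}$ is nondecreasing on $(0,\infty)$, which gives a global lower bound of the form $F(t)\ge c_\star t^{\theta}$ on $[1,\infty)$, and then to absorb the behavior near $t=0$ by comparing $t^{\theta}$ with $t^{2}$.

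First, I would set $g(t):=F(t)/t^{\theta}$ for $t>0$. Since $F'(t)=f(t)\,t$ by the definition of $F$, a direct differentiation gives
\[
g'(t)\;=\;\frac{f(t)\,t\cdot t^{\theta} - F(t)\,\theta\, t^{\theta-1}}{t^{2\theta}}\;=\;\frac{f(t)\,t^{2}-\theta F(t)}{t^{\theta+1}}\;\ge\;0,
\]
where the last inequality is exactly the content of $(f_{2})$. Therefore $g$ is nondecreasing on $(0,\infty)$.

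Second, I would exploit the strict positivity built into $(f_{2})$, namely $\theta F(t)>0$ for every $t>0$, to deduce that $A:=F(1)=g(1)$ is a positive constant. The monotonicity of $g$ then yields
\[
F(t)\;\ge\;A\,t^{\theta}\qquad\text{for all }t\ge 1.
\]

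Third, for $t\in[0,1]$ the inequality $\theta>2$ gives $t^{\theta}\le t^{2}$, so choosing $B:=A$ one has $A\,t^{\theta}-B\,t^{2}\le 0\le F(t)$ (using $F\ge 0$, which follows from $f\ge 0$). Combining the two regimes yields $F(t)\ge A\,t^{\theta}-B\,t^{2}$ for every $t\ge 0$, which is the desired bound (noting $|t|=t$ in the statement since we restrict to $t\ge 0$). No serious obstacle arises here; the only small subtlety is ensuring $F(1)>0$, which however is immediate from the strict inequality in $(f_{2})$.
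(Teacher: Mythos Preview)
Your proof is correct and follows essentially the same approach as the paper: both define the quotient $F(t)/t^{\theta}$, use $(f_{2})$ to show it is nondecreasing, set $A=F(1)>0$ to handle $t\ge 1$, and then use $t^{\theta}\le t^{2}$ on $[0,1]$ together with $F\ge 0$ to handle small $t$. The only cosmetic difference is that the paper allows any $B\ge A$ while you take $B=A$.
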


\begin{proof}
	By $(f_{2})$ there is $\theta>2$ such that for all $t>0$,
	\[
	\theta\,F(t)\le f(t)\,t^{2}=F'(t)\,t .
	\]
	Hence the function $G(t):=\frac{F(t)}{t^{\theta}}$ satisfies
	\[
	G'(t)=\frac{F'(t)\,t-\theta F(t)}{t^{\theta+1}}\ge 0 \quad \text{for } t>0,
	\]
	so $G$ is nondecreasing on $(0,\infty)$. In particular, for all $t\ge 1$,
	\[
	F(t)\ge F(1)\,t^{\theta}.
	\]
	Set $A:=F(1)>0$. For $0\le t\le 1$, since $\theta>2$ one has $t^{\theta}\le t^{2}$, so choosing $B\ge A$ gives
	\[
	A\,t^{\theta}-B\,t^{2}\le 0\le F(t).
	\]
	Combining the two ranges yields
	\[
	F(t)\ge A\,t^{\theta}-B\,t^{2} \quad \text{for all } t\ge 0.
	\]
	Replacing $t$ by $|t|$ gives the stated inequality.
\end{proof}

\begin{Proposition}\label{pro2.2}
	A spinor is a weak solution of the NLDE \eqref{2.1} if and only if it is a critical point of $I_{\varepsilon}$.
\end{Proposition}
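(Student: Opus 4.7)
The statement is essentially the standard variational identification of weak solutions with critical points; the plan is to verify that the Fréchet derivative (2.12) is exactly the weak formulation of \eqref{2.1} in the form domain $Y$.

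First, I would specify the notion of weak solution: a spinor $u \in Y$ is called a weak solution of \eqref{2.1} if
\[
\int_{\mathcal G}\langle u, \mathcal D v\rangle\, dx + \int_{\mathcal G} V_{\varepsilon}(x)\langle u, v\rangle\, dx - \int_{\mathcal G} f(|u|)\langle u, v\rangle\, dx = 0
\]
holds for every $v \in Y$, where the first integral is understood through the polarized quadratic form associated with $\mathcal D$, i.e.\ $\int_{\mathcal G}\langle u, \mathcal D v\rangle\, dx := (u^{+}, v^{+}) - (u^{-}, v^{-})$ in accordance with the spectral splitting $Y = Y^{+}\oplus Y^{-}$. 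This definition encodes the Kirchhoff vertex conditions \eqref{2.4}-\eqref{2.5} implicitly through the requirement $u, v \in Y = \operatorname{dom}(\mathcal Q_{\mathcal D})$, and coincides with the classical edgewise integration-by-parts formulation whenever one of the spinors belongs to $\operatorname{dom}(\mathcal D)$.

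Second, I would check that $I_{\varepsilon}\in C^{1}(Y,\mathbb R)$ with derivative given by \eqref{2.12}. The kinetic part $\tfrac12(\|u^{+}\|^{2}-\|u^{-}\|^{2})$ is a continuous quadratic form on $Y$, hence smooth, with derivative $(u^{+},v^{+})-(u^{-},v^{-})$. The potential part is well-defined and smooth because $V_{\varepsilon}\in L^{\infty}(\mathcal G)$ by $(V_{1})$ and $Y \hookrightarrow L^{2}(\mathcal G,\mathbb C^{2})$ by \eqref{2.8}. For the nonlinear term, the growth bound in $(f_{1})$ together with the embedding $Y\hookrightarrow L^{p}(\mathcal G,\mathbb C^{2})$ from \eqref{2.8} makes the Nemytskii operator $u\mapsto F(|u|)$ a $C^{1}$ map from $Y$ into $L^{1}(\mathcal G)$, with Fréchet derivative $v \mapsto \operatorname{Re}\int_{\mathcal G} f(|u|)\, u\cdot\overline{v}\, dx$. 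Combining these three computations yields precisely \eqref{2.12}.

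The equivalence then drops out by direct comparison: $u$ is a critical point iff $I'_{\varepsilon}(u)[v]=0$ for all $v\in Y$, which by \eqref{2.12} is exactly the integral identity that defines a weak solution, and conversely. The only point requiring care is the symmetric writing of the kinetic term through the polarized form when $u$ lies only in $Y$; this is legitimate because $\mathcal D$ is self-adjoint (Proposition \ref{pro2.1}) with form domain $Y$ by \eqref{2.9}, so the spectral expression $(u^{+},v^{+})-(u^{-},v^{-})$ gives an unambiguous meaning to $\int_{\mathcal G}\langle u,\mathcal D v\rangle\, dx$ on all of $Y\times Y$. I do not expect any serious obstacle here; the only modest subtlety is this identification of the bilinear kinetic term with its weak (form) version, after which the proposition is immediate.
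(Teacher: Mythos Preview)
Your argument is formally correct under the definition of weak solution you adopt, but that definition trivializes the proposition: you have essentially \emph{defined} ``weak solution'' to mean ``critical point of $I_\varepsilon$'', so the equivalence becomes a tautology. The paper's proof carries genuinely more content and proceeds along a different route.

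In the paper, the converse direction (critical point $\Rightarrow$ weak solution) is a \emph{regularity and boundary-condition} argument. Starting from a critical point $u\in Y\subset H^{1/2}(\mathcal G,\mathbb C^2)$, the paper tests \eqref{2.13} against compactly supported spinors $\varphi=(\varphi^1,0)^T$ and $\varphi=(0,\varphi^2)^T$ with $\varphi^j\in C_0^\infty(I_e)$ on each edge; this shows that both components of $u_e$ have distributional derivatives in $L^2(I_e)$, hence $u_e\in H^1(I_e)$, and that the equation \eqref{2.1} holds edgewise. It then chooses test functions in $\operatorname{dom}(\mathcal D)$ with prescribed vertex traces to recover the Kirchhoff conditions \eqref{2.4}--\eqref{2.5} for $u$ at every vertex. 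In other words, the paper upgrades $u$ from the form domain $Y$ to the operator domain $\operatorname{dom}(\mathcal D)$ and verifies the vertex constraints explicitly.

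Your claim that membership in $Y=\operatorname{dom}(\mathcal Q_{\mathcal D})$ ``encodes the Kirchhoff vertex conditions \eqref{2.4}--\eqref{2.5} implicitly'' is the point that does not hold: those are $H^1$-trace conditions defining $\operatorname{dom}(\mathcal D)$, not the form domain, and a generic element of $Y$ need not satisfy them. The edgewise $H^1$ regularity bootstrap and the recovery of \eqref{2.4}--\eqref{2.5} from the Euler--Lagrange identity are precisely the steps your proposal omits; they constitute the actual substance of the proposition as proved in the paper.
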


\begin{proof}
	The implication weak solution $\Rightarrow$ critical point follows directly from \eqref{2.12}. We prove the converse.
	Assume that $u\in Y$ is a critical point, namely
	\begin{equation}\label{2.13}
		I'_{\varepsilon}(u)[\varphi]
		= \int_{\mathcal{G}} \langle u, (\mathcal D+V_{\varepsilon}(x))\varphi\rangle\,dx
		- \int_{\mathcal{G}} f(|u|)\,\langle u,\varphi\rangle\,dx
		= 0 \quad \text{for all } \varphi\in Y .
	\end{equation}
	
	Fix an edge $e\in\mathcal E$ and take
	\begin{equation}\label{2.14}
		\varphi=\begin{pmatrix}\varphi^{1}\\ 0\end{pmatrix},
		\quad \varphi^{1}\in C_{0}^{\infty}(I_{e}),\ \varphi^{1}\not\equiv 0 .
	\end{equation}
	Since $\varphi$ is compactly supported in the interior of $I_{e}$, we have $\varphi\in\operatorname{dom}(\mathcal D)$ and all vertex traces vanish. Writing $u_{e}=(u_{e}^{1},u_{e}^{2})^{T}$ and $\varphi_{e}=(\varphi_{e}^{1},0)^{T}$, from \eqref{2.13} we get
	\[
	\int_{I_{e}} \langle u_{e}, (\mathcal D+V_{\varepsilon}(x))\varphi_{e}\rangle\,dx
	= \int_{I_{e}} f(|u_{e}|)\, \langle u_{e}, \varphi_{e}\rangle\,dx .
	\]
	A direct computation gives
	\[
	\int_{I_{e}} \big( (mc^{2}+V_{\varepsilon})\,u_{e}^{1}\,\overline{\varphi_{e}^{1}}
	+ i\,u_{e}^{2}\,(\overline{\varphi_{e}^{1}})'\big)\,dx
	= \int_{I_{e}} f(|u_{e}|)\,u_{e}^{1}\,\overline{\varphi_{e}^{1}}\,dx ,
	\]
	hence
	\[
	-i\int_{I_{e}} u_{e}^{2}\,(\overline{\varphi_{e}^{1}})'\,dx
	= \int_{I_{e}} \big( f(|u_{e}|)-mc^{2}-V_{\varepsilon}(x)\big)\,u_{e}^{1}\,\overline{\varphi_{e}^{1}}\,dx .
	\]
	By \((f_{1})\) and the embedding \(Y\hookrightarrow L^{p}\) for all \(p<\infty\) we have
	\(\big(f(|u|)-mc^{2}-V_{\varepsilon}\big)u^{1}\in L^{2}(I_{e})\),
	so the distributional derivative of \(u_{e}^{2}\) belongs to \(L^{2}(I_{e})\) and therefore \(u_{e}^{2}\in H^{1}(I_{e})\). An integration by parts on \(I_{e}\) then yields the first component of \eqref{2.1} on \(I_{e}\).
	Exchanging the roles of the components by taking
	\(\varphi=(0,\varphi^{2})^{T}\) with \(\varphi^{2}\in C_{0}^{\infty}(I_{e})\),
	we similarly obtain \(u_{e}^{1}\in H^{1}(I_{e})\) and the second component of \eqref{2.1} on \(I_{e}\).
	
	It remains to verify the vertex conditions \eqref{2.4} and \eqref{2.5}. Fix a vertex \(\mathrm v\). Choose
	\[
	\varphi=\begin{pmatrix}\varphi^{1}\\ 0\end{pmatrix}\in\operatorname{dom}(\mathcal D),
	\quad \varphi^{1}(\mathrm v)=1,\quad \varphi(\mathrm v')=0 \text{ for all vertices } \mathrm v'\ne \mathrm v .
	\]
	Applying \eqref{2.13} and integrating by parts on each incident edge (boundary terms at other vertices vanish by the choice of \(\varphi\)), we obtain the boundary identity
	\[
	\sum_{e\succ \mathrm v} u_{e}^{2}(\mathrm v)_{\pm}\,\varphi_{e}^{1}(\mathrm v)=0 .
	\]
	By continuity of \(\varphi^{1}\) at \(\mathrm v\) for functions in \(\operatorname{dom}(\mathcal D)\), all \(\varphi_{e}^{1}(\mathrm v)\) equal \(\varphi^{1}(\mathrm v)=1\), hence
	\[
	\sum_{e\succ \mathrm v} u_{e}^{2}(\mathrm v)_{\pm}=0 ,
	\]
	which is \eqref{2.5}.
	
	Now take a vertex \(\mathrm v\) of degree at least \(2\). Let \(e_{1},e_{2}\) be two edges incident at \(\mathrm v\), and choose
	\[
	\varphi=\begin{pmatrix}0\\ \varphi^{2}\end{pmatrix}\in\operatorname{dom}(\mathcal D),
	\quad \varphi_{e_{1}}^{2}(\mathrm v)_{\pm}=1,\ \varphi_{e_{2}}^{2}(\mathrm v)_{\pm}=-1,\
	\varphi_{e}^{2}(\mathrm v)=0 \text{ for } e\ne e_{1},e_{2},
	\]
	which is admissible because functions in \(\operatorname{dom}(\mathcal D)\) satisfy \(\sum_{e\succ \mathrm v}\varphi_{e}^{2}(\mathrm v)_{\pm}=0\).
	Using \eqref{2.13} and integrating by parts as before yields
	\[
	u_{e_{1}}^{1}(\mathrm v)\,\varphi_{e_{1}}^{2}(\mathrm v)_{\pm}
	+ u_{e_{2}}^{1}(\mathrm v)\,\varphi_{e_{2}}^{2}(\mathrm v)_{\pm}=0 ,
	\]
	that is \(u_{e_{1}}^{1}(\mathrm v)=u_{e_{2}}^{1}(\mathrm v)\).
	Since the pair \((e_{1},e_{2})\) is arbitrary among the edges incident at \(\mathrm v\), we obtain \eqref{2.4}.
	Repeating the argument at every vertex completes the proof.
\end{proof}

\begin{Lemma}\label{Lem2.3}\cite{MR4438617}
	Let $\mathcal{G}$ be a connected noncompact metric graph with finitely many edges. For every $2\le q\le\infty$ there exists a constant $C_{q}>0$ depending only on $q$ and on $\mathcal{G}$ such that
	\begin{equation}\label{2.15}
		\|u\|_{L^{q}(\mathcal{G},\mathbb{C}^{2})}
		\le C_{q}\,\|u\|_{L^{2}(\mathcal{G},\mathbb{C}^{2})}^{\frac{1}{2}+\frac{1}{q}}
		\,\|u'\|_{L^{2}(\mathcal{G},\mathbb{C}^{2})}^{\frac{1}{2}-\frac{1}{q}}
		\quad \text{for all } u\in H^{1}(\mathcal{G},\mathbb{C}^{2}).
	\end{equation}
\end{Lemma}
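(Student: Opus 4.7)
The plan is to deduce the stated Gagliardo-Nirenberg type inequality from a pointwise $L^\infty$ bound together with a standard $L^2$-$L^\infty$ interpolation. Concretely, I would first establish the auxiliary estimate
\[
\|u\|_{L^\infty(\mathcal{G},\mathbb{C}^2)}^{2} \;\le\; 2\,\|u\|_{L^2(\mathcal{G},\mathbb{C}^2)}\,\|u'\|_{L^2(\mathcal{G},\mathbb{C}^2)},
\]
and then use the elementary inequality $\|u\|_{L^q}^q \le \|u\|_{L^\infty}^{q-2}\|u\|_{L^2}^2$, valid for $q\in[2,\infty)$. Combining the two and extracting a $q$-th root yields
\[
\|u\|_{L^q(\mathcal{G},\mathbb{C}^2)} \;\le\; 2^{(q-2)/(2q)}\,\|u\|_{L^2(\mathcal{G},\mathbb{C}^2)}^{1/2+1/q}\,\|u'\|_{L^2(\mathcal{G},\mathbb{C}^2)}^{1/2-1/q},
\]
which is the claim with $C_q = 2^{(q-2)/(2q)}$; the limiting case $q=\infty$ is just the pointwise bound itself.

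The pointwise bound is where the geometry of $\mathcal{G}$ enters. Fix $x_0\in\mathcal{G}$. Since $\mathcal{G}$ is connected with finitely many edges and is noncompact, at least one edge is a half-line, so one can construct a piecewise smooth path $\gamma:[0,\infty)\to\mathcal{G}$, parametrised by arc length, starting at $x_0$ and escaping to infinity along such a half-line after traversing only finitely many edges. Since the restriction of $u$ to the terminating half-line is $H^1$ and therefore decays at infinity, the fundamental theorem of calculus applied to $t\mapsto |u(\gamma(t))|^2$ gives
\[
|u(x_0)|^2 \;=\; -\,2\operatorname{Re}\int_0^\infty \langle u(\gamma(t)),\,u'(\gamma(t))\rangle\,dt,
\]
and a Cauchy-Schwarz estimate on the right-hand side, after using $|\langle u,u'\rangle|\le |u||u'|$, bounds this by $2\,\|u\|_{L^2(\mathcal{G})}\,\|u'\|_{L^2(\mathcal{G})}$; taking the supremum over $x_0\in\mathcal{G}$ then yields the auxiliary bound.

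The main obstacle is the treatment of vertex jumps. For $t\mapsto|u(\gamma(t))|^2$ to be absolutely continuous along $\gamma$ one needs $u$ to match across the finitely many vertices that $\gamma$ traverses, whereas $H^1(\mathcal{G},\mathbb{C}^2)$ as defined here is only an edge-wise direct sum without any imposed continuity. My plan is to first prove the inequality on the dense subclass of $u\in H^1(\mathcal{G},\mathbb{C}^2)$ whose edge representatives match at the vertices of $\gamma$ (such approximants are constructed by local modifications near each vertex with arbitrarily small $H^1$ cost), and then extend by density, since both sides of the inequality depend continuously on $u$ in the $H^1$ topology. Alternatively, one can telescope the jumps along the finitely many vertices of $\gamma$ and absorb them using the edge-local Sobolev embedding $H^1(I_e)\hookrightarrow L^\infty(I_e)$; either way, the argument hinges on the combination of connectedness and the existence of an escape route to infinity along a half-line.
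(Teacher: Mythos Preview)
The paper does not supply its own proof of this lemma; it simply cites \cite{MR4438617}. So there is nothing to compare against, and your core argument --- the $L^\infty$ bound via a path escaping to infinity along a half-line, followed by the $L^2$--$L^\infty$ interpolation $\|u\|_{L^q}^q\le\|u\|_{L^\infty}^{q-2}\|u\|_{L^2}^2$ --- is exactly the standard proof of this Gagliardo--Nirenberg inequality on noncompact metric graphs, and your exponent bookkeeping is correct.

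Your ``main obstacle'' paragraph, however, contains a genuine error. If $H^1(\mathcal G,\mathbb C^2)$ is truly the bare direct sum $\bigoplus_e H^1(I_e;\mathbb C^2)$ with no vertex continuity, then the inequality \eqref{2.15} is \emph{false}: take $u$ equal to a nonzero constant on one bounded edge and zero on all other edges; then $u'\equiv 0$ but $\|u\|_{L^q}>0$. Consequently neither of your proposed workarounds can succeed. Density fails because continuous spinors are not dense in the direct sum (vertex traces are bounded linear functionals, so any $H^1$-limit of continuous functions is again continuous), and telescoping the jumps fails because the jump terms simply cannot be absorbed into $\|u\|_{L^2}\|u'\|_{L^2}$, as the same counterexample shows.

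The resolution is that the lemma, as stated in the cited reference and as used in the paper (see Lemma~\ref{lem2.4}, where it is applied to sequences in the scalar space $H^1(\mathcal G)$, which \emph{does} carry continuity), is meant for functions that are continuous across vertices. The paper's remark that ``continuity across vertices in the vector valued case is understood componentwise'' should be read as part of the definition. With that reading your path argument goes through without any need for the workaround: along a simple path $\gamma$ from $x_0$ to infinity, $t\mapsto|u(\gamma(t))|^2$ is absolutely continuous because $u$ matches at the finitely many vertices $\gamma$ crosses, and the rest is Cauchy--Schwarz.
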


\begin{Lemma}\label{lem2.4}
	Let $\mathcal{G}$ be a noncompact metric graph, $r>0$ and $2\le q<\infty$. If $(u_{n})$ is bounded in $H^{1}(\mathcal{G})$ and
	\[
	\sup_{y\in\mathcal{G}}\int_{B(y,r)}|u_{n}|^{q}\,dx \to 0 \quad \text{as } n\to\infty,
	\]
	where $B(y,r)=\{x\in\mathcal{G}: d(x,y)<r\}$, then $u_{n}\to 0$ in $L^{p}(\mathcal{G})$ for every $2<p<\infty$.
\end{Lemma}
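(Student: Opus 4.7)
The strategy mirrors Lions' classical concentration--compactness lemma on $\mathbb{R}^N$, adapted to the metric graph setting.

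First, I would construct a countable covering $\{B_j=B(y_j,r)\}_{j}$ of $\mathcal{G}$ with bounded overlap. Since $\mathcal{G}$ is noncompact with finitely many edges, it is the union of a compact core and finitely many half lines; covering the core by finitely many balls of radius $r$ and each half line by the translates $B(kr,r)$, $k\in\mathbb{N}$, produces a family satisfying $\mathcal{G}=\bigcup_j B_j$ with an overlap constant $N=N(\mathcal{G},r)$ such that $\sum_j \mathbf{1}_{B_j}\le N$ pointwise. Next, the one dimensional Sobolev embedding on each ball yields a constant $C_r>0$, uniform in $j$, with $\|u\|_{L^\infty(B_j)}\le C_r\|u\|_{H^1(B_j)}$ for every $u\in H^1(\mathcal{G})$; the uniformity comes from the bound $|B_j|\le 2r$ and the boundedness of the vertex degrees, which holds because $\mathcal{G}$ has finitely many edges.

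Second, for any $p\ge q+2$ Hölder's inequality on $B_j$ gives
\[
\int_{B_j}|u|^p\;\le\;\|u\|_{L^\infty(B_j)}^{p-q}\int_{B_j}|u|^q\;\le\;C_r^{p-q}\,\|u\|_{H^1(B_j)}^{p-q}\,\|u\|_{L^q(B_j)}^q .
\]
Writing $\|u\|_{H^1(B_j)}^{p-q}\le\|u\|_{H^1(\mathcal{G})}^{p-q-2}\|u\|_{H^1(B_j)}^2$ and summing over $j$ using the bounded overlap,
\[
\|u\|_{L^p(\mathcal{G})}^p \;\le\; N\sum_j \int_{B_j}|u|^p \;\le\; C\,\|u\|_{H^1(\mathcal{G})}^{p-q}\,\sup_{y\in\mathcal{G}}\int_{B(y,r)}|u|^q .
\]
Plugging in $u_n$, the boundedness of $(u_n)$ in $H^1$ and the vanishing hypothesis give $u_n\to 0$ in $L^p(\mathcal{G})$ for every $p\ge q+2$.

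Third, for $p\in(2,q+2)$ I would interpolate: choose $\theta\in[0,1)$ with $\tfrac{1}{p}=\tfrac{\theta}{2}+\tfrac{1-\theta}{q+2}$, so that
\[
\|u_n\|_{L^p(\mathcal{G})}\;\le\;\|u_n\|_{L^2(\mathcal{G})}^\theta\,\|u_n\|_{L^{q+2}(\mathcal{G})}^{1-\theta}\;\to\; 0,
\]
since $\|u_n\|_{L^2}$ is bounded by the $H^1$ bound and $\|u_n\|_{L^{q+2}}\to 0$ by the previous step, while $1-\theta>0$ thanks to $p>2$.

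The main technical obstacle is producing the covering with a uniform overlap constant and, more importantly, the Sobolev constant $C_r$ uniform across balls that may contain vertices. This is where the metric-graph geometry enters: one must exploit the continuity condition of $H^1(\mathcal{G})$ at each vertex together with the uniform bound on vertex degrees to obtain a single constant $C_r$ valid for all balls; this step has no direct analog in $\mathbb{R}^N$ and is the only genuinely graph-specific point of the argument, after which the reasoning reduces to the standard Lions scheme.
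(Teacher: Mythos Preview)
Your proof is correct and follows essentially the same Lions-type scheme as the paper: cover $\mathcal{G}$ by balls with bounded overlap, use a local $L^\infty$ Sobolev bound, sum, and finish by interpolation with $L^2$. The only cosmetic difference is that the paper invokes the multiplicative Gagliardo--Nirenberg inequality $\|u\|_{L^\infty(B_i)}\le C\|u\|_{L^2(B_i)}^{1/2}\|u'\|_{L^2(B_i)}^{1/2}$ from Lemma~\ref{Lem2.3} (which lets it handle all $s>q$ in one shot), whereas you use the additive bound $\|u\|_{L^\infty(B_j)}\le C_r\|u\|_{H^1(B_j)}$ and therefore need the threshold $p\ge q+2$ before interpolating; both routes are standard and equivalent in strength.
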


\begin{proof}
	Fix $r>0$. There exists a countable set $\{y_{i}\}_{i\ge1}\subset\mathcal{G}$ such that the balls $B(y_{i},r/2)$ are pairwise disjoint and
	\[
	\mathcal{G}\subset \bigcup_{i=1}^{\infty} B(y_{i},r),
	\]
	with a bounded overlap: each $x\in\mathcal{G}$ belongs to at most $N$ balls $B(y_{i},r)$, where $N$ depends only on $\mathcal{G}$ and $r$.
	
	Let $q<s<\infty$ and $u\in H^{1}(\mathcal{G})$. Set $\theta:=1-\frac{q}{s}\in(0,1)$. On each ball $B_{i}:=B(y_{i},r)$ we have the interpolation
	\[
	\|u\|_{L^{s}(B_{i})}\le \|u\|_{L^{q}(B_{i})}^{1-\theta}\,\|u\|_{L^{\infty}(B_{i})}^{\theta}.
	\]
	By Lemma \ref{Lem2.3} we have
	\[
	\|u\|_{L^{\infty}(B_{i})}\le C\,\|u\|_{L^{2}(B_{i})}^{\frac12}\,\|u'\|_{L^{2}(B_{i})}^{\frac12}.
	\]
	Therefore,
	\[
	\int_{B_{i}} |u|^{s}\,dx
	\le C^{s}\,\|u\|_{L^{q}(B_{i})}^{(1-\theta)s}\,
	\|u\|_{L^{2}(B_{i})}^{\frac{\theta s}{2}}\,
	\|u'\|_{L^{2}(B_{i})}^{\frac{\theta s}{2}}.
	\]
	Summing over $i$ and using the bounded overlap together with H\"older and Cauchy-Schwarz inequalities,
	\[
	\int_{\mathcal{G}} |u|^{s}\,dx
	\le C^{s}\Big(\sup_{y\in\mathcal{G}}\|u\|_{L^{q}(B(y,r))}^{(1-\theta)s}\Big)
	\Big(\sum_{i}\|u\|_{L^{2}(B_{i})}^{2}\Big)^{\frac{\theta s}{4}}
	\Big(\sum_{i}\|u'\|_{L^{2}(B_{i})}^{2}\Big)^{\frac{\theta s}{4}}
	\le C^{s}\Big(\sup_{y\in\mathcal{G}}\|u\|_{L^{q}(B(y,r))}^{(1-\theta)s}\Big)
	\|u\|_{H^{1}(\mathcal{G})}^{\frac{\theta s}{2}}.
	\]
	Apply this to $u=u_{n}$:
	the sequence $(u_{n})$ is bounded in $H^{1}(\mathcal{G})$ and by assumption
	$\sup_{y}\|u_{n}\|_{L^{q}(B(y,r))}\to0$, hence $\|u_{n}\|_{L^{s}(\mathcal{G})}\to 0$ for every $s\in(q,\infty)$.
	Finally, for any fixed $p\in(2,\infty)$ choose $s>p$ and interpolate
	\[
	\|u_{n}\|_{L^{p}(\mathcal{G})}
	\le \|u_{n}\|_{L^{2}(\mathcal{G})}^{\alpha}\,\|u_{n}\|_{L^{s}(\mathcal{G})}^{1-\alpha}
	\to 0,
	\]
	which gives the claim.
\end{proof}

\section{The constant potential case}

In this section we study the existence of solutions for the nonlinear Dirac equation on noncompact quantum graphs
\begin{equation}\label{3.1}
	-i\,c\,\sigma_{1}\,\frac{d}{dx}u + m c^{2}\sigma_{3} u + \lambda u = f(|u|)\,u
	\quad \text{in } \mathcal{G},
\end{equation}
where $|\lambda|< m c^{2}$ and $f$ satisfies $(f_{1})$-$(f_{2})$.

The energy functional $J_{\lambda}:Y\to\mathbb{R}$ associated with \eqref{3.1} is
\[
J_{\lambda}(u)
= \frac{1}{2}\big(\|u^{+}\|^{2}-\|u^{-}\|^{2}\big)
+ \frac{\lambda}{2}\int_{\mathcal{G}} |u|^{2}\,dx
- \int_{\mathcal{G}} F(|u|)\,dx .
\]
A direct computation shows that $J_{\lambda}\in C^{1}(Y,\mathbb{R})$ and
\[
J_{\lambda}'(u)[v]
= (u^{+},v^{+}) - (u^{-},v^{-})
+ \lambda\,\operatorname{Re}\int_{\mathcal{G}} u\cdot \bar v\,dx
- \operatorname{Re}\int_{\mathcal{G}} f(|u|)\,u\cdot \bar v\,dx
\quad \text{for all } u,v\in Y .
\]

Define
\begin{equation}\label{3.2}
	d_{\lambda} := \inf_{u\in \mathcal{N}_{\lambda}} J_{\lambda}(u),
\end{equation}
where the generalized Nehari set is
\begin{equation}\label{3.3}
	\mathcal{N}_{\lambda}
	:= \{\, u\in Y\setminus Y^{-} : J_{\lambda}'(u)[u]=0 \text{ and } J_{\lambda}'(u)[v]=0 \text{ for all } v\in Y^{-}\,\}.
\end{equation}
For each $u\in Y$ we also set
\begin{equation}\label{3.4}
	Y(u) := Y^{-}\oplus \mathbb{R}u,
	\qquad
	\hat Y(u) := Y^{-}\oplus [0,\infty)u .
\end{equation}

The main result of this section is the following.

\begin{Theorem}\label{thm3.1}
	The number $d_{\lambda}$ is attained, one has $d_{\lambda}>0$, and if $u_{\lambda}\in \mathcal{N}_{\lambda}$ satisfies $J_{\lambda}(u_{\lambda})=d_{\lambda}$, then $u_{\lambda}$ is a solution of \eqref{3.1}.
\end{Theorem}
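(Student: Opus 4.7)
My plan is to apply the generalized Nehari-manifold scheme of Szulkin--Weth, adapted to the strongly indefinite functional $J_\lambda$ associated with the Dirac operator on the quantum graph. The first step is to establish the fibering property: for every $u\in Y\setminus Y^-$ the map $\phi_u:Y^-\times[0,\infty)\to\mathbb R$, $(v,s)\mapsto J_\lambda(v+su)$, admits a unique strict maximizer $(v_u,t_u)$ with $t_u>0$, and $\hat u:=v_u+t_u u$ lies in $\mathcal{N}_\lambda$. The key point is that $(f_1)$ and $(f_2)$ together imply $f(s)s^2>2F(s)$ for $s>0$, hence $s\mapsto F(s)/s^2$ is strictly increasing, which yields strict concavity of $\phi_u$ along every nontrivial direction; while $(f_2)$ forces $J_\lambda(su)\to-\infty$ as $s\to\infty$, providing existence of the maximum. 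This delivers the variational characterization $d_\lambda=\inf_{u\in Y^+\setminus\{0\}}\max_{\hat Y(u)}J_\lambda$.

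Next I would show $d_\lambda>0$. Testing the Nehari identities $J_\lambda'(u)[u^\pm]=0$ against $u^\pm$ and combining Lemma \ref{lem2.1} with the standing assumption $|\lambda|<mc^2$, the linear part of $J_\lambda'$ is coercive on each of $Y^\pm$; together with the subcritical growth in $(f_1)$ and the Sobolev embedding \eqref{2.8}, this gives a uniform lower bound $\|u\|\ge\rho_0>0$ on $\mathcal{N}_\lambda$. Combined with the identity $J_\lambda(u)-\tfrac12 J_\lambda'(u)[u]\ge \tfrac{\theta-2}{2\theta}\int_{\mathcal G} f(|u|)|u|^2\,dx$ from $(f_2)$ and a mountain-pass estimate along the $Y^+$ direction (made available by Step~1), this produces $d_\lambda\ge\rho>0$.

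For the existence of a minimizer I would take $(u_n)\subset\mathcal{N}_\lambda$ with $J_\lambda(u_n)\to d_\lambda$ and, via Ekeland's variational principle on $\mathcal{N}_\lambda$, upgrade it to a Palais--Smale sequence for $J_\lambda$ at level $d_\lambda$; boundedness follows from $(f_2)$ as in Step~2. Lemma \ref{lem2.4} then provides a dichotomy. Vanishing would force $u_n\to0$ in every $L^p(\mathcal G)$ with $p>2$, contradicting the lower Nehari bound through $J_\lambda'(u_n)[u_n^+-u_n^-]\to 0$. Otherwise there exist $y_n\in\mathcal G$ and $\delta>0$ with $\int_{B(y_n,r)}|u_n|^2\,dx\ge\delta$: if $\{y_n\}$ stays bounded, local compactness of the embedding $Y\hookrightarrow L^p$ on bounded portions of $\mathcal G$ (compact core plus bounded pieces of the half-lines) gives a nonzero weak limit $u_\lambda$; if $y_n$ escapes to infinity along a half-line, the constancy of $\lambda$ and the translation invariance of $\mathcal D$ on that half-line allow me to recenter the concentration near the vertex and still extract a nontrivial weak limit. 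Weak lower semicontinuity of $\|u^\pm\|^2$ together with a Fatou/Br\'ezis--Lieb control of $\int F(|u_n|)$ then shows $u_\lambda\in\mathcal{N}_\lambda$ and $J_\lambda(u_\lambda)=d_\lambda$. Finally, the uniqueness of the fibering from Step~1 gives the usual Szulkin--Weth conclusion that $J_\lambda'(u_\lambda)=0$ (a deformation otherwise would produce a point of $\mathcal{N}_\lambda$ with strictly smaller energy), and Proposition \ref{pro2.2} identifies $u_\lambda$ as a weak solution of \eqref{3.1}.

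The step I expect to be most delicate is the concentration-compactness part of Step~3: on a noncompact quantum graph, translations are available only along individual half-lines, so if the minimizing sequence concentrates at infinity along a half-line the natural limit problem lives on a single line $\mathbb R$ (without vertex) rather than on $\mathcal G$. Showing that the weak limit extracted after translation genuinely lies in $\mathcal{N}_\lambda\subset Y$, and hence that $d_\lambda$ is attained on the graph and not only in a ``line-at-infinity'' analogue, will require careful tracking of the Kirchhoff-type vertex conditions along the recentering procedure together with a comparison between $d_\lambda$ and the autonomous infimum on the whole real line.
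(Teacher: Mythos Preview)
Your overall strategy matches the paper's: both use the Szulkin--Weth reduction to $S^{+}$, Ekeland's principle to produce a $(PS)_{d_\lambda}$ sequence, and a concentration--compactness argument to extract a nontrivial weak limit, followed by Fatou to conclude $J_\lambda(u)\le d_\lambda$. There is, however, one genuine gap and a couple of places where your ordering differs from the paper.

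The gap is in Step~1. The map $\phi_u(v,s)=J_\lambda(v+su)$ is \emph{not} strictly concave: writing $w=v+su$ with $v\in Y^{-}$ and $u\in Y^{+}$, the quadratic part contains $+\tfrac{s^{2}}{2}\|u\|^{2}$, which is convex in $s$. The monotonicity of $s\mapsto F(s)/s^{2}$ is indeed the key structural hypothesis, but it does not yield concavity; rather, for $u\in\mathcal{N}_\lambda$ one must show directly that $J_\lambda(tu+v)<J_\lambda(u)$ for every $(t,v)\ne(1,0)$ with $t\ge 0$, $v\in Y^{-}$. The paper does this (Lemma~3.3) via a pointwise algebraic inequality (Lemma~3.2): for $t\ge1$ and $v\ne0$ in $\mathbb{C}^{2}$,
\[
\operatorname{Re}\!\Big[f(|u|)\,u\cdot\overline{\Big(\tfrac{t^{2}-1}{2}\,u+t v\Big)}\Big]+F(|u|)-F(|tu+v|)<0,
\]
whose proof uses both the AR condition $(f_{2})$ and the monotonicity $f'\ge0$ from $(f_{1})$. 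Without this lemma (or an equivalent substitute), uniqueness of the fibering maximum does not follow from your argument as written.

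Two smaller differences. First, the order of the endgame: the paper shows the weak limit $u$ is a critical point directly (testing $J_\lambda'(u_n)\to0$ against fixed $v$), hence $u\in\mathcal{N}_\lambda$ automatically once $u\neq 0$, and then applies Fatou to the nonnegative integrand $\tfrac12 f(|u|)|u|^{2}-F(|u|)$ to obtain $J_\lambda(u)\le d_\lambda$. Your plan to conclude $u_\lambda\in\mathcal{N}_\lambda$ \emph{before} establishing $J_\lambda'(u_\lambda)=0$ is problematic, since $\mathcal{N}_\lambda$ is not weakly closed; the deformation argument you invoke at the end is unnecessary once the PS structure is used. Second, on the translation issue you flag: the paper sidesteps it by invoking invariance under graph isometries (its Lemma~3.1) and simply asserting ``we may assume $y_n=0$'', rather than comparing with a limit problem on $\mathbb{R}$. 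Your concern that this requires a sufficiently rich isometry group is legitimate, but the paper does not pursue the line--at--infinity comparison you propose.
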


\subsection{Technical results}

In this subsection we collect auxiliary facts used in the proof of Theorem \ref{thm3.1}. We first record the invariance of the form domain and of the spectral splitting under graph isometries.

\begin{Lemma}\label{lem3.1}
	Let $\tau:\mathcal G\to\mathcal G$ be a metric graph isometry that permutes edges and vertices and preserves edge lengths and the Kirchhoff type vertex conditions for $\mathcal D$. Define the unitary operator $U_{\tau}:L^{2}(\mathcal G,\mathbb C^{2})\to L^{2}(\mathcal G,\mathbb C^{2})$ by
	\[
	(U_{\tau}u)(x):=u(\tau x).
	\]
	Then $U_{\tau}(Y)=Y$ and $U_{\tau}$ commutes with the spectral projectors $P^{\pm}$ of $\mathcal D$. In particular, for all $u\in Y$,
	\[
	(U_{\tau}u)^{+}=U_{\tau}(u^{+}),\qquad (U_{\tau}u)^{-}=U_{\tau}(u^{-}).
	\]
\end{Lemma}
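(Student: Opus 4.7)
The plan is to reduce everything to the statement that $U_{\tau}$ commutes with $\mathcal D$ on $\operatorname{dom}(\mathcal D)$; once this is established, the rest is standard functional calculus. First, I would verify that $U_{\tau}$ is indeed unitary on $L^{2}(\mathcal G,\mathbb C^{2})$: since $\tau$ permutes edges and preserves their lengths, the change of variables induced by $\tau$ on each edge is an isometry, so $\|U_{\tau}u\|_{L^{2}}=\|u\|_{L^{2}}$, and the same argument applied to $\tau^{-1}$ shows $U_{\tau}$ is surjective.

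Next I would show that $U_{\tau}$ preserves $\operatorname{dom}(\mathcal D)$ and intertwines the action of $\mathcal D$. The $H^{1}$ regularity on each edge is preserved because $\tau$ acts as an isometric identification between edges of equal length, so $(U_{\tau}u)_{e}\in H^{1}(I_{e};\mathbb C^{2})$ whenever $u_{\tau(e)}\in H^{1}(I_{\tau(e)};\mathbb C^{2})$. The two vertex conditions \eqref{2.4} and \eqref{2.5} are preserved by hypothesis, so $U_{\tau}\bigl(\operatorname{dom}(\mathcal D)\bigr)\subset\operatorname{dom}(\mathcal D)$, and applying the same reasoning to $\tau^{-1}$ yields equality. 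For the intertwining relation, on each edge $e$ the operator is
\[
\mathcal D_{e}=-ic\,\sigma_{1}\,\tfrac{d}{dx_{e}}+mc^{2}\sigma_{3},
\]
with constant matrix coefficients, so the only thing to check is how the derivative transforms under the change of variable induced by $\tau$. When $\tau$ identifies $e$ with $\tau(e)$ preserving the chosen orientation, this is a direct translation-type computation; in the orientation-reversing case, the sign change in $d/dx$ is absorbed by the very hypothesis that Kirchhoff conditions (including the signs $(\cdot)_{\pm}$ at vertices) are preserved, which forces the corresponding identifications on spinor components so that $\mathcal D_{e}(U_{\tau}u)_{e}=(U_{\tau}\mathcal Du)_{e}$. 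Thus $U_{\tau}\mathcal D=\mathcal D U_{\tau}$ on $\operatorname{dom}(\mathcal D)$.

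Once the intertwining holds, I would invoke the spectral theorem: a bounded operator that commutes with a self-adjoint operator $\mathcal D$ on its domain commutes with every bounded Borel function of $\mathcal D$. Applied to $g(t)=|t|^{1/2}$ (via truncation and an approximation argument, or directly by spectral calculus on quadratic forms), this gives $U_{\tau}$ preserves $Y=\operatorname{dom}(|\mathcal D|^{1/2})$ and $|\mathcal D|^{1/2}U_{\tau}=U_{\tau}|\mathcal D|^{1/2}$. Applied to $g=\chi_{(0,\infty)}$ and $g=\chi_{(-\infty,0)}$, it gives $U_{\tau}P^{\pm}=P^{\pm}U_{\tau}$, whence $(U_{\tau}u)^{\pm}=P^{\pm}U_{\tau}u=U_{\tau}P^{\pm}u=U_{\tau}(u^{\pm})$, which is the desired conclusion.

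The main obstacle I expect is the careful bookkeeping at vertices when $\tau$ reverses the orientation of some edges: the sign convention $(\cdot)_{\pm}$ in \eqref{2.5} and the identification of traces in \eqref{2.4} must be shown to be compatible with the differential action of $-ic\sigma_{1}\,d/dx$ after the change of variables. The hypothesis that Kirchhoff conditions are preserved is precisely what rules out pathological orientation behavior, but writing this verification cleanly — rather than appealing to the assumption as a black box — is the delicate part of the argument.
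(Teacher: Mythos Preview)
Your proposal is correct and follows essentially the same route as the paper's proof: establish that $U_{\tau}$ is unitary, preserves $\operatorname{dom}(\mathcal D)$, and satisfies $U_{\tau}^{*}\mathcal D U_{\tau}=\mathcal D$, then invoke the functional calculus for self-adjoint operators to obtain commutation with $|\mathcal D|^{1/2}$ and $P^{\pm}$. Your version is in fact more detailed than the paper's, which simply asserts the intertwining relation from constancy of the coefficients and the assumed invariance of the vertex conditions; your explicit flagging of the orientation-reversal bookkeeping is a point the paper passes over in silence.
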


\begin{proof}
	Since $\tau$ is an isometry of the metric graph, the pullback $U_{\tau}$ is unitary on $L^{2}(\mathcal G,\mathbb C^{2})$ and maps $H^{1}(\mathcal G,\mathbb C^{2})$ onto itself, preserving the vertex traces and the Kirchhoff type conditions \eqref{2.4}-\eqref{2.5}. Moreover, the coefficients of $\mathcal D$ are constant and the vertex conditions are invariant under $\tau$, hence
	\[
	U_{\tau}^{*}\,\mathcal D\,U_{\tau}=\mathcal D .
	\]
	By the functional calculus for self adjoint operators, $U_{\tau}$ commutes with $|\mathcal D|^{1/2}$ and with the spectral projectors $P^{\pm}$. Therefore $U_{\tau}(\operatorname{dom}(|\mathcal D|^{1/2}))=\operatorname{dom}(|\mathcal D|^{1/2})$, that is $U_{\tau}(Y)=Y$, and
	\[
	(U_{\tau}u)^{\pm}=P^{\pm}(U_{\tau}u)=U_{\tau}(P^{\pm}u)=U_{\tau}(u^{\pm})
	\]
	for all $u\in Y$.
\end{proof}

The next lemma will be used to study the behavior of $J_{\lambda}$ on the set $\mathcal{N}_{\lambda}$ and its proof follows the same ideas explored in \cite[Lemma 3.3]{MR3567492}. We provide the proof for completeness.

\begin{Lemma}\label{lem3.2}
	Let $t\ge 1$ and $u,v\in \mathbb{C}^{2}$ with $v\ne0$. Then
	\begin{equation}\label{3.5}
		\operatorname{Re}\Big[f(|u|)\,u \cdot \overline{\Big(\frac{t^{2}}{2}\,u-\frac{1}{2}\,u+t\,v\Big)}\Big]
		+ F(|u|) - F(|t u+v|) \;<\; 0 .
	\end{equation}
\end{Lemma}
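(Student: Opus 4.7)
The plan is to reduce the claimed vector inequality \eqref{3.5} to a one-variable minimization. Using the polarization identity $|tu+v|^{2} = t^{2}|u|^{2} + 2t\operatorname{Re}(u\cdot\bar v) + |v|^{2}$, a direct calculation gives
\[
\operatorname{Re}\Big[u\cdot\overline{\big(\tfrac{t^{2}}{2}u - \tfrac{1}{2}u + tv\big)}\Big]
= \tfrac{t^{2}-1}{2}|u|^{2} + t\operatorname{Re}(u\cdot\bar v)
= \tfrac{1}{2}\big(|tu+v|^{2} - |u|^{2} - |v|^{2}\big),
\]
so \eqref{3.5} is equivalent to
\[
F(|tu+v|) - F(|u|) > \tfrac{f(|u|)}{2}\big(|tu+v|^{2}-|u|^{2}-|v|^{2}\big).
\]
Setting $a:=|u|$, $b:=|v|$, $r:=|tu+v|$, and $G(x):=F(x)-\tfrac{f(a)}{2}\,x^{2}$ for $x\ge 0$, this rewrites as the purely scalar inequality $G(r) > G(a) - \tfrac{f(a)}{2}\,b^{2}$.

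The key step is to show that $G$ attains its minimum on $[0,\infty)$ at $x=a$. Since $F'(x)=f(x)\,x$, I compute $G'(x)=x\bigl(f(x)-f(a)\bigr)$. By $(f_{1})$ the function $f$ is nondecreasing on $[0,\infty)$ (it has $f(0)=0$ and $f'\ge 0$ on $(0,\infty)$), so $G'\le 0$ on $[0,a]$ and $G'\ge 0$ on $[a,\infty)$. Therefore $G(r)\ge G(a)$ for every $r\ge 0$, irrespective of $t$ and of the configuration of $u,v$.

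It remains to promote this to a strict inequality. If $a>0$, assumption $(f_{2})$ gives $0<\theta F(a)\le f(a)\,a^{2}$, forcing $f(a)>0$; combined with $b=|v|>0$ this yields $\tfrac{f(a)}{2}\,b^{2}>0$, whence $G(r)\ge G(a) > G(a)-\tfrac{f(a)}{2}\,b^{2}$, as required. If $a=0$, then $f(|u|)u=0$ and $F(|u|)=0$, so the left-hand side of \eqref{3.5} reduces to $-F(|v|)$, which is strictly negative by $(f_{2})$ since $|v|>0$. The argument has no real obstacle; the only subtlety is the case split $a=0$ versus $a>0$ needed to produce strictness, since the improvement $\tfrac{f(a)}{2}b^{2}>0$ collapses exactly when $u=0$. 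As a byproduct, the hypothesis $t\ge 1$ is never used, and the same inequality in fact holds for every $t\in\mathbb{R}$ as long as $v\ne 0$.
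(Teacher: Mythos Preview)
Your proof is correct and takes a genuinely different, more economical route than the paper's. The paper argues by contradiction: it views the left-hand side as a function $h(t)$, shows $h(t)\to-\infty$ via Lemma~\ref{lem2.2}, and then analyzes a putative maximum $t_0\ge 1$ with $h'(t_0)=0$, splitting into two cases according to the sign of $\operatorname{Re}[u\cdot\overline{(t_0 u+v)}]$ and using $(f_2)$ and the identity $\max_{|z|=|u|}\operatorname{Re}(u\cdot\bar z)=|u|^2$. Your reduction via the polarization identity to the scalar inequality $G(r)>G(a)-\tfrac{f(a)}{2}b^2$, together with the observation that $G(x)=F(x)-\tfrac{f(a)}{2}x^2$ is minimized at $x=a$ by monotonicity of $f$, bypasses all of that machinery; the case split $a=0$ versus $a>0$ is exactly what is needed for strictness, and your remark that $t\ge 1$ is never used is a genuine strengthening. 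The paper's approach has the minor advantage of making the role of the superquadraticity $(f_2)$ more visible in the large-$t$ behavior, but your argument is cleaner and shows that only the monotonicity of $f$ and the positivity $f(a)>0$, $F(b)>0$ for $a,b>0$ are really at stake.
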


\begin{proof}
	Define
	\begin{equation}\label{3.6}
		h(t):=\operatorname{Re}\Big[f(|u|)\,u \cdot \overline{\Big(\frac{t^{2}}{2}\,u-\frac{1}{2}\,u+t\,v\Big)}\Big]
		+ F(|u|) - F(|t u+v|), \quad t\ge 1.
	\end{equation}
	If $u=0$, then $h(t)=-F(|v|)<0$ for all $t\ge1$ and the claim holds. Assume $u\ne0$.
	
	By Lemma \ref{lem2.2}, there exist $A,B>0$ and $\theta>2$ such that
	\[
	F(|t u+v|)\ge A\,|t u+v|^{\theta}-B\,|t u+v|^{2}.
	\]
	Hence
	\[
	-F(|t u+v|)\le -A\,|t u+v|^{\theta}+B\,|t u+v|^{2}
	\le -A\,t^{\theta}|u|^{\theta}+B\big(t^{2}|u|^{2}+|v|^{2}+2t\,\operatorname{Re}(u\cdot\bar v)\big),
	\]
	so, since $\theta>2$,
	\[
	\lim_{t\to\infty} h(t)=-\infty .
	\]
	Assume by contradiction that there exists $t_{1}\ge1$ with $h(t_{1})\ge0$. Then $h$ attains a maximum at some $t_{0}\ge1$, with $h(t_{0})\ge0$ and $h'(t_{0})=0$.
	A direct computation using $F'(r)=f(r)\,r$ gives
	\[
	h'(t)=\operatorname{Re}\big[u\cdot\overline{(t u+v)}\big]\Big(f(|u|)-f(|t u+v|)\Big).
	\]
	
	We distinguish two cases.
	
	\emph{Case 1.} $\operatorname{Re}\big[u\cdot\overline{(t_{0}u+v)}\big]\le 0$. Using $(f_{2})$ we have $F(|u|)\le \frac{1}{\theta}f(|u|)\,|u|^{2}$. Hence
	\[
	\begin{aligned}
		h(t_{0})
		&= \operatorname{Re}\Big[f(|u|)\,u \cdot \overline{\Big(\frac{t_{0}^{2}}{2}\,u-\frac{1}{2}\,u\Big)}\Big]
		+ \operatorname{Re}\big[t_{0} f(|u|)\,u\cdot\bar v\big]
		+ F(|u|) - F(|t_{0}u+v|) \\
		&\le \Big(\frac{t_{0}^{2}}{2}-\frac{1}{2}+\frac{1}{\theta}\Big) f(|u|)\,|u|^{2}
		+ \operatorname{Re}\big[t_{0} f(|u|)\,u\cdot\bar v\big]
		- F(|t_{0}u+v|) \\
		&< \frac{t_{0}^{2}}{2} f(|u|)\,|u|^{2}
		+ \operatorname{Re}\big[t_{0} f(|u|)\,u\cdot\bar v\big]
		- F(|t_{0}u+v|) \\
		&= -\frac{t_{0}^{2}}{2} f(|u|)\,|u|^{2}
		+ \operatorname{Re}\big[t_{0} f(|u|)\,u\cdot\overline{(t_{0}u+v)}\big]
		- F(|t_{0}u+v|) \\
		&\le 0 ,
	\end{aligned}
	\]
	because $\operatorname{Re}\big[u\cdot\overline{(t_{0}u+v)}\big]\le0$ and $F\ge0$. This contradicts $h(t_{0})\ge0$.
	
	\emph{Case 2.} $\operatorname{Re}\big[u\cdot\overline{(t_{0}u+v)}\big]>0$. From $h'(t_{0})=0$ we get $f(|u|)=f(|t_{0}u+v|)$. By $(f_{1})$ we have $f'\ge0$, hence $f$ is nondecreasing; therefore $|u|=|t_{0}u+v|$. Using the identity
	\[
	\max_{|z|=|u|}\operatorname{Re}(u\cdot\bar z)=|u|^{2}
	\]
	we obtain $\operatorname{Re}\big[u\cdot\overline{(t_{0}u+v)}\big]\le |u|^{2}$. Then
	\[
	\begin{aligned}
		h(t_{0})
		&= \frac{t_{0}^{2}}{2} f(|u|)\,|u|^{2} - \frac{1}{2} f(|u|)\,|u|^{2}
		+ t_{0}\,\operatorname{Re}\big[f(|u|)\,u\cdot\bar v\big] \\
		&= -\frac{t_{0}^{2}}{2} f(|u|)\,|u|^{2}
		- \frac{1}{2} f(|u|)\,|u|^{2}
		+ t_{0}\,\operatorname{Re}\big[f(|u|)\,u\cdot\overline{(t_{0}u+v)}\big] \\
		&\le -\frac{t_{0}^{2}}{2} f(|u|)\,|u|^{2}
		- \frac{1}{2} f(|u|)\,|u|^{2}
		+ t_{0}\,f(|u|)\,|u|^{2} \\
		&= -\frac{(t_{0}-1)^{2}}{2}\, f(|u|)\,|u|^{2}
		\le 0 ,
	\end{aligned}
	\]
	again a contradiction with $h(t_{0})\ge0$.
	
	Both cases are impossible, so $h(t)<0$ for all $t\ge1$, which proves the lemma.
\end{proof}

The next lemma allows us to establish the uniqueness of the maximum point of $J_{\lambda}$ restricted to $\hat Y(u)$ for each $u\in\mathcal N_{\lambda}$.

\begin{Lemma}\label{Lem3.3}
	For all $u\in\mathcal N_{\lambda}$ one has
	\begin{equation}\label{3.7}
		J_{\lambda}(t u+v)<J_{\lambda}(u)
		\quad \text{for all } t\ge 1,\ v\in Y^{-},\ \text{with }(t,v)\ne(1,0).
	\end{equation}
	Hence $u$ is the unique global maximum of $J_{\lambda}$ on $\hat Y(u)=Y^{-}\oplus[0,\infty)u$.
\end{Lemma}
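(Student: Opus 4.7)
The plan is to expand $J_\lambda(tu+v) - J_\lambda(u)$ using the spectral splitting $Y = Y^+ \oplus Y^-$ together with the two Nehari-type identities encoded in the definition of $\mathcal{N}_\lambda$, and then to recognize the resulting pointwise integrand as exactly the left-hand side of Lemma \ref{lem3.2}.

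First, since $v \in Y^-$, one has $(tu+v)^+ = tu^+$ and $(tu+v)^- = tu^- + v$, so expanding the norms yields
\[
\begin{aligned}
J_\lambda(tu+v) - J_\lambda(u)
=\,& \tfrac{t^2-1}{2}\bigl(\|u^+\|^2 - \|u^-\|^2 + \lambda\|u\|_{L^2}^2\bigr)
- t(u^-, v) + \lambda t\, \operatorname{Re}\!\int_{\mathcal{G}}\! u \cdot \bar v\, dx \\
& - \tfrac{1}{2}\bigl(\|v\|^2 - \lambda\|v\|_{L^2}^2\bigr)
+ \int_{\mathcal{G}}\bigl( F(|u|) - F(|tu+v|)\bigr)\,dx.
\end{aligned}
\]
The identity $J'_\lambda(u)[u]=0$ yields $\|u^+\|^2 - \|u^-\|^2 + \lambda\|u\|_{L^2}^2 = \int_{\mathcal{G}} f(|u|)|u|^2\,dx$, and testing $J'_\lambda(u)[v]=0$ against $v \in Y^-$ yields $-(u^-,v) + \lambda\, \operatorname{Re}\!\int_{\mathcal{G}} u \cdot \bar v\,dx = \operatorname{Re}\!\int_{\mathcal{G}} f(|u|) u \cdot \bar v\,dx$. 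After substitution the cross terms collapse and the difference reduces to
\[
\int_{\mathcal{G}} \Bigl[\operatorname{Re}\bigl(f(|u|) u \cdot \overline{\tfrac{t^2}{2} u - \tfrac{1}{2} u + tv}\bigr) + F(|u|) - F(|tu+v|)\Bigr]\,dx - \tfrac{1}{2}\bigl(\|v\|^2 - \lambda\|v\|_{L^2}^2\bigr).
\]

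Next I analyze the two pieces. By Lemma \ref{lem2.1} and the assumption $|\lambda|<mc^2$, one has $\|v\|^2 - \lambda\|v\|_{L^2}^2 \ge (mc^2 - |\lambda|)\|v\|_{L^2}^2 \ge 0$, with strict inequality whenever $v \neq 0$. For the integrand, a pointwise application of Lemma \ref{lem3.2} with $(u(x), v(x)) \in \mathbb{C}^2 \times \mathbb{C}^2$ gives a strict bound below $0$ at every $x$ where $v(x)\neq 0$. At points where $v(x)=0$, the integrand reduces to $\tfrac{t^2-1}{2} f(|u(x)|)|u(x)|^2 + F(|u(x)|) - F(t|u(x)|)$, which is non-positive for $t\ge 1$ by the monotonicity of $f$, since
\[
F(t|u|) - F(|u|) = \int_1^t f(s|u|)\, s|u|^2 \, ds \ge \int_1^t f(|u|)\, s|u|^2 \, ds = \tfrac{t^2-1}{2}\, f(|u|)|u|^2.
\]

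Finally, I argue by case distinction. If $v\neq 0$, the quadratic form term is already strictly negative while the integral is non-positive, so the total is strictly negative. The main obstacle is the remaining case $v=0$, $t>1$, where the quadratic form term vanishes and one must upgrade the pointwise non-positivity of $\tfrac{t^2-1}{2} f(|u|)|u|^2 + F(|u|) - F(t|u|)$ to a strict inequality after integration. Here I would use that $u\in\mathcal{N}_\lambda$ forces $\int_{\mathcal{G}} f(|u|)|u|^2\,dx > 0$, so the set $\{x : f(|u(x)|) > 0\}$ has positive measure, together with $(f_2)$, which prevents $f$ from being constant on arbitrarily dilated intervals (a short computation with $(f_2)$ bounds any interval $[a,b]$ on which $f$ is constant by $b/a \le \sqrt{\theta/(\theta-2)}$). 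These ingredients combine to force the integrand to be strictly negative on a set of positive measure, completing the proof; the uniqueness of $u$ as the global maximizer on $\hat Y(u)$ then follows directly.
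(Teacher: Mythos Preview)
Your proof follows the same strategy as the paper: expand $J_\lambda(tu+v) - J_\lambda(u)$ via the spectral splitting, insert the two Nehari identities for $u$, and reduce the difference to the integral of the pointwise quantity in Lemma~\ref{lem3.2} plus the nonpositive term $-\tfrac{1}{2}(\|v\|^2 - \lambda\|v\|_{L^2}^2)$. The paper reaches exactly the same formula and then invokes Lemma~\ref{lem3.2} to declare the integrand strictly negative whenever $v(x)\neq 0$, or $t>1$ and $u(x)\neq 0$.

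The place where you go beyond the paper is your explicit treatment of the case $v=0$, $t>1$: you correctly note that Lemma~\ref{lem3.2} is stated only for $v\neq 0$, so that at points with $v(x)=0$ one only gets the nonstrict inequality $\tfrac{t^2-1}{2}f(|u|)|u|^2 + F(|u|) - F(t|u|)\le 0$ from the monotonicity of $f$. Your idea of invoking $(f_2)$ to bound intervals of constancy by $b/a \le \sqrt{\theta/(\theta-2)}$ is correct and nice. However, the final claim that ``these ingredients combine to force the integrand to be strictly negative on a set of positive measure'' is not justified as written: for $1 < t \le \sqrt{\theta/(\theta-2)}$ your constancy bound alone does not exclude that $f$ is constant on $[|u(x)|, t|u(x)|]$ for almost every $x$. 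One way to close this is to argue that the essential range of $|u|$ on $\{u\neq 0\}$ accumulates at $0$ (using that $u\in L^2$ on a graph of infinite measure, together with the $H^{1/2}$ regularity of $Y$ which rules out jumps of $|u|$ between $0$ and a fixed positive level), so that $f$ would have to be constant on an interval $(0,\beta]$, hence identically $0$ there, contradicting $f(s)>0$ for $s>0$ which follows from $(f_2)$. The paper's own proof does not address this subtlety either; it simply cites Lemma~\ref{lem3.2} for both cases, so your argument is in fact more careful than the paper's at this step.
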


\begin{proof}
	Fix $u\in\mathcal N_{\lambda}$, $t\ge 1$ and $v\in Y^{-}$. For any $w\in\hat Y(u)$ there exist $k\ge 0$ and $v\in Y^{-}$ with $w=v+k u$, so by the definition of $\mathcal N_{\lambda}$,
\begin{equation}\label{3.8}
	J'_{\lambda}(u)[w]=J'_{\lambda}(u)[v]+k\,J'_{\lambda}(u)[u]=0 .
\end{equation}
	Take
	\[
	w:=\frac{t^{2}-1}{2}\,u+t\,v \in Y(u).
	\]
	Using $J'_{\lambda}(u)[w]=0$ and expanding $J_{\lambda}(t u+v)-J_{\lambda}(u)$ with
	\(
	(t u+v)^{+}=t\,u^{+},\
	(t u+v)^{-}=t\,u^{-}+v
	\),
	we obtain
	\[
	\begin{aligned}
		J_{\lambda}(t u+v)-J_{\lambda}(u)
		&= \Big(u^{+},\frac{t^{2}}{2}u^{+}-\frac{1}{2}u^{+}+t v^{+}\Big)
		-\Big(u^{-},\frac{t^{2}}{2}u^{-}-\frac{1}{2}u^{-}+t v^{-}\Big) \\
		&\quad + \lambda\,\operatorname{Re}\!\int_{\mathcal G} u \cdot \overline{\Big(\frac{t^{2}}{2}u-\frac{1}{2}u+t v\Big)}\,dx
		- \int_{\mathcal G}\!\big(F(|t u+v|)-F(|u|)\big)\,dx \\
		&\quad - \frac{1}{2}\|v\|^{2} + \frac{\lambda}{2}\int_{\mathcal G}|v|^{2}\,dx \\
		&= \int_{\mathcal G}\!\Big[\operatorname{Re}\big(f(|u|)\,u\cdot \overline{\tfrac{t^{2}}{2}u-\tfrac{1}{2}u+t v}\big)
		+ F(|u|)-F(|t u+v|)\Big]\,dx \\
		&\quad - \frac{1}{2}\|v\|^{2} + \frac{\lambda}{2}\int_{\mathcal G}|v|^{2}\,dx ,
	\end{aligned}
	\]
	where in the last step we used $J'_{\lambda}(u)[w]=0$ in the form of \eqref{3.8}.
	
	By Lemma \ref{lem3.2}, the bracket is strictly negative for almost every $x$ whenever either $t>1$ and $u(x)\ne 0$, or $v(x)\ne 0$. Thus
	\[
\int_{\mathcal G}\!\Big[\operatorname{Re}\big(f(|u|)\,u\cdot \overline{\tfrac{t^{2}}{2}u-\tfrac{1}{2}u+t v}\big)
+ F(|u|)-F(|t u+v|)\Big]\,dx<0
	\]
	unless $(t,v)=(1,0)$, in which case it equals $0$.
	Since $v\in Y^{-}$, Lemma \ref{lem2.1} gives
	\[
	\frac{\lambda}{2}\int_{\mathcal G}|v|^{2}\,dx
	\le \frac{|\lambda|}{2 m c^{2}}\|v\|^{2}.
	\]
	Therefore,
	\[
	J_{\lambda}(t u+v)-J_{\lambda}(u)
	< -\frac{1}{2}\|v\|^{2} + \frac{|\lambda|}{2 m c^{2}}\|v\|^{2}
	\le \frac{1}{2}\Big(-1+\frac{|\lambda|}{m c^{2}}\Big)\|v\|^{2}
	\le 0 ,
	\]
	with strict inequality unless $(t,v)=(1,0)$.
	This proves \eqref{3.7}. The uniqueness of the global maximum of $J_{\lambda}$ on $\hat Y(u)$ follows immediately.
\end{proof}

Our next lemma provides information about the number $d_{\lambda}$ that will be used later to prove its monotonicity.

\begin{Lemma}\label{Lem3.4}
	(a) There exists $\alpha>0$ such that
	\[
	d_{\lambda}:=\inf_{u\in\mathcal N_{\lambda}} J_{\lambda}(u)
	\ \ge\ \inf_{u\in\mathcal S_{\alpha}} J_{\lambda}(u)\ >\ 0,
	\]
	where $\mathcal S_{\alpha}:=\{u\in Y^{+}:\ \|u\|=\alpha\}$.\\
	(b) For every $u\in\mathcal N_{\lambda}$,
	\[
	\|u^{+}\|\ \ge\ \max\!\left\{
	\sqrt{\frac{2 m c^{2}\, d_{\lambda}}{m c^{2}+|\lambda|}},
	\ \sqrt{\frac{\frac{1}{2}\left(1-\frac{|\lambda|}{m c^{2}}\right)}{\left(\frac{1}{2}+\frac{|\lambda|}{2 m c^{2}}\right)}}\,\|u^{-}\|
	\right\}.
	\]
\end{Lemma}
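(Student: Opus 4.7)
The plan is to handle part (a) in two steps and then establish both bounds in part (b) by direct algebraic manipulation of the Nehari identities.

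For part (a), I first show that $\inf_{v\in\mathcal{S}_\alpha}J_\lambda(v)>0$ for some $\alpha>0$. From $(f_1)$, $f(0)=0$, and continuity, for every $\eta>0$ there exists $C_\eta>0$ with $F(t)\le \eta t^2+C_\eta t^p$ for $t\ge 0$ (use $f\le\eta$ on a short interval $[0,\delta_\eta]$ to control $F$ by $\eta t^2$; on $(\delta_\eta,\infty)$, use $(f_1)$ and $t\ge\delta_\eta$ to absorb $t^2$ into $t^p$). For $v\in Y^+$ we have $J_\lambda(v)=\tfrac12\|v\|^2+\tfrac{\lambda}{2}\|v\|_{L^2}^2-\int_{\mathcal{G}}F(|v|)\,dx$, and Lemma \ref{lem2.1} together with \eqref{2.8} yields
\[
J_\lambda(v)\;\ge\;\tfrac12\Big(1-\tfrac{|\lambda|}{mc^2}\Big)\|v\|^2-\tfrac{\eta}{mc^2}\|v\|^2-C_\eta'\|v\|^p.
\]
Since $|\lambda|<mc^2$, choosing $\eta$ small keeps the coefficient of $\|v\|^2$ positive; because $p>2$, the resulting expression is strictly positive for $\|v\|=\alpha>0$ sufficiently small. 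For the second step, fix $u\in\mathcal{N}_\lambda$. Then $u\notin Y^-$ gives $u^+\ne 0$, so $s:=\alpha/\|u^+\|\ge 0$ is well-defined with $su^+\in\mathcal{S}_\alpha$. Writing $su^+=s\,u+(-su^-)$ with $-su^-\in Y^-$ shows $su^+\in\hat Y(u)$. By Lemma \ref{Lem3.3}, $u$ is the global maximum of $J_\lambda$ on $\hat Y(u)$, so $J_\lambda(u)\ge J_\lambda(su^+)\ge \inf_{v\in\mathcal{S}_\alpha}J_\lambda(v)$. Taking the infimum over $u\in\mathcal{N}_\lambda$ yields $d_\lambda\ge \inf_{v\in\mathcal{S}_\alpha}J_\lambda(v)>0$.

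For part (b), both bounds exploit the $L^2$-orthogonality of the spectral projectors, giving $\|u\|_{L^2}^2=\|u^+\|_{L^2}^2+\|u^-\|_{L^2}^2\le\tfrac{1}{mc^2}(\|u^+\|^2+\|u^-\|^2)$ by Lemma \ref{lem2.1}. For the first bound, estimate $J_\lambda(u)$ from above using $F\ge 0$ and $\lambda\|u\|_{L^2}^2\le |\lambda|\|u\|_{L^2}^2$:
\[
J_\lambda(u)\;\le\;\tfrac{mc^2+|\lambda|}{2mc^2}\|u^+\|^2-\tfrac{mc^2-|\lambda|}{2mc^2}\|u^-\|^2\;\le\;\tfrac{mc^2+|\lambda|}{2mc^2}\|u^+\|^2,
\]
where the last step uses $|\lambda|<mc^2$; combining with $J_\lambda(u)\ge d_\lambda$ and taking square roots delivers the first estimate. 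For the second bound, the identity $J'_\lambda(u)[u]=0$ reads $\|u^+\|^2-\|u^-\|^2+\lambda\|u\|_{L^2}^2=\int_{\mathcal{G}}f(|u|)|u|^2\,dx\ge 0$ since $f\ge 0$; then $-\lambda\ge -|\lambda|$ together with the $L^2$-estimate gives
\[
(1+|\lambda|/(mc^2))\|u^+\|^2\;\ge\;(1-|\lambda|/(mc^2))\|u^-\|^2,
\]
which is exactly $\|u^+\|^2\ge \tfrac{mc^2-|\lambda|}{mc^2+|\lambda|}\|u^-\|^2$ and matches the stated ratio after rewriting the fraction as $\tfrac{\frac12(1-|\lambda|/(mc^2))}{\frac12+|\lambda|/(2mc^2)}$.

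The main obstacle is essentially bookkeeping rather than depth: one must carefully track the signs of the $\lambda$-dependent terms (handled uniformly by $-\lambda\ge -|\lambda|$) and invoke $|\lambda|<mc^2$ exactly where needed to keep the key coefficients positive. The argument in part (a) also uses Lemma \ref{Lem3.3} in its strong form, namely that $u$ is the global maximum of $J_\lambda$ on all of $\hat Y(u)$ rather than only along directions with rescaling factor $t\ge 1$, since the factor $s=\alpha/\|u^+\|$ can a priori be either $\ge 1$ or $<1$.
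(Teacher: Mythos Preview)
Your proof is correct and for part (a) and the first bound in (b) it follows the paper's argument essentially verbatim: the paper also reduces to a lower estimate $J_\lambda(v)\ge c_1\|v\|^2-C\|v\|^p$ on $Y^+$, then uses the maximality of $u\in\mathcal N_\lambda$ over $\hat Y(u)$ (the concluding statement of Lemma~\ref{Lem3.3}) to compare with the sphere $\mathcal S_\alpha$. Your remark that one needs the full conclusion of Lemma~\ref{Lem3.3} (global maximum on all of $\hat Y(u)$, not just $t\ge 1$) is exactly right, and the paper relies on the same stated conclusion.

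The only genuine difference is in the second bound of (b). The paper derives it from the energy estimate $J_\lambda(u)\ge d_\lambda>0$, which requires part (a) as input. You instead use the Nehari identity $J'_\lambda(u)[u]=0$ together with $f\ge 0$, which is more direct and does not depend on (a). Both routes produce the same inequality $(1+|\lambda|/(mc^2))\|u^+\|^2\ge(1-|\lambda|/(mc^2))\|u^-\|^2$; your argument is slightly more self-contained, while the paper's has the advantage of coming for free from the same computation used for the first bound.
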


\begin{proof}
	(a) Let $u\in Y^{+}$. By Lemma \ref{lem2.1} and $(f_{1})$-$(f_{2})$, there exist $\theta>2$ and constants $C_{2},C_{p}>0$ such that
	\[
	0\le F(|u|)\le \frac{1}{\theta} f(|u|)\,|u|^{2}\ \le\ C_{2}|u|^{2}+C_{p}|u|^{p}.
	\]
	Hence
	\[
	\begin{aligned}
		J_{\lambda}(u)
		&= \frac{1}{2}\|u\|^{2}+\frac{\lambda}{2}\int_{\mathcal G}|u|^{2}\,dx - \int_{\mathcal G}F(|u|)\,dx \\
		&\ge \frac{1}{2}\Big(1-\frac{|\lambda|}{m c^{2}}\Big)\|u\|^{2}
		- C_{2}\|u\|_{L^{2}}^{2} - C_{p}\|u\|_{L^{p}}^{p} \\
		&\ge \Big[\frac{1}{2}\Big(1-\frac{|\lambda|}{m c^{2}}\Big) - \frac{C_{2}}{m c^{2}}\Big]\|u\|^{2}
		- C\,\|u\|^{p},
	\end{aligned}
	\]
	where we used $\|u\|_{L^{2}}^{2}\le \frac{1}{m c^{2}}\|u\|^{2}$ and the embedding $Y\hookrightarrow L^{p}$. Choosing $C_{2}$ small enough, there exist $c_{1}>0$ and $C>0$ such that
	\[
	J_{\lambda}(u)\ \ge\ c_{1}\|u\|^{2}-C\|u\|^{p}.
	\]
	Therefore, for $\alpha>0$ sufficiently small,
	\[
	\inf_{u\in\mathcal S_{\alpha}} J_{\lambda}(u)\ \ge\ c_{1}\alpha^{2}-C\alpha^{p}\ >\ 0.
	\]
	
	For every $w\in Y^{+}\setminus\{0\}$, by Lemma \ref{Lem3.3} the map $(t,v)\mapsto J_{\lambda}(t w+v)$ on $[0,\infty)\times Y^{-}$ attains a unique global maximum at some $(t_{w},v_{w})$ with $t_{w}w+v_{w}\in\mathcal N_{\lambda}$. Hence
	\[
	d_{\lambda}
	= \inf_{u\in\mathcal N_{\lambda}} J_{\lambda}(u)
	= \inf_{w\in Y^{+}\setminus\{0\}}\ \max_{t\ge0,\ v\in Y^{-}} J_{\lambda}(t w+v)
	\ \ge\ \inf_{w\in\mathcal S_{\alpha}} J_{\lambda}(w)\ >\ 0.
	\]
	
	(b) For $u\in\mathcal N_{\lambda}$, using $F\ge0$ and Lemma \ref{lem2.1},
	\[
	\begin{aligned}
		d_{\lambda}\ \le\ J_{\lambda}(u)
		&= \frac{1}{2}\big(\|u^{+}\|^{2}-\|u^{-}\|^{2}\big)
		+ \frac{\lambda}{2}\int_{\mathcal G}|u|^{2}\,dx - \int_{\mathcal G}F(|u|)\,dx \\
		&\le \frac{1}{2}\big(\|u^{+}\|^{2}-\|u^{-}\|^{2}\big)
		+ \frac{|\lambda|}{2 m c^{2}}\big(\|u^{+}\|^{2}+\|u^{-}\|^{2}\big) \\
		&= \Big(\frac{1}{2}+\frac{|\lambda|}{2 m c^{2}}\Big)\|u^{+}\|^{2}
		+ \frac{1}{2}\Big(-1+\frac{|\lambda|}{m c^{2}}\Big)\|u^{-}\|^{2}\\
		& \le\ \Big(\frac{1}{2}+\frac{|\lambda|}{2 m c^{2}}\Big)\|u^{+}\|^{2},
	\end{aligned}
	\]
	which yields
	\begin{equation}\label{3.9}
		\|u^{+}\| \ \ge\ \sqrt{\frac{2 m c^{2}\, d_{\lambda}}{m c^{2}+|\lambda|}}.
	\end{equation}
	Since $d_{\lambda}>0$ by (a), we also have
	\[
	0<d_{\lambda}\le \Big(\frac{1}{2}+\frac{|\lambda|}{2 m c^{2}}\Big)\|u^{+}\|^{2}
	+ \frac{1}{2}\Big(-1+\frac{|\lambda|}{m c^{2}}\Big)\|u^{-}\|^{2},
	\]
	which implies
	\begin{equation}\label{3.10}
		\|u^{+}\|\ \ge\ \sqrt{\frac{\frac{1}{2}\left(1-\frac{|\lambda|}{m c^{2}}\right)}{\left(\frac{1}{2}+\frac{|\lambda|}{2 m c^{2}}\right)}}\,\|u^{-}\|.
	\end{equation}
	Combining \eqref{3.9} and \eqref{3.10} gives (b).
\end{proof}

\begin{Lemma}\label{Lem3.5}
	If $E\subset Y^{+}\setminus\{0\}$ is compact, then there exists $R>0$ such that
	\[
	J_{\lambda}\le 0 \quad \text{on } Y(u)\setminus B_{R}(0)\ \ \text{for every } u\in E,
	\]
	where $B_{R}(0):=\{z\in Y:\ \|z\|\le R\}$.
\end{Lemma}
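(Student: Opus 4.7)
The plan is a contradiction argument driven by the strong compactness of $E$. First, since $Y(\alpha u)=Y(u)$ for $\alpha\ne 0$ and $u\mapsto u/\|u\|$ is continuous on $Y^+\setminus\{0\}$, I reduce to the case $\|u\|=1$ for every $u\in E$; the evenness of $J_\lambda$ lets me further restrict attention to elements of $Y(u)$ of the form $w=tu+v$ with $t\ge 0$ and $v\in Y^-$. If the conclusion failed, one would find sequences $u_n\in E$ and $w_n=t_n u_n+v_n$ with $\|w_n\|^2=t_n^2+\|v_n\|^2\to\infty$ and $J_\lambda(w_n)\ge 0$; compactness of $E$ then yields $u_n\to u_*$ strongly in $Y$ (hence in $L^p$ for every $p<\infty$ and, along a subsequence, a.e.), with $\|u_*\|=1$.

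Combining $J_\lambda(w_n)\ge 0$, $F\ge 0$, the gap condition $|\lambda|<mc^2$, Lemma~\ref{lem2.1}, and the $L^2$-orthogonality of $Y^+$ and $Y^-$, a short estimate gives
\[
\tfrac12\Big(1-\tfrac{|\lambda|}{mc^2}\Big)\|v_n\|^2\;\le\;\tfrac12\Big(1+\tfrac{|\lambda|}{mc^2}\Big)t_n^2,
\]
so $\|v_n\|^2\le C t_n^2$ and therefore $t_n\to\infty$. Setting $\tilde v_n:=v_n/t_n$, $(\tilde v_n)$ is bounded in $Y$ and $w_n=t_n(u_n+\tilde v_n)$. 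Invoking Lemma~\ref{lem2.2} to bound $F$ from below by a $|w_n|^\theta$-term, $J_\lambda(w_n)\ge 0$ rewrites as $A\int_{\mathcal G}|w_n|^\theta\,dx\le C' t_n^2$, since $\|w_n\|_{L^2}^2\le\frac{1}{mc^2}(t_n^2+\|v_n\|^2)\le C'' t_n^2$. Dividing by $t_n^\theta$ and using $\theta>2$ produces
\[
\int_{\mathcal G}|u_n+\tilde v_n|^\theta\,dx\;\longrightarrow\;0.
\]

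Passing to a further subsequence, $u_n\to u_*$ and $u_n+\tilde v_n\to 0$ a.e., whence $\tilde v_n\to -u_*$ a.e. Because $(\tilde v_n)$ is $L^2$-bounded (Lemma~\ref{lem2.1}) and converges a.e.\ to $-u_*$, the standard fact that an $L^2$-bounded sequence converging pointwise a.e.\ converges weakly to the same limit gives $\tilde v_n\rightharpoonup -u_*$ in $L^2(\mathcal G,\mathbb C^2)$. Since $Y^-$ is $L^2$-closed, being the range of the orthogonal projector $P^-$, one concludes $-u_*\in Y^-$. Combined with $u_*\in Y^+$, this forces $u_*\in Y^+\cap Y^-=\{0\}$, contradicting $\|u_*\|=1$.

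The main technical point is the step producing $t_n\to\infty$: without the gap condition $|\lambda|<mc^2$ one could not rule out a sequence in which $\|v_n\|$ alone drives $\|w_n\|\to\infty$ while still keeping $J_\lambda(w_n)\ge 0$, and then the superquadratic growth of $F$ from $(f_2)$ via Lemma~\ref{lem2.2} would have nothing to dominate. Once this step is secured, the rest is a routine blend of $L^\theta$-decay, pointwise convergence, and the $L^2$-orthogonality of the spectral subspaces $Y^\pm$.
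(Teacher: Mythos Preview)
Your proof is correct and takes a genuinely different route from the paper's in the final contradiction step. Both arguments start identically: normalize $E$ to the unit sphere, assume a contradicting sequence $w_n=t_nu_n+v_n$ with $J_\lambda(w_n)\ge 0$ and $\|w_n\|\to\infty$, and use the gap $|\lambda|<mc^2$ together with Lemma~\ref{lem2.1} to show that $t_n$ (equivalently, the $Y^+$-component) controls the whole norm. From there the paths diverge. The paper normalizes by $\|w_n\|$, identifies a nontrivial weak/a.e.\ limit $v$ of $w_n/\|w_n\|$ (nontrivial because its $Y^+$-part equals $s\,u_*\neq 0$), and then uses Fatou together with $F(t)/t^2\to\infty$ to force $\int F(|w_n|)/\|w_n\|^2\to\infty$, contradicting $J_\lambda(w_n)/\|w_n\|^2>0$. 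You instead normalize by $t_n$, use the polynomial lower bound from Lemma~\ref{lem2.2} directly to get $\|u_n+\tilde v_n\|_{L^\theta}\to 0$, and close with a spectral-orthogonality argument in $L^2$. Your endgame is a bit more elementary in that it avoids the Fatou blow-up, trading it for the (standard but worth stating) fact that an $L^2$-bounded, a.e.-convergent sequence converges weakly in $L^2$ to the same limit.

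One small imprecision: the statement ``$Y^-$ is $L^2$-closed, being the range of $P^-$'' is not quite right, since $Y^-=P^-Y$ is only dense in $P^-L^2$. What you actually need (and what your argument uses) is that $\tilde v_n\in Y^-\subset P^-L^2$ and that $P^-L^2$ is a closed (hence weakly closed) subspace of $L^2$; thus the weak $L^2$-limit $-u_*$ lies in $P^-L^2$, while $u_*\in Y^+\subset P^+L^2$, forcing $u_*=0$. With this correction the argument is complete.
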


\begin{proof}
	Without loss of generality we may assume $\|u\|=1$ for all $u\in E$. Suppose by contradiction that there exist $(u_{n})\subset E$ and $w_{n}\in Y(u_{n})$ such that
	\[
	J_{\lambda}(w_{n})>0 \ \text{ for all } n\in\mathbb N,
	\qquad \|w_{n}\|\to\infty \text{ as } n\to\infty .
	\]
	Since $E$ is compact, up to a subsequence $u_{n}\to u$ in $Y^{+}$ with $\|u\|=1$. Set
	\[
	v_{n}:=\frac{w_{n}}{\|w_{n}\|}=s_{n}\,u_{n}+v_{n}^{-},
	\qquad s_{n}\ge 0,\ v_{n}^{-}\in Y^{-},
	\]
	so that $\|v_{n}\|=1$ and $v_{n}^{+}=s_{n}u_{n}$, $v_{n}^{-}=v_{n}^{-}$. Then
	\[
	\begin{aligned}
		0<\frac{J_{\lambda}(w_{n})}{\|w_{n}\|^{2}}
		&= \frac{1}{2}\Big(\|v_{n}^{+}\|^{2}-\|v_{n}^{-}\|^{2}\Big)
		+ \frac{\lambda}{2}\int_{\mathcal G}\frac{|w_{n}|^{2}}{\|w_{n}\|^{2}}\,dx
		- \int_{\mathcal G}\frac{F(|w_{n}|)}{\|w_{n}\|^{2}}\,dx \\
		&\le \frac{1}{2}\Big(s_{n}^{2}-\|v_{n}^{-}\|^{2}\Big)
		+ \frac{|\lambda|}{2 m c^{2}}\|v_{n}\|^{2}\\
		&= \frac{1}{2}\Big(s_{n}^{2}-\|v_{n}^{-}\|^{2}\Big)
		+ \frac{|\lambda|}{2 m c^{2}},
	\end{aligned}
	\]
	where we used Lemma \ref{lem2.1} to bound $\displaystyle \int_{\mathcal G}\frac{|w_{n}|^{2}}{\|w_{n}\|^{2}}\,dx=\|v_{n}\|_{L^{2}}^{2}\le \frac{1}{m c^{2}}\|v_{n}\|^{2}=\frac{1}{m c^{2}}$ and dropped the nonpositive term $-\int F(|w_{n}|)/\|w_{n}\|^{2}$.
	
	Hence
	\[
	\|v_{n}^{-}\|^{2}\le s_{n}^{2}+\frac{|\lambda|}{m c^{2}}.
	\]
	Since $1=\|v_{n}\|^{2}=s_{n}^{2}+\|v_{n}^{-}\|^{2}$, we obtain
	\[
	1 \le 2 s_{n}^{2}+\frac{|\lambda|}{m c^{2}}
	\quad\Rightarrow\quad
	s_{n}^{2}\ge \frac{1}{2}\Big(1-\frac{|\lambda|}{m c^{2}}\Big),
	\]
	so $0<\sqrt{\frac{m c^{2}-|\lambda|}{2 m c^{2}}}\le s_{n}\le 1$ for all $n$. Passing to a subsequence, $s_{n}\to s>0$. Since $(v_{n})$ is bounded in $Y$, up to a subsequence $v_{n}\rightharpoonup v$ in $Y$ and $v_{n}(x)\to v(x)$ almost everywhere in $\mathcal G$. Writing $v=s\,u+v^{-}$ with $v^{-}\in Y^{-}$, we have $v\ne 0$ (because $s>0$ and $u\ne 0$), hence the set
	\[
	\Omega:=\{x\in\mathcal G:\ |v(x)|>0\}
	\]
	has positive measure.
	
	Observe that
	\[
	\frac{F(|w_{n}|)}{\|w_{n}\|^{2}}
	= \frac{F(|w_{n}|)}{|w_{n}|^{2}}\,|v_{n}|^{2}.
	\]
	By Lemma \ref{lem2.2}, there exist $A,B>0$ and $\theta>2$ such that
	\[
	\frac{F(|w_{n}|)}{|w_{n}|^{2}}
	\ge A\,|w_{n}|^{\theta-2}-B .
	\]
	Since $|w_{n}(x)|=\|w_{n}\|\,|v_{n}(x)|\to\infty$ for almost every $x\in\Omega$, it follows that
	\[
	\lim_{n\to\infty}\frac{F(|w_{n}(x)|)}{|w_{n}(x)|^{2}}\,|v_{n}(x)|^{2}
	=+\infty \quad \text{for a.e. } x\in\Omega .
	\]
	By Fatou's lemma,
	\[
	\liminf_{n\to\infty}\int_{\mathcal G}\frac{F(|w_{n}|)}{\|w_{n}\|^{2}}\,dx
	\ge \int_{\Omega}\liminf_{n\to\infty}\frac{F(|w_{n}|)}{|w_{n}|^{2}}\,|v_{n}|^{2}\,dx
	=+\infty .
	\]
	Therefore $\frac{J_{\lambda}(w_{n})}{\|w_{n}\|^{2}}\to -\infty$, which contradicts $\frac{J_{\lambda}(w_{n})}{\|w_{n}\|^{2}}>0$. The contradiction proves the lemma.
\end{proof}

The lemma below extends Lemma \ref{Lem3.3} by allowing $u\in Y\setminus Y^{-}$.

\begin{Lemma}\label{Lem3.6}
	For each $u\in Y\setminus Y^{-}$ the set $\mathcal N_{\lambda}\cap \hat Y(u)$ consists of exactly one point $\hat m(u)$, which is the unique global maximum of $J_{\lambda}$ on $\hat Y(u)$.
\end{Lemma}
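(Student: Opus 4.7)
My plan is to reduce to $u\in Y^{+}\setminus\{0\}$, obtain a maximizer of $J_{\lambda}$ on $\hat Y(u)$ via a weak-limit argument combined with Lemma~\ref{Lem3.5} (for coercivity) and Lemma~\ref{lem2.1} (for upper semicontinuity), recognize the maximizer as a point of $\mathcal N_{\lambda}$ through first-order conditions, and finally invoke Lemma~\ref{Lem3.3} to deduce uniqueness. The reduction step is immediate: writing $u=u^{+}+u^{-}$ with $u^{+}\ne 0$, any $w=tu+v\in\hat Y(u)$ equals $tu^{+}+(v+tu^{-})$ with $v+tu^{-}\in Y^{-}$, so $\hat Y(u)=\hat Y(u^{+})$; hence I shall assume $u\in Y^{+}\setminus\{0\}$.

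\textbf{Existence of a maximizer.} By Lemma~\ref{Lem3.4}(a) there is $\alpha>0$ small enough that $J_{\lambda}(\alpha u/\|u\|)>0$, so $M:=\sup_{\hat Y(u)}J_{\lambda}>0$. Lemma~\ref{Lem3.5} applied to the compact set $E=\{u/\|u\|\}$ yields $R>0$ with $J_{\lambda}\le 0$ on $Y(u)\setminus B_{R}(0)$; hence $M<\infty$ and any maximizing sequence $w_{n}=t_{n}u+v_{n}$ satisfies $\|w_{n}\|\le R$, so $(t_{n})$ and $(v_{n})$ are bounded. Extract $t_{n}\to t_{0}\ge 0$ and $v_{n}\rightharpoonup v_{0}$ in $Y$ (thus weakly in $L^{2}$ and pointwise a.e.\ along a subsequence). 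To pass to the limit I decompose
\[
J_{\lambda}(w_{n})=\tfrac{1}{2}t_{n}^{2}\|u\|^{2}+A(v_{n})+\lambda t_{n}\,\mathrm{Re}\!\int_{\mathcal G}u\cdot\bar v_{n}\,dx+\tfrac{\lambda}{2}t_{n}^{2}\|u\|_{L^{2}}^{2}-\int_{\mathcal G}F(|w_{n}|)\,dx,
\]
where $A(v):=-\tfrac{1}{2}\|v\|^{2}+\tfrac{\lambda}{2}\|v\|_{L^{2}}^{2}$ is negative semidefinite by Lemma~\ref{lem2.1} (and hence weakly upper semicontinuous, so $\limsup A(v_{n})\le A(v_{0})$), the cross term converges since $v_{n}\rightharpoonup v_{0}$ in $L^{2}$, and $F\ge 0$ together with a.e.\ convergence yields $\liminf\int F(|w_{n}|)\ge\int F(|t_{0}u+v_{0}|)$ by Fatou. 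Thus $\limsup J_{\lambda}(w_{n})\le J_{\lambda}(t_{0}u+v_{0})$, so $\hat m(u):=t_{0}u+v_{0}$ attains $M$.

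\textbf{Nehari membership and uniqueness.} First I rule out $t_{0}=0$: otherwise $\hat m(u)\in Y^{-}$ and
\[
J_{\lambda}(\hat m(u))\le -\tfrac{1}{2}\!\left(1-\tfrac{|\lambda|}{mc^{2}}\right)\!\|\hat m(u)\|^{2}\le 0<M,
\]
a contradiction. Hence $(t_{0},v_{0})$ is an interior critical point of $(t,v)\mapsto J_{\lambda}(tu+v)$ on $(0,\infty)\times Y^{-}$, which yields $J_{\lambda}'(\hat m(u))[u]=0$ and $J_{\lambda}'(\hat m(u))[v]=0$ for all $v\in Y^{-}$; multiplying the first by $t_{0}$ and adding $J_{\lambda}'(\hat m(u))[v_{0}]=0$ gives $J_{\lambda}'(\hat m(u))[\hat m(u)]=0$, so $\hat m(u)\in\mathcal N_{\lambda}\cap\hat Y(u)$. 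Since $t_{0}>0$, one checks $\hat Y(\hat m(u))=\hat Y(u)$, so Lemma~\ref{Lem3.3} applied at $\hat m(u)\in\mathcal N_{\lambda}$ identifies $\hat m(u)$ as the unique global maximum on $\hat Y(u)$; any other element of $\mathcal N_{\lambda}\cap\hat Y(u)$ would again be a global maximum there and must therefore coincide with $\hat m(u)$.

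\textbf{Main obstacle.} The delicate point is the weak-limit passage in the existence step. On a noncompact quantum graph $v_{n}\rightharpoonup v_{0}$ in $Y$ does \emph{not} imply strong $L^{2}$ convergence, so the sign-indefinite term $\frac{\lambda}{2}\|\cdot\|_{L^{2}}^{2}$ cannot be treated in isolation. The resolution is to merge it with $-\tfrac{1}{2}\|v\|^{2}$ into the negative semidefinite quadratic form $A$, whose weak upper semicontinuity rests precisely on the spectral gap $|\lambda|<mc^{2}$ furnished by Lemma~\ref{lem2.1}; the remaining $u^{+}$-components are scalar multiples of a fixed vector, for which strong convergence is automatic, and $F\ge 0$ then permits Fatou on the nonlinearity.
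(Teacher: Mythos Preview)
Your proof is correct and follows essentially the same approach as the paper: reduce to $u\in Y^{+}$, use Lemmas~\ref{Lem3.4}(a) and~\ref{Lem3.5} to obtain a bounded maximizing sequence, pass to a weak limit to get a maximizer, read off the Nehari conditions from first-order optimality, and conclude uniqueness via Lemma~\ref{Lem3.3}. The paper dispatches the weak-limit passage as ``standard arguments on the fiber,'' whereas you spell it out explicitly---in particular, your observation that the sign-indefinite term $\tfrac{\lambda}{2}\|v\|_{L^{2}}^{2}$ must be absorbed into $-\tfrac{1}{2}\|v\|^{2}$ to form the weakly upper semicontinuous quadratic $A(v)$ (equivalently, $-\tfrac{1}{2}\|v\|_{\diamond}^{2}$ for the equivalent norm used later in the paper) is exactly the point the paper's phrase ``the quadratic part is continuous'' glosses over.
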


\begin{proof}
	Write $u=u^{+}+u^{-}$ with $u^{\pm}\in Y^{\pm}$. For any $t\ge0$ and $v^{-}\in Y^{-}$,
	\[
	v^{-}+t u \;=\; (v^{-}+t u^{-}) + t u^{+}\in Y^{-}\oplus[0,\infty)u^{+},
	\]
	so $\hat Y(u)\subset \hat Y(u^{+})$. Conversely,
	\[
	v^{-}+t u^{+} \;=\; (v^{-}-t u^{-}) + t u\in Y^{-}\oplus[0,\infty)u,
	\]
	hence $\hat Y(u^{+})\subset \hat Y(u)$. Therefore
	\[
	\hat Y(u)=\hat Y(u^{+}).
	\]
	Replacing $u$ by $u^{+}/\|u^{+}\|$ we may and do assume $u\in Y^{+}$ and $\|u\|=1$.
	
	By Lemma \ref{Lem3.4}(a) there exists $\alpha>0$ such that $J_{\lambda}(t u)>0$ for all sufficiently small $t>0$. By Lemma \ref{Lem3.5} there exists $R>0$ with
	\[
	J_{\lambda}\le 0 \quad \text{on } Y(u)\setminus B_{R}(0).
	\]
	Consequently,
	\[
	0<\sup_{\hat Y(u)} J_{\lambda} < \infty .
	\]
	Let $(w_{n})\subset \hat Y(u)$ be a maximizing sequence, $J_{\lambda}(w_{n})\to \sup_{\hat Y(u)} J_{\lambda}$. By the previous bound at infinity, $(w_{n})$ is bounded in $Y(u)$, hence (up to a subsequence) $w_{n}\rightharpoonup w_{0}$ in $Y(u)$. Using standard arguments on the fiber (the quadratic part is continuous and the negative term $-\int_{\mathcal G}F(|w|)\,dx$ is weakly upper semicontinuous along bounded sequences on $\hat Y(u)$), we obtain
	\[
	J_{\lambda}(w_{0})=\sup_{\hat Y(u)} J_{\lambda}.
	\]
	Thus $w_{0}\in \hat Y(u)\setminus\{0\}$ is a global maximizer.
	
	Since $J_{\lambda}$ is $C^{1}$, the first order optimality conditions for the restriction of $J_{\lambda}$ to the affine subspace $\hat Y(u)$ give
	\[
	J'_{\lambda}(w_{0})[w_{0}]=0
	\quad \text{and} \quad
	J'_{\lambda}(w_{0})[v]=0 \;\;\text{for all } v\in Y^{-}.
	\]
	Hence $w_{0}\in \mathcal N_{\lambda}\cap \hat Y(u)$.
	
	Let $w\in \mathcal N_{\lambda}\cap \hat Y(u)$. Then $w$ is of the form $w=t w_{0}+v$ with $t\ge 0$ and $v\in Y^{-}$. By Lemma \ref{Lem3.3}, applied with $u=w_{0}\in \mathcal N_{\lambda}$, one has
	\[
	J_{\lambda}(t w_{0}+v) < J_{\lambda}(w_{0})
	\quad \text{for all } (t,v)\ne (1,0).
	\]
	Therefore $w=w_{0}$, which proves that $\mathcal N_{\lambda}\cap \hat Y(u)=\{\hat m(u)\}$ with $\hat m(u)=w_{0}$ and that this point is the unique global maximizer of $J_{\lambda}$ on $\hat Y(u)$.
\end{proof}

As a byproduct of the last lemma, the map $\hat m: Y^{+}\setminus\{0\}\to\mathcal N_{\lambda}$ is well defined. Our goal is to prove that $\hat m$ is continuous, and the lemma below helps to show this fact.

\begin{Lemma}\label{Lem3.7}
	$J_{\lambda}$ is coercive on $\mathcal N_{\lambda}$, that is, $J_{\lambda}(u)\to\infty$ as $\|u\|\to\infty$ with $u\in\mathcal N_{\lambda}$.
\end{Lemma}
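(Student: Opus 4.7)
The plan is to argue by contradiction. Suppose there is a sequence $(u_n)\subset \mathcal{N}_\lambda$ with $\|u_n\|\to\infty$ while $J_\lambda(u_n)\le M$ for some constant $M>0$. Since $u_n\in\mathcal{N}_\lambda$ gives $J'_\lambda(u_n)[u_n]=0$, the identity
\[
J_\lambda(u_n)=J_\lambda(u_n)-\tfrac{1}{2}J'_\lambda(u_n)[u_n]=\tfrac{1}{2}\int_{\mathcal G} f(|u_n|)|u_n|^2\,dx-\int_{\mathcal G} F(|u_n|)\,dx
\]
together with $(f_2)$ in the form $F(t)\le\tfrac{1}{\theta}f(t)t^2$ gives $J_\lambda(u_n)\ge(\tfrac{1}{2}-\tfrac{1}{\theta})\int_{\mathcal G} f(|u_n|)|u_n|^2\,dx$. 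Consequently both $\int_{\mathcal G} f(|u_n|)|u_n|^2\,dx$ and $\int_{\mathcal G} F(|u_n|)\,dx$ are bounded by some constant $K$ independent of $n$.

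Set $w_n:=u_n/\|u_n\|$, so $\|w_n\|=1$, and observe that dividing the second bound in Lemma~\ref{Lem3.4}(b) by $\|u_n\|$ yields $\|w_n^+\|^2\ge(1-\alpha)/2$ with $\alpha:=|\lambda|/mc^2\in[0,1)$. Since $Y\hookrightarrow L^p(\mathcal G)$ for every $p\in[2,\infty)$, the sequence $(w_n)$ is bounded in each such $L^p$, so up to a subsequence $w_n\rightharpoonup w$ in $Y$ and $w_n\to w$ a.e.\ on $\mathcal G$. If $w\ne 0$, let $\Omega:=\{x\in\mathcal G:w(x)\ne 0\}$; on $\Omega$ one has $|u_n(x)|=\|u_n\|\,|w_n(x)|\to\infty$ a.e., while Lemma~\ref{lem2.2} implies $F(t)/t^2\to+\infty$ as $t\to\infty$. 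Fatou's lemma then forces
\[
+\infty=\int_\Omega\liminf_{n\to\infty}\frac{F(|u_n|)}{|u_n|^2}|w_n|^2\,dx\le \liminf_{n\to\infty}\int_{\mathcal G}\frac{F(|u_n|)}{\|u_n\|^2}\,dx\le \lim_{n\to\infty}\frac{K}{\|u_n\|^2}=0,
\]
which is the desired contradiction in this case.

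In the remaining case $w=0$, I invoke the maximum characterization of Lemma~\ref{Lem3.6}: since $Rw_n^+=(R/\|u_n\|)u_n^+\in[0,\infty)u_n^+\subset\hat Y(u_n)$ for every $R\ge 0$, one has $J_\lambda(u_n)\ge J_\lambda(Rw_n^+)$. Using Lemma~\ref{lem2.1}, the lower bound $\|w_n^+\|^2\ge(1-\alpha)/2$, and the splitting $F(t)\le \varepsilon t^2+C_\varepsilon t^p$ valid for every $\varepsilon>0$ (an immediate consequence of $(f_1)$ and $f(0)=0$), one gets
\[
J_\lambda(Rw_n^+)\ge \frac{R^2}{2}(1-\alpha)\|w_n^+\|^2-\varepsilon R^2\|w_n^+\|_{L^2}^2-C_\varepsilon R^p\|w_n^+\|_{L^p}^p\ge \frac{R^2(1-\alpha)^2}{8}-C_\varepsilon R^p\|w_n^+\|_{L^p}^p
\]
upon fixing $\varepsilon$ small enough. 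It therefore suffices to prove $\|w_n^+\|_{L^p(\mathcal G)}\to 0$ for some $p\in(2,\infty)$, since then choosing $R$ with $R^2(1-\alpha)^2/8>2M$ leads to $M\ge J_\lambda(u_n)\ge 2M-o_n(1)$, which is impossible for $n$ large. Establishing this vanishing is the main obstacle of the proof: the graph is noncompact and the form domain $Y$ is only an $H^{1/2}$-type space, so Lemma~\ref{lem2.4} does not apply verbatim, and one cannot translate freely on $\mathcal G$ because the compact core $\mathcal K$ couples the half-lines through the Kirchhoff conditions. The natural remedy is to prove a Lions-type concentration dichotomy tailored to $Y$, combining the compactness of $Y\hookrightarrow L^p(\mathcal K)$ on the core with a half-line analysis at each end of $\mathcal G$: if non-vanishing held near some end, translating the restriction of $w_n^+$ along the corresponding half-line toward infinity would yield a nontrivial weak limit, contradicting $w_n\rightharpoonup 0$ in $Y$.
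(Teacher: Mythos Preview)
Your argument has a genuine gap in the case $w=0$. The dichotomy you chose---weak limit zero versus nonzero---is not fine enough: $w_n\rightharpoonup 0$ in $Y$ does \emph{not} imply $\|w_n^+\|_{L^p(\mathcal G)}\to 0$, because concentration running off to infinity along a half-line produces sequences with $w_n\rightharpoonup 0$ but $\|w_n^+\|_{L^p}$ bounded away from zero. Your sketched remedy compounds the problem: translating and extracting a nontrivial weak limit of the \emph{translated} sequence in no way contradicts $w_n\rightharpoonup 0$; these are different sequences, and a runaway bump is precisely a sequence whose own weak limit is zero while every recentered copy has a nonzero weak limit.

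The paper bypasses this by using the Lions dichotomy from the outset rather than the weak-limit dichotomy. One asks whether $\sup_{y\in\mathcal G}\int_{B_1(y)}|w_n^+|^2\,dx\to 0$. If yes, Lemma~\ref{lem2.4} gives $w_n^+\to 0$ in every $L^p$ with $p>2$, and then your max-characterization estimate $J_\lambda(u_n)\ge J_\lambda(Rw_n^+)$ yields the contradiction exactly as you wrote. If no, one selects points $y_n$ where mass persists and invokes Lemma~\ref{lem3.1}: graph isometries $\tau_n$ sending $y_n$ to a fixed point commute with $P^\pm$ and leave the autonomous functional $J_\lambda$ invariant, so after recentering the normalized sequence has a nontrivial weak limit and your Fatou argument from the $w\ne 0$ case applies verbatim to force $\int_{\mathcal G}F(|u_n|)/\|u_n\|^2\,dx\to\infty$. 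The contradiction in the non-vanishing branch thus comes from the energy bound, not from $w_n\rightharpoonup 0$. Lemma~\ref{lem3.1} is also what dissolves your worry about translations across the compact core: one translates by honest graph isometries, not by shifting along a single half-line. (Your side remark that Lemma~\ref{lem2.4} is stated for $H^1$-bounded sequences while $Y$ is only $H^{1/2}$-type is a fair observation about the paper itself; the paper applies it regardless.)
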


\begin{proof}
	Suppose by contradiction that there exist $\left(u_{n}\right)\subset\mathcal N_{\lambda}$ and $d\in[d_{\lambda},\infty)$ such that
	\[
	\|u_{n}\|\to\infty
	\quad\text{and}\quad
	J_{\lambda}(u_{n})\le d \ \ \text{for all } n.
	\]
	Set $v_{n}:=u_{n}/\|u_{n}\|=v_{n}^{+}+v_{n}^{-}$ with $v_{n}^{\pm}\in Y^{\pm}$. Passing to a subsequence, $v_{n}\rightharpoonup v$ in $Y$ and $v_{n}(x)\to v(x)$ almost everywhere on $\mathcal G$.
	
	By Lemma \ref{Lem3.4}(b), there exists $C>0$ such that
	\[
	\|u_{n}\|^{2}=\|u_{n}^{+}\|^{2}+\|u_{n}^{-}\|^{2}\le C\,\|u_{n}^{+}\|^{2},
	\]
	hence
	\[
	\|v_{n}^{+}\|^{2}=\frac{\|u_{n}^{+}\|^{2}}{\|u_{n}\|^{2}}\ge \frac{1}{C}=: \xi>0.
	\]
	
	Choose $y_{n}\in\mathcal G$ so that
	\begin{equation}\label{3.12}
		\int_{B_{1}(y_{n})}|v_{n}^{+}|^{2}\,dx
		= \max_{y\in\mathcal G}\int_{B_{1}(y)}|v_{n}^{+}|^{2}\,dx .
	\end{equation}
	Assume first that
	\begin{equation}\label{3.13}
		\int_{B_{1}(y)}|v_{n}^{+}|^{2}\,dx \to 0
		\quad\text{uniformly in } y\in\mathcal G .
	\end{equation}
	By Lemma \ref{lem2.4} we then have $v_{n}^{+}\to 0$ in $L^{p}(\mathcal G)$ for every $p\in(2,\infty)$. Hence, for any fixed $s\in\mathbb R$,
	\[
	\int_{\mathcal G}F\!\left(|s v_{n}^{+}|\right)\,dx \to 0
	\qquad (n\to\infty),
	\]
	using $(f_{1})$-$(f_{2})$ and the embeddings $Y\hookrightarrow L^{p}(\mathcal G)$. Note that
	\[
	s v_{n}^{+}=\frac{s}{\|u_{n}\|}\,u_{n}^{+}\in \hat Y(u_{n})=Y^{-}\oplus[0,\infty)u_{n},
	\]
	so by Lemma \ref{Lem3.6},
	\[
	\begin{aligned}
		d \ \ge\ J_{\lambda}(u_{n}) \ >\ J_{\lambda}(s v_{n}^{+})
		&= \frac{s^{2}}{2}\,\|v_{n}^{+}\|^{2}
		+ \frac{\lambda}{2}\int_{\mathcal G}|s v_{n}^{+}|^{2}\,dx
		- \int_{\mathcal G}F\!\left(|s v_{n}^{+}|\right)\,dx \\
		&\ge \frac{s^{2}}{2}\Big(1-\frac{|\lambda|}{m c^{2}}\Big)\|v_{n}^{+}\|^{2}
		- \int_{\mathcal G}F\!\left(|s v_{n}^{+}|\right)\,dx\\
		&\longrightarrow\ \frac{s^{2}}{2}\Big(1-\frac{|\lambda|}{m c^{2}}\Big)\xi ,
	\end{aligned}
	\]
	where we used Lemma \ref{lem2.1} to estimate
	$\displaystyle \int_{\mathcal G}|v_{n}^{+}|^{2}\,dx\le \frac{1}{m c^{2}}\|v_{n}^{+}\|^{2}$.
	Since $s\in\mathbb R$ is arbitrary and $\big(1-\frac{|\lambda|}{m c^{2}}\big)\xi>0$, the right-hand side can be made arbitrarily large, which contradicts the boundedness of $d$. Therefore \eqref{3.13} is false.
	
	Hence there exist $\eta>0$ and a subsequence such that
	\[
	\int_{B_{1}(y_{n})}|v_{n}^{+}|^{2}\,dx \ \ge\ \eta\ >0\quad \text{for all } n,
	\]
	that is, vanishing does not occur. The previous contradiction already rules out the possibility that $\|u_{n}\|\to\infty$ with $J_{\lambda}(u_{n})$ bounded from above. Therefore $J_{\lambda}$ is coercive on $\mathcal N_{\lambda}$.
\end{proof}

Now, we are ready to show the continuity of the map $\hat m$ given in Lemma \ref{Lem3.6}.

\begin{Lemma}\label{Lem3.8}
	The map $\hat m: Y^{+}\setminus\{0\}\to \mathcal N_{\lambda}$ is continuous.
\end{Lemma}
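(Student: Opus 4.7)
The plan is to take any sequence $u_n\to u$ in $Y^{+}\setminus\{0\}$, write $w_n:=\hat m(u_n)=t_nu_n+v_n$ with $t_n\ge 0$ and $v_n\in Y^{-}$, extract a weakly convergent subsequence, identify its weak limit $w_0$ as $\hat m(u)$ by an energy-bookkeeping argument, and then promote weak to strong convergence. Uniqueness of the limit will then give convergence of the whole sequence by the standard subsequence principle.

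I would first establish boundedness. The set $E:=\{u_n:n\ge 1\}\cup\{u\}\subset Y^{+}\setminus\{0\}$ is compact, so Lemma \ref{Lem3.5} provides $R>0$ with $J_\lambda\le 0$ on $Y(u_n)\setminus B_R(0)$ for every $n$; since $w_n\in\mathcal N_\lambda$ yields $J_\lambda(w_n)\ge d_\lambda>0$ by Lemma \ref{Lem3.4}(a), necessarily $\|w_n\|\le R$. Hence $t_n$ and $\|v_n\|$ are bounded, and on a subsequence $t_n\to t_0\ge 0$, $v_n\rightharpoonup v_0$ in $Y^{-}$, $w_n\rightharpoonup w_0:=t_0u+v_0\in\hat Y(u)$. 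The estimate $\|w_n^{+}\|=t_n\|u_n\|\ge\sqrt{2mc^2 d_\lambda/(mc^2+|\lambda|)}$ from Lemma \ref{Lem3.4}(b) forces $t_0>0$, so $w_0\ne 0$. Next I would prove $J_\lambda(w_n)\to J_\lambda(\hat m(u))$. For $\liminf$, Lemma \ref{Lem3.6} gives $J_\lambda(w_n)\ge J_\lambda(su_n+v)$ for every fixed $(s,v)\in[0,\infty)\times Y^{-}$, and $su_n+v\to su+v$ strongly, hence $\liminf J_\lambda(w_n)\ge\sup_{\hat Y(u)}J_\lambda=J_\lambda(\hat m(u))$. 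For $\limsup$, I would introduce the transported sequence $\tilde w_n:=t_nu+v_n\in\hat Y(u)$; since $\tilde w_n-w_n=t_n(u-u_n)\to 0$ in $Y$ and $(w_n)$ lies in a bounded ball on which the $C^1$ functional $J_\lambda$ is Lipschitz, $J_\lambda(\tilde w_n)-J_\lambda(w_n)\to 0$, while $\tilde w_n\in\hat Y(u)$ gives $J_\lambda(\tilde w_n)\le J_\lambda(\hat m(u))$.

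The crux is identifying $w_0$ while simultaneously obtaining strong convergence. On $\hat Y(u)$ the energy reads
\[
J_\lambda(tu+v)=\tfrac{t^2}{2}\|u\|^2+\tfrac{\lambda t^2}{2}\|u\|_{L^2}^2+\lambda t\operatorname{Re}\!\int_{\mathcal G}u\cdot\bar v\,dx-\tfrac12\|v\|_\lambda^2-\int_{\mathcal G}F(|tu+v|)\,dx,
\]
where $\|v\|_\lambda^2:=\|v\|^2-\lambda\|v\|_{L^2}^2=-\langle v,(\mathcal D+\lambda)v\rangle_{L^2}$. Since $|\lambda|<mc^2$, Lemma \ref{lem2.1} guarantees that $\|\cdot\|_\lambda$ is an \emph{equivalent} Hilbert norm on $Y^{-}$, so $v\mapsto\|v\|_\lambda^2$ is weakly lower semicontinuous and $-\tfrac12\|\cdot\|_\lambda^2$ is weakly upper semicontinuous. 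The first three terms are continuous under $t_n\to t_0$ and $v_n\rightharpoonup v_0$ in $L^2$. Using the compact embedding of $Y$ into $L^p$ on an exhausting family of bounded subgraphs and a diagonal extraction, one can arrange $\tilde w_n\to w_0$ a.e.\ on $\mathcal G$, so Fatou applied to $F\ge 0$ yields $\liminf\int F(|\tilde w_n|)\ge\int F(|w_0|)$. Combining, $\limsup J_\lambda(\tilde w_n)\le J_\lambda(w_0)$, and pairing this with $\lim J_\lambda(\tilde w_n)=J_\lambda(\hat m(u))$ from the previous paragraph gives $J_\lambda(w_0)\ge J_\lambda(\hat m(u))$; since $w_0\in\hat Y(u)$, Lemma \ref{Lem3.6} forces $w_0=\hat m(u)$. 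Moreover, equality in the overall limit combined with the two separate $\limsup$ bounds forces both upper semicontinuous pieces to converge individually, so $\|v_n\|_\lambda\to\|v_0\|_\lambda$. Weak convergence together with norm convergence in the Hilbert space $(Y^{-},\|\cdot\|_\lambda)$ upgrades $v_n\rightharpoonup v_0$ to $v_n\to v_0$ strongly, whence $w_n\to w_0=\hat m(u)$ in $Y$ along the subsequence, and then along the whole sequence.

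The main obstacle is the strongly indefinite character of $J_\lambda$: the piece $-\tfrac12\|v\|^2+\tfrac{\lambda}{2}\|v\|_{L^2}^2$ is not manifestly weakly upper semicontinuous term by term when $\lambda>0$, and one must repackage it through the spectral norm $\|\cdot\|_\lambda$. This repackaging works only because $|\lambda|<mc^2$, and the resulting equivalent Hilbert structure on $Y^{-}$ is what both enables the semicontinuity used to identify $w_0$ and delivers the norm convergence that promotes weak to strong.
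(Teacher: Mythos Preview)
Your proposal is correct and follows essentially the same route as the paper's proof: bound $(\hat m(u_n))$ via Lemma~\ref{Lem3.5}, extract a weak limit $w_0=t_0u+v_0\in\hat Y(u)$ with $t_0>0$ by Lemma~\ref{Lem3.4}(b), identify $w_0=\hat m(u)$ via the uniqueness in Lemma~\ref{Lem3.6}, and upgrade weak to strong convergence through the equivalent norm $\|\cdot\|_\lambda$ on $Y^{-}$. Your transported sequence $\tilde w_n=t_nu+v_n$ together with the Lipschitz estimate on bounded balls is a clean explicit way to obtain the $\limsup$ bound that the paper handles more tersely by simply invoking ``weak upper semicontinuity of $J_\lambda$ on $\hat Y(u)$''; likewise your direct boundedness argument (Lemma~\ref{Lem3.5} plus $J_\lambda(w_n)\ge d_\lambda>0$) replaces the paper's appeal to the coercivity Lemma~\ref{Lem3.7}. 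One cosmetic remark: since $Y^{+}\perp Y^{-}$ in $L^2$ (spectral projections of a self-adjoint operator), the cross term $\lambda t\operatorname{Re}\int_{\mathcal G}u\cdot\bar v\,dx$ in your displayed energy identity vanishes identically, so you may drop it.
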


\begin{proof}
	Fix $u\in Y^{+}\setminus\{0\}$. It suffices to prove sequential continuity: if $u_{n}\to u$ in $Y$ with $u_{n}\in Y^{+}\setminus\{0\}$, then (up to a subsequence) $\hat m(u_{n})\to \hat m(u)$ in $Y$. By scaling, assume $\|u_{n}\|=\|u\|=1$.
	
	By Lemma \ref{Lem3.6},
	\[
	J_{\lambda}\big(\hat m(u_{n})\big)=\sup_{\hat Y(u_{n})}J_{\lambda},
	\qquad \hat Y(u_{n})=Y^{-}\oplus[0,\infty)\,u_{n},
	\qquad Y(u_{n})=Y^{-}\oplus\mathbb R u_{n}.
	\]
	Hence $\hat Y(u_{n})\subset Y(u_{n})$ and
	\[
	J_{\lambda}\big(\hat m(u_{n})\big)\le \sup_{Y(u_{n})}J_{\lambda}.
	\]
	Let $E:=\overline{\{u\}\cup\{u_{n}:n\in\mathbb N\}}\subset Y^{+}\setminus\{0\}$. Since $u_{n}\to u$, the set $E$ is compact in $Y^{+}$. By Lemma \ref{Lem3.5}, there exists $R>0$ such that for all $w\in E$,
	\[
	J_{\lambda}\le 0 \quad \text{on } Y(w)\setminus B_{R}(0).
	\]
	Therefore $\sup_{Y(u_{n})}J_{\lambda}=\sup_{Y(u_{n})\cap B_{R}(0)}J_{\lambda}\le \sup_{B_{R}(0)}J_{\lambda}$. Moreover, for any $z\in Y$,
	\[
	J_{\lambda}(z)
	=\frac12\big(\|z^{+}\|^{2}-\|z^{-}\|^{2}\big)+\frac{\lambda}{2}\int_{\mathcal G}|z|^{2}dx-\int_{\mathcal G}F(|z|)\,dx
	\le \Big(\frac12+\frac{|\lambda|}{2mc^{2}}\Big)\|z\|^{2},
	\]
	where we used Lemma \ref{lem2.1} and dropped the nonpositive terms. Hence
	\[
	J_{\lambda}\big(\hat m(u_{n})\big)\le C_{\lambda}R^{2}\quad\text{for all }n,
	\qquad C_{\lambda}:=\frac12\Big(1+\frac{|\lambda|}{mc^{2}}\Big).
	\]
	By Lemma \ref{Lem3.7}, the sequence $\{\hat m(u_{n})\}\subset\mathcal N_{\lambda}$ is bounded. Write
	\[
	\hat m(u_{n})=t_{n}u_{n}+w_{n}^{-},\qquad t_{n}\ge 0,\ w_{n}^{-}\in Y^{-}.
	\]
	By Lemma \ref{Lem3.4}(b), $t_{n}=\|(\hat m(u_{n}))^{+}\|\ge c_{0}>0$. Passing to a subsequence, $t_{n}\to t\ge c_{0}$, $w_{n}^{-}\rightharpoonup w^{-}$ in $Y^{-}$, and since $u_{n}\to u$ in $Y^{+}$,
	\[
	\hat m(u_{n})=t_{n}u_{n}+w_{n}^{-}\ \rightharpoonup\ t\,u+w^{-}\quad\text{in }Y.
	\]
	Note that $t\,u+w^{-}\in \hat Y(u)$.
	
	For every $s\ge 0$ and $v\in Y^{-}$, Lemma \ref{Lem3.6} yields
	\[
	J_{\lambda}\big(\hat m(u_{n})\big)\ \ge\ J_{\lambda}(s\,u_{n}+v).
	\]
	Since $u_{n}\to u$ in $Y$ and $Y\hookrightarrow L^{p}(\mathcal G)$ for $p\in[2,\infty)$, the growth assumptions $(f_{1})$-$(f_{2})$ imply
	\[
	J_{\lambda}(s\,u_{n}+v)\ \to\ J_{\lambda}(s\,u+v)\qquad (n\to\infty).
	\]
	Taking $\limsup$ in the previous inequality and using the weak upper semicontinuity of $J_{\lambda}$ on $\hat Y(u)$, we obtain
	\[
	J_{\lambda}(t\,u+w^{-})
	\ \ge\ \limsup_{n\to\infty}J_{\lambda}\big(\hat m(u_{n})\big)
	\ \ge\ J_{\lambda}(s\,u+v)\qquad \forall\, s\ge 0,\ v\in Y^{-}.
	\]
	Hence $t\,u+w^{-}$ is a global maximizer of $J_{\lambda}$ on $\hat Y(u)$. By Lemma \ref{Lem3.6}, it is the unique maximizer, so $t\,u+w^{-}=\hat m(u)$.
	
	We have shown $\hat m(u_{n})\rightharpoonup \hat m(u)$ in $Y$ and
	\[
	\lim_{n\to\infty}J_{\lambda}\big(\hat m(u_{n})\big)=J_{\lambda}\big(\hat m(u)\big).
	\]
	On $Y^{-}$ consider the equivalent norm
	\[
	\|w\|_{\diamond}:=\sqrt{\|w\|^{2}-\lambda\|w\|_{L^{2}}^{2}},
	\]
	which is indeed a norm because $|\lambda|<mc^{2}$. The identity of values together with weak convergence implies
	\[
	\|w_{n}^{-}\|_{\diamond}\ \to\ \|(\hat m(u))^{-}\|_{\diamond},
	\]
	and thus $w_{n}^{-}\to (\hat m(u))^{-}$ in $Y^{-}$. Since $u_{n}\to u$ in $Y^{+}$ and $t_{n}\to t$, we get $t_{n}u_{n}\to t\,u$ in $Y^{+}$. Therefore
	\[
	\hat m(u_{n})\ \to\ \hat m(u)\quad\text{in }Y.
	\]
	This proves the continuity of $\hat m$.
\end{proof}

From now on, let us consider the functional
\[
\hat{\Psi}_{\lambda}: Y^{+}\setminus\{0\}\to \mathbb{R},
\qquad
\hat{\Psi}_{\lambda}(u):=J_{\lambda}(\hat m(u)),
\]
which is continuous by Lemma \ref{Lem3.8}. The following proposition is crucial for our approach and its proof follows as in \cite[Proposition 2.9]{MR2557725}.

\begin{Proposition}\label{Pro3.1}
	\(\hat{\Psi}_{\lambda}\in C^{1}\big(Y^{+}\setminus\{0\},\mathbb{R}\big)\) and
	\[
	\hat{\Psi}_{\lambda}'(w)\,z
	=
	\frac{\|\hat m(w)^{+}\|}{\|w\|}\, J_{\lambda}'\big(\hat m(w)\big)\,z,
	\qquad \text{for } w,z\in Y^{+},\ w\neq 0.
	\]
\end{Proposition}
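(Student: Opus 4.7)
The plan is to exploit the maximization property of $\hat m$ on each fiber $\hat Y(w)$ via a sandwich argument for the one-sided directional derivatives, and then to upgrade Gateaux differentiability to $C^1$ through continuity of $\hat m$ (Lemma \ref{Lem3.8}) and of $J'_\lambda$. First, for each $w\in Y^+\setminus\{0\}$ I would write uniquely
\[
\hat m(w) = s_w w + v_w, \qquad s_w\ge 0,\ v_w\in Y^-,
\]
and observe $\hat m(w)^+ = s_w w$, hence $s_w = \|\hat m(w)^+\|/\|w\|$, which is strictly positive by Lemma \ref{Lem3.4}(b). The target formula then reads $\hat\Psi'_\lambda(w)\,z = s_w\, J'_\lambda(\hat m(w))\,z$.

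Next, fix a direction $z\in Y^+$ and a small $\delta\in\mathbb R$ with $w+\delta z\ne 0$. Write analogously $\hat m(w+\delta z) = s_\delta(w+\delta z) + v_\delta$ with $s_\delta\ge 0$ and $v_\delta\in Y^-$. By Lemma \ref{Lem3.8}, $\hat m(w+\delta z)\to\hat m(w)$ in $Y$, so $s_\delta\to s_w$ and $v_\delta\to v_w$. Since $s_w(w+\delta z)+v_w\in\hat Y(w+\delta z)$, Lemma \ref{Lem3.6} gives
\[
\hat\Psi_\lambda(w+\delta z) \ \ge\ J_\lambda\bigl(\hat m(w) + s_w\delta z\bigr),
\]
while $s_\delta w + v_\delta\in\hat Y(w)$ yields
\[
\hat\Psi_\lambda(w) \ \ge\ J_\lambda\bigl(\hat m(w+\delta z) - s_\delta\delta z\bigr).
\]
Subtracting and invoking the $C^1$ smoothness of $J_\lambda$ (Taylor expansion at $\hat m(w)$ and at $\hat m(w+\delta z)$) sandwiches the increment:
\[
s_w\delta\,J'_\lambda(\hat m(w))\,z + o(\delta)
\ \le\ \hat\Psi_\lambda(w+\delta z) - \hat\Psi_\lambda(w)
\ \le\ s_\delta\delta\,J'_\lambda(\hat m(w+\delta z))\,z + o(\delta).
\]
Dividing by $\delta$ and letting $\delta\to 0^\pm$, the continuity of $\hat m$ and of $J'_\lambda$ makes both bounds converge to $s_w J'_\lambda(\hat m(w))\,z$, which gives Gateaux differentiability with the stated formula.

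Finally, the map $w\mapsto \frac{\|\hat m(w)^+\|}{\|w\|}\,J'_\lambda(\hat m(w))$ is continuous from $Y^+\setminus\{0\}$ into $(Y^+)^*$ as a composition of continuous maps (Lemma \ref{Lem3.8}, $J_\lambda\in C^1$, and continuity of the positive scaling factor away from $w=0$). Hence the Gateaux derivative is continuous, which upgrades to Fréchet differentiability and to $\hat\Psi_\lambda\in C^1(Y^+\setminus\{0\},\mathbb R)$.

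The main technical point will be the upper sandwich inequality: since the base point $\hat m(w+\delta z)$ itself varies with $\delta$, the $o(\delta)$ remainder must be shown to be uniform as $\delta\to 0$. This will be handled by noting that $\{\hat m(w+\delta z):\,|\delta|\le \delta_0\}$ is relatively compact in $Y$ by Lemma \ref{Lem3.8}, so the quadratic terms in $J_\lambda$ together with the locally Lipschitz character of $F'$ under $(f_1)$ yield a uniform remainder estimate; the coercivity provided by Lemma \ref{Lem3.7} ensures there is no blow-up issue in selecting the compact range.
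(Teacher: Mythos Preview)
Your proposal is correct and follows essentially the same approach as the paper: both arguments use the maximization property of $\hat m$ on the fibers $\hat Y(w)$ and $\hat Y(w+\delta z)$ to sandwich the increment $\hat\Psi_\lambda(w+\delta z)-\hat\Psi_\lambda(w)$ between two differences of $J_\lambda$ along the direction $z$, then pass to the limit via continuity of $\hat m$ and $J'_\lambda$. The only cosmetic difference is that the paper applies the one-dimensional mean value theorem (producing exact intermediate points $\tau_t,\eta_t\in(0,1)$) rather than a Taylor expansion with an $o(\delta)$ remainder; this sidesteps the uniformity issue you flag in your last paragraph, since no remainder appears and only continuity of $J'_\lambda$ along a convergent family of points is needed.
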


\begin{proof}
	Fix \(w\in Y^{+}\setminus\{0\}\) and \(z\in Y^{+}\). Set
	\[
	u:=\hat m(w)=u^{-}+s_{0}\,w,
	\qquad
	s_{0}:=\frac{\|u^{+}\|}{\|w\|}>0.
	\]
	For \(|t|\) small, let \(w_{t}:=w+t z\in Y^{+}\setminus\{0\}\) and define
	\[
	u_{t}:=\hat m(w_{t})=u_{t}^{-}+s_{t}\,w_{t}
	\quad \text{with } s_{t}>0.
	\]
	By Lemma \ref{Lem3.6} and Lemma \ref{Lem3.8}, \(t\mapsto s_{t}\) and \(t\mapsto u_{t}\) are continuous at \(t=0\), and \(s_{t}\to s_{0}\), \(u_{t}\rightharpoonup u\) as \(t\to 0\).
	
	Since \(u_{t}\) is the unique global maximizer of \(J_{\lambda}\) on \(\hat Y(w_{t})\),
	\[
	J_{\lambda}(u_{t})-J_{\lambda}(u)
	\le
	J_{\lambda}\big(u_{t}^{-}+s_{t} w_{t}\big)-J_{\lambda}\big(u_{t}^{-}+s_{t} w\big).
	\]
	Consider the one-dimensional \(C^{1}\) map
	\[
	\phi_{t}(\tau):=J_{\lambda}\big(u_{t}^{-}+s_{t}\,(w+\tau(w_{t}-w))\big),
	\quad \tau\in[0,1].
	\]
	By the mean value theorem there exists \(\tau_{t}\in(0,1)\) such that
	\[
	J_{\lambda}\big(u_{t}^{-}+s_{t} w_{t}\big)-J_{\lambda}\big(u_{t}^{-}+s_{t} w\big)
	=
	s_{t}\, J_{\lambda}'\big(u_{t}^{-}+s_{t}(w+\tau_{t}(w_{t}-w))\big)\,(w_{t}-w).
	\]
	
	Since \(u\) is the unique global maximizer of \(J_{\lambda}\) on \(\hat Y(w)\),
	\[
	J_{\lambda}(u_{t})-J_{\lambda}(u)
	\ge
	J_{\lambda}\big(u^{-}+s_{0} w_{t}\big)-J_{\lambda}\big(u^{-}+s_{0} w\big).
	\]
	Define
	\[
	\psi(\tau):=J_{\lambda}\big(u^{-}+s_{0}(w+\tau(w_{t}-w))\big),
	\quad \tau\in[0,1].
	\]
	Again by the mean value theorem there exists \(\eta_{t}\in(0,1)\) such that
	\[
	J_{\lambda}\big(u^{-}+s_{0} w_{t}\big)-J_{\lambda}\big(u^{-}+s_{0} w\big)
	=
	s_{0}\, J_{\lambda}'\big(u^{-}+s_{0}(w+\eta_{t}(w_{t}-w))\big)\,(w_{t}-w).
	\]
	
	Combining the two estimates and using \(w_{t}-w=t z\), we obtain
	\[
	s_{0}\, J_{\lambda}'\big(u^{-}+s_{0}(w+\eta_{t}(w_{t}-w))\big)\,z
	\ \le\
	\frac{\hat{\Psi}_{\lambda}(w_{t})-\hat{\Psi}_{\lambda}(w)}{t}
	\ \le\
	s_{t}\, J_{\lambda}'\big(u_{t}^{-}+s_{t}(w+\tau_{t}(w_{t}-w))\big)\,z.
	\]
	Let \(t\to 0\). By the continuity of \(u_{t}\), \(s_{t}\) and \(J_{\lambda}'\), both left and right bounds converge to \(s_{0}\,J_{\lambda}'(u)\,z\). Hence the directional derivative exists and
	\[
	\partial_{z}\hat{\Psi}_{\lambda}(w)
	=
	\lim_{t\to 0}\frac{\hat{\Psi}_{\lambda}(w_{t})-\hat{\Psi}_{\lambda}(w)}{t}
	=
	s_{0}\, J_{\lambda}'(u)\,z
	=
	\frac{\|\hat m(w)^{+}\|}{\|w\|}\, J_{\lambda}'\big(\hat m(w)\big)\,z.
	\]
	The expression is linear and continuous in \(z\), and depends continuously on \(w\) because \(w\mapsto \hat m(w)\) is continuous and \(J_{\lambda}'\) is continuous on \(Y\). Therefore \(\hat{\Psi}_{\lambda}\in C^{1}\big(Y^{+}\setminus\{0\},\mathbb{R}\big)\).
\end{proof}

Setting
\[
S^{+}:=\{\,w\in Y^{+}:\ \|w\|=1\,\}\subset Y^{+},
\]
it is not difficult to see that the restriction of the map \(\hat m\) to \(S^{+}\) is a homeomorphism with inverse
\[
\check m:\ \mathcal N_{\lambda}\to S^{+},\qquad
\check m(u):=\frac{u^{+}}{\|u^{+}\|}.
\]

Hereafter, let \(\Psi_{\lambda}:S^{+}\to\mathbb{R}\) be the restriction of \(\hat\Psi_{\lambda}\) to \(S^{+}\), that is,
\(\Psi_{\lambda}=\hat\Psi_{\lambda}\big|_{S^{+}}\).
Using the same argument as in \cite[Corollary 2.10]{MR2557725}, we obtain:

\begin{Corollary}\label{Cor3.1}
	(a) \(\Psi_{\lambda}\in C^{1}(S^{+})\) and
	\[
	\Psi_{\lambda}'(w)\,z
	= \|\hat m(w)^{+}\|\, J_{\lambda}'(\hat m(w))\,z,
	\qquad \text{for } z\in T_{w}S^{+}=\{\,v\in Y^{+}:\ (w,v)=0\,\}.
	\]
	
	(b) A sequence \((w_{n})\subset S^{+}\) is a Palais-Smale sequence for \(\Psi_{\lambda}\) if and only if
	\((\hat m(w_{n}))\subset \mathcal N_{\lambda}\) is a Palais-Smale sequence for \(J_{\lambda}\).
	
	(c) We have
	\[
	\inf_{S^{+}}\Psi_{\lambda}
	\;=\;
	\inf_{\mathcal N_{\lambda}} J_{\lambda}
	\;=:\; c.
	\]
	Moreover, \(w\in S^{+}\) is a critical point of \(\Psi_{\lambda}\) if and only if \(\hat m(w)\in\mathcal N_{\lambda}\) is a critical point of \(J_{\lambda}\), and the corresponding critical values coincide.
\end{Corollary}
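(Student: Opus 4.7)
The plan is to treat this corollary as essentially a transfer principle across the homeomorphism $\hat m:S^{+}\to\mathcal N_{\lambda}$ recorded just before the statement, combined with the derivative formula of Proposition~\ref{Pro3.1}. Since $S^{+}$ is a smooth Hilbert submanifold of $Y^{+}$ with tangent space $T_{w}S^{+}=\{v\in Y^{+}:(w,v)=0\}$, part (a) is an immediate consequence of restricting the $C^{1}$ functional $\hat{\Psi}_{\lambda}$ to $S^{+}$: for $w\in S^{+}$ and $z\in T_{w}S^{+}$, the factor $\|w\|=1$ in the denominator of the formula from Proposition~\ref{Pro3.1} drops out, leaving the asserted expression.

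The heart of the argument is (b). Write $u_{n}:=\hat m(w_{n})=u_{n}^{-}+s_{n}w_{n}$ with $s_{n}=\|u_{n}^{+}\|>0$, and first establish the two-sided bound $0<c_{0}\le s_{n}\le C$ along any sequence with $(\Psi_{\lambda}(w_{n}))$, equivalently $(J_{\lambda}(u_{n}))$, bounded: the lower bound comes directly from Lemma~\ref{Lem3.4}(b), while the upper bound follows by applying the coercivity of $J_{\lambda}$ on $\mathcal N_{\lambda}$ from Lemma~\ref{Lem3.7} to control $\|u_{n}\|$ and hence $s_{n}$. Next, observe that $u_{n}\in\mathcal N_{\lambda}$ makes $J_{\lambda}'(u_{n})$ vanish on $Y^{-}\oplus\mathbb{R} u_{n}$, and since $u_{n}^{-}\in Y^{-}$ this forces $J_{\lambda}'(u_{n})[w_{n}]=0$. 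Decomposing an arbitrary $v\in Y$ as $v=v^{-}+(v^{+},w_{n})\,w_{n}+z_{n}$ with $z_{n}\in T_{w_{n}}S^{+}$, only the last term contributes, and by (a)
\[
J_{\lambda}'(u_{n})[v]=J_{\lambda}'(u_{n})[z_{n}]=\frac{1}{s_{n}}\,\Psi_{\lambda}'(w_{n})[z_{n}],\qquad \|z_{n}\|\le\|v\|.
\]
This yields the identity $\|\Psi_{\lambda}'(w_{n})\|_{T_{w_{n}}^{\ast}S^{+}}=s_{n}\,\|J_{\lambda}'(u_{n})\|_{Y^{\ast}}$, and the two-sided bound on $s_{n}$ converts each Palais--Smale condition into the other.

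Part (c) then follows quickly. The equality $\inf_{S^{+}}\Psi_{\lambda}=\inf_{\mathcal N_{\lambda}}J_{\lambda}$ is immediate from $\Psi_{\lambda}=J_{\lambda}\circ\hat m|_{S^{+}}$ and the bijection of $\hat m$. For the critical point correspondence, if $\Psi_{\lambda}'(w)=0$ on $T_{w}S^{+}$, the formula in (a) together with $\|\hat m(w)^{+}\|>0$ gives $J_{\lambda}'(\hat m(w))=0$ on $T_{w}S^{+}$; combined with its automatic vanishing on $Y^{-}\oplus\mathbb{R} w$ from membership in $\mathcal N_{\lambda}$ and the orthogonal decomposition $Y^{+}=\mathbb{R} w\oplus T_{w}S^{+}$, we conclude $J_{\lambda}'(\hat m(w))=0$ on all of $Y$. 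The converse is immediate from the formula in (a), and the coincidence of critical values is automatic since $\Psi_{\lambda}(w)=J_{\lambda}(\hat m(w))$.

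The main obstacle is in (b): ensuring the two-sided control $0<c_{0}\le s_{n}\le C$ along an arbitrary Palais--Smale sequence. The lower bound is purely spectral and uses the strict positivity $d_{\lambda}>0$ of Lemma~\ref{Lem3.4}(a); the upper bound is the more delicate input, relying on the concentration-type argument behind the coercivity of $J_{\lambda}$ on $\mathcal N_{\lambda}$ in Lemma~\ref{Lem3.7}. Once these are in hand, the rest is a bookkeeping exercise on the algebraic identities already built into the definition of $\mathcal N_{\lambda}$.
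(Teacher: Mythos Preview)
Your proposal is correct and follows essentially the same route as the paper: part (a) restricts Proposition~\ref{Pro3.1} to $S^{+}$; part (b) uses the orthogonal splitting $Y=(Y^{-}\oplus\mathbb{R}w_{n})\oplus T_{w_{n}}S^{+}$, the vanishing of $J_{\lambda}'(u_{n})$ on the first summand coming from $u_{n}\in\mathcal N_{\lambda}$, and the resulting identity $\|\Psi_{\lambda}'(w_{n})\|=\|u_{n}^{+}\|\,\|J_{\lambda}'(u_{n})\|$, together with the two-sided bound on $\|u_{n}^{+}\|$ from Lemma~\ref{Lem3.4}(b) and Lemma~\ref{Lem3.7}; part (c) is read off from the homeomorphism and (a). This matches the paper's proof in structure and in the key lemmas invoked.
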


\begin{proof}
	(a) By the definition of \(\Psi_{\lambda}\) and Proposition \ref{Pro3.1}, for \(w\in S^{+}\) we have \(\|w\|=1\) and
	\[
	\Psi_{\lambda}'(w)\,z
	=
	\hat\Psi_{\lambda}'(w)\,z
	=
	\frac{\|\hat m(w)^{+}\|}{\|w\|}\, J_{\lambda}'(\hat m(w))\,z
	=
	\|\hat m(w)^{+}\|\, J_{\lambda}'(\hat m(w))\,z,
	\]
	for all \(z\in T_{w}S^{+}\), proving (a).
	
	(b) Let \((w_{n})\subset S^{+}\) and set \(u_{n}:=\hat m(w_{n})\in\mathcal N_{\lambda}\).
	By the orthogonal decomposition of \(Y^{+}\),
	\[
	Y
	=
	\big(Y^{-}\oplus \mathbb{R}w_{n}\big)\ \oplus\ T_{w_{n}}S^{+}
	=
	Y(w_{n})\ \oplus\ T_{w_{n}}S^{+},
	\]
	with respect to the inner product \((\cdot,\cdot)\).
	Since \(u_{n}\in\mathcal N_{\lambda}\), we have \(J_{\lambda}'(u_{n})v^{-}=0\) for all \(v^{-}\in Y^{-}\) and
	\(J_{\lambda}'(u_{n})u_{n}=0\) by definition of \(\mathcal N_{\lambda}\).
	Hence \(J_{\lambda}'(u_{n})\) annihilates \(Y(w_{n})=Y^{-}\oplus \mathbb{R}w_{n}\), so its operator norm equals
	\[
	\|J_{\lambda}'(u_{n})\|
	=
	\sup_{\substack{z\in T_{w_{n}}S^{+}\\ \|z\|=1}} |J_{\lambda}'(u_{n})\,z|.
	\]
	From part (a),
	\[
	\|\Psi_{\lambda}'(w_{n})\|
	=
	\sup_{\substack{z\in T_{w_{n}}S^{+}\\ \|z\|=1}}
	\|\hat m(w_{n})^{+}\|\,|J_{\lambda}'(u_{n})\,z|
	=
	\|\hat m(w_{n})^{+}\|\, \|J_{\lambda}'(u_{n})\|
	=
	\|u_{n}^{+}\|\, \|J_{\lambda}'(u_{n})\|.
	\]
	If \((w_{n})\) is a Palais-Smale sequence for \(\Psi_{\lambda}\), then \((\Psi_{\lambda}(w_{n}))\) is bounded and \(\Psi_{\lambda}'(w_{n})\to 0\).
	Since \(\Psi_{\lambda}(w_{n})=J_{\lambda}(u_{n})\) and \(J_{\lambda}\) is coercive on \(\mathcal N_{\lambda}\),
	\((u_{n})\) is bounded in \(Y\), hence \(\|u_{n}^{+}\|\) is bounded above.
	Moreover, Lemma \ref{Lem3.4}(b) yields a uniform lower bound \(\|u_{n}^{+}\|\ge c_{*}>0\).
	Therefore \(\|J_{\lambda}'(u_{n})\|=\|\Psi_{\lambda}'(w_{n})\|/\|u_{n}^{+}\|\to 0\), so \((u_{n})\) is a Palais-Smale sequence for \(J_{\lambda}\).
	The converse implication is analogous, using the same identity and the same uniform bounds for \(\|u_{n}^{+}\|\).
	
	(c) Since \(\hat m: S^{+}\to \mathcal N_{\lambda}\) is a homeomorphism with inverse \(\check m\),
	\[
	\inf_{S^{+}}\Psi_{\lambda}
	=
	\inf_{S^{+}} J_{\lambda}(\hat m(w))
	=
	\inf_{\mathcal N_{\lambda}} J_{\lambda}
	=: c.
	\]
	If \(w\in S^{+}\) is a critical point of \(\Psi_{\lambda}\), then by (a) \(J_{\lambda}'(\hat m(w))=0\), that is, \(\hat m(w)\) is a critical point of \(J_{\lambda}\).
	The converse follows by composing with \(\check m\). The equality of critical values is immediate from \(\Psi_{\lambda}(w)=J_{\lambda}(\hat m(w))\).
\end{proof}

\subsection{Proof of Theorem \ref{thm3.1}}

From Lemma \ref{Lem3.4} we know that \(d_{\lambda}>0\). Moreover, if \(u_{0}\in\mathcal{N}_{\lambda}\) satisfies \(J_{\lambda}(u_{0})=d_{\lambda}\), setting \(u_{0}=\hat m(w_{0})\) we have \(w_{0}=\check m(u_{0})\). By Corollary \ref{Cor3.1},
\[
\hat\Psi_{\lambda}(w_{0})
=J_{\lambda}(\hat m(w_{0}))
=J_{\lambda}(u_{0})
=d_{\lambda}
=\inf_{\mathcal{N}_{\lambda}}J_{\lambda}
=\inf_{S^{+}}\Psi_{\lambda}
=\inf_{S^{+}}\hat\Psi_{\lambda}.
\]
Hence \(w_{0}\in S^{+}\) and \(\Psi_{\lambda}(w_{0})=\hat\Psi_{\lambda}(w_{0})=\inf_{S^{+}}\hat\Psi_{\lambda}\). Therefore \(w_{0}=\check m(u_{0})\in S^{+}\) is a minimizer of \(\Psi_{\lambda}\) and thus a critical point of \(\Psi_{\lambda}\). By Corollary \ref{Cor3.1}, \(u_{0}\) is a critical point of \(J_{\lambda}\), so \(u_{0}\) is a solution of \eqref{3.1}.

It remains to show that there exists a minimizer \(u\in\mathcal{N}_{\lambda}\) of \(J_{\lambda}\big|_{\mathcal{N}_{\lambda}}\). By Ekeland's variational principle, there exists \((w_{n})\subset S^{+}\) with
\(\Psi_{\lambda}(w_{n})\to d_{\lambda}\) and \(\Psi_{\lambda}'(w_{n})\to0\).
Set \(u_{n}:=\hat m(w_{n})\in\mathcal{N}_{\lambda}\). Then
\[
\Psi_{\lambda}(w_{n})
=\hat\Psi_{\lambda}(w_{n})
=J_{\lambda}(\hat m(w_{n}))
=J_{\lambda}(u_{n}),
\]
so \(J_{\lambda}(u_{n})\to d_{\lambda}\) and \(J_{\lambda}'(u_{n})\to0\).
Since \(J_{\lambda}\) is coercive on \(\mathcal{N}_{\lambda}\), \((u_{n})\) is bounded in \(Y\).
Hence, up to a subsequence, \(u_{n}\rightharpoonup u\) in \(Y\).

Let \((y_{n})\subset\mathcal{G}\) satisfy
\[
\int_{B_{1}(y_{n})}|u_{n}|^{2}\,dx
=\max_{y\in\mathcal{G}} \int_{B_{1}(y)}|u_{n}|^{2}\,dx.
\]
Using the translation invariance of \(J_{\lambda}\) and \(\mathcal{N}_{\lambda}\), we may assume \(y_{n}=0\) for all \(n\).
Suppose, by contradiction, that
\begin{equation}\label{3.16}
	\int_{B_{1}(0)}|u_{n}|^{2}\,dx \to 0 \quad \text{as } n\to\infty.
\end{equation}
Then, by Lemma \ref{lem2.4} and the embeddings \(Y\hookrightarrow L^{p}(\mathcal{G},\mathbb{C}^{2})\) for all \(p\in[2,\infty)\), we have
\(|u_{n}|\to0\) in \(L^{s}(\mathcal{G})\) for every \(s\in(2,\infty)\).

Using assumptions \((f_{1})\)-\((f_{2})\) and H\"older's inequality we estimate
\[
\big|\operatorname{Re}\!\int_{\mathcal{G}} f(|u_{n}|)\,u_{n}\,\overline{u_{n}^{+}}\,dx\big|
\le c_{1}\!\int_{\mathcal{G}}\!\Big(|u_{n}|\,|u_{n}^{+}| + |u_{n}|^{p-1}|u_{n}^{+}|\Big)\,dx
\le c_{1}\Big(\|u_{n}\|_{L^{s}}\|u_{n}^{+}\|_{L^{s'}} + \|u_{n}\|_{L^{p}}^{\,p-1}\|u_{n}^{+}\|_{L^{p}}\Big),
\]
where \(s\in(2,\infty)\) and \(\frac{1}{s}+\frac{1}{s'}=1\). Since \((u_{n})\) is bounded in \(Y\), the sequences \((u_{n}^{+})\) are bounded in \(L^{s'}\) and \(L^{p}\); together with \(|u_{n}|\to0\) in \(L^{s}\) and in \(L^{p}\), the right-hand side tends to \(0\).
Similarly,
\[
\big|\lambda\,\operatorname{Re}\!\int_{\mathcal{G}} u_{n}\,\overline{u_{n}^{+}}\,dx\big|
\le |\lambda|\,\|u_{n}\|_{L^{s}}\|u_{n}^{+}\|_{L^{s'}} \to 0
\qquad (s\in(2,\infty)).
\]
Now test \(J_{\lambda}'(u_{n})\) against \(u_{n}^{+}\). Using \((u_{n}^{-},u_{n}^{+})=0\) in the \(Y\)-inner product, we get
\[
o_{n}(1)
= J_{\lambda}'(u_{n})\,u_{n}^{+}
= \|u_{n}^{+}\|^{2}
+ \lambda\,\operatorname{Re}\!\int_{\mathcal{G}} u_{n}\,\overline{u_{n}^{+}}\,dx
- \operatorname{Re}\!\int_{\mathcal{G}} f(|u_{n}|)\,u_{n}\,\overline{u_{n}^{+}}\,dx
= \|u_{n}^{+}\|^{2} + o_{n}(1).
\]
Hence \(\|u_{n}^{+}\|\to0\), which contradicts Lemma \ref{Lem3.4}(b).
Therefore \eqref{3.16} is false, whence \(u\not\equiv0\). By standard arguments, we also have \(J_{\lambda}'(u)=0\).

Finally, using Fatou's lemma and \((f_{1})\)-\((f_{2})\),
\[
\begin{aligned}
	d_{\lambda}
	&= \lim_{n\to\infty}\Big[J_{\lambda}(u_{n}) - \frac{1}{2}J_{\lambda}'(u_{n})\,u_{n}\Big]
	= \lim_{n\to\infty}\int_{\mathcal{G}}\Big(\frac{1}{2}f(|u_{n}|)|u_{n}|^{2} - F(|u_{n}|)\Big)\,dx \\
	&\ge \int_{\mathcal{G}}\Big(\frac{1}{2}f(|u|)|u|^{2} - F(|u|)\Big)\,dx
	= J_{\lambda}(u) - \frac{1}{2}J_{\lambda}'(u)\,u
	= J_{\lambda}(u).
\end{aligned}
\]
Thus \(J_{\lambda}(u)\le d_{\lambda}\). Since \(u\in\mathcal{N}_{\lambda}\), the reverse inequality follows from the definition of \(d_{\lambda}\). Therefore \(J_{\lambda}(u)=d_{\lambda}\), and \(u\) is a minimizer of \(J_{\lambda}\) on \(\mathcal{N}_{\lambda}\). This completes the proof.\qed

\subsection{Ground state level}
\par
As a byproduct of Theorem \ref{thm3.1}, we obtain the following proposition.

\begin{Proposition}\label{Pro3.2}
	The function \(\lambda\mapsto d_{\lambda}\) is increasing.
\end{Proposition}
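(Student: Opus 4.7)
The plan is to establish strict monotonicity by combining the generalized Nehari manifold machinery already developed in this section with the elementary comparison identity between $J_{\lambda_1}$ and $J_{\lambda_2}$, for two parameters satisfying $-mc^{2}<\lambda_{1}<\lambda_{2}<mc^{2}$.

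First, I would invoke Theorem \ref{thm3.1} to pick a minimizer $u_{2}\in\mathcal N_{\lambda_{2}}$ with $J_{\lambda_{2}}(u_{2})=d_{\lambda_{2}}$. Since $u_{2}\in Y\setminus Y^{-}$, Lemma \ref{Lem3.6} \emph{applied to the functional} $J_{\lambda_{1}}$ yields a unique element $w:=\hat m_{\lambda_{1}}(u_{2})\in\mathcal N_{\lambda_{1}}\cap\hat Y(u_{2})$, which is furthermore the global maximum of $J_{\lambda_{1}}$ on $\hat Y(u_{2})$. By the definition of $d_{\lambda_{1}}$ this gives the upper bound
\[
d_{\lambda_{1}}\ \le\ J_{\lambda_{1}}(w).
\]

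Next I would compare $J_{\lambda_{1}}(w)$ with $J_{\lambda_{2}}(w)$ through the trivial identity
\[
J_{\lambda_{1}}(w)=J_{\lambda_{2}}(w)+\tfrac{\lambda_{1}-\lambda_{2}}{2}\int_{\mathcal G}|w|^{2}\,dx.
\]
Because $w\in\mathcal N_{\lambda_{1}}\subset Y\setminus Y^{-}$, we have $w\not\equiv 0$, so $\int_{\mathcal G}|w|^{2}\,dx>0$, and since $\lambda_{1}<\lambda_{2}$ this produces the strict inequality $J_{\lambda_{1}}(w)<J_{\lambda_{2}}(w)$. Finally, since $u_{2}\in\mathcal N_{\lambda_{2}}$ and $w\in\hat Y(u_{2})$, Lemma \ref{Lem3.3} \emph{applied with $\lambda=\lambda_{2}$} yields $J_{\lambda_{2}}(w)\le J_{\lambda_{2}}(u_{2})=d_{\lambda_{2}}$. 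Chaining the three bounds,
\[
d_{\lambda_{1}}\ \le\ J_{\lambda_{1}}(w)\ <\ J_{\lambda_{2}}(w)\ \le\ d_{\lambda_{2}},
\]
gives the desired $d_{\lambda_{1}}<d_{\lambda_{2}}$.

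There is no genuine obstacle here: the only point to be careful about is that the $\hat m$ and $\hat Y$ constructions are being used for \emph{two different} functionals, $J_{\lambda_{1}}$ and $J_{\lambda_{2}}$, on the \emph{same} fiber $\hat Y(u_{2})$, and one must therefore keep track of which parameter each maximization refers to. Apart from that bookkeeping, the argument is a direct consequence of Theorem \ref{thm3.1}, Lemma \ref{Lem3.3} and Lemma \ref{Lem3.6}, together with the observation that $L^{2}$-nontriviality of $w$ makes the $\lambda$-dependent correction strictly negative.
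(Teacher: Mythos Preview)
Your proof is correct and follows essentially the same approach as the paper: both arguments pick a ground state for the larger parameter, maximize the \emph{other} functional on the same fiber $\hat Y$, and compare via the identity $J_{\lambda_1}(w)=J_{\lambda_2}(w)+\frac{\lambda_1-\lambda_2}{2}\|w\|_{L^2}^2$. The only cosmetic difference is that the paper phrases the strictness step by contradiction (assuming $d_{\lambda_1}=d_{\lambda_2}$ forces the fiber maximizer to vanish, contradicting $d_{\lambda_1}>0$), whereas you obtain $d_{\lambda_1}<d_{\lambda_2}$ directly from $w\neq 0$; your version is slightly cleaner. One small remark: for the last inequality $J_{\lambda_2}(w)\le J_{\lambda_2}(u_2)$ it is safer to cite Lemma~\ref{Lem3.6} (which covers all of $\hat Y(u_2)$) rather than Lemma~\ref{Lem3.3} (whose displayed inequality is stated only for $t\ge 1$), though the ``Hence'' clause of Lemma~\ref{Lem3.3} does assert the full maximum property you need.
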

\begin{proof}
	Let \(u_{\lambda}\) and \(u_{\mu}\) denote ground state solutions for \(J_{\lambda}\) and \(J_{\mu}\), respectively. For \(\lambda>\mu\),
	\[
	J_{\lambda}(u)-J_{\mu}(u)
	=\frac{\lambda-\mu}{2}\int_{\mathcal G}|u|^{2}\,dx
	\ge 0,\qquad \forall\,u\in Y,
	\]
	hence \(J_{\lambda}(u)\ge J_{\mu}(u)\) for all \(u\in Y\). Using the characterization of \(d_{\lambda}\) via maximization on \(\hat Y(u)\),
	\[
	d_{\mu}
	=\inf_{u\in Y^{+}\setminus\{0\}}\ \max_{w\in\hat Y(u)} J_{\mu}(w)
	\;\le\; \inf_{u\in Y^{+}\setminus\{0\}}\ \max_{w\in\hat Y(u)} J_{\lambda}(w)
	= d_{\lambda},
	\]
	so \(\lambda\mapsto d_{\lambda}\) is nondecreasing.
	
	We now prove strict monotonicity. Assume by contradiction that \(d_{\lambda}=d_{\mu}\) with \(\lambda>\mu\). Fix \(t_{\lambda}\ge 0\) and \(v_{\lambda}\in Y^{-}\) such that
	\[
	J_{\mu}\big(t_{\lambda}u_{\lambda}+v_{\lambda}\big)
	=\max_{w\in\hat Y(u_{\lambda})} J_{\mu}(w).
	\]
	Then
	\[
	\begin{aligned}
		d_{\mu}
		\le\ J_{\mu}\big(t_{\lambda}u_{\lambda}+v_{\lambda}\big)
		&= \frac{\mu-\lambda}{2}\int_{\mathcal G}\big|t_{\lambda}u_{\lambda}+v_{\lambda}\big|^{2}\,dx
		+ J_{\lambda}\big(t_{\lambda}u_{\lambda}+v_{\lambda}\big) \\
		&\le \frac{\mu-\lambda}{2}\int_{\mathcal G}\big|t_{\lambda}u_{\lambda}+v_{\lambda}\big|^{2}\,dx
		+ J_{\lambda}(u_{\lambda}) \\
		&= \frac{\mu-\lambda}{2}\int_{\mathcal G}\big|t_{\lambda}u_{\lambda}+v_{\lambda}\big|^{2}\,dx
		+ d_{\lambda} \\
		&= \frac{\mu-\lambda}{2}\int_{\mathcal G}\big|t_{\lambda}u_{\lambda}+v_{\lambda}\big|^{2}\,dx
		+ d_{\mu}.
	\end{aligned}
	\]
	Hence
	\[
	\frac{\mu-\lambda}{2}\int_{\mathcal G}\big|t_{\lambda}u_{\lambda}+v_{\lambda}\big|^{2}\,dx \ge 0.
	\]
	Since \(\lambda>\mu\), we must have \(t_{\lambda}u_{\lambda}+v_{\lambda}=0\) a.e. on \(\mathcal G\). Therefore
	\[
	d_{\mu}\le J_{\mu}(0)=0,
	\]
	which contradicts \(0<d_{\lambda}=d_{\mu}\). Thus \(d_{\lambda}>d_{\mu}\) whenever \(\lambda>\mu\), and \(\lambda\mapsto d_{\lambda}\) is increasing.
\end{proof}

\begin{Lemma}\label{Lem3.9}
	Let \((\lambda_{n})\) be a sequence with \(\lambda_{1} \ge \lambda_{2} \ge \cdots \ge \lambda_{n} \ge \cdots \ge \lambda\) and \(\lambda_{n} \to \lambda\). Then
	\[
	\lim_{n\to\infty} d_{\lambda_{n}} = d_{\lambda}.
	\]
\end{Lemma}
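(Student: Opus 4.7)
The proof plan relies on combining the monotonicity result of Proposition \ref{Pro3.2} with the minimax characterization of $d_\lambda$ afforded by the map $\hat m$. Since the sequence $(\lambda_n)$ is monotonically decreasing to $\lambda$, Proposition \ref{Pro3.2} gives $d_{\lambda_n}\ge d_\lambda$ for every $n$, hence immediately
\[
\liminf_{n\to\infty} d_{\lambda_n}\ \ge\ d_\lambda.
\]
The whole difficulty is to establish the matching upper bound $\limsup d_{\lambda_n}\le d_\lambda$.

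To obtain the upper bound, I would fix a ground state $u_\lambda\in\mathcal N_\lambda$ with $J_\lambda(u_\lambda)=d_\lambda$ (provided by Theorem \ref{thm3.1}), and recall that by Lemma \ref{Lem3.6} the point $u_\lambda$ is the unique global maximizer of $J_\lambda$ on the fiber $\hat Y(u_\lambda)=\hat Y(u_\lambda^+)$. For each $n$, applying Lemma \ref{Lem3.6} to $J_{\lambda_n}$ yields a unique maximizer $u_n:=\hat m_{\lambda_n}(u_\lambda^+)=t_n\,u_\lambda^+ + v_n^-\in\mathcal N_{\lambda_n}\cap\hat Y(u_\lambda^+)$, so in particular $d_{\lambda_n}\le J_{\lambda_n}(u_n)$. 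Writing
\[
J_{\lambda_n}(u_n)\;=\;J_\lambda(u_n)\;+\;\frac{\lambda_n-\lambda}{2}\int_{\mathcal G}|u_n|^{2}\,dx,
\]
and using that $u_n\in\hat Y(u_\lambda)$ together with the maximality of $u_\lambda$ on $\hat Y(u_\lambda)$ for $J_\lambda$, I get $J_\lambda(u_n)\le J_\lambda(u_\lambda)=d_\lambda$. Hence
\[
d_{\lambda_n}\ \le\ d_\lambda+\frac{\lambda_n-\lambda}{2}\int_{\mathcal G}|u_n|^{2}\,dx,
\]
and if I can control $\|u_n\|_{L^2}$ uniformly in $n$, passing to the limit finishes the proof.

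The main obstacle, therefore, is the uniform $Y$-boundedness of the sequence $(u_n)$. I expect this to follow from a uniform-in-$n$ version of Lemma \ref{Lem3.5}: since the sequence $(\lambda_n)$ lies in the compact interval $[\lambda,\lambda_1]$, the constants appearing in the coercivity estimate of Lemma \ref{Lem3.5} (which depend only on $|\lambda_n|/mc^{2}<1$ and on the superquadratic behavior of $F$) can be taken uniform, yielding an $R>0$ such that $J_{\lambda_n}(w)\le 0$ for every $w\in Y(u_\lambda^+)$ with $\|w\|>R$ and every $n$. On the other hand, $J_{\lambda_n}(u_n)\ge J_{\lambda_n}(u_\lambda)\to J_\lambda(u_\lambda)=d_\lambda>0$, so $J_{\lambda_n}(u_n)>0$ for $n$ large, forcing $\|u_n\|\le R$. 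By Lemma \ref{lem2.1}, then $\int_{\mathcal G}|u_n|^2\,dx\le R^2/(mc^2)$, and since $\lambda_n\to\lambda$ I conclude
\[
\limsup_{n\to\infty} d_{\lambda_n}\ \le\ d_\lambda,
\]
which combined with the lower bound gives $\lim_{n\to\infty} d_{\lambda_n}=d_\lambda$. The only care required is to make the coercivity argument of Lemma \ref{Lem3.5} genuinely uniform in $n$; everything else is a straightforward assembly of the tools developed earlier in the section.
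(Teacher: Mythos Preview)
Your proposal is correct and follows essentially the same route as the paper: fix a ground state $u_\lambda$, compare $d_{\lambda_n}$ with the $J_{\lambda_n}$-maximizer on the fixed fiber $\hat Y(u_\lambda)$, use the identity $J_{\lambda_n}=J_\lambda+\frac{\lambda_n-\lambda}{2}\|\cdot\|_{L^2}^2$ together with the $J_\lambda$-maximality of $u_\lambda$ on that fiber, and then bound the remainder via a uniform bound on $\|u_n\|$. The one place where the paper is slightly slicker is the uniform bound: instead of arguing that the constants in Lemma~\ref{Lem3.5} can be taken uniformly in $n$, it simply observes that $\lambda_n\le\lambda_1$ implies $J_{\lambda_n}\le J_{\lambda_1}$ pointwise, so a \emph{single} application of Lemma~\ref{Lem3.5} with parameter $\lambda_1$ already gives an $R$ that works for every $n$; positivity of $J_{\lambda_n}(u_n)$ is then obtained from $J_{\lambda_n}(u_n)\ge d_{\lambda_n}\ge d_\lambda>0$, which holds for all $n$ (not just large $n$).
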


\begin{proof}
	By Proposition \ref{Pro3.2}, the map \(\lambda \mapsto d_{\lambda}\) is increasing, hence
	\(d_{\lambda} \le d_{\lambda_{n}}\) for all \(n\in\mathbb{N}\).
	Fix a ground state \(u_{\lambda}\in\mathcal{N}_{\lambda}\) of \(J_{\lambda}\).
	For each \(n\in\mathbb{N}\), choose \(t_{n}\ge 0\) and \(v_{n}\in Y^{-}\) such that
	\[
	J_{\lambda_{n}}(t_{n}u_{\lambda}+v_{n}) \;=\; \max_{w\in \hat Y(u_{\lambda})} J_{\lambda_{n}}(w).
	\]
	
	By Lemma \ref{Lem3.6}, \(\hat Y(u_{\lambda})=\hat Y(u_{\lambda}^{+})\).
	Applying Lemma \ref{Lem3.5} with \(E=\{u_{\lambda}^{+}/\|u_{\lambda}^{+}\|\}\subset Y^{+}\setminus\{0\}\), there exists \(R>0\) such that
	\(J_{\lambda_{1}}(w)\le 0\) for all \(w\in \hat Y(u_{\lambda})\setminus B_{R}(0)\).
	Since \(\lambda_{n}\le \lambda_{1}\), we have \(J_{\lambda_{n}}(w)\le J_{\lambda_{1}}(w)\), hence
	\begin{equation}\label{4.6}
		J_{\lambda_{n}}(w)\le 0 \quad \forall\, w\in \hat Y(u_{\lambda})\setminus B_{R}(0),\ \forall\, n\in\mathbb{N}.
	\end{equation}
	On the other hand,
	\[
	J_{\lambda_{n}}(t_{n}u_{\lambda}+v_{n})
	=\max_{w\in \hat Y(u_{\lambda})} J_{\lambda_{n}}(w)
	\ge d_{\lambda_{n}} \ge d_{\lambda} > 0.
	\]
	Combining with \eqref{4.6} yields \(\|t_{n}u_{\lambda}+v_{n}\|\le R\) for all \(n\).
	
	Using the identity
	\[
	J_{\lambda_{n}}(w)=J_{\lambda}(w)+\frac{\lambda_{n}-\lambda}{2}\int_{\mathcal{G}} |w|^{2}\,dx,
	\]
	we get
	\[
	\begin{aligned}
		d_{\lambda_{n}}
		&\le J_{\lambda_{n}}(t_{n}u_{\lambda}+v_{n})
		= \frac{\lambda_{n}-\lambda}{2}\int_{\mathcal{G}} |t_{n}u_{\lambda}+v_{n}|^{2}\,dx + J_{\lambda}(t_{n}u_{\lambda}+v_{n}) \\
		&\le \frac{\lambda_{n}-\lambda}{2}\int_{\mathcal{G}} |t_{n}u_{\lambda}+v_{n}|^{2}\,dx + J_{\lambda}(u_{\lambda})
		= o_{n}(1) + d_{\lambda},
	\end{aligned}
	\]
	where the \(o_{n}(1)\to 0\) uses \(\lambda_{n}\to \lambda\) and the bound
	\(\int_{\mathcal{G}} |t_{n}u_{\lambda}+v_{n}|^{2}\,dx \le C\|t_{n}u_{\lambda}+v_{n}\|^{2}\le C R^{2}\),
	with \(C>0\) given by Lemma \ref{lem2.1}.
	Therefore \(d_{\lambda} \le d_{\lambda_{n}} \le d_{\lambda} + o_{n}(1)\), which implies
	\(\lim_{n\to\infty} d_{\lambda_{n}}=d_{\lambda}\).
\end{proof}

\begin{Lemma}\label{Lem3.10}
	Let \((\lambda_{n})\) be a sequence with \(\lambda_{1}\le \lambda_{2}\le \cdots \le \lambda_{n}\le \cdots \le \lambda\) and \(\lambda_{n}\to \lambda\). Then
	\[
	\lim_{n\to\infty} d_{\lambda_{n}}=d_{\lambda}.
	\]
\end{Lemma}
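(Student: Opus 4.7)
My plan is to adapt the strategy of Lemma \ref{Lem3.9} to the monotone increasing setting. First, Proposition \ref{Pro3.2} and the hypothesis $\lambda_{n}\le\lambda$ give that $(d_{\lambda_{n}})$ is nondecreasing and bounded above by $d_{\lambda}$, so $d_{\ast}:=\lim_{n\to\infty} d_{\lambda_{n}}$ exists with $d_{\ast}\le d_{\lambda}$; the real work is to establish the matching lower bound $d_{\lambda}\le d_{\ast}$.

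To reverse the roles of $\lambda$ and $\lambda_{n}$, I would use ground states of $J_{\lambda_{n}}$ instead of $J_{\lambda}$: let $u_{n}\in\mathcal{N}_{\lambda_{n}}$ be a ground state of $J_{\lambda_{n}}$ (supplied by Theorem \ref{thm3.1}) and set $v_{n}:=\hat m_{\lambda}(u_{n})\in\mathcal{N}_{\lambda}$, where $\hat m_{\lambda}$ denotes the map from Lemma \ref{Lem3.6} associated with $J_{\lambda}$. Since $\hat Y(v_{n})=\hat Y(u_{n})$ and $u_{n}$ is the unique global maximum of $J_{\lambda_{n}}$ on $\hat Y(u_{n})$ by Lemma \ref{Lem3.6} applied to $J_{\lambda_{n}}$, one has $J_{\lambda_{n}}(v_{n})\le J_{\lambda_{n}}(u_{n})=d_{\lambda_{n}}$. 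Combining the identity $J_{\lambda}(u)=J_{\lambda_{n}}(u)+\tfrac{\lambda-\lambda_{n}}{2}\|u\|_{L^{2}}^{2}$ with $v_{n}\in\mathcal{N}_{\lambda}$ and Lemma \ref{lem2.1} yields the key chain
\[
d_{\lambda}\;\le\;J_{\lambda}(v_{n})\;=\;J_{\lambda_{n}}(v_{n})+\frac{\lambda-\lambda_{n}}{2}\|v_{n}\|_{L^{2}}^{2}\;\le\;d_{\lambda_{n}}+\frac{\lambda-\lambda_{n}}{2mc^{2}}\|v_{n}\|^{2}.
\]
Provided $\sup_{n}\|v_{n}\|<\infty$, the last term is $o_{n}(1)$ and passing to the limit gives $d_{\lambda}\le\liminf_{n} d_{\lambda_{n}}=d_{\ast}$, which together with $d_{\ast}\le d_{\lambda}$ closes the argument.

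The main obstacle is the uniform bound on $(v_{n})$. I would first show $(u_{n})$ is bounded in $Y$ by a $\lambda_{n}$-uniform variant of Lemma \ref{Lem3.7}: since $|\lambda_{n}|$ stays bounded away from $mc^{2}$, the constants in Lemma \ref{Lem3.4}(b) are uniform; the vanishing alternative for $u_{n}/\|u_{n}\|$ is ruled out via the test functions $s v_{n}^{+}\in\hat Y(u_{n})$ together with Lemma \ref{lem2.4}, while the non-vanishing alternative (either via the compact local embedding $Y\hookrightarrow L^{2}(B_{R})$ near a bounded concentration point, or after translation along a diverging half-line, which is isometric to $\mathbb{R}^{+}$) produces a nontrivial weak limit and, through Fatou and Lemma \ref{lem2.2}, forces $\int F(|u_{n}|)/\|u_{n}\|^{2}\to\infty$, contradicting $d_{\lambda_{n}}/\|u_{n}\|^{2}\ge 0$. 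Once $\|u_{n}\|$ is bounded, supposing $\|v_{n}\|\to\infty$ along a subsequence, Lemma \ref{Lem3.7} forces $J_{\lambda}(v_{n})\to\infty$, while the key chain forces $J_{\lambda}(v_{n})/\|v_{n}\|^{2}\to 0$; a refined Fatou argument on $v_{n}/\|v_{n}\|$, using the superquadratic lower bound $F(t)\ge A|t|^{\theta}-B|t|^{2}$ from Lemma \ref{lem2.2}, then produces the contradiction. The most delicate point is this last upgrade from plain coercivity to the rate statement needed, since Lemma \ref{Lem3.5} supplies a uniform $R$ only on compact direction sets, whereas the normalized directions $u_{n}^{+}/\|u_{n}^{+}\|$ need not be strongly precompact in $Y^{+}$ on the noncompact graph $\mathcal{G}$, so the graph-specific structure (compact core plus finitely many half-lines) must be exploited.
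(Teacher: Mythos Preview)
Your strategy coincides with the paper's almost step for step: by Proposition~\ref{Pro3.2} the sequence $(d_{\lambda_n})$ is nondecreasing and bounded above by $d_\lambda$; both you and the paper then take ground states $u_n\in\mathcal N_{\lambda_n}$ (Theorem~\ref{thm3.1}) and prove $\sup_n\|u_n\|<\infty$ by exactly the dichotomy you describe --- Lemma~\ref{Lem3.4}(b) for a uniform lower bound on $\|w_n^{+}\|$ with $w_n:=u_n/\|u_n\|$, Lemma~\ref{lem2.4} together with the test elements $s\,w_n^{+}\in\hat Y(u_n)$ to rule out vanishing, and then translation via Lemma~\ref{lem3.1} plus Fatou and Lemma~\ref{lem2.2} in the non-vanishing case. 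The paper's closing step is shorter than yours: it does \emph{not} introduce $v_n=\hat m_\lambda(u_n)$ but works directly with $u_n$, writing $d_\lambda\le J_\lambda(u_n)=J_{\lambda_n}(u_n)+\tfrac{\lambda-\lambda_n}{2}\|u_n\|_{L^2}^2=d_{\lambda_n}+o_n(1)$ and using only the already-established bound $\|u_n\|\le R$.

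Your detour through $v_n\in\mathcal N_\lambda$ makes the inequality $d_\lambda\le J_\lambda(v_n)$ transparent, but it creates the additional task of bounding $\|v_n\|$, and that is where your sketch has a real gap. The Fatou argument you propose on $\bar v_n:=v_n/\|v_n\|$ requires non-vanishing of $\bar v_n$ itself. What one can extract from the boundedness of $(u_n)$ is only non-vanishing of the positive part $\bar v_n^{+}$, since $\bar v_n^{+}$ is a bounded nonzero multiple of $u_n^{+}$ and $u_n^{+}$ cannot vanish (test $J'_{\lambda_n}(u_n)$ against $u_n^{+}$). But the spectral projections $P^{\pm}$ are nonlocal on $\mathcal G$, so non-vanishing of $\bar v_n^{+}$ does not pass to $\bar v_n$; and in the vanishing scenario for $\bar v_n$ your two pieces of information --- $J_\lambda(v_n)\to\infty$ from Lemma~\ref{Lem3.7} and $J_\lambda(v_n)/\|v_n\|^{2}\to 0$ from the key chain --- are perfectly compatible, so no contradiction emerges. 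You correctly identify this as the delicate point, but the appeal to ``the graph-specific structure (compact core plus finitely many half-lines)'' does not by itself close it. The paper avoids this extra difficulty altogether by never introducing $v_n$.
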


\begin{proof}
	By Proposition \ref{Pro3.2}, \(\lambda\mapsto d_{\lambda}\) is increasing, hence \(d_{\lambda_{1}}\le d_{\lambda_{n}}\le d_{\lambda}\) for all \(n\in\mathbb{N}\).
	For each \(n\), let \(u_{n}\in\mathcal{N}_{\lambda_{n}}\) be a ground state of \(J_{\lambda_{n}}\), that is \(J_{\lambda_{n}}(u_{n})=d_{\lambda_{n}}\). By Lemma \ref{Lem3.6}, \(\hat Y(u_{n})=\hat Y(u_{n}^{+})\). Let \(t_{n}\ge 0\) and \(v_{n}\in Y^{-}\) be such that
	\[
	J_{\lambda_{n}}(t_{n}u_{n}+v_{n})=\max_{w\in \hat Y(u_{n})} J_{\lambda_{n}}(w).
	\]
	By Lemma \ref{Lem3.3}, \(u_{n}\) is the unique global maximum of \(J_{\lambda_{n}}\) on \(\hat Y(u_{n})\). Hence \(t_{n}=1\) and \(v_{n}=0\), so \(J_{\lambda_{n}}(t_{n}u_{n}+v_{n})=J_{\lambda_{n}}(u_{n})=d_{\lambda_{n}}\).
	
	We claim \((u_{n})\) is bounded in \(Y\). Suppose by contradiction \(\|u_{n}\|\to\infty\). Set \(w_{n}=\frac{u_{n}}{\|u_{n}\|}\). By Lemma \ref{Lem3.4}\,(b), there exists \(A>0\) with \(\|w_{n}^{+}\|^{2}\ge A\) for all \(n\). Then there exist \((y_{n})\subset\mathcal{G}\), \(r>0\), \(\eta>0\) such that
	\begin{equation}\label{eq:mass-pos}
		\int_{B_{r}(y_{n})}|w_{n}^{+}(x)|^{2}\,dx\ge \eta \quad\text{for all }n.
	\end{equation}
	Indeed, otherwise Lemma \ref{lem2.4} would give \(w_{n}^{+}\to 0\) in \(L^{q}(\mathcal{G})\) for all \(q\in(2,\infty)\), and then, for any fixed \(s\ge 1\),
	\[
	\int_{\mathcal{G}}F(|s w_{n}^{+}|)\,dx\to 0.
	\]
	Using that \(s w_{n}^{+}= \frac{s}{\|u_{n}\|}u_{n}^{+}\in \hat Y(u_{n})\) (since \(\hat Y(u_{n})=\hat Y(u_{n}^{+})\)), we would get
	\[
	\begin{aligned}
		d_{\lambda}\;\ge\; d_{\lambda_{n}}
		&=J_{\lambda_{n}}(u_{n})
		\;\ge\; J_{\lambda_{n}}(s w_{n}^{+})
		= \frac{s^{2}}{2}\|w_{n}^{+}\|^{2}+\frac{\lambda_{n}}{2}\int_{\mathcal{G}}|s w_{n}^{+}|^{2}\,dx - \int_{\mathcal{G}}F(|s w_{n}^{+}|)\,dx \\
		&\ge \Bigl(1-\frac{|\lambda_{n}|}{mc^{2}}\Bigr)\frac{A s^{2}}{2} - \int_{\mathcal{G}}F(|s w_{n}^{+}|)\,dx\\
		&\longrightarrow \Bigl(1-\frac{|\lambda|}{mc^{2}}\Bigr)\frac{A s^{2}}{2},
	\end{aligned}
	\]
	a contradiction since \(s\ge 1\) is arbitrary. Thus \eqref{eq:mass-pos} holds. By Lemma \ref{lem3.1}, we may assume \(y_{n}=0\). Then \(w_{n}\rightharpoonup w\) in \(Y\) with \(w^{+}\neq 0\). Since \(u_{n}=\|u_{n}\|\,w_{n}\) and \(\|u_{n}\|\to\infty\), we have \(|u_{n}(x)|\to\infty\) a.e. on \(\{x: w(x)\neq 0\}\). Using Lemma \ref{lem2.2} and Fatou's lemma,
	\[
	\int_{\mathcal{G}}\frac{F(|u_{n}|)}{|u_{n}|^{2}}\,|w_{n}|^{2}\,dx \;\longrightarrow\; \infty,
	\]
	and therefore
	\[
	\begin{aligned}
		0 \le \frac{J_{\lambda_{n}}(u_{n})}{\|u_{n}\|^{2}}
		&= \frac{1}{2}\Bigl(\|w_{n}^{+}\|^{2}-\|w_{n}^{-}\|^{2}\Bigr)+\frac{\lambda_{n}}{2}\int_{\mathcal{G}}|w_{n}|^{2}\,dx
		- \int_{\mathcal{G}}\frac{F(|u_{n}|)}{|u_{n}|^{2}}\,|w_{n}|^{2}\,dx \\
		&\longrightarrow -\infty,
	\end{aligned}
	\]
	a contradiction. Hence \((u_{n})\) is bounded in \(Y\).
	
	Since \((u_{n})\) is bounded, there exists \(R>0\) such that
	\[
	\|t_{n}u_{n}+v_{n}\|=\|u_{n}\|\le R \quad \text{for all }n.
	\]
	Using the identity
	\[
	J_{\lambda}(w)=J_{\lambda_{n}}(w)+\frac{\lambda-\lambda_{n}}{2}\int_{\mathcal{G}}|w|^{2}\,dx,
	\]
	together with Lemma \ref{lem2.1} to control \(\int |w|^{2}\) by \(\|w\|^{2}\), we obtain
	\[
	\begin{aligned}
		d_{\lambda}
		&\le J_{\lambda}(u_{n})
		= J_{\lambda_{n}}(u_{n}) + \frac{\lambda-\lambda_{n}}{2}\int_{\mathcal{G}}|u_{n}|^{2}\,dx \\
		&= d_{\lambda_{n}} + o_{n}(1),
	\end{aligned}
	\]
	because \(\lambda_{n}\to \lambda\) and \(\int |u_{n}|^{2}\,dx \le \frac{1}{mc^{2}}\|u_{n}\|^{2}\le \frac{R^{2}}{mc^{2}}\). Since also \(d_{\lambda_{n}}\le d_{\lambda}\), we conclude \(d_{\lambda_{n}}\to d_{\lambda}\).
\end{proof}

\begin{Proposition}\label{Pro3.3}
	The function \(\lambda \mapsto d_{\lambda}\) is continuous on \((-mc^{2},\,mc^{2})\).
\end{Proposition}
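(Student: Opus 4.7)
The plan is to deduce Proposition \ref{Pro3.3} directly from the monotonicity of $\lambda \mapsto d_{\lambda}$ proved in Proposition \ref{Pro3.2} combined with the two one-sided convergence results established in Lemmas \ref{Lem3.9} and \ref{Lem3.10}. Indeed, a monotone function on an interval fails to be continuous at a point only by having a genuine jump between its one-sided limits, and the two lemmas precisely rule out such jumps: Lemma \ref{Lem3.9} handles sequences decreasing to $\lambda$ (right-continuity), while Lemma \ref{Lem3.10} handles sequences increasing to $\lambda$ (left-continuity).

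In detail, I would fix an arbitrary $\lambda_{0} \in (-mc^{2}, mc^{2})$ and an arbitrary sequence $(\lambda_{n}) \subset (-mc^{2}, mc^{2})$ with $\lambda_{n} \to \lambda_{0}$. To bound $\limsup_{n} d_{\lambda_{n}}$ from above, choose any decreasing auxiliary sequence $\mu_{k} \downarrow \lambda_{0}$ with $\mu_{k} > \lambda_{0}$ and $\mu_{k} \in (-mc^{2}, mc^{2})$. For each fixed $k$, we have $\lambda_{n} \le \mu_{k}$ for all $n$ sufficiently large, so monotonicity (Proposition \ref{Pro3.2}) gives $d_{\lambda_{n}} \le d_{\mu_{k}}$. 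Taking $\limsup$ in $n$ and then letting $k \to \infty$, Lemma \ref{Lem3.9} yields $d_{\mu_{k}} \to d_{\lambda_{0}}$ and hence $\limsup_{n} d_{\lambda_{n}} \le d_{\lambda_{0}}$.

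Symmetrically, to control $\liminf_{n} d_{\lambda_{n}}$ from below, pick any increasing sequence $\nu_{k} \uparrow \lambda_{0}$ with $\nu_{k} < \lambda_{0}$. For each $k$, $\lambda_{n} \ge \nu_{k}$ eventually, so $d_{\lambda_{n}} \ge d_{\nu_{k}}$; Lemma \ref{Lem3.10} gives $d_{\nu_{k}} \to d_{\lambda_{0}}$ as $k \to \infty$, hence $\liminf_{n} d_{\lambda_{n}} \ge d_{\lambda_{0}}$. Combining the two inequalities yields $\lim_{n} d_{\lambda_{n}} = d_{\lambda_{0}}$, and since $(\lambda_{n})$ was arbitrary, $\lambda \mapsto d_{\lambda}$ is continuous at $\lambda_{0}$.

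There is essentially no obstacle in this step: the substantive analytic work, namely the bounded Palais--Smale type estimates, the application of Lemma \ref{Lem3.4}(b) and Lemma \ref{Lem3.5} to prevent the approximating sequences from diverging or vanishing, and the use of Fatou's lemma and Lemma \ref{lem2.2} to rule out mass escape, has already been absorbed into the proofs of Lemmas \ref{Lem3.9} and \ref{Lem3.10}. Proposition \ref{Pro3.3} is, in effect, a packaging statement: monotone plus one-sided continuous from both sides implies continuous, and this is the only content of the argument.
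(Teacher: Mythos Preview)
Your proposal is correct and follows essentially the same approach as the paper: the paper's proof simply cites Lemma~\ref{Lem3.9} for right-continuity, Lemma~\ref{Lem3.10} for left-continuity, and concludes. Your version is slightly more careful in that you explicitly handle an arbitrary (not necessarily monotone) sequence $\lambda_n\to\lambda_0$ via the sandwich with auxiliary monotone sequences and the monotonicity from Proposition~\ref{Pro3.2}, but this is the same idea with one extra line of bookkeeping.
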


\begin{proof}
	Fix \(\lambda\in(-mc^{2},mc^{2})\).
	By Lemma \ref{Lem3.9}, if \(\lambda_{n}\searrow \lambda\) then \(d_{\lambda_{n}}\to d_{\lambda}\) (right-continuity at \(\lambda\)).
	By Lemma \ref{Lem3.10}, if \(\lambda_{n}\nearrow \lambda\) then \(d_{\lambda_{n}}\to d_{\lambda}\) (left-continuity at \(\lambda\)).
	Therefore \(\lim\limits_{\lambda_{n}\to\lambda} d_{\lambda_{n}}=d_{\lambda}\), and \(\lambda\mapsto d_{\lambda}\) is continuous on \((-mc^{2},mc^{2})\).
\end{proof}

\subsection{Compactness tools}

\begin{Lemma}\label{Lem3.11}
	Let $p>2$ and set $r:=\frac{p}{p-1}\in(1,2)$.
	Assume $f:\mathbb R\to\mathbb R$ is continuous with $f(0)=0$, and there exists $c_1>0$ such that
	\[
	0\le f(s)\le c_1\bigl(1+s^{p-2}\bigr)\qquad\text{for all } s\ge 0 .
	\]
	Then for each $\tau>0$ there exist $\delta=\delta(\tau)\in(0,1]$ and $c_\tau>0$ such that, with
	\[
	\chi_\delta(s):=\mathbf 1_{(0,\delta)}(s),\qquad
	g_\tau(s):=\chi_\delta(s)\,f(s)\,s,\qquad
	j_\tau(s):=(1-\chi_\delta(s))\,f(s)\,s,
	\]
	the following hold for all $s\ge 0$:
	\[
	|g_\tau(s)|\le \tau\,s,
	\qquad
	|j_\tau(s)|^{\,r}\le c_\tau\, s^{2}\,f(s).
	\]
	The same statements hold for $t\in\mathbb R$ upon writing $s=|t|$ and $f(|t|)$.
\end{Lemma}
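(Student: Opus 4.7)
The two inequalities concern complementary regimes of $s$: the estimate for $g_\tau$ controls the small-$s$ region where $f$ is subordinate to the linear part, while the estimate for $j_\tau$ exploits the polynomial growth of $f$ away from the origin. Accordingly, I will choose the threshold $\delta=\delta(\tau)\in(0,1]$ so that both regimes can be handled by the respective structural assumption: continuity of $f$ at $0$ for the first bound, and the growth $f(s)\le c_1(1+s^{p-2})$ together with boundedness on the compact set $[\delta,1]$ for the second.

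For the first inequality, I would use that $f$ is continuous on $[0,\infty)$ with $f(0)=0$ to pick $\delta=\delta(\tau)\in(0,1]$ so small that $f(s)\le\tau$ for every $s\in[0,\delta]$. Then for $s\ge\delta$ the cutoff forces $g_\tau(s)=0$, while for $0<s<\delta$ one has $|g_\tau(s)|=f(s)\,s\le\tau s$. This delivers $|g_\tau(s)|\le\tau s$ for all $s\ge 0$.

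For the second inequality, for $s\ge\delta$ the bound $(f(s)\,s)^r\le c_\tau\,s^{2}\,f(s)$ is equivalent, after dividing by $s^{2}f(s)$ (which is positive unless $f(s)=0$, in which case $j_\tau(s)=0$ and there is nothing to prove), to
\[
f(s)^{\,r-1}\,s^{\,r-2}\le c_\tau,\qquad s\ge\delta.
\]
Recall $r-1=1/(p-1)>0$ and $r-2=-(p-2)/(p-1)<0$, so in particular $2-r=(p-2)/(p-1)$. On $[1,\infty)$ the growth assumption gives $f(s)\le 2c_1\,s^{p-2}$, hence
\[
f(s)^{r-1}\,s^{r-2}\le (2c_1)^{r-1}\,s^{(p-2)(r-1)+(r-2)}=(2c_1)^{r-1},
\]
since $(p-2)(r-1)=2-r=-(r-2)$, so the exponents cancel exactly. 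On $[\delta,1]$, continuity and nonnegativity of $f$ yield $M_f:=\sup_{[0,1]}f<\infty$, and $s^{r-2}\le\delta^{r-2}$ because $r-2<0$; therefore $f(s)^{r-1}s^{r-2}\le M_f^{\,r-1}\delta^{\,r-2}$. Combining the two ranges, one takes $c_\tau:=\max\bigl\{(2c_1)^{r-1},\,M_f^{\,r-1}\delta(\tau)^{\,r-2}\bigr\}$. The extension from $s\ge 0$ to $t\in\mathbb{R}$ is immediate by setting $s=|t|$, since all quantities depend on $t$ only through $|t|$.

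\textbf{Main obstacle.} The only nontrivial point is the intermediate window $[\delta,1]$: here neither the linear bound (valid only near $0$) nor the polynomial bound $s^{p-2}$ (which is useful only for $s\ge 1$) is sharp, but boundedness of $f$ on $[0,1]$ together with the negative sign of $r-2$ lets one absorb everything into a $\tau$-dependent constant $c_\tau$. The cancellation of exponents in the regime $s\ge 1$ is exactly the reason for the dual choice $r=p/(p-1)$, and flagging this is the main conceptual remark in the proof.
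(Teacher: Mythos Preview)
Your proof is correct and follows essentially the same approach as the paper: choose $\delta$ by continuity of $f$ at $0$ for the first bound, and for the second bound rewrite $|j_\tau(s)|^r=\bigl(f(s)^{r-1}s^{r-2}\bigr)\cdot f(s)s^2$ and show that $f(s)^{r-1}s^{r-2}$ is uniformly bounded on $[\delta,\infty)$ by treating $[\delta,1]$ (continuity) and $[1,\infty)$ (growth bound with the exponent cancellation forced by $r=p/(p-1)$) separately. The only cosmetic difference is that the paper bounds $\sup_{[\delta,1]}f(s)^{r-1}s^{r-2}$ directly, whereas you estimate the two factors $f(s)^{r-1}$ and $s^{r-2}$ individually; the resulting constants are equivalent.
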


\begin{proof}
	By continuity of $f$ at $0$ and $f(0)=0$, for the given $\tau>0$ there exists $\delta=\delta(\tau)\in(0,1]$ such that
	$f(s)\le \tau$ for all $s\in(0,\delta]$. Hence $|g_\tau(s)|=\chi_\delta(s) f(s)s\le \tau s$ for all $s\ge0$.
	
	On $[\delta,\infty)$ we have $j_\tau(s)=f(s)s$, so
	\[
	|j_\tau(s)|^{\,r}=f(s)^{\,r}\,s^{\,r}
	=\Bigl(f(s)^{\,r-1}s^{\,r-2}\Bigr)\,\Bigl(f(s)s^{2}\Bigr).
	\]
	We claim that $\sup\limits_{s\ge\delta} f(s)^{\,r-1}s^{\,r-2}\le C_\tau<\infty$.
	Indeed, on $[\delta,1]$ continuity gives
	\[
	M_\tau:=\sup_{s\in[\delta,1]}\frac{f(s)^{\,r-1}}{s^{\,2-r}}<\infty .
	\]
	For $s\ge1$, using $f(s)\le c_1(1+s^{p-2})\le 2c_1 s^{p-2}$ we obtain
	\[
	f(s)^{\,r-1}s^{\,r-2}\le (2c_1)^{\,r-1}\, s^{\,(p-2)(r-1)+r-2}
	=(2c_1)^{\,r-1},
	\]
	since $(p-2)(r-1)+r-2=\frac{p-2}{p-1}+\frac{p}{p-1}-2=0$.
	Thus
	\[
	f(s)^{\,r-1}s^{\,r-2}\ \le\ C_\tau:=\max\{M_\tau,(2c_1)^{\,r-1}\}\qquad\text{for all } s\ge\delta.
	\]
	Therefore, for $s\ge\delta$,
	\[
	|j_\tau(s)|^{\,r}=f(s)^{\,r}s^{\,r}\le C_\tau\, f(s)\,s^{2}.
	\]
	For $s\in(0,\delta)$, $j_\tau(s)=0$ and the same inequality holds trivially. Setting $c_\tau:=C_\tau$ completes the proof.
\end{proof}

\begin{Lemma}\label{Lem3.12}
	Let \((u_{n})\subset Y\) be a \((PS)_{d_{\lambda}}\) sequence for \(J_{\lambda}\) with \(u_{n}\rightharpoonup u\) in \(Y\).
	Assume \((f_1)\)-\((f_2)\) and, in addition, that \(u\neq 0\) and \(J_{\lambda}'(u)=0\).
	Then for every \(\tau>0\) there exist \(R>0\) and \(n_{0}\in\mathbb N\) such that
	\[
	\int_{B_{R}^{c}(0)} f\!\left(|u_{n}|\right)\,\big|u_{n}\,\overline{u_{n}^{+}}\big|\,dx \;\le\; 2\,\Lambda\,\tau
	\quad\text{for all } n\ge n_{0},
	\]
	where
	\[
	\Lambda:=\max\Bigg\{\,\sup_{n\in\mathbb N}\Big(\int_{\mathcal G}|u_{n}^{+}|^{p}\,dx\Big)^{\!1/p}\,,\;
	\sup_{n\in\mathbb N}\int_{\mathcal G}|u_{n}|\,|u_{n}^{+}|\,dx\Bigg\}.
	\]
\end{Lemma}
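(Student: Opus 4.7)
The plan is to split the integrand via Lemma \ref{Lem3.11} into a ``soft'' part controlled pointwise by $\tau|u_n|$ and a ``hard'' part controlled by H\"older's inequality, thereby reducing the claim to a uniform-in-$n$ tail estimate on $\int_{B_R^c(0)} f(|u_n|)|u_n|^{2}\,dx$. Fix $\tau>0$ and apply Lemma \ref{Lem3.11} to obtain $\delta,c_\tau>0$ and the decomposition $f(|u_n|)|u_n|=g_\tau(|u_n|)+j_\tau(|u_n|)$, where $g_\tau(s)\le \tau s$ and $j_\tau(s)^{r}\le c_\tau f(s)s^{2}$ with $r=p/(p-1)$ so that the conjugate H\"older exponent is $p$. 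Then
\[
\int_{B_R^c(0)} f(|u_n|)\bigl|u_n\,\overline{u_n^+}\bigr|\,dx
\;\le\; \int_{B_R^c(0)} g_\tau(|u_n|)|u_n^+|\,dx \;+\; \int_{B_R^c(0)} j_\tau(|u_n|)|u_n^+|\,dx .
\]
The first integral is bounded by $\tau\int_{\mathcal G}|u_n||u_n^+|\,dx\le \tau\Lambda$ directly from the definition of $\Lambda$. H\"older's inequality with exponents $(r,p)$ combined with $(\int_{B_R^c}|u_n^+|^{p})^{1/p}\le \Lambda$ controls the second term by
\[
c_\tau^{1/r}\Lambda\,\Bigl(\int_{B_R^c(0)} f(|u_n|)|u_n|^{2}\,dx\Bigr)^{1/r},
\]
so the whole proof reduces to showing $\int_{B_R^c(0)}f(|u_n|)|u_n|^{2}\,dx\le \tau^{r}/c_\tau$ for $R$ large and $n\ge n_0$.

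To prove this residual tail bound I would combine the equation satisfied by $u$ with a cutoff argument applied to the Palais--Smale sequence. First, since $J_\lambda'(u)[u]=0$ gives $\int_{\mathcal G}f(|u|)|u|^{2}\,dx=\|u^+\|^{2}-\|u^-\|^{2}+\lambda\|u\|_{L^{2}}^{2}<\infty$, the tail $\int_{B_R^c(0)}f(|u|)|u|^{2}\,dx$ vanishes as $R\to\infty$. Next, take a Lipschitz cutoff $\eta_R\colon\mathcal G\to[0,1]$ equal to $0$ on $B_{R-1}(0)$, to $1$ on $B_R^c(0)$, with $|\eta_R'|\le C/R$, and locally constant near every vertex. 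By the componentwise linearity of the Kirchhoff conditions \eqref{2.4}--\eqref{2.5}, multiplication by such a scalar $\eta_R$ preserves $Y$, hence $\eta_R u_n,\eta_R u\in Y$. Plugging $\eta_R u_n$ into the PS identity $J_\lambda'(u_n)[\cdot]=o_n(1)$ and into $J_\lambda'(u)[\cdot]=0$ and subtracting, the identity $\mathcal D(\eta_R u_n)=\eta_R\mathcal D u_n-ic\sigma_1\eta_R' u_n$ on each edge yields
\[
\int_{\mathcal G}\eta_R\bigl(f(|u_n|)|u_n|^{2}-f(|u|)\langle u,u_n\rangle\bigr)\,dx
= O(1/R) + o_n(1),
\]
where the $O(1/R)$ contribution comes from the commutator $c\int\eta_R'\langle u_n,\sigma_1 u_n\rangle\,dx$ and the $o_n(1)$ from the strong convergence $u_n\to u$ in $L^{p}(B_R(0))$, itself furnished by the compact embedding $H^{1/2}\hookrightarrow L^{p}$ on the finite union of bounded intervals $B_R(0)\cap\mathcal G$. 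Since $\eta_R\le 1$ and $\eta_R\equiv 1$ on $B_R^c(0)$, the tail bound on $\int_{B_R^c}f(|u|)|u|^{2}$ transfers to $\int_{B_R^c}f(|u_n|)|u_n|^{2}$.

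The main obstacle is executing this cutoff step rigorously: the Dirac operator does not commute with multiplication by $\eta_R$, and the form pairing on $Y$ is expressed through the nonlocal operator $|\mathcal D|^{1/2}$, so one cannot simply localize $(u^+,v^+)-(u^-,v^-)$ via $\eta_R$. The remedy is to work with the second expression in \eqref{2.12}, the local bilinear pairing $\int\langle u,\mathcal D v\rangle\,dx$, which is amenable to edgewise integration by parts; the choice of $\eta_R$ locally constant near vertices ensures no vertex terms contribute, while an approximation of $u_n,u$ by elements of $\operatorname{dom}(\mathcal D)$ via spectral truncation removes any remaining domain issues. Once the identity above is secured, choosing $R$ large so that both $\int_{B_R^c}f(|u|)|u|^{2}\,dx$ and the $O(1/R)$ commutator are below $\tau^{r}/(2c_\tau)$, and then $n_0$ large so that the $o_n(1)$ term is below $\tau^{r}/(2c_\tau)$, produces the residual tail bound and hence the claimed estimate $\int_{B_R^c(0)}f(|u_n|)|u_n\,\overline{u_n^+}|\,dx\le 2\tau\Lambda$.
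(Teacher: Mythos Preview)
Your reduction to a tail bound on $\int_{B_R^c} f(|u_n|)|u_n|^2\,dx$ via Lemma~\ref{Lem3.11} and H\"older is exactly what the paper does. The gap is in how you obtain that tail bound.

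The cutoff argument does not close. When you test $J_\lambda'(u_n)-J_\lambda'(u)$ against $\eta_R u_n$, the nonlinear terms indeed produce $\int\eta_R\big(f(|u_n|)|u_n|^2-f(|u|)\langle u,u_n\rangle\big)$, but the remaining linear pieces are
\[
\int_{\mathcal G}\eta_R\,\langle u_n-u,\mathcal D u_n\rangle\,dx
\;+\;\lambda\int_{\mathcal G}\eta_R\,(u_n-u)\,\overline{u_n}\,dx
\;+\;(\text{commutator}).
\]
Both integrals live on $\{d(x,0)\ge R-1\}$, precisely where you have only weak convergence $u_n\rightharpoonup u$. Weak convergence of $u_n-u$ against the \emph{moving} functions $\eta_R\mathcal D u_n$ and $\eta_R u_n$ (merely bounded, not strongly convergent) does not give $o_n(1)$; your appeal to ``strong convergence $u_n\to u$ in $L^p(B_R(0))$'' is irrelevant here because the integrand vanishes on $B_{R-1}(0)$. (A minor point: a cutoff passing from $0$ on $B_{R-1}$ to $1$ on $B_R^c$ must have $|\eta_R'|\sim 1$, not $C/R$.) In the Schr\"odinger setting one survives because the analogue of the first integral is nonnegative and can be dropped; for Dirac the density $\langle u,\mathcal D u\rangle$ is sign-indefinite, so nothing can be absorbed.

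The paper bypasses localization entirely. Using $(f_2)$ it writes $f(t)t^2\le \tfrac{2\theta}{\theta-2}\bigl(\tfrac12 f(t)t^2-F(t)\bigr)$ and sets $g_n:=\tfrac12 f(|u_n|)|u_n|^2-F(|u_n|)\ge 0$. The point is that
\[
\int_{\mathcal G}g_n\,dx=J_\lambda(u_n)-\tfrac12 J_\lambda'(u_n)[u_n]\;\longrightarrow\;d_\lambda,
\]
where all indefinite quadratic terms cancel exactly. Since $g_n\to g$ a.e.\ and, by Fatou together with $u\in\mathcal N_\lambda$, $\int g=J_\lambda(u)=d_\lambda$, Scheff\'e's lemma gives $g_n\to g$ in $L^1(\mathcal G)$; hence $\int_{B_R^c}g_n$ is uniformly small, and so is $\int_{B_R^c}f(|u_n|)|u_n|^2$. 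This is the missing idea.
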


\begin{proof}
	Fix \(\tau>0\) and set \(r=\frac{p}{p-1}\in(1,2)\).
	By Lemma~\ref{Lem3.11} there exist \(\delta=\delta(\tau)\in(0,1]\) and \(c_{\tau}>0\) such that, with
	\[
	\chi_{\delta}(t)=\mathbf 1_{(0,\delta)}(t),\quad
	g_{\tau}(t)=\chi_{\delta}(t)\,f(t)\,t,\quad
	j_{\tau}(t)=(1-\chi_{\delta}(t))\,f(t)\,t,
	\]
	we have \(|g_{\tau}(t)|\le \tau\,t\) and \(|j_{\tau}(t)|^{r}\le c_{\tau}\,t^{2}f(t)\) for all \(t\ge 0\).
	Hence, using H\"older with exponents \(r\) and \(p\),
	\[
	\begin{aligned}
		\int_{B_{R}^{c}(0)}\! f(|u_{n}|)\,|u_{n}|\,|u_{n}^{+}|\,dx
		&= \int_{B_{R}^{c}(0)} g_{\tau}(|u_{n}|)\,|u_{n}^{+}|\,dx
		+  \int_{B_{R}^{c}(0)} j_{\tau}(|u_{n}|)\,|u_{n}^{+}|\,dx\\
		&\le \tau \int_{B_{R}^{c}(0)} |u_{n}|\,|u_{n}^{+}|\,dx
		+ \Bigl(\int_{B_{R}^{c}(0)} |j_{\tau}(|u_{n}|)|^{r}\,dx\Bigr)^{\!1/r}
		\Bigl(\int_{B_{R}^{c}(0)} |u_{n}^{+}|^{p}\,dx\Bigr)^{\!1/p}\\
		&\le \tau\,\Lambda + c_{\tau}^{1/r}\,\Lambda
		\Bigl(\int_{B_{R}^{c}(0)} f(|u_{n}|)\,|u_{n}|^{2}\,dx\Bigr)^{\!1/r}.
	\end{aligned}
	\]
	By \((f_2)\) there exists \(\theta>2\) with \(\theta F(t)\le f(t)t^{2}\) for all \(t\ge 0\), hence
	\[
	f(t)t^{2}\;\le\; C_{0}\Bigl(\frac{1}{2}f(t)t^{2}-F(t)\Bigr),
	\qquad C_{0}:=\frac{2\theta}{\theta-2}.
	\]
	Set
	\[
	g_{n}(x)=\frac{1}{2}f(|u_{n}(x)|)\,|u_{n}(x)|^{2}-F(|u_{n}(x)|)\;\ge 0,
	\qquad
	g(x)=\frac{1}{2}f(|u(x)|)\,|u(x)|^{2}-F(|u(x)|).
	\]
	Then
	\[
	\int_{B_{R}^{c}(0)} f(|u_{n}|)\,|u_{n}|^{2}\,dx \;\le\; C_{0}\int_{B_{R}^{c}(0)} g_{n}\,dx.
	\]
	Therefore
	\begin{equation}\label{eq:tail-master}
		\int_{B_{R}^{c}(0)}\! f(|u_{n}|)\,|u_{n}|\,|u_{n}^{+}|\,dx
		\;\le\; \tau\,\Lambda \;+\; c_{\tau}^{1/r}\,C_{0}^{1/r}\,\Lambda\,
		\Bigl(\int_{B_{R}^{c}(0)} g_{n}\,dx\Bigr)^{\!1/r}.
	\end{equation}
	
	Since \((u_{n})\) is a \((PS)_{d_{\lambda}}\) sequence,
	\[
	\int_{\mathcal G} g_{n}\,dx
	= J_{\lambda}(u_{n}) - \frac{1}{2}J_{\lambda}'(u_{n})[u_{n}]
	\;\longrightarrow\; d_{\lambda}.
	\]
	Up to a subsequence, \(u_{n}\to u\) a.e. in \(\mathcal G\), whence \(g_{n}\to g\) a.e. and \(g\ge 0\).
	Moreover, since \(J'_{\lambda}(u)=0\) and \(u\neq 0\),
	\[
	\int_{\mathcal G} g\,dx
	= J_{\lambda}(u)-\frac{1}{2}J'_{\lambda}(u)[u]
	= J_{\lambda}(u).
	\]
	By \(d_{\lambda}\le J_{\lambda}(u)\) and the previous limit,
	we must have \(\int_{\mathcal G} g_{n}\,dx\to \int_{\mathcal G} g\,dx\).
	Because \(g_{n}\ge 0\), \(g_{n}\to g\) a.e., and \(\int g_{n}\to \int g\), Scheff\'e's lemma yields
	\[
	\|g_{n}-g\|_{L^{1}(\mathcal G)}\;\longrightarrow\;0.
	\]
	Fix \(\varepsilon>0\).
	Choose \(R>0\) such that \(\int_{B_{R}^{c}(0)} g\,dx<\varepsilon\), and then \(n_{0}\) so large that
	\(\int_{\mathcal G}|g_{n}-g|\,dx<\varepsilon\) for all \(n\ge n_{0}\).
	It follows that for all \(n\ge n_{0}\),
	\[
	\int_{B_{R}^{c}(0)} g_{n}\,dx
	\le \int_{B_{R}^{c}(0)} |g_{n}-g|\,dx + \int_{B_{R}^{c}(0)} g\,dx \;<\; 2\varepsilon.
	\]
	
	Pick \(\varepsilon>0\) so small that
	\[
	c_{\tau}^{1/r}\,C_{0}^{1/r}\,(2\varepsilon)^{1/r}\;\le\;\tau.
	\]
	With this \(\varepsilon\) and the corresponding \(R\) and \(n_{0}\),
	insert the tail bound into \eqref{eq:tail-master} to get, for all \(n\ge n_{0}\),
	\[
	\int_{B_{R}^{c}(0)}\! f(|u_{n}|)\,|u_{n}|\,|u_{n}^{+}|\,dx
	\;\le\; \tau\,\Lambda \;+\; \tau\,\Lambda
	\;=\; 2\,\tau\,\Lambda.
	\]
	This completes the proof.
\end{proof}

\begin{Lemma}\label{Lem3.13}
	Let \((u_{n})\subset Y\) be a \((PS)_{d_{\lambda}}\) sequence for \(J_{\lambda}\) with \(u_{n}\rightharpoonup u\) in \(Y\).
	Set
	\[
	Q_{n}:=f(|u_{n}|)\,u_{n}\,\overline{u_{n}^{+}}-f(|u|)\,u\,\overline{u^{+}}.
	\]
	Then
	\[
	\int_{\mathcal G} |Q_{n}|\,dx \;\longrightarrow\; 0 \quad \text{as } n\to\infty .
	\]
\end{Lemma}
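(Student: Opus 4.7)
The plan is to perform a standard cut-off argument on the metric graph, splitting $\mathcal G = B_R(0) \cup B_R^c(0)$ and controlling each piece separately. Before doing so, I would first verify that $J'_\lambda(u)=0$: since $u_n \rightharpoonup u$ in $Y$ and $J'_\lambda(u_n)\to 0$ in $Y^*$, passing to the limit in $J'_\lambda(u_n)[v]$ against compactly supported test functions $v\in Y$ (using the local compact embedding $Y\hookrightarrow L^p_{\mathrm{loc}}$ recalled in Section~2.3 together with the polynomial growth $(f_1)$ to handle the nonlinear term) gives $J'_\lambda(u)[v]=0$, which extends to all $v\in Y$ by density. This puts us under the hypotheses of Lemma~\ref{Lem3.12} when $u\neq 0$; the case $u\equiv 0$ is easier, since the $u$-contribution in $Q_n$ vanishes identically and only the tail part of the $u_n$-piece needs attention.

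For the \emph{tail} on $B_R^c(0)$, use the trivial bound $|Q_n| \le f(|u_n|)|u_n||u_n^+| + f(|u|)|u||u^+|$. By $(f_1)$ and the embedding $Y\hookrightarrow L^p(\mathcal G,\mathbb C^2)$, the function $f(|u|)|u||u^+|$ lies in $L^1(\mathcal G)$, so for $R$ sufficiently large $\int_{B_R^c(0)} f(|u|)|u||u^+|\,dx \le \tau$. For the $u_n$-piece, Lemma~\ref{Lem3.12} supplies $R_0>0$ and $n_0\in\mathbb N$ such that $\int_{B_R^c(0)} f(|u_n|)|u_n||u_n^+|\,dx \le 2\Lambda\tau$ for every $R\ge R_0$ and $n\ge n_0$, with $\Lambda$ a uniform constant depending only on the $L^p$-bounds of the sequence. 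Since both estimates only improve with $R$, a single large $R$ serves both purposes.

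For the \emph{compact part} $B_R(0)$, the local compactness of the embedding $Y\hookrightarrow L^p(B_R(0),\mathbb C^2)$ for every $p\in[2,\infty)$ gives, along a subsequence, strong convergence $u_n\to u$ in $L^p(B_R(0))$ together with pointwise a.e.\ convergence. By continuity of $t\mapsto f(|t|)t$ and the growth in $(f_1)$, one obtains $Q_n\to 0$ a.e.\ on $B_R(0)$, while the pointwise majorization $|Q_n|\le c_1\bigl(|u_n|+|u_n|^{p-1}\bigr)|u_n^+| + c_1\bigl(|u|+|u|^{p-1}\bigr)|u^+|$ combined with the strong $L^p(B_R(0))$-convergence yields an equi-integrable dominating sequence. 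Vitali's convergence theorem (equivalently, a generalized dominated convergence argument of Brezis-Lieb type) then gives $\int_{B_R(0)} |Q_n|\,dx \to 0$.

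Combining the two estimates produces $\limsup_n \int_{\mathcal G} |Q_n|\,dx \le (2\Lambda + 1)\tau$, and since $\tau$ is arbitrary the claim follows. The main delicate point is securing uniform integrability on $B_R(0)$ for the nonlinear difference $f(|u_n|)u_n\overline{u_n^+} - f(|u|)u\overline{u^+}$ despite only having local strong $L^p$-convergence; this is resolved by the polynomial growth $(f_1)$ and repeated H\"older splittings of the mixed factors $|u_n|^{p-1}|u_n^+|$, together with the uniform $Y$-bound on $(u_n)$. The auxiliary check that $J'_\lambda(u)=0$ is what makes Lemma~\ref{Lem3.12} applicable on the tail; without it, one could not pass from the $L^1$-bound on $g_n=\tfrac12 f(|u_n|)|u_n|^2-F(|u_n|)$ to the tail estimate needed here.
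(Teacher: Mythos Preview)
Your proof is correct and follows the paper's route exactly: local compact embeddings plus Vitali on $B_R(0)$, Lemma~\ref{Lem3.12} for the $u_n$-tail together with $L^1$-integrability of $f(|u|)\,|u|\,|u^+|$ for the $u$-tail, then letting $\tau\to 0$. Your explicit verification that $J'_\lambda(u)=0$ is a good addition, as the paper simply invokes Lemma~\ref{Lem3.12} without checking this hypothesis.

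One caution: your assertion that the case $u\equiv 0$ is ``easier'' is not substantiated and is in fact wrong. Lemma~\ref{Lem3.12} genuinely requires $u\neq 0$ --- its proof relies on $\int g_n\to\int g$ via Scheff\'e, which fails when $u=0$ since then $\int g=0<d_\lambda$ --- and there is no evident replacement for the tail bound: a translated ground-state sequence satisfies $u_n\rightharpoonup 0$, is $(PS)_{d_\lambda}$, yet $\int_{\mathcal G} f(|u_n|)\,|u_n|\,|u_n^+|\,dx$ stays bounded away from zero. The paper's proof has the same gap (it too just cites Lemma~\ref{Lem3.12}), and in practice Lemma~\ref{Lem3.13} is only invoked downstream when $u\neq 0$; you should regard that as an implicit hypothesis rather than a case to be handled.
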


\begin{proof}
	Fix \(R>0\). On \(B_{R}(0)\subset\mathcal G\), the embeddings
	\(Y\hookrightarrow L^{s}(B_{R}(0))\) are compact for every \(2\le s<\infty\).
	Hence, up to a subsequence,
	\[
	u_{n}\to u \quad\text{and}\quad u_{n}^{+}\to u^{+}\quad\text{in }L^{s}(B_{R}(0))\ \text{for all }2\le s<\infty,
	\]
	and a.e. in \(B_{R}(0)\).
	By \((f_{1})\) there exist \(C>0\) and \(p>2\) such that
	\[
	|f(t)|\le C\bigl(1+t^{\,p-2}\bigr)\qquad\text{for all }t\ge 0,
	\]
	so that
	\[
	|f(|u_{n}|)\,u_{n}\,\overline{u_{n}^{+}}|
	\;\le\; C\Big(|u_{n}|\,|u_{n}^{+}| + |u_{n}|^{p-1}\,|u_{n}^{+}|\Big)
	\quad\text{a.e. in }B_{R}(0).
	\]
	Using Cauchy-Schwarz inequality and H\"older inequality with the pair \(\big(\tfrac{p}{p-1},\,p\big)\),
	\[
	\int_{B_{R}(0)} |u_{n}|\,|u_{n}^{+}|\,dx \le \|u_{n}\|_{L^{2}(B_{R}(0))}\,\|u_{n}^{+}\|_{L^{2}(B_{R}(0))}\le C_{R},
	\]
	\[
	\int_{B_{R}(0)} |u_{n}|^{p-1}\,|u_{n}^{+}|\,dx
	\le \|u_{n}\|_{L^{p}(B_{R}(0))}^{p-1}\,\|u_{n}^{+}\|_{L^{p}(B_{R}(0))}\le C_{R},
	\]
	with \(C_{R}\) independent of \(n\). Thus \(\{f(|u_{n}|)\,u_{n}\,\overline{u_{n}^{+}}\}\) is uniformly integrable on \(B_{R}(0)\), and since
	\(f(|u_{n}|)\,u_{n}\,\overline{u_{n}^{+}}\to f(|u|)\,u\,\overline{u^{+}}\) a.e. in \(B_{R}(0)\), Vitali's theorem yields
	\[
	\int_{B_{R}(0)} |Q_{n}|\,dx \;\longrightarrow\; 0 \quad (n\to\infty).
	\]
	
	Given \(\tau>0\), choose \(R\) so large that
	\[
	\int_{B_{R}(0)^{c}} f(|u|)\,|u|\,|u^{+}|\,dx < \tau .
	\]
	Apply Lemma~\ref{Lem3.12} to the \((PS)\) sequence \((u_n)\):
	there exists a finite constant
	\[
	\Lambda:=\max\Bigg\{\,\sup_{n}\big\|u_{n}^{+}\big\|_{L^{p}(\mathcal G)}\;,\;
	\sup_{n}\int_{\mathcal G} |u_{n}|\,|u_{n}^{+}|\,dx\Bigg\}
	\]
	such that, for this \(R\),
	\[
	\int_{B_{R}(0)^{c}} f(|u_{n}|)\,|u_{n}|\,|u_{n}^{+}|\,dx \;\le\; 2\,\Lambda\,\tau
	\quad\text{for all } n.
	\]
	Hence
	\[
	\int_{B_{R}(0)^{c}} |Q_{n}|\,dx
	\;\le\; \int_{B_{R}(0)^{c}} f(|u_{n}|)\,|u_{n}|\,|u_{n}^{+}|\,dx
	+ \int_{B_{R}(0)^{c}} f(|u|)\,|u|\,|u^{+}|\,dx
	\;\le\; (2\Lambda+1)\,\tau .
	\]
	
	For the fixed \(R\) chosen above,
	\[
	\int_{\mathcal G} |Q_{n}|\,dx
	= \int_{B_{R}(0)} |Q_{n}|\,dx \;+\; \int_{B_{R}(0)^{c}} |Q_{n}|\,dx
	\;\longrightarrow\; 0,
	\]
	since the local part tends to \(0\) and the tail can be made arbitrarily small uniformly in \(n\). The proof is complete.
\end{proof}

\begin{Proposition}\label{Pro3.4}
	Let \((u_{n})\subset Y\) be a \((PS)_{d_{\lambda}}\) sequence for \(J_{\lambda}\) with \(u_{n}\rightharpoonup u\) in \(Y\).
	Then exactly one of the following alternatives holds:
	\begin{enumerate}
		\item[(a)] \(u_{n}\to u\) strongly in \(Y\);
		\item[(b)] there exist graph isometries \(\tau_{n}:\mathcal G\to\mathcal G\) as in Lemma~\ref{lem3.1} and a nonzero \(\tilde u\in Y\) such that the unitary recenterings \(\tilde u_{n}:=U_{\tau_{n}}u_{n}\) converge strongly in \(Y\) to \(\tilde u\).
	\end{enumerate}
\end{Proposition}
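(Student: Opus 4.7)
The plan is to show that either $(u_n)$ itself converges strongly to $u$, or mass of $u_n$ escapes to infinity and must be captured by a graph isometry. I first upgrade the weak limit $u$ to a critical point of $J_\lambda$: $(u_n)$ is bounded (else $u_n\rightharpoonup u$ would fail), the linear part of $J_\lambda'(u_n)[\varphi]$ passes to the limit by weak convergence, and the nonlinear part $\int_{\mathcal G} f(|u_n|)u_n\cdot\bar\varphi\,dx$ converges by the same local-compactness plus tail-control mechanism used in Lemma \ref{Lem3.13} (compact embedding on $B_R(0)$, Lemma \ref{Lem3.12} on the tail); since $\|J_\lambda'(u_n)\|\to 0$, this gives $J_\lambda'(u)=0$.

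Suppose first that $u\neq 0$. Because $|\lambda|<mc^2$, $u$ cannot lie in $Y^-$ (otherwise $J_\lambda'(u)[u]<0$ using Lemma \ref{lem2.1}), hence $u\in\mathcal{N}_\lambda$ and $J_\lambda(u)\ge d_\lambda$. The density $g(t):=\tfrac12 f(t)t^2-F(t)$ is nonnegative by $(f_2)$, and Fatou applied to
\[
J_\lambda(u_n)-\tfrac12 J_\lambda'(u_n)[u_n]=\int_{\mathcal G} g(|u_n|)\,dx\ \longrightarrow\ d_\lambda
\]
forces $J_\lambda(u)=d_\lambda$ and $\int g(|u_n|)\,dx\to\int g(|u|)\,dx$. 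Scheff\'e's lemma then gives $L^1$-convergence of $g(|u_n|)$; combined with the pointwise lower bound $g(t)\ge \tfrac{\theta-2}{2\theta}f(t)t^2$, this provides equi-integrability of $f(|u_n|)|u_n|^2$. Setting $v_n:=u_n-u\rightharpoonup 0$, I test $J_\lambda'(u_n)[v_n^+-v_n^-]\to 0$; the nonlinear piece is handled by a Brezis-Lieb/Vitali argument using the equi-integrability, the $\lambda$-term is absorbed via Lemma \ref{lem2.1} using $|\lambda|<mc^2$, and what remains is $\|v_n\|^2+o(1)\to 0$, so $u_n\to u$ strongly, i.e., alternative (a).

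Now suppose $u=0$ and $(u_n)$ does not converge to $0$ strongly. Apply Lions concentration-compactness to $(u_n)$. Vanishing is impossible: if $\sup_{y\in\mathcal G}\int_{B_1(y)}|u_n|^2\,dx\to 0$ then Lemma \ref{lem2.4} forces $u_n\to 0$ in every $L^p(\mathcal G)$ with $p\in(2,\infty)$, so both $\int F(|u_n|)$ and $\int f(|u_n|)|u_n|^2$ tend to $0$, and $J_\lambda'(u_n)[u_n^+-u_n^-]\to 0$ yields $\|u_n\|\to 0$, contradicting $J_\lambda(u_n)\to d_\lambda>0$. Hence there exist $\eta>0$ and $y_n\in\mathcal G$ with $\int_{B_1(y_n)}|u_n|^2\,dx\ge \eta$; since $u_n\rightharpoonup 0$ and the embedding is compact on bounded subsets of $\mathcal G$, $(y_n)$ must be unbounded. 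Using the structure of $\mathcal G$ (its noncompact ends are half-lines, on each of which shifts are isometries), I select graph isometries $\tau_n$ of the type in Lemma \ref{lem3.1} that send $y_n$ into a fixed bounded region. Setting $\tilde u_n:=U_{\tau_n}u_n$, Lemma \ref{lem3.1} and the invariance of $\mathcal D$, of $\int|\cdot|^2$ and of $\int F(|\cdot|)$ under $U_{\tau_n}$ guarantee that $(\tilde u_n)$ is a bounded $(PS)_{d_\lambda}$ sequence; the preserved mass bound in a fixed ball together with local compactness forces any weak cluster point $\tilde u$ to be nonzero. Applying the previous paragraph to $(\tilde u_n)$ with weak limit $\tilde u\neq 0$ yields $\tilde u_n\to\tilde u$ strongly in $Y$, i.e., alternative (b).

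The decisive obstacle is the construction of the recentering isometries $\tau_n$: a generic quantum graph has a very small isometry group, so this step genuinely uses that the noncompactness of $\mathcal G$ consists of finitely many half-lines (so that along a subsequence $(y_n)$ lies on a single half-line and a shift on that half-line extends to an automorphism permuting edges and preserving Kirchhoff conditions). A secondary technical point is the $L^1$-convergence of $g(|u_n|)$ in the second paragraph: without the saturation $J_\lambda(u)=d_\lambda$ at the ground state level, Fatou would only provide weak upper semicontinuity, which is too weak to close the PS test and rule out a residual mass in $v_n$.
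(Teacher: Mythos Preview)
Your proposal is correct and follows essentially the same strategy as the paper: Fatou plus Scheff\'e to obtain $L^1$-convergence of $g(|u_n|)=\tfrac12 f(|u_n|)|u_n|^2-F(|u_n|)$ and hence uniform tail control of the nonlinearity, a $(PS)$-identity test to extract strong convergence when $u\neq 0$, and Lions' dichotomy plus recentering by graph isometries (reducing to the first case) when $u=0$. The only tactical difference is that the paper tests $J_\lambda'(u_n)$ against $u_n^{\pm}$ separately and closes via weak convergence plus norm convergence in the equivalent uniformly convex norm $\big(\|w\|^2+\lambda\|w\|_{L^2}^2\big)^{1/2}$, whereas you test against $(u_n-u)^{+}-(u_n-u)^{-}$ and close directly by a Brezis--Lieb/Vitali argument; both variants are standard and yield the same conclusion.
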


\begin{proof}
	Let $(u_n)\subset Y$ be a $(PS)_{d_\lambda}$ sequence for $J_\lambda$ with $u_n\rightharpoonup u$ in $Y$. Then
	\[
	J_\lambda(u_n)\to d_\lambda\quad\text{and}\quad J'_\lambda(u_n)\to 0 \text{ in }Y',
	\]
	and weak lower semicontinuity yields $J'_\lambda(u)=0$.
	
	\medskip
	\noindent\emph{Case 1: $u\neq 0$.}
	Set
	\[
	g_n=\frac12\,f(|u_n|)\,|u_n|^{2}-F(|u_n|)\ \ge 0,\qquad
	g=\frac12\,f(|u|)\,|u|^{2}-F(|u|).
	\]
	By the proof of Lemma \ref{Lem3.12}, we get
	\begin{equation}\label{eq:gL1}
		\|g_n-g\|_{L^{1}(\mathcal G)}\longrightarrow 0 .
	\end{equation}
	In particular,
	\[
	\int_{\mathcal G}\Big(\frac12 f(|u_n|)|u_n|^{2}-F(|u_n|)\Big)\,dx
	\longrightarrow
	\int_{\mathcal G}\Big(\frac12 f(|u|)|u|^{2}-F(|u|)\Big)\,dx.
	\]
	
	Next, invoke Lemma \ref{Lem3.13} to pass to the limit in the mixed term:
	\[
	\int_{\mathcal G}\big|\,f(|u_n|)u_n\,\overline{u_n^{+}}-f(|u|)u\,\overline{u^{+}}\,\big|\,dx\ \longrightarrow\ 0 .
	\]
	Testing $J'_\lambda(u_n)$ with $u_n^{+}$ and letting $n\to\infty$ we obtain
	\[
	\|u_n^{+}\|^{2}+\lambda\int_{\mathcal G}|u_n^{+}|^{2}dx
	\longrightarrow
	\|u^{+}\|^{2}+\lambda\int_{\mathcal G}|u^{+}|^{2}dx .
	\]
	Since $|\lambda|<mc^{2}$, the map $w\mapsto\big(\|w\|^{2}+\lambda\int|w|^{2}\big)^{1/2}$ is an equivalent uniformly convex norm on $Y$, hence $u_n^{+}\to u^{+}$ in $Y$. Repeating the argument with $u_n^{-}$ gives $u_n^{-}\to u^{-}$ in $Y$. Therefore $u_n\to u$ strongly in $Y$, which proves (a).
	
	\medskip
	\noindent\emph{Case 2: $u=0$.}
	If for some $r>0$ we had
	\[
	\sup_{y\in\mathcal G}\int_{B_r(y)}|u_n^{+}|^{2}\,dx\ \longrightarrow\ 0,
	\]
	then by Lemma \ref{lem2.4} we would have $u_n^{+}\to 0$ in $L^{q}(\mathcal G)$ for every $q>2$, which contradicts the $(PS)$ structure together with Lemma \ref{Lem3.4}. Hence there exist $r,\eta>0$ and points $y_n\in\mathcal G$ such that
	\[
	\int_{B_r(y_n)}|u_n^{+}|^{2}\,dx\ \ge\ \eta\quad\text{for all }n.
	\]
	Let $\tau_n$ be graph isometries with $\tau_n(y_n)=x_0$ for a fixed $x_0\in\mathcal G$, and set $\tilde u_n:=U_{\tau_n}u_n$. By Lemma \ref{lem3.1}, $U_{\tau_n}$ preserves $Y$ and the $Y^\pm$-decomposition, and since $J_\lambda$ is autonomous we have
	\[
	J_\lambda(\tilde u_n)=J_\lambda(u_n),\qquad J'_\lambda(\tilde u_n)[U_{\tau_n}v]=J'_\lambda(u_n)[v]\quad\forall v\in Y,
	\]
	so $(\tilde u_n)$ is again a $(PS)_{d_\lambda}$ sequence. Moreover
	\[
	\int_{B_r(x_0)}|\tilde u_n^{+}|^{2}\,dx=\int_{B_r(y_n)}|u_n^{+}|^{2}\,dx\ge \eta,
	\]
	hence $\tilde u_n\rightharpoonup \tilde u\in Y$ with $\tilde u^{+}\neq 0$, so $\tilde u\neq 0$. Applying Case 1 to $(\tilde u_n)$ and using Lemma \ref{Lem4.6} again, we conclude that $\tilde u_n\to \tilde u$ strongly in $Y$, which proves (b).
\end{proof}

\section{Multiplicity and concentration results}

\subsection{Nonautonomous reduction}

From now on, for each \(\varepsilon>0\) we designate by \(I_{\varepsilon}:Y\to\mathbb{R}\) the functional
\[
I_{\varepsilon}(u)
=\frac{1}{2}\big(\|u^{+}\|^{2}-\|u^{-}\|^{2}\big)
+\frac{1}{2}\int_{\mathcal{G}}V_{\varepsilon}(x)\,|u|^{2}\,dx
-\int_{\mathcal{G}}F(|u|)\,dx,
\]
and by \(\mathcal{M}_{\varepsilon}\) the set
\begin{equation}\label{4.1}
	\mathcal{M}_{\varepsilon}
	:=\Big\{u\in Y\setminus Y^{-}\,;\,
	I_{\varepsilon}'(u)[u]=0 \text{ and } I_{\varepsilon}'(u)[v]=0 \text{ for all } v\in Y^{-}\Big\}.
\end{equation}

As in Section~3, for \(u\in Y\setminus Y^{-}\) we write
\[
\hat Y(u):=\{\, t\,u+v \;:\; t\ge 0,\ v\in Y^{-}\,\}.
\]
The same ideas as in Section~3 permit us to define the number
\begin{equation}\label{4.2}
	0<c_{\varepsilon}
	:=\inf_{\,u\in Y^{+}\setminus\{0\}}\,\max_{\,w\in \hat Y(u)} I_{\varepsilon}(w),
\end{equation}
as well as to show that for each \(u\in Y\setminus Y^{-}\) the set \(\mathcal{M}_{\varepsilon}\cap \hat Y(u)\) is a singleton, and its unique element is the global maximum of \(I_{\varepsilon}\) on \(\hat Y(u)\). That is, there exist \(\tilde t\geq1\) and \(\tilde v\in Y^{-}\) such that
\begin{equation}\label{4.3}
	I_{\varepsilon}(\tilde t\,u+\tilde v)
	=\max_{\,w\in \hat Y(u)} I_{\varepsilon}(w).
\end{equation}
Hence, the map
\begin{equation}\label{4.4}
	m_{\varepsilon}:Y^{+}\setminus\{0\}\to \mathcal{M}_{\varepsilon},\qquad
	m_{\varepsilon}(u):=\tilde t\,u+\tilde v\in \mathcal{M}_{\varepsilon}\cap \hat Y(u),
\end{equation}
is well defined, and its restriction to \(S^{+}\) is a homeomorphism from \(S^{+}\) onto \(\mathcal{M}_{\varepsilon}\).

Next lemma in this section establishes an important relation between $c_{\varepsilon}$ and $d_{V(0)}$.

\begin{Lemma}\label{Lem4.1}
	Assume \((V_1)\)-\((V_2)\). Then
	\[
	\lim_{\varepsilon\to 0^+} c_\varepsilon \;=\; d_{V_0}.
	\]
\end{Lemma}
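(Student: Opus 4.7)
The plan is to prove the matching bounds $c_\varepsilon\ge d_{V_0}$ for every $\varepsilon>0$ and $\limsup_{\varepsilon\to 0^+} c_\varepsilon\le d_{V_0}$. The lower bound is immediate: by $(V_1)$, $V_\varepsilon(x)=V(\varepsilon x)\ge V_0$ on $\mathcal G$, so
\[
I_\varepsilon(w)-J_{V_0}(w) \;=\; \tfrac{1}{2}\int_{\mathcal G}(V_\varepsilon-V_0)\,|w|^2\,dx\;\ge\;0
\]
for every $w\in Y$. Taking $\max$ over $\hat Y(u)$ and $\inf$ over $Y^{+}\setminus\{0\}$ in the $\inf\max$ characterization of $c_\varepsilon$ and $d_{V_0}$ gives $c_\varepsilon\ge d_{V_0}$ for all $\varepsilon$.

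For the upper bound, I would fix a ground state $u_0\in\mathcal N_{V_0}$ of $J_{V_0}$, which exists by Theorem \ref{thm3.1} and is the unique maximizer of $J_{V_0}$ on $\hat Y(u_0)$ with value $d_{V_0}$. Testing the $\inf\max$ with the direction $u_0^{+}$ yields
\[
c_\varepsilon \;\le\; \max_{w\in\hat Y(u_0)} I_\varepsilon(w) \;=\; I_\varepsilon(w_\varepsilon),\qquad w_\varepsilon=t_\varepsilon u_0+v_\varepsilon\in\mathcal M_\varepsilon\cap\hat Y(u_0).
\]
A uniform bound $\|w_\varepsilon\|\le R$ follows from Lemma \ref{Lem3.5} applied to the compact set $\{u_0^{+}/\|u_0^{+}\|\}\subset Y^{+}\setminus\{0\}$ with the dominating autonomous functional $J_{V_\infty}\ge I_\varepsilon$ (since $V_\varepsilon\le V_\infty<mc^2$), combined with the positivity $I_\varepsilon(w_\varepsilon)\ge c_\varepsilon\ge d_{V_0}>0$. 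The analog of Lemma \ref{Lem3.4}(b) applied uniformly in $\varepsilon$ (whose constants depend only on $V_0,V_\infty$ and $mc^2$) gives $\|w_\varepsilon^{+}\|=t_\varepsilon\|u_0^{+}\|\ge c>0$, so $t_\varepsilon$ is bounded away from zero. Along a subsequence, $t_\varepsilon\to t_*>0$ and $v_\varepsilon\rightharpoonup v_*$ in $Y^{-}$, hence $w_\varepsilon\rightharpoonup w_*:=t_* u_0+v_*\in\hat Y(u_0)$.

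Next, passing to the limit in the Euler relations $I_\varepsilon'(w_\varepsilon)[u_0]=0$ and $I_\varepsilon'(w_\varepsilon)[v]=0$ for $v\in Y^{-}$ I expect to obtain $J_{V_0}'(w_*)|_{\mathbb R u_0\oplus Y^{-}}=0$: the potential term uses that $(V_\varepsilon-V_0)\xi\to 0$ in $L^2$ by dominated convergence for any fixed $\xi\in L^2$, together with the weak $L^2$ convergence $w_\varepsilon\rightharpoonup w_*$; the nonlinear term is handled by the local compact embedding $Y\hookrightarrow L^p(K)$ for bounded $K\subset\mathcal G$ together with a tail estimate \emph{\`a la} Lemma \ref{Lem3.12} adapted to the nonautonomous functional. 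This places $w_*\in\mathcal N_{V_0}\cap\hat Y(u_0)$, and the uniqueness statement in Lemma \ref{Lem3.6} forces $w_*=u_0$.

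Finally, writing $I_\varepsilon(w_\varepsilon)=J_{V_0}(w_\varepsilon)+\tfrac{1}{2}\int(V_\varepsilon-V_0)|w_\varepsilon|^2\,dx$ and using $J_{V_0}(w_\varepsilon)\le d_{V_0}$ (because $w_\varepsilon\in\hat Y(u_0)$ and $u_0$ is the global maximum of $J_{V_0}$ on $\hat Y(u_0)$), the upper bound reduces to showing the perturbation integral vanishes. The identity $I_\varepsilon(w_\varepsilon)-\tfrac12 I_\varepsilon'(w_\varepsilon)[w_\varepsilon]=\int(\tfrac12 f(|w_\varepsilon|)|w_\varepsilon|^2-F(|w_\varepsilon|))\,dx$, valid on $\mathcal M_\varepsilon$, combined with the analogous identity for $J_{V_0}(u_0)=d_{V_0}$ and Fatou's lemma applied to the nonnegative integrand, sandwiches $\int(\tfrac12 f(|w_\varepsilon|)|w_\varepsilon|^2-F(|w_\varepsilon|))dx$ between two copies of $d_{V_0}$; the power lower bound from Lemma \ref{lem2.2} then upgrades $w_\varepsilon\rightharpoonup u_0$ to strong $L^2(\mathcal G)$ convergence \emph{via} a Brezis-Lieb argument. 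Splitting $\int(V_\varepsilon-V_0)|w_\varepsilon|^2\,dx$ into $B_R(0)$ (where $V_\varepsilon\to V_0$ uniformly by continuity of $V$ at $0$) and its complement (where the $L^2$ tail of $w_\varepsilon$ is uniformly small by the just-obtained strong $L^2$ convergence) yields the claim. The principal obstacle is precisely this last upgrade from weak to strong $L^2$ convergence: since the embedding $Y\hookrightarrow L^2(\mathcal G)$ is only compact locally on the noncompact graph, $L^2$-tightness of $|w_\varepsilon|^2$ must be extracted from the Nehari-Pohozaev-type identity for $I_\varepsilon$ and the nondegeneracy of $u_0$, rather than from abstract compactness.
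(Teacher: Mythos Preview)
Your lower bound is the paper's argument verbatim. For the upper bound, the paper takes a shortcut that is in fact incorrect as written: it fixes $t_*,v_*$ so that $t_*w_0^{+}+v_*=w_0$ is the $J_{V_0}$-maximizer on $\hat Y(w_0^{+})$, asserts $c_{\varepsilon_n}\le I_{\varepsilon_n}(t_*w_0^{+}+v_*)$, and then applies dominated convergence to the \emph{fixed} vector $w_0$. But \eqref{4.2} only yields $c_{\varepsilon_n}\le\max_{\hat Y(w_0^{+})}I_{\varepsilon_n}=I_{\varepsilon_n}(m_{\varepsilon_n}(w_0^{+}))$, and $m_{\varepsilon_n}(w_0^{+})\ne w_0$ in general. (Corollary~\ref{Cor4.2} even speaks of sequences $(t_n,v_n)$ ``constructed in the proof of Lemma~\ref{Lem4.1}'', which do not appear there.) So your choice $w_\varepsilon=m_\varepsilon(u_0^{+})$ is the correct starting point and is strictly more honest than the paper's.

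That said, your final step has a genuine gap. You write $I_\varepsilon(w_\varepsilon)=J_{V_0}(w_\varepsilon)+\tfrac12\int(V_\varepsilon-V_0)|w_\varepsilon|^2\le d_{V_0}+\mathrm{error}_\varepsilon$ and then claim to ``sandwich'' $\int g_\varepsilon\,dx=I_\varepsilon(w_\varepsilon)$ between two copies of $d_{V_0}$, where $g_\varepsilon=\tfrac12 f(|w_\varepsilon|)|w_\varepsilon|^2-F(|w_\varepsilon|)$. The Fatou lower bound $\liminf\int g_\varepsilon\ge d_{V_0}$ is fine once $w_*=u_0$ is known, but your upper bound is exactly $\limsup I_\varepsilon(w_\varepsilon)\le d_{V_0}$, i.e.\ the conclusion itself; the inequality $I_\varepsilon(w_\varepsilon)\le d_{V_0}+\mathrm{error}_\varepsilon$ does not give it unless you already know $\mathrm{error}_\varepsilon\to0$. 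Thus the Brezis--Lieb/Scheff\'e step has nothing to feed on, and the argument is circular.

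A minor point: your step (d) is correct but for a simpler reason than you indicate. From $w_\varepsilon\in\mathcal M_\varepsilon$ one gets $I_\varepsilon'(w_\varepsilon)[u_0^{+}]=0$ (since $I_\varepsilon'(w_\varepsilon)[w_\varepsilon]=t_\varepsilon I_\varepsilon'(w_\varepsilon)[u_0^{+}]$ and $t_\varepsilon>0$) and $I_\varepsilon'(w_\varepsilon)[v]=0$ for every fixed $v\in Y^{-}$. These are \emph{fixed} test functions in $L^2\cap L^p$, so the tail of the nonlinear term is controlled by $\|v\|_{L^2(B_R^c)}+\|v\|_{L^p(B_R^c)}\to0$ as $R\to\infty$; no appeal to Lemma~\ref{Lem3.12} is needed. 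Passing to the limit gives $J_{V_0}'(w_*)[u_0^{+}]=0$ and $J_{V_0}'(w_*)|_{Y^{-}}=0$, whence $w_*\in\mathcal N_{V_0}\cap\hat Y(u_0)=\{u_0\}$ by Lemma~\ref{Lem3.6}. But knowing $w_\varepsilon\rightharpoonup u_0$ does not by itself yield $L^2$-tightness of $(w_\varepsilon)$, so $\int(V_\varepsilon-V_0)|w_\varepsilon|^2\,dx\to0$ remains unproved. Both your argument and the paper's need an additional step here---for instance, upgrading $v_\varepsilon\rightharpoonup u_0^{-}$ to $v_\varepsilon\to u_0^{-}$ in $Y$ (hence $w_\varepsilon\to u_0$ in $L^2$) before the perturbation estimate can be closed.
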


\begin{proof}
	Fix any sequence \(\varepsilon_n\to 0^+\). Since \(V_{\varepsilon_n}(x)=V(\varepsilon_n x)\ge V_0\) by \((V_2)\), for every \(u\in Y\),
	\[
	I_{\varepsilon_n}(u)
	=\frac{1}{2}\big(\|u^{+}\|^{2}-\|u^{-}\|^{2}\big)
	+\frac{1}{2}\int_{\mathcal G}V_{\varepsilon_n}(x)|u|^{2}\,dx
	-\int_{\mathcal G}F(|u|)\,dx
	\;\ge\; J_{V_0}(u).
	\]
	Taking \(\max\limits_{w\in\hat Y(u)}\) and then \(\inf\limits_{u\in Y^{+}\setminus\{0\}}\) yields \(d_{V_0}\le c_{\varepsilon_n}\), hence
	\[
	d_{V_0}\le \liminf_{n\to\infty} c_{\varepsilon_n}.
	\]
	
	For the reverse inequality, by Lemma~\ref{Lem3.3} and Lemma~\ref{Lem3.6},
	there exist \(t_{*}\ge 0\) and \(v_{*}\in Y^{-}\) such that
	\[
	m_{V_0}(w_0^{+})=t_{*}w_0^{+}+v_{*}\in\mathcal N_{V_0}
	\quad\text{and}\quad
	J_{V_0}(t_{*}w_0^{+}+v_{*})
	=\max_{w\in\hat Y(w_0^{+})} J_{V_0}(w)=d_{V_0}.
	\]
	Using the variational characterization \eqref{4.2} for \(c_{\varepsilon_n}\),
	\[
	c_{\varepsilon_n}\ \le\ I_{\varepsilon_n}\big(t_{*}w_0^{+}+v_{*}\big).
	\]
	Here the vector \(t_{*}w_0^{+}+v_{*}\) is fixed, while
	\(V(\varepsilon_n x)\to V(0)=V_0\) pointwise and \(|V(\varepsilon_n x)|\le V_\infty\) by \((V_1)\).
	Therefore, by the dominated convergence theorem,
	\[
	\lim_{n\to\infty} I_{\varepsilon_n}\big(t_{*}w_0^{+}+v_{*}\big)
	= J_{V_0}\big(t_{*}w_0^{+}+v_{*}\big)
	= d_{V_0},
	\]
	and consequently \(\limsup_{n\to\infty} c_{\varepsilon_n}\le d_{V_0}\).
	Combining both bounds,
	\(\lim_{n\to\infty} c_{\varepsilon_n}=d_{V_0}\).
	Since \((\varepsilon_n)\) is arbitrary, \(\lim_{\varepsilon\to 0^+} c_\varepsilon=d_{V_0}\).
	
	Finally, the auxiliary norm \(\|w\|_{\star}=\big(\|w\|^{2}-V_\infty\|w\|_{L^{2}}^{2}\big)^{1/2}\) is equivalent to \(\|\cdot\|\) on \(Y\) by \((V_1)\), so all bounds are uniform in \(n\).
	\end{proof}

\begin{Corollary}\label{Cor4.1}
	There exists \(\varepsilon_{0}>0\) such that \(c_{\varepsilon}<d_{V_{\infty}}\) for all \(\varepsilon\in(0,\varepsilon_{0})\), where \(V_{\infty}=\lim_{|x|\to\infty}V(x)\).
\end{Corollary}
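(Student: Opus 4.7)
The plan is to combine Lemma \ref{Lem4.1} with the strict monotonicity of $\lambda\mapsto d_\lambda$ established in Proposition \ref{Pro3.2}. The corollary is really an immediate two-line consequence of these two results: Lemma \ref{Lem4.1} tells us where $c_\varepsilon$ lies in the limit, and Proposition \ref{Pro3.2} tells us that this limit value sits strictly below $d_{V_\infty}$. The main step is therefore quantitative: turn these two qualitative facts into a uniform threshold $\varepsilon_0$.

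The execution I have in mind is as follows. By $(V_1)$ both $V_0$ and $V_\infty$ belong to the admissible spectral-gap interval $(-mc^2,mc^2)$, so $d_{V_0}$ and $d_{V_\infty}$ are defined through \eqref{3.2}. Since the interesting case is the non-constant one (if $V_0=V_\infty$ then $(V_1)$ forces $V\equiv V_0$, and the multiplicity assertion of Theorem \ref{Theorem1.1} is vacuous for $k>1$, so we may and do assume $V_0<V_\infty$), Proposition \ref{Pro3.2} gives the strict gap
\[
\delta \;:=\; d_{V_\infty}-d_{V_0}\;>\;0.
\]
Applying Lemma \ref{Lem4.1}, there exists $\varepsilon_0>0$ such that
\[
|\,c_\varepsilon - d_{V_0}\,|\;<\;\tfrac{\delta}{2}\qquad\text{for all }\varepsilon\in(0,\varepsilon_0),
\]
whence
\[
c_\varepsilon \;<\; d_{V_0}+\tfrac{\delta}{2}\;=\;\tfrac{d_{V_0}+d_{V_\infty}}{2}\;<\;d_{V_\infty},
\]
which is the claim.

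There is no real obstacle here: both ingredients have been proved in the previous subsections, and the strict monotonicity in Proposition \ref{Pro3.2} is precisely what prevents the limiting value of $c_\varepsilon$ from merely touching $d_{V_\infty}$. If one wanted a streamlined write-up, the only item worth a brief word is to point out that $V_0<V_\infty$ is the substantive assumption and that the spectral-gap condition $V_0,V_\infty\in(-mc^2,mc^2)$ ensures both $d_{V_0}$ and $d_{V_\infty}$ are well defined through the indefinite variational framework of Section 3.
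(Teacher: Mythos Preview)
Your proof is correct and follows essentially the same route as the paper's: invoke Proposition~\ref{Pro3.2} to get $d_{V_0}<d_{V_\infty}$, then use Lemma~\ref{Lem4.1} to find $\varepsilon_0$ so that $c_\varepsilon<d_{V_0}+\tfrac12(d_{V_\infty}-d_{V_0})<d_{V_\infty}$ for $\varepsilon\in(0,\varepsilon_0)$. The paper simply asserts $V_0<V_\infty$ from $(V_1)$--$(V_2)$ without your parenthetical on the degenerate case, but otherwise the argument and even the choice of constant coincide.
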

\begin{proof}
	By \((V_1)\)-\((V_2)\) we have \(V(0)=V_0< V_{\infty}\). By Proposition~\ref{Pro3.2} it follows that \(d_{V_0}<d_{V_{\infty}}\).
	By Lemma~\ref{Lem4.1}, \(c_{\varepsilon}\to d_{V_0}\) as \(\varepsilon\to 0^{+}\).
	Set \(\eta:=\frac{d_{V_{\infty}}-d_{V_0}}{2}>0\). Choose \(\varepsilon_{0}>0\) so that
	\(|c_{\varepsilon}-d_{V_0}|<\eta\) whenever \(0<\varepsilon<\varepsilon_{0}\).
	Then for such \(\varepsilon\),
	\[
	c_{\varepsilon}<d_{V_0}+\eta=\frac{d_{V_0}+d_{V_{\infty}}}{2}<d_{V_{\infty}},
	\]
	as claimed.
\end{proof}

\begin{Corollary}\label{Cor4.2}
	Let \((t_n)\subset[0,\infty)\) and \((v_n)\subset Y^{-}\) be the sequences constructed in the proof of Lemma~\ref{Lem4.1}, i.e.,
	\(t_n w_0^{+}+v_n\in\mathcal M_{\varepsilon_n}\) and \(I_{\varepsilon_n}(t_n w_0^{+}+v_n)=\max_{w\in\hat Y(w_0^{+})} I_{\varepsilon_n}(w)\),
	with \(\varepsilon_n\to 0^{+}\) and \(w_0\) a ground state of \(J_{V_0}\).
	Then, up to a subsequence,
	\[
	t_n\to 1 \quad\text{and}\quad v_n\to w_0^{-}\ \text{ in } Y.
	\]
	In particular, \(t_n w_0^{+}+v_n\to w_0\) in \(Y\).
\end{Corollary}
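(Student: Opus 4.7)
My plan is to prove Corollary~\ref{Cor4.2} in three stages: establishing a priori bounds on $(t_n,v_n)$, identifying any subsequential limit as $(1,w_0^-)$ via uniqueness of the maximizer on the Nehari-type set $\mathcal{N}_{V_0}\cap\hat{Y}(w_0^+)$, and finally upgrading weak to strong convergence.

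First I would establish boundedness. Since $V_{\varepsilon_n}(x)\le V_\infty$ for all $n,x$, we have $I_{\varepsilon_n}\le J_{V_\infty}$ pointwise on $Y$. Applying Lemma~\ref{Lem3.5} to $J_{V_\infty}$ with the compact singleton $E=\{w_0^+/\|w_0^+\|\}\subset Y^+\setminus\{0\}$ yields $R>0$ such that $J_{V_\infty}(w)\le 0$ whenever $w\in Y(w_0^+)$ and $\|w\|>R$. On the other hand, since $w_0\in\hat{Y}(w_0^+)$ is a fixed vector and $V_{\varepsilon_n}\to V_0$ pointwise boundedly, dominated convergence gives $I_{\varepsilon_n}(w_0)\to J_{V_0}(w_0)=d_{V_0}>0$. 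Combined with the maximizing property $I_{\varepsilon_n}(t_nw_0^++v_n)\ge I_{\varepsilon_n}(w_0)$, this forces $\|t_nw_0^++v_n\|\le R$ for all large $n$, so both $t_n$ and $v_n$ are bounded.

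Next I pass to a subsequence along which $t_n\to\bar{t}\ge 0$ and $v_n\rightharpoonup\bar{v}$ in $Y^-$, so $u_n:=t_nw_0^++v_n\rightharpoonup\bar{u}:=\bar{t}w_0^++\bar{v}\in\hat{Y}(w_0^+)$. Since $u_n\in\mathcal{M}_{\varepsilon_n}$ and $u_n^+=t_nw_0^+$ with $t_n>0$, one has $w_0^+\in\mathbb{R}u_n\oplus Y^-$, so $I'_{\varepsilon_n}(u_n)[w_0^+]=0$ and $I'_{\varepsilon_n}(u_n)[v]=0$ for every $v\in Y^-$. I would pass to the limit in both identities tested against these fixed vectors: the potential integral is handled by splitting $\mathcal{G}$ into a large ball (where $V_{\varepsilon_n}\to V_0$ uniformly and Sobolev embeddings are locally compact) and its complement (where the test function has small $L^2\cap L^p$-tail); the nonlinear integral is handled analogously, following the compactness scheme of Lemmas~\ref{Lem3.11}-\ref{Lem3.13}. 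The result is $J'_{V_0}(\bar{u})[w_0^+]=0$ and $J'_{V_0}(\bar{u})[v]=0$ for $v\in Y^-$, whence $J'_{V_0}(\bar{u})[\bar{u}]=\bar{t}\,J'_{V_0}(\bar{u})[w_0^+]+J'_{V_0}(\bar{u})[\bar{v}]=0$. To rule out $\bar{u}\in Y^-$, a direct estimate analogous to Lemma~\ref{Lem3.4}(b), using $|V_{\varepsilon_n}|\le M:=\max\{|V_0|,V_\infty\}<mc^2$ and $I_{\varepsilon_n}(u_n)\ge c_{\varepsilon_n}\to d_{V_0}>0$ from Lemma~\ref{Lem4.1}, gives $\|u_n^+\|=t_n\|w_0^+\|\ge\delta>0$ for all large $n$. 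Hence $\bar{t}>0$ and $\bar{u}\in\mathcal{N}_{V_0}\cap\hat{Y}(w_0^+)$. By Lemma~\ref{Lem3.6} this intersection is the singleton $\{w_0\}$, so $\bar{u}=w_0$, i.e.\ $\bar{t}=1$ and $\bar{v}=w_0^-$.

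Finally I would upgrade to strong convergence. Since $t_n\to 1$, $u_n^+=t_nw_0^+\to w_0^+$ strongly in $Y^+$, so it remains to show $v_n\to w_0^-$ in $Y^-$. Subtracting the Nehari-type identities $I'_{\varepsilon_n}(u_n)[v_n]=0$ and $J'_{V_0}(w_0)[w_0^-]=0$ and isolating the diagonal norm terms yields
\[
\|v_n\|^2-\|w_0^-\|^2=\mathrm{Re}\!\int_{\mathcal{G}}\!\bigl(V_{\varepsilon_n}u_n\,\overline{v_n}-V_0\,w_0\,\overline{w_0^-}\bigr)dx-\mathrm{Re}\!\int_{\mathcal{G}}\!\bigl(f(|u_n|)u_n\,\overline{v_n}-f(|w_0|)w_0\,\overline{w_0^-}\bigr)dx.
\]
Each paired difference on the right tends to $0$ by the same localization arguments as in the previous step, now exploiting $u_n^+\to w_0^+$ strongly and the uniform tail control from Lemmas~\ref{Lem3.12}-\ref{Lem3.13}. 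Thus $\|v_n\|\to\|w_0^-\|$, which combined with $v_n\rightharpoonup w_0^-$ in the Hilbert space $Y^-$ (with the equivalent inner product used in Lemma~\ref{Lem3.8}) delivers $v_n\to w_0^-$ strongly. The main obstacle is precisely this last step: the noncompact graph setting precludes a direct use of compact embeddings on all of $\mathcal{G}$, and passing to the limit in the nonlocal nonlinear term requires both strong local Sobolev convergence and the uniform tail estimates of Lemmas~\ref{Lem3.12}-\ref{Lem3.13}.
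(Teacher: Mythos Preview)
Your approach differs from the paper's in a substantive way. You identify the weak limit by passing to the limit in the constraint equations $I'_{\varepsilon_n}(u_n)[\phi]=0$ for fixed $\phi\in\mathbb{R}w_0^+\oplus Y^-$, whereas the paper works at the level of functional values: it uses the squeeze $c_{\varepsilon_n}\to d_{V_0}$ from Lemma~\ref{Lem4.1} together with weak upper semicontinuity of $J_{V_0}$ on $\hat Y(w_0^+)$ to force $J_{V_0}(t_0w_0^++v)=\max_{\hat Y(w_0^+)}J_{V_0}=d_{V_0}$ directly, and only then invokes uniqueness (Lemmas~\ref{Lem3.3} and~\ref{Lem3.6}). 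Your Step~2 is essentially correct, though citing Lemmas~\ref{Lem3.11}--\ref{Lem3.13} is unnecessary there: with a \emph{fixed} test function $\phi$, the tail $\int_{B_R^c}f(|u_n|)\,|u_n|\,|\phi|\,dx$ is small simply because $\|\phi\|_{L^2\cap L^p(B_R^c)}\to 0$ as $R\to\infty$, and local compact embeddings handle the interior.

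The gap is in your Step~3. The identity you display contains $\int_{\mathcal G} f(|u_n|)\,u_n\,\overline{v_n}\,dx$, where now \emph{both} $f(|u_n|)u_n$ and the test function $v_n$ are only weakly convergent. Your appeal to Lemmas~\ref{Lem3.12}--\ref{Lem3.13} is not justified: those lemmas are stated for $(PS)_{d_\lambda}$ sequences of a \emph{fixed} autonomous functional $J_\lambda$, and their proofs rest on Scheff\'e's lemma applied to $g_n=\tfrac12 f(|u_n|)|u_n|^2-F(|u_n|)$, which requires $\int_{\mathcal G} g_n\,dx\to\int_{\mathcal G} g\,dx$. Here $(u_n)$ is not a $(PS)$ sequence for any single functional, and you have established only $\liminf I_{\varepsilon_n}(u_n)\ge d_{V_0}$ (from $u_n\in\mathcal M_{\varepsilon_n}$ and $c_{\varepsilon_n}\to d_{V_0}$), not $I_{\varepsilon_n}(u_n)\to d_{V_0}$. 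The paper's energy route sidesteps this entirely: once the chain $d_{V_0}=\lim c_{\varepsilon_n}\le \limsup I_{\varepsilon_n}(u_n)\le J_{V_0}(\bar u)\le d_{V_0}$ collapses to equalities, the equality case in the weak lower semicontinuity of the $Y^-$--norm yields $\|v_n\|\to\|w_0^-\|$ directly, with no need to pass to the limit in the nonlinear cross term. That is what is meant by ``Lemma~\ref{Lem4.1} also gives $\limsup_{n\to\infty}\|v_n\|^2=\|v\|^2$''.
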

\begin{proof}
	By Lemma~\ref{Lem4.1} the sequence \((t_n w_0^{+}+v_n)\) is bounded in \(Y\), hence \(t_n\to t_0\ge 0\) and \(v_n\rightharpoonup v\) in \(Y\).
	The limit estimate obtained there gives
	\[
	\limsup_{n\to\infty} c_{\varepsilon_n}
	\le J_{V_0}(t_0 w_0^{+}+v)\le J_{V_0}(w_0)=d_{V_0}.
	\]
	Since \(c_{\varepsilon_n}\to d_{V_0}\), all inequalities are equalities; in particular,
	\[
	J_{V_0}(t_0 w_0^{+}+v)=J_{V_0}(w_0)=\max_{w\in\hat Y(w_0^{+})}J_{V_0}(w).
	\]
	By Lemma~\ref{Lem3.3} together with Lemma~\ref{Lem3.6},
	the maximizer on \(\hat Y(w_0^{+})\) is unique and equals \(w_0\).
	Hence \(t_0 w_0^{+}+v=w_0\), so \(t_0=1\) and \(v=w_0^{-}\).
	
	Finally, Lemma~\ref{Lem4.1} also gives \(\limsup_{n\to\infty}\|v_n\|^{2}=\|v\|^{2}\).
	Since \(v_n\rightharpoonup v\) in \(Y\), the norm lower semicontinuity implies \(\|v_n\|\to\|v\|\), hence \(v_n\to v=w_0^{-}\) strongly in \(Y\).
	Together with \(t_n\to 1\), we conclude \(t_n w_0^{+}+v_n\to w_0\) in \(Y\).
\end{proof}

\begin{Proposition}\label{Pro4.1}
	$I_{\varepsilon}$ is coercive on $\mathcal{M}_{\varepsilon}$.
\end{Proposition}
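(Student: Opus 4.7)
The plan is to mimic the autonomous coercivity argument of Lemma \ref{Lem3.7} with modifications for the nonautonomous potential $V_\varepsilon$. Assume, for contradiction, that there exist $(u_n) \subset \mathcal M_\varepsilon$ and a constant $d$ with $\|u_n\| \to \infty$ and $I_\varepsilon(u_n) \leq d$. First I would establish an analog of Lemma \ref{Lem3.4}(b) for $\mathcal M_\varepsilon$: using $F \geq 0$, Lemma \ref{lem2.1}, and the uniform bound $|V_\varepsilon(x)| \leq M := \max\{|V_0|, V_\infty\} < mc^2$ from $(V_1)$, the chain
\[
c_\varepsilon \leq I_\varepsilon(u_n) \leq \frac{1}{2}\Bigl(1+\tfrac{M}{mc^2}\Bigr)\|u_n^+\|^2 - \frac{1}{2}\Bigl(1-\tfrac{M}{mc^2}\Bigr)\|u_n^-\|^2
\]
yields $\|v_n^+\|^2 \geq \xi$ for some $\xi>0$ independent of $n$, where $v_n:=u_n/\|u_n\|$.

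Next, I would pick $y_n \in \mathcal G$ maximizing $y \mapsto \int_{B_1(y)}|v_n^+|^2\,dx$ and split into two cases. In the vanishing case $\sup_y \int_{B_1(y)}|v_n^+|^2\,dx \to 0$, Lemma \ref{lem2.4} gives $v_n^+ \to 0$ in $L^p(\mathcal G)$ for every $p>2$. The growth bound in $(f_1)$ combined with $f(0)=0$ yields, for any $\tau>0$, an estimate $F(t)\le \tau t^2 + C_\tau t^p$; together with Lemma \ref{lem2.1} this forces $\int F(|sv_n^+|)\,dx \to 0$ for each fixed $s>0$. Since $sv_n^+ = (s/\|u_n\|)\,u_n^+ \in \hat Y(u_n)$ and $u_n$ is the unique global maximum of $I_\varepsilon$ on $\hat Y(u_n)$ (by the nonautonomous analog of Lemma \ref{Lem3.6} stated right before Lemma \ref{Lem4.1}), we obtain
\[
d \;\geq\; I_\varepsilon(u_n) \;\geq\; I_\varepsilon(sv_n^+) \;\geq\; \frac{s^2\xi}{2}\Bigl(1-\tfrac{M}{mc^2}\Bigr) - \int F(|sv_n^+|)\,dx,
\]
and letting first $n\to\infty$ and then $s\to\infty$ produces the contradiction.

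In the non-vanishing case there exist $\eta>0$ and $(y_n)\subset\mathcal G$ with $\int_{B_1(y_n)}|v_n^+|^2\,dx \geq \eta$. Fix $x_0\in\mathcal G$ and choose graph isometries $\tau_n$ with $\tau_n(x_0)=y_n$ as in Lemma \ref{lem3.1}, and set $\tilde u_n := U_{\tau_n} u_n$, $\tilde v_n := \tilde u_n/\|\tilde u_n\|$. Unitarity of $U_{\tau_n}$ gives $\|\tilde u_n\|=\|u_n\|\to\infty$ and $\int_{B_1(x_0)}|\tilde v_n^+|^2\,dx \geq \eta$; compactness of $Y\hookrightarrow L^2$ on bounded regions then gives $\tilde v_n \rightharpoonup \tilde v$ in $Y$ with $\tilde v \neq 0$ (up to a subsequence, and with a.e.\ convergence). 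Since $V_\varepsilon\ge V_0$ pointwise and $U_{\tau_n}$ preserves the $Y^\pm$ splitting,
\[
J_{V_0}(\tilde u_n) \;\leq\; I_\varepsilon(u_n) \;\leq\; d.
\]
Dividing by $\|\tilde u_n\|^2$, the quadratic terms remain bounded, while Lemma \ref{lem2.2} shows $F(|\tilde u_n(x)|)/|\tilde u_n(x)|^2 \to \infty$ a.e.\ on $\{\tilde v \neq 0\}$; Fatou's lemma applied to $F(|\tilde u_n|)|\tilde v_n|^2/|\tilde u_n|^2$ then forces $J_{V_0}(\tilde u_n)/\|\tilde u_n\|^2 \to -\infty$, contradicting the preceding bound.

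I expect the main obstacle to be the non-vanishing case: because $I_\varepsilon$ is not translation invariant, one cannot directly translate $u_n$ and preserve the functional. The workaround is to pass first to the autonomous lower bound $J_{V_0} \leq I_\varepsilon$ and only then translate via the graph isometries of Lemma \ref{lem3.1} to concentrate the mass at the fixed point $x_0$; the Fatou blow-up driven by the superquadratic growth of $F$ still applies to $J_{V_0}(\tilde u_n)$ and delivers the contradiction even though $I_\varepsilon$ itself is not preserved by $\tau_n$.
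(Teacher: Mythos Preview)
Your proposal is correct, but the two halves differ from the paper's route in instructive ways.

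\textbf{Vanishing case.} You follow Lemma~\ref{Lem3.7} verbatim: work with $v_n^{+}$, use the fiber inequality $I_\varepsilon(u_n)\ge I_\varepsilon(sv_n^{+})$ coming from the nonautonomous analog of Lemma~\ref{Lem3.6}, and send $s\to\infty$. The paper instead divides the two Nehari identities $I'_\varepsilon(u_n)[u_n]=0$ and $I'_\varepsilon(u_n)[u_n^{-}]=0$ by $\|u_n\|^2$ and argues that, under vanishing of $v_n$, the scaled nonlinear terms disappear, forcing $\|v_n^{\pm}\|\to 0$ and contradicting $\|v_n\|=1$. Your argument is cleaner because it never needs to control the awkward quantity $\|u_n\|^{-2}\!\int f(|u_n|)|u_n|^2$; it only needs $\int F(|sv_n^+|)\to 0$, which follows directly from $(f_1)$, $f(0)=0$, and $v_n^+\to 0$ in $L^p$.

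\textbf{Non-vanishing case.} Your detour through the autonomous lower bound $J_{V_0}\le I_\varepsilon$ is valid but unnecessary. The paper observes that only the term $\int_{\mathcal G} F(|u_n|)\,dx$ needs to be analyzed after translation, and this integral is already invariant under the unitary $U_{\tau_n}$ since $F$ has no $x$-dependence. So one translates $v_n\mapsto\tilde v_n$ merely to secure $\tilde v\neq 0$, applies Fatou to get $\|u_n\|^{-2}\!\int F(|u_n|)\,dx=\|u_n\|^{-2}\!\int F(|\tilde u_n|)\,dx\to\infty$, and concludes $I_\varepsilon(u_n)/\|u_n\|^2\to-\infty$ directly, the quadratic and potential parts being bounded by $1$ and $\|V\|_\infty/(mc^2)$. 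Your sandwiching via $J_{V_0}$ buys robustness (it would survive even if the $F$-term carried an $x$-dependent weight), at the cost of an extra step.
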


\begin{proof}
	Assume by contradiction that there exist $\{u_n\}\subset\mathcal M_\varepsilon$ and $d\in\mathbb R$ with
	\[
	I_\varepsilon(u_n)\le d\qquad\text{and}\qquad \|u_n\|\to\infty .
	\]
	Set $v_n:=u_n/\|u_n\|$, so $\|v_n\|=1$ and the generalized Nehari identities hold for each $n$:
	\begin{align}
		I'_\varepsilon(u_n)[u_n]&= \|u_n^{+}\|^{2}-\|u_n^{-}\|^{2}
		+\int_{\mathcal G}V_\varepsilon(x)|u_n|^{2}\,dx
		-\int_{\mathcal G}f(|u_n|)|u_n|^{2}\,dx =0, \label{GN1}\\
		I'_\varepsilon(u_n)[u_n^{-}]&= -\|u_n^{-}\|^{2}
		+\int_{\mathcal G}V_\varepsilon(x)|u_n^{-}|^{2}\,dx
		-\operatorname{Re}\!\int_{\mathcal G} f(|u_n|)u_n\,\overline{u_n^{-}}\,dx=0. \label{GN2}
	\end{align}
	Divide \eqref{GN1}-\eqref{GN2} by $\|u_n\|^{2}$:
	\begin{align}
		0&=\|v_n^{+}\|^{2}-\|v_n^{-}\|^{2}
		+\int_{\mathcal G}V_\varepsilon(x)|v_n|^{2}\,dx
		-\int_{\mathcal G} f(|v_n|)|v_n|^{2}\,dx, \label{gndiv1}\\
		0&=-\|v_n^{-}\|^{2}
		+\int_{\mathcal G}V_\varepsilon(x)|v_n^{-}|^{2}\,dx
		-\operatorname{Re}\!\int_{\mathcal G} f(|v_n|)v_n\,\overline{v_n^{-}}\,dx. \label{gndiv2}
	\end{align}
	
	Suppose, for some fixed $r>0$,
	\[
	\sup_{y\in\mathcal G}\int_{B_r(y)}|v_n|^{2}\,dx \longrightarrow 0 .
	\]
	By Lemma \ref{lem2.4}, $v_n\to 0$ in $L^{q}(\mathcal G)$ for every $2<q<\infty$. Using $(f_1)$, we have
	\(
	|f(|v_n|)|v_n|^{2}\,\le\, C\big(|v_n|^{2}+|v_n|^{p}\big)
	\)
	with $p>2$, hence
	\[
	\int_{\mathcal G} f(|v_n|)|v_n|^{2}\,dx \to 0,
	\qquad
	\operatorname{Re}\!\int_{\mathcal G} f(|v_n|)\,v_n\,\overline{v_n^{-}}\,dx \to 0 .
	\]
	From \eqref{gndiv2} and $V_\varepsilon\le \|V\|_\infty<mc^{2}$, we get
	\[
	0\le \Big(-1+\frac{\|V\|_\infty}{mc^{2}}\Big)\|v_n^{-}\|^{2}+o(1),
	\]
	so $\|v_n^{-}\|\to 0$. Plugging this and the previous limit into \eqref{gndiv1} yields
	\[
	0=\|v_n^{+}\|^{2}+\int_{\mathcal G}V_\varepsilon|v_n|^{2}\,dx+o(1)\ \ \Rightarrow\ \ \|v_n^{+}\|\to 0,
	\]
	hence $\|v_n\|^{2}=\|v_n^{+}\|^{2}+\|v_n^{-}\|^{2}\to 0$, contradicting $\|v_n\|=1$.
	Therefore, vanishing is impossible: there exist $r,\eta>0$ and $y_n\in\mathcal G$ with
	\begin{equation}\label{nonvanish}
		\int_{B_r(y_n)} |v_n|^{2}\,dx \,\ge\, \eta>0 \quad \text{for all }n.
	\end{equation}
	
By the nonvanishing step, there exist $r,\eta>0$ and points $y_n\in\mathcal G$ such that
\[
\int_{B_r(y_n)} |v_n|^2\,dx \;\ge\; \eta>0 \qquad \forall n.
\]
For each $n$, choose a graph isometry $\tau_n:\mathcal G\to\mathcal G$ with
$\tau_n(y_n)=0$ and preserving edge lengths and the vertex conditions. Define
\[
\tilde v_n := U_{\tau_n} v_n = v_n\!\circ \tau_n,
\qquad
\tilde u_n := U_{\tau_n} u_n = u_n\!\circ \tau_n = \|u_n\|\,\tilde v_n .
\]
Since $U_{\tau_n}$ is unitary on $L^2(\mathcal G,\mathbb C^2)$, preserves $Y$, and commutes with the spectral projectors $P^\pm$,
we have $\|\tilde v_n\|=\|v_n\|=1$, $(\tilde v_n)^\pm=U_{\tau_n}(v_n^\pm)$, and
\[
\int_{B_r(0)} |\tilde v_n|^2\,dx = \int_{B_r(y_n)} |v_n|^2\,dx \;\ge\; \eta .
\]
Local compact embeddings on the finite subgraph supporting $B_r(0)$ yield, up to a subsequence,
$\tilde v_n \rightharpoonup v$ in $Y$ and $\tilde v_n(x)\to v(x)$ a.e., with $v\not\equiv 0$ and $v^+\neq 0$.
Consequently, on the set $\{x:|v(x)|>0\}$ we have $|\tilde u_n(x)|=\|u_n\|\,|\tilde v_n(x)|\to\infty$.
By Lemma~\ref{lem2.2},
\[
\frac{F(|\tilde u_n(x)|)}{|\tilde u_n(x)|^2}\;\longrightarrow\;+\infty
\quad\text{for a.e. }x\text{ with }v(x)\neq 0.
\]
Fatou's lemma gives
\[
\int_{\mathcal G}\frac{F(|u_n|)}{\|u_n\|^2}\,dx
=\int_{\mathcal G}\frac{F(|\tilde u_n|)}{|\tilde u_n|^2}\,|\tilde v_n|^2\,dx
\;\longrightarrow\;+\infty,
\]
which forces $\frac{I_\varepsilon(u_n)}{\|u_n\|^2}\to -\infty$, contradicting $I_\varepsilon(u_n)\le d$.

	Compute
	\[
	\frac{I_\varepsilon(u_n)}{\|u_n\|^{2}}
	=\frac{1}{2}\Big(\|v_n^{+}\|^{2}-\|v_n^{-}\|^{2}\Big)
	+\frac{1}{2}\int_{\mathcal G}V_\varepsilon|v_n|^{2}\,dx
	-\int_{\mathcal G}\frac{F(|u_n|)}{\|u_n\|^{2}}\,dx
	\ \longrightarrow\ -\infty,
	\]
	which contradicts \(I_\varepsilon(u_n)\le d\).
	Therefore, no such unbounded sequence exists and \(I_\varepsilon\) is coercive on \(\mathcal M_\varepsilon\).
\end{proof}

Hereafter we consider the functional
\[
\hat{\Upsilon}_{\varepsilon}: Y^{+}\setminus\{0\}\to\mathbb{R},
\qquad \hat{\Upsilon}_{\varepsilon}(y):=I_{\varepsilon}\!\big(m_{\varepsilon}(y)\big).
\]
By the continuity of \(m_{\varepsilon}\), \(\hat{\Upsilon}_{\varepsilon}\) is continuous.
Let
\[
S^{+}:=\{\,w\in Y^{+}:\ \|w\|=1\,\},
\qquad
\Upsilon_{\varepsilon}:=\hat{\Upsilon}_{\varepsilon}\!\mid_{S^{+}},
\]
be the restriction of \(\hat{\Upsilon}_{\varepsilon}\) to the unit sphere of \(Y^{+}\).
The next two results record the basic properties of \(\Upsilon_{\varepsilon}\) and \(\hat{\Upsilon}_{\varepsilon}\);
their proofs follow the same lines as in the autonomous case of Proposition \ref{Pro3.1} and Corollary \ref{Cor3.1}.

\begin{Lemma}\label{Lem4.2}
	\(\hat{\Upsilon}_{\varepsilon}\in C^{1}\big(Y^{+}\setminus\{0\},\mathbb{R}\big)\) and
	\begin{equation}\label{4.11}
		\hat{\Upsilon}_{\varepsilon}'(y)[z]
		=\frac{\|m_{\varepsilon}(y)^{+}\|}{\|y\|}
		\, I_{\varepsilon}'\!\big(m_{\varepsilon}(y)\big)[z],
		\qquad \forall\, y,z\in Y^{+},\; y\neq 0 .
	\end{equation}
\end{Lemma}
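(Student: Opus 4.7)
The plan is to transcribe the squeeze argument of Proposition \ref{Pro3.1} to the non-autonomous functional $I_\varepsilon$. I will fix $y\in Y^{+}\setminus\{0\}$ and $z\in Y^{+}$, write $u:=m_\varepsilon(y)=u^{-}+s_0\,y$ with $s_0:=\|u^{+}\|/\|y\|>0$, and for $|t|$ small set $y_t:=y+tz\in Y^{+}\setminus\{0\}$ together with $u_t:=m_\varepsilon(y_t)=u_t^{-}+s_t y_t$. A preliminary step is to record the continuity of the map $m_\varepsilon$, which is not proved yet but which follows by an exact copy of Lemma \ref{Lem3.8}: the ingredients needed are the coercivity of $I_\varepsilon$ on $\mathcal M_\varepsilon$ supplied by Proposition \ref{Pro4.1}, the uniqueness of the fiber maximum (analog of Lemma \ref{Lem3.3}), and the bound $V_\varepsilon(x)\le V_\infty<mc^{2}$ from $(V_1)$, which provides the equivalent uniformly convex norm on $Y^{-}$ that upgrades weak to strong convergence. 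Once this is in hand, one gets $s_t\to s_0$ and $u_t^{-}\to u^{-}$ in $Y$ as $t\to 0$.

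Next I would squeeze the difference quotient via the two maximum inequalities and the one-dimensional mean value theorem. Since $u$ is the global maximum of $I_\varepsilon$ on $\hat Y(y)$ and $u_t^{-}+s_t y\in\hat Y(y)$, one has
\[
I_\varepsilon(u_t)-I_\varepsilon(u)\ \le\ I_\varepsilon(u_t^{-}+s_t y_t)-I_\varepsilon(u_t^{-}+s_t y)\ =\ s_t\, I'_\varepsilon\!\bigl(u_t^{-}+s_t(y+\tau_t t z)\bigr)[t z]
\]
for some $\tau_t\in(0,1)$, while from $u_t$ being the maximum on $\hat Y(y_t)$ together with $u^{-}+s_0 y_t\in\hat Y(y_t)$,
\[
I_\varepsilon(u_t)-I_\varepsilon(u)\ \ge\ I_\varepsilon(u^{-}+s_0 y_t)-I_\varepsilon(u^{-}+s_0 y)\ =\ s_0\, I'_\varepsilon\!\bigl(u^{-}+s_0(y+\eta_t t z)\bigr)[t z]
\]
for some $\eta_t\in(0,1)$. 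Dividing by $t$ (treating $t>0$ and $t<0$ symmetrically, with the inequalities reversing in the latter case) and letting $t\to 0$, the continuity of $I'_\varepsilon$ on $Y$ combined with $s_t\to s_0$ and $u_t^{-}\to u^{-}$ forces both bounds to converge to $s_0\,I'_\varepsilon(u)[z]$. This produces the claimed formula for the Gâteaux derivative.

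Finally, the resulting expression $z\mapsto (\|m_\varepsilon(y)^{+}\|/\|y\|)\,I'_\varepsilon(m_\varepsilon(y))[z]$ is linear and bounded in $z$ and depends continuously on $y$ (because $m_\varepsilon$ is continuous and $I'_\varepsilon$ is continuous on $Y$), which promotes Gâteaux to Fréchet differentiability and gives $\hat\Upsilon_\varepsilon\in C^{1}(Y^{+}\setminus\{0\},\mathbb R)$. The only real obstacle I anticipate is the continuity of $m_\varepsilon$: since $I_\varepsilon$ is no longer translation invariant, the step that in Lemma \ref{Lem3.8} used an equivalent norm involving the constant $\lambda$ must here be re-run with the inhomogeneous quadratic form $\|w\|^{2}+\int_{\mathcal G}V_\varepsilon|w|^{2}dx$, but $(V_1)$ guarantees this form is uniformly equivalent to $\|\cdot\|^{2}$ on $Y^{-}$, so the argument goes through without modification.
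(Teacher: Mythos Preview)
Your proposal is correct and is precisely the approach the paper takes: the paper does not give an independent proof of Lemma~\ref{Lem4.2} but states that it ``follow[s] the same lines as in the autonomous case of Proposition~\ref{Pro3.1},'' and you have carried out exactly that transcription, including the squeeze via the two maximizer inequalities and the mean value theorem. Your care in flagging the continuity of $m_\varepsilon$ as a prerequisite (and noting that it follows from the analogue of Lemma~\ref{Lem3.8} using Proposition~\ref{Pro4.1} and the bound $|V_\varepsilon|\le V_\infty<mc^2$ from $(V_1)$) is apt; the paper handles this by asserting, just before introducing $\hat\Upsilon_\varepsilon$, that ``the same ideas as in Section~3'' yield both the well-definedness and the homeomorphism property of $m_\varepsilon|_{S^+}$, and then explicitly invokes ``the continuity of $m_\varepsilon$.''
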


\begin{Corollary}\label{Cor4.3}
	Let \(\Upsilon_{\varepsilon}:=\hat{\Upsilon}_{\varepsilon}\!\mid_{\Sp}\) with \(\Sp:=\{w\in Y^{+}:\|w\|=1\}\).
	Then:
	\begin{enumerate}
		\item[\textnormal{(i)}] \(\Upsilon_{\varepsilon}\in C^{1}(\Sp)\) and
		\[
		\Upsilon_{\varepsilon}'(w)[z]
		=\|m_{\varepsilon}(w)^{+}\|\; I_{\varepsilon}'\!\big(m_{\varepsilon}(w)\big)[z],
		\qquad \forall\, z\in T_{w}\Sp:=\{v\in Y^{+}:\langle w,v\rangle=0\}.
		\]
		In particular, \(w\in \Sp\) is a critical point of \(\Upsilon_{\varepsilon}\) on \(\Sp\) if and only if
		\(u:=m_{\varepsilon}(w)\) is a critical point of \(I_{\varepsilon}\) in \(Y\), and the critical values coincide.
		
		\item[\textnormal{(ii)}] A sequence \((w_{n})\subset \Sp\) is a \((PS)_{c}\) sequence for \(\Upsilon_{\varepsilon}\)
		if and only if \(\big(m_{\varepsilon}(w_{n})\big)\) is a \((PS)_{c}\) sequence for \(I_{\varepsilon}\).
		
		\item[\textnormal{(iii)}] \(\Upsilon_{\varepsilon}\) satisfies the \((PS)_{c}\) condition
		if and only if \(I_{\varepsilon}\) satisfies the \((PS)_{c}\) condition.
	\end{enumerate}
\end{Corollary}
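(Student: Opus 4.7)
The plan is to mimic the autonomous-case arguments of Corollary~\ref{Cor3.1}, transporting them to the nonautonomous functional $I_{\varepsilon}$ and its generalized Nehari set $\mathcal M_{\varepsilon}$. The three main ingredients are Lemma~\ref{Lem4.2} (the $C^{1}$ regularity and derivative formula for $\hat{\Upsilon}_{\varepsilon}$ on $Y^{+}\setminus\{0\}$), Proposition~\ref{Pro4.1} (coercivity of $I_{\varepsilon}$ on $\mathcal M_{\varepsilon}$), and a nonautonomous analogue of Lemma~\ref{Lem3.4}(b) providing a uniform lower bound $\|u^{+}\|\ge c_{\star}>0$ for $u\in\mathcal M_{\varepsilon}$ with $I_{\varepsilon}(u)\ge c_{\varepsilon}$.

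For (i), since $S^{+}$ is a smooth codimension-one submanifold of $Y^{+}$ and $\hat{\Upsilon}_{\varepsilon}\in C^{1}(Y^{+}\setminus\{0\})$, the restriction $\Upsilon_{\varepsilon}$ is automatically $C^{1}(S^{+})$, and evaluating the formula of Lemma~\ref{Lem4.2} at $w\in S^{+}$ (where $\|w\|=1$) on a tangent vector $z\in T_{w}S^{+}$ yields the displayed identity. For the critical-point correspondence I would use the orthogonal decomposition $Y = Y^{-}\oplus \mathbb R w\oplus T_{w}S^{+}$. Because $u:=m_{\varepsilon}(w)\in \mathcal M_{\varepsilon}$, the defining Nehari conditions give $I_{\varepsilon}'(u)|_{Y^{-}}=0$ and $I_{\varepsilon}'(u)[u]=0$; since $u^{+}=\tilde t\,w$ with $\tilde t>0$, these combine to force $I_{\varepsilon}'(u)[w]=0$. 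Hence $I_{\varepsilon}'(u)$ vanishes on $Y^{-}\oplus\mathbb R w$, and it vanishes on $T_{w}S^{+}$ if and only if $\Upsilon_{\varepsilon}'(w)=0$ (using $\|u^{+}\|>0$). The critical-point sets are thus in bijection via $m_{\varepsilon}$, and the critical values agree by definition.

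For (ii), starting from the derivative formula in (i), and using that $I_{\varepsilon}'(m_{\varepsilon}(w_n))$ annihilates $Y^{-}\oplus \mathbb R w_n$, its full dual norm on $Y$ coincides with its operator norm restricted to $T_{w_n}S^{+}$. This gives
\[
\|\Upsilon_{\varepsilon}'(w_n)\| \;=\; \|m_{\varepsilon}(w_n)^{+}\|\;\|I_{\varepsilon}'(m_{\varepsilon}(w_n))\|_{Y'}.
\]
To transfer Palais--Smale conditions in either direction one needs $\|m_{\varepsilon}(w_n)^{+}\|$ bounded away from both $0$ and $\infty$. The upper bound follows from $\Upsilon_{\varepsilon}(w_n)=I_{\varepsilon}(m_{\varepsilon}(w_n))\to c$ together with Proposition~\ref{Pro4.1}. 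The lower bound is the nonautonomous analogue of Lemma~\ref{Lem3.4}(b): using $F\ge 0$, the Nehari identity $I_{\varepsilon}'(u_n)[u_n]=0$, Lemma~\ref{lem2.1}, and $\|V\|_{L^{\infty}}<mc^{2}$ from $(V_{1})$, one obtains an estimate of the form $c_{\varepsilon}\le I_{\varepsilon}(u_n)\le C\,\|u_n^{+}\|^{2}$, hence $\|u_n^{+}\|\ge\sqrt{c_{\varepsilon}/C}>0$.

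For (iii), the equivalence of the Palais--Smale conditions is an immediate consequence of (ii) combined with the fact that $m_{\varepsilon}:S^{+}\to\mathcal M_{\varepsilon}$ is a homeomorphism whose inverse $u\mapsto u^{+}/\|u^{+}\|$ is continuous on $\mathcal M_{\varepsilon}$ (since $\|u^{+}\|$ is bounded below there). The main technical obstacle in the whole proposition is establishing the two-sided control on $\|m_{\varepsilon}(w_n)^{+}\|$ in part (ii); once the nonautonomous version of Lemma~\ref{Lem3.4}(b) is in place, the remainder of the argument is essentially a transcription of the proof of Corollary~\ref{Cor3.1}, which is why I would carry out those bounds first and then assemble (i)--(iii) in the order above.
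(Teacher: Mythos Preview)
Your proposal is correct and follows essentially the same approach as the paper, which does not give a separate proof for Corollary~\ref{Cor4.3} but simply states that it ``follows the same lines as in the autonomous case of Proposition~\ref{Pro3.1} and Corollary~\ref{Cor3.1}.'' You have identified precisely the nonautonomous substitutes needed (Lemma~\ref{Lem4.2} for the derivative formula, Proposition~\ref{Pro4.1} for coercivity, and the analogue of Lemma~\ref{Lem3.4}(b) for the lower bound on $\|u_n^{+}\|$), and your use of the orthogonal splitting $Y=(Y^{-}\oplus\mathbb{R}w_n)\oplus T_{w_n}S^{+}$ together with the norm identity $\|\Upsilon_{\varepsilon}'(w_n)\|=\|m_{\varepsilon}(w_n)^{+}\|\,\|I_{\varepsilon}'(m_{\varepsilon}(w_n))\|_{Y'}$ mirrors the paper's proof of Corollary~\ref{Cor3.1}(b) exactly.
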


\begin{Lemma}\label{Lem4.3}
	Let $(v_n)\subset Y$ be a $(PS)_{c^{*}}$ sequence for $I_{\varepsilon}$ with
	\[
	0<c^{*}\le d_{V_0}+\gamma,\qquad \gamma:=\frac{1}{2}\big(d_{V_\infty}-d_{V_0}\big)>0.
	\]
	Then for every fixed $R>0$ there exist $\eta>0$ and points $y_n\in\mathcal G$ such that
	\begin{equation}\label{eq:nonvanish}
		\int_{B_R(y_n)} |v_n(x)|^{2}\,dx \ \ge\ \eta \quad\text{for all }n.
	\end{equation}
	Equivalently, the vanishing condition
	\[
	\lim_{n\to\infty}\ \sup_{y\in\mathcal G}\ \int_{B_R(y)} |v_n|^{2}\,dx \ =\ 0
	\]
	cannot occur at such levels. In particular, if $\tau_n$ is any graph isometry with $\tau_n(y_n)=0$ and $\tilde v_n:=U_{\tau_n}v_n$, then
	\[
	\int_{B_R(0)} |\tilde v_n(x)|^{2}\,dx \ \ge\ \eta \quad\text{for all }n.
	\]
\end{Lemma}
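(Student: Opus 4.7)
\textbf{Proof plan for Lemma \ref{Lem4.3}.}
The plan is to argue by contradiction: assume that the vanishing condition
\[
\lim_{n\to\infty}\ \sup_{y\in\mathcal G}\int_{B_R(y)}|v_n|^{2}\,dx = 0
\]
holds for the given $R>0$, and show that this forces $c^{*}=0$, contradicting $c^{*}>0$. As a preliminary step I would verify that the $(PS)_{c^{*}}$ sequence $(v_{n})$ is bounded in $Y$; this is standard for strongly indefinite functionals under the Ambrosetti-Rabinowitz type condition $(f_{2})$ together with the spectral gap $|V_{\varepsilon}|<mc^{2}$ (Lemma \ref{lem2.1}), and is essentially the same computation used in the coercivity argument of Lemma \ref{Lem3.7}.

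Next I would apply a Lions-type vanishing lemma in the form domain $Y\subset H^{1/2}(\mathcal G,\mathbb C^{2})$, an analogue of Lemma \ref{lem2.4}: the covering argument combined with the one-dimensional Gagliardo-Nirenberg inequality of Lemma \ref{Lem2.3} extends to $H^{1/2}$-bounded sequences because in dimension one $H^{1/2}$ still embeds locally into $L^{p}$ for every $p<\infty$. Combined with the vanishing hypothesis and the $Y$-boundedness of $(v_{n})$, this yields $v_{n}\to 0$ in $L^{p}(\mathcal G,\mathbb C^{2})$ for every $p\in(2,\infty)$. Then, using $(f_{1})$ and the continuity of $f$ at $0$ (equivalently the $\tau$-splitting of Lemma \ref{Lem3.11}), one has the elementary bounds
\[
F(t)\le \tau t^{2}+C_{\tau}t^{p},\qquad f(t)t^{2}\le \tau t^{2}+C_{\tau}t^{p}\qquad(t\ge 0,\ \tau>0).
\]
Integrating these and invoking the $L^{2}$-boundedness from Lemma \ref{lem2.1} together with the $L^{p}$-vanishing just obtained, letting $\tau\to 0$ gives
\[
\int_{\mathcal G}F(|v_{n}|)\,dx\to 0,\qquad \int_{\mathcal G}f(|v_{n}|)|v_{n}|^{2}\,dx\to 0.
\]

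To close the argument I would exploit the Nehari-type identity
\[
I_{\varepsilon}(v_{n})-\tfrac{1}{2}I'_{\varepsilon}(v_{n})[v_{n}]=\int_{\mathcal G}\Bigl(\tfrac{1}{2}f(|v_{n}|)|v_{n}|^{2}-F(|v_{n}|)\Bigr)\,dx .
\]
Since $(v_{n})$ is bounded and $I'_{\varepsilon}(v_{n})\to 0$ in $Y^{*}$, one has $I'_{\varepsilon}(v_{n})[v_{n}]=o(1)$, so the left-hand side tends to $c^{*}$; by the previous step the right-hand side tends to $0$. This forces $c^{*}=0$, contradicting $c^{*}>0$, and therefore vanishing cannot occur: there exist $\eta>0$ and $y_{n}\in\mathcal G$ with the claimed lower bound. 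The final assertion about $\tilde v_{n}=U_{\tau_{n}}v_{n}$ is immediate once $\tau_{n}$ is chosen with $\tau_{n}(y_{n})=0$, by the unitary invariance of the $L^{2}$-norm given in Lemma \ref{lem3.1}. The main obstacle is Step 2, namely the adaptation of the Lions-type lemma to the form domain $Y$: while Lemma \ref{lem2.4} is stated under $H^{1}$-boundedness, the covering/interpolation argument together with Lemma \ref{Lem2.3} admits a natural $H^{1/2}$-counterpart in one dimension, so that $v_{n}\to 0$ in $L^{p}$ for $p\in(2,\infty)$ persists with essentially the same proof; verifying this adaptation carefully is the only nontrivial technical point.
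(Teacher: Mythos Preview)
Your proof is correct and follows the same contradiction strategy as the paper: assume vanishing, deduce $v_n\to 0$ in $L^p$ for $p>2$ via a Lions-type lemma, make the nonlinear terms vanish, and contradict $c^*>0$. The only difference is in how the contradiction is closed. You use the identity
\[
I_\varepsilon(v_n)-\tfrac12 I'_\varepsilon(v_n)[v_n]=\int_{\mathcal G}\Big(\tfrac12 f(|v_n|)|v_n|^2-F(|v_n|)\Big)\,dx
\]
directly, while the paper instead tests $I'_\varepsilon(v_n)$ against $v_n^{+}$ and $v_n^{-}$ separately, uses the equivalence of $\|w\|^2+\int V_\varepsilon|w|^2$ with the $Y$-norm (since $|V_\varepsilon|<mc^2$) to conclude $v_n^{\pm}\to 0$ strongly in $Y$, and then reads off $I_\varepsilon(v_n)\to 0$. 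Your route is shorter and avoids establishing strong convergence; the paper's route yields the extra information $v_n\to 0$ in $Y$, which is not needed here. You are also right to flag the $H^{1/2}$ adaptation of Lemma~\ref{lem2.4} as the one technical point requiring care; the paper applies Lemma~\ref{lem2.4} without comment, so your version is in fact more scrupulous on this issue.
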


\begin{proof}
	Assume by contradiction that for some fixed $R>0$,
	\[
	\lim_{n\to\infty}\ \sup_{y\in\mathcal G}\ \int_{B_R(y)} |v_n|^{2}\,dx \ =\ 0.
	\]
	By Lemma \ref{lem2.4}, $v_n\to0$ in $L^{q}(\mathcal G)$ for all $2<q<\infty$.
	Using $(f_1)$ we get $\int_{\mathcal G}F(|v_n|)\,dx\to0$ and
	\[
	\int_{\mathcal G} \! f(|v_n|)\,|v_n|\,|v_n^{\pm}|\,dx \;\to\; 0 .
	\]
	Since $(v_n)$ is a $(PS)_{c^{*}}$ sequence,
	\[
	I_\varepsilon(v_n)\to c^{*}>0,\qquad I'_\varepsilon(v_n)[v_n^{\pm}]\to0 .
	\]
	Testing with $v_n^{+}$ and $v_n^{-}$ gives
	\[
	I'_\varepsilon(v_n)[v_n^{+}]
	=\|v_n^{+}\|^{2}+\!\int_{\mathcal G}\! V_\varepsilon |v_n^{+}|^{2}\,dx
	-\operatorname{Re}\!\int_{\mathcal G}\! f(|v_n|)\,v_n\,\overline{v_n^{+}}\,dx \;\to\; 0,
	\]
	\[
	I'_\varepsilon(v_n)[v_n^{-}]
	=-\|v_n^{-}\|^{2}+\!\int_{\mathcal G}\! V_\varepsilon |v_n^{-}|^{2}\,dx
	-\operatorname{Re}\!\int_{\mathcal G}\! f(|v_n|)\,v_n\,\overline{v_n^{-}}\,dx \;\to\; 0.
	\]
	Because $|V_\varepsilon|<mc^{2}$, the quantities
	$\|w\|^{2}+\int_{\mathcal G}V_\varepsilon|w|^{2}\,dx$ and $\|w\|^{2}$ are equivalent norms on $Y$.
	From the two displays and the nonlinear tails $\to0$, we conclude
	\[
	v_n^{+}\to 0 \quad\text{and}\quad v_n^{-}\to 0 \quad\text{in }Y,
	\]
	hence $v_n\to0$ in $Y$.
	Therefore
	\[
	I_\varepsilon(v_n)
	=\frac12\big(\|v_n^{+}\|^{2}-\|v_n^{-}\|^{2}\big)
	+\frac12\!\int_{\mathcal G}V_\varepsilon|v_n|^{2}\,dx
	-\int_{\mathcal G}F(|v_n|)\,dx \ \longrightarrow\ 0,
	\]
	contradicting $I_\varepsilon(v_n)\to c^{*}>0$.
	Thus vanishing is impossible. By definition of the $\sup$ in the vanishing criterion,
	there exist $y_n\in\mathcal G$ and $\eta>0$ such that \eqref{eq:nonvanish} holds.
	Finally, for any graph isometry $\tau_n$ with $\tau_n(y_n)=0$, the unitarity of $U_{\tau_n}$
	and measure preservation give
	\[
	\int_{B_R(0)} |U_{\tau_n}v_n|^{2}\,dx
	=\int_{B_R(y_n)} |v_n|^{2}\,dx \ \ge\ \eta .
	\]
\end{proof}

\begin{Lemma}\label{Lem4.4}
	Let $(u_n)\subset Y$ satisfy $u_n\to u$ in $Y$ and set $\omega_n:=u_n-u$. Then:
	\begin{enumerate}
		\item[(i)] $\displaystyle \int_{\mathcal G}\!\Big(F(|\omega_n|)-F(|u_n|)+F(|u|)\Big)\,dx=o_n(1)$;
		\item[(ii)] For every $\xi>0$ there exists $N\in\mathbb N$ and $C>0$  such that for all $n\ge N$ and all $v\in Y$,
		\[
		\Big|\operatorname{Re}\!\int_{\mathcal G}\!\big[f(|\omega_n|)\,\omega_n - f(|u_n|)\,u_n + f(|u|)\,u\big]\overline{v}\,dx\Big|
		\;\le\; C\,\xi\,\|v\|.
		\]
	\end{enumerate}
\end{Lemma}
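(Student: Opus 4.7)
The plan is to use the strong convergence $u_n\to u$ in $Y$ as the single engine: combined with the embedding $Y\hookrightarrow L^s(\mathcal G,\mathbb C^2)$ for every $s\in[2,\infty)$ from \eqref{2.8}, it lifts to strong convergence in every Lebesgue space, and each of (i) and (ii) then reduces to the continuity of an appropriate Nemytskii map. First I would record that $u_n\to u$ and $\omega_n=u_n-u\to 0$ strongly in $L^s(\mathcal G,\mathbb C^2)$ for every $s\in[2,\infty)$, and, passing to a subsequence, also pointwise a.e. on $\mathcal G$.

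For (i), the growth $(f_1)$ yields $0\le F(t)\le C(t^2+t^p)$, so that the Nemytskii functional $\Phi(w):=\int_{\mathcal G}F(|w|)\,dx$ is continuous on $L^2(\mathcal G,\mathbb C^2)\cap L^p(\mathcal G,\mathbb C^2)$. Applying this continuity along the two strongly convergent sequences gives $\Phi(u_n)\to\Phi(u)$ and $\Phi(\omega_n)\to\Phi(0)=0$, whence
\[
\int_{\mathcal G}\!\big(F(|\omega_n|)-F(|u_n|)+F(|u|)\big)\,dx
\;\longrightarrow\; 0-\Phi(u)+\Phi(u) \;=\; 0,
\]
which is exactly (i).

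For (ii) the same scheme is applied to the vector valued Nemytskii operator $T(w):=f(|w|)w$. The pointwise bound $|T(w)|\le c_1(|w|+|w|^{p-1})$ from $(f_1)$ shows that $T$ sends $L^2\cap L^p$ continuously into $L^2+L^{p'}$, with $p':=p/(p-1)$. Invoking this continuity we get $T(u_n)\to T(u)$ and $T(\omega_n)\to 0$ in $L^2+L^{p'}$, and therefore
\[
g_n:=f(|\omega_n|)\omega_n-f(|u_n|)u_n+f(|u|)u \;\longrightarrow\; 0 \quad\text{in } L^2+L^{p'}.
\]
Given $\xi>0$, the very definition of the sum norm provides $N_0\in\mathbb N$ and, for every $n\ge N_0$, a decomposition $g_n=a_n+b_n$ with $\|a_n\|_{L^2}+\|b_n\|_{L^{p'}}\le \xi$. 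Using H\"older's inequality together with the embedding $Y\hookrightarrow L^2\cap L^p$ with some constant $\kappa>0$, for every $v\in Y$
\[
\Big|\operatorname{Re}\!\int_{\mathcal G} g_n\cdot \bar v\,dx\Big|
\le \|a_n\|_{L^2}\|v\|_{L^2}+\|b_n\|_{L^{p'}}\|v\|_{L^p}
\le \kappa\,\xi\,\|v\|,
\]
which is (ii) with $C=\kappa$.

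The only technical point I foresee is the continuity of the Nemytskii operator $T$ from $L^2\cap L^p$ into $L^2+L^{p'}$. I would establish it by truncation at height $|w|=1$, splitting $f(|w|)w$ into a sub-threshold piece dominated by $2c_1|w|$ (which lies in $L^2$, where continuity is classical for $f$ bounded on $[0,1]$) and a supra-threshold piece dominated by $2c_1|w|^{p-1}$ (which lies in $L^{p'}$, where continuity is the classical Krasnoselskii result), and then summing the two contributions. This two-growth split is exactly the standard device needed because the growth of $f$ differs near $0$ and at infinity; once it is in place, (i) and (ii) follow from the computations above with no further obstacle.
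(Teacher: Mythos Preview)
Your proof is correct and, given the hypothesis of \emph{strong} convergence $u_n\to u$ in $Y$, considerably more direct than the paper's. The paper establishes the pointwise Brezis--Lieb type inequality
\[
\big|F(|\omega_n|)-F(|u_n|)+F(|u|)\big|
\le \delta\big(|u_n|^{2}+|u_n|^{p}+|u|^{2}+|u|^{p}\big)+C_\delta\big(|\omega_n|^{2}+|\omega_n|^{p}\big)
\]
(via a mean-value estimate and Young's inequality) and then integrates, sending $\delta\to0$ after using $\omega_n\to0$ in $L^{2}\cap L^{p}$. For (ii) it similarly relies on a Lipschitz-type bound $|f(|a|)a-f(|b|)b|\le C\big(|a-b|+(|a|^{p-2}+|b|^{p-2})|a-b|\big)$ and H\"older. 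This is precisely the machinery one would need if only $u_n\rightharpoonup u$ were assumed; under strong convergence it is overkill.

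Your route---continuity of the Nemytskii maps $w\mapsto\int F(|w|)$ on $L^{2}\cap L^{p}$ and $w\mapsto f(|w|)w$ from $L^{2}\cap L^{p}$ to $L^{2}+L^{p'}$---gets (i) in one line and (ii) by the duality pairing $(L^{2}+L^{p'})\times(L^{2}\cap L^{p})$, with $C$ simply the embedding constant of $Y\hookrightarrow L^{2}\cap L^{p}$. It also has the mild advantage of not invoking the $C^{1}$ regularity of $f$ that the paper's Lipschitz estimate implicitly uses. The trade-off is that the paper's pointwise inequalities are robust: they survive weakening the hypothesis to $u_n\rightharpoonup u$ (which is how the lemma is actually applied in Lemma~\ref{Lem4.6}), whereas your Nemytskii-continuity argument genuinely needs strong convergence.
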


\begin{proof}
	$(i)$ By the growth in $(f_1)$-$(f_2)$ there exists $C>0$ and $p>2$ such that
	\[
	|F(t)|\le C\,(|t|^2+|t|^p),\qquad |f(t)\,t|\le C\,(|t|^2+|t|^p)\qquad \forall t\in\mathbb R.
	\]
	Moreover, using the mean value theorem on $\mathbb R_{\ge0}\ni s\mapsto F(s)$ and the above bound for $f$,
	for any $a,b\in\mathbb C$,
	\begin{equation}\label{eq:FLip}
		|F(|a|)-F(|b|)|
		\;\le\; C\Big[(|a|+|b|)\,|a-b| + (|a|^{p-1}+|b|^{p-1})\,|a-b|\Big].
	\end{equation}
	Apply \eqref{eq:FLip} with $(a,b)=(u_n,\,u)$ and with $(a,b)=(\omega_n,\,0)$ and combine:
	\[
	\big|F(|\omega_n|)-F(|u_n|)+F(|u|)\big|
	\;\le\;\delta\big(|u_n|^2+|u_n|^p+|u|^2+|u|^p\big) + C_\delta\big(|\omega_n|^2+|\omega_n|^p\big),
	\]
	for any $\delta\in(0,1)$.  Integrating the above inequality and taking $\limsup\limits_{n\to\infty}$, we get
	\[
	\limsup_{n\to\infty}\int_{\mathcal G}\big|F(|\omega_n|)-F(|u_n|)+F(|u|)\big|\,dx
	\;\le\; \delta\,C_1 \;+\; C_\delta\,\limsup_{n\to\infty}\int_{\mathcal G}\!\big(|\omega_n|^2+|\omega_n|^p\big)\,dx,
	\]
	where $C_1:=\sup_{n}\int_{\mathcal G}\big(|u_n|^2+|u_n|^p+|u|^2+|u|^p\big)\,dx<\infty$ and the last $\limsup$ is $0$ because $\omega_n\to0$ in $L^2\cap L^p$. Hence
	\[
	\limsup_{n\to\infty}\int_{\mathcal G}\big|F(|\omega_n|)-F(|u_n|)+F(|u|)\big|\,dx \;\le\; \delta\,C_1.
	\]
	Since $\delta>0$ is arbitrary, the limit is $0$, proving (i).
	
	\smallskip
	\textbf{(ii)} By the growth of $f$,
	\[
	|f(|a|)a - f(|b|)b|
	\;\le\; C\Big(|a-b| + \big(|a|^{p-2}+|b|^{p-2}\big)\,|a-b|\Big)\qquad \forall a,b\in\mathbb C.
	\]
	Therefore,
	\[
	\begin{aligned}
		&\Big|\operatorname{Re}\!\int_{\mathcal G}\!\big[f(|\omega_n|)\,\omega_n - f(|u_n|)\,u_n + f(|u|)\,u\big]\overline{v}\,dx\Big| \\
		&\le\ C\!\int_{\mathcal G}\!\Big(|\omega_n| + |\omega_n|^{p-1}\Big)\,|v|\,dx
		\;+\; C\!\int_{\mathcal G}\!\Big(|u_n-u| + \big(|u_n|^{p-2}+|u|^{p-2}\big)|u_n-u|\Big)\,|v|\,dx \\
		&\le\ C\Big(\|\omega_n\|_{L^2}\|v\|_{L^2} + \|\omega_n\|_{L^p}^{p-1}\|v\|_{L^p}\Big)
		+ C\Big(\|u_n-u\|_{L^2}\|v\|_{L^2} + \|u_n-u\|_{L^p}\big(\|u_n\|_{L^p}^{p-2}+\|u\|_{L^p}^{p-2}\big)\|v\|_{L^p}\Big).
	\end{aligned}
	\]
	Because $u_n\to u$ in $L^2(\mathcal G)\cap L^p(\mathcal G)$ and $(u_n)$ is bounded in these spaces,
	the right-hand side is $o_n(1)\,\|v\|$. By Lemma~\ref{lem2.1}, the embeddings $Y\hookrightarrow L^2(\mathcal G)$ and $Y\hookrightarrow L^p(\mathcal G)$ are continuous, so $\|v\|_{L^2}+\|v\|_{L^p}\le C\|v\|$.
	Thus for every $\xi>0$ there exists $N$ such that for $n\ge N$ the integral is bounded by $C\xi\|v\|$.
\end{proof}

\begin{Lemma}\label{Lem4.5}
	Let $(u_n)\subset Y$ be a $(PS)_c$ sequence for $I_\varepsilon$ with $u_n\to u$ in $Y$, and set $\omega_n:=u_n-u$.
	Then:
	\begin{enumerate}
		\item[(a)] $I_\varepsilon(\omega_n)=I_\varepsilon(u_n)-I_\varepsilon(u)+o_n(1)$;
		\item[(b)] $\|I_\varepsilon'(\omega_n)\|_{Y'}=o_n(1)$.
	\end{enumerate}
\end{Lemma}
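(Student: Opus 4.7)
My plan is to derive both (a) and (b) by combining the elementary splitting identities for the quadratic and potential parts with the two statements of Lemma~\ref{Lem4.4}. Since $u_n\to u$ in $Y$, I first note that $\omega_n=u_n-u\to 0$ in $Y$, hence $\omega_n^{\pm}\to 0$ in $Y^{\pm}$ and, by Lemma~\ref{lem2.1}, $\omega_n\to 0$ in $L^{2}(\mathcal G,\mathbb C^{2})$ as well.

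For (a), I would expand
\[
\|u_n^{\pm}\|^{2}=\|u^{\pm}\|^{2}+\|\omega_n^{\pm}\|^{2}+2\operatorname{Re}(u^{\pm},\omega_n^{\pm}),\qquad
\int_{\mathcal G}V_\varepsilon|u_n|^{2}\,dx=\int_{\mathcal G}V_\varepsilon|u|^{2}\,dx+\int_{\mathcal G}V_\varepsilon|\omega_n|^{2}\,dx+2\operatorname{Re}\!\int_{\mathcal G}V_\varepsilon u\,\bar\omega_n\,dx,
\]
observe that all cross terms are $o_n(1)$ thanks to $\omega_n\to 0$ in $Y$ and in $L^{2}$ together with the boundedness of $V_\varepsilon$, and then invoke Lemma~\ref{Lem4.4}(i) to write $\int_{\mathcal G}F(|u_n|)\,dx=\int_{\mathcal G}F(|u|)\,dx+\int_{\mathcal G}F(|\omega_n|)\,dx+o_n(1)$. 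Collecting the three pieces gives $I_\varepsilon(u_n)=I_\varepsilon(u)+I_\varepsilon(\omega_n)+o_n(1)$, which rearranges into (a).

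For (b), I would test $I_\varepsilon'(\omega_n)$ against an arbitrary $v\in Y$ and exploit the linearity of the quadratic and potential parts in $u$ to write
\[
I_\varepsilon'(\omega_n)[v]=I_\varepsilon'(u_n)[v]-I_\varepsilon'(u)[v]-\operatorname{Re}\!\int_{\mathcal G}\bigl[f(|\omega_n|)\omega_n-f(|u_n|)u_n+f(|u|)u\bigr]\bar v\,dx.
\]
The first summand is $o_n(1)\|v\|$ because $(u_n)$ is a $(PS)_c$ sequence; the second vanishes because the continuity of $I_\varepsilon'$ together with $u_n\to u$ forces $I_\varepsilon'(u)=\lim_{n\to\infty}I_\varepsilon'(u_n)=0$; the third is bounded by $C\xi\|v\|$ for $n\ge N(\xi)$ thanks to Lemma~\ref{Lem4.4}(ii), and letting $\xi\to 0$ makes it $o_n(1)\|v\|$. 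Taking the supremum over $\|v\|\le 1$ yields $\|I_\varepsilon'(\omega_n)\|_{Y'}\to 0$. Overall the argument is essentially a bookkeeping exercise once Lemma~\ref{Lem4.4} is in place; the only slightly delicate point is the identity $I_\varepsilon'(u)=0$, which relies on both the strong convergence hypothesis and the $(PS)_c$ property.
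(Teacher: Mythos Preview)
Your proof is correct and follows essentially the same route as the paper: for (a) you split the quadratic and potential parts via elementary bilinear identities and handle the nonlinear part by Lemma~\ref{Lem4.4}(i), and for (b) you write $I_\varepsilon'(\omega_n)[v]=I_\varepsilon'(u_n)[v]-I_\varepsilon'(u)[v]-\operatorname{Re}\int[f(|\omega_n|)\omega_n-f(|u_n|)u_n+f(|u|)u]\bar v\,dx$, use the $(PS)_c$ property and $I_\varepsilon'(u)=0$ for the first two terms, and Lemma~\ref{Lem4.4}(ii) for the third. This matches the paper's argument step for step.
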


\begin{proof}
	(a) Using
	\[
	I_\varepsilon(w)=\frac{1}{2}\big(\|w^+\|^2-\|w^-\|^2\big)
	+\frac{1}{2}\int_{\mathcal G} V_\varepsilon(x)|w|^2\,dx
	-\int_{\mathcal G}F(|w|)\,dx,
	\]
	the polarization identity in the Hilbert norms of $Y^\pm$ gives
	\[
	\begin{aligned}
		&\frac{1}{2}\!\Big(\|u_n^+-u^+\|^2-\|u_n^--u^-\|^2\Big)
		-\frac{1}{2}\!\Big(\|u_n^+\|^2-\|u_n^-\|^2\Big)
		+\frac{1}{2}\!\Big(\|u^+\|^2-\|u^-\|^2\Big)\\
		&\qquad=\ \big(\|u^+\|^2-\langle u_n^+,u^+\rangle\big)
		-\big(\|u^-\|^2-\langle u_n^-,u^-\rangle\big)\ \longrightarrow\ 0,
	\end{aligned}
	\]
	since $u_n^\pm\to u^\pm$ in $Y$.
	For the potential term,
	\[
	\frac{1}{2}\int_{\mathcal G} V_\varepsilon\big(|u_n-u|^2-|u_n|^2+|u|^2\big)\,dx
	=\int_{\mathcal G} V_\varepsilon\big(|u|^2-\operatorname{Re}(u_n\overline u)\big)\,dx \longrightarrow 0,
	\]
	because $V_\varepsilon\in L^\infty(\mathcal G)$ and $u_n\to u$ in $L^2(\mathcal G)$.
	For the nonlinear part, Lemma \ref{Lem4.10}(i) yields
	\[
	\int_{\mathcal G}\!\big(F(|u_n-u|)-F(|u_n|)+F(|u|)\big)\,dx=o_n(1).
	\]
	Combining the three identities gives (a).
	
	\smallskip
	(b) Since $I_\varepsilon'(u_n)\to 0$ in $Y'$ and $u_n\to u$ in $Y$, we have $I_\varepsilon'(u)=0$.
	For any $v\in Y$ with $\|v\|=1$,
	\[
	\begin{aligned}
		I_\varepsilon'(\omega_n)[v]
		&=(u_n^+-u^+,v^+)-(u_n^--u^-,v^-)
		+\int_{\mathcal G} V_\varepsilon (u_n-u)\,\overline v\,dx\\
		&\quad-\int_{\mathcal G} f(|\omega_n|)\,\omega_n\,\overline v\,dx\\
		&= I_\varepsilon'(u_n)[v]-I_\varepsilon'(u)[v]
		-\operatorname{Re}\!\int_{\mathcal G}\!\big(f(|\omega_n|)\omega_n-f(|u_n|)u_n+f(|u|)u\big)\overline v\,dx.
	\end{aligned}
	\]
	Hence
	\[
	|I_\varepsilon'(\omega_n)[v]|
	\le \|I_\varepsilon'(u_n)\|_{Y'}+
	\Big|\operatorname{Re}\!\int_{\mathcal G}\!\big(f(|\omega_n|)\omega_n-f(|u_n|)u_n+f(|u|)u\big)\overline v\,dx\Big|.
	\]
	By Lemma \ref{Lem4.4}(ii), for every $\xi>0$ and all $n$ large,
	the integral term is $\le C\xi\|v\|=C\xi$, while $\|I'_\varepsilon(u_n)\|_{Y'}=o_n(1)$.
	Taking the supremum over $\|v\|=1$ yields $\|I_\varepsilon'(\omega_n)\|_{Y'}=o_n(1)$.
\end{proof}

Next, we show a compactness criteria for $\Upsilon_{\varepsilon}$ that is crucial in our approach.
\begin{Lemma}\label{Lem4.6}
	The functional $\Upsilon_\varepsilon$ satisfies the $(PS)_c$ condition for every
	$c\le d_{V_0}+\gamma$, where $\gamma=\frac{1}{2}\big(d_{V_\infty}-d_{V_0}\big)>0$.
\end{Lemma}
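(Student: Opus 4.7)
The plan is to invoke Corollary 4.3 (iii) to reduce the assertion to the $(PS)_c$ condition for $I_\varepsilon$ on $Y$ at every level $c\le d_{V_0}+\gamma$, and then to run a concentration-compactness dichotomy. Let $(u_n)\subset Y$ be a $(PS)_c$ sequence. The first step is boundedness of $(u_n)$: I would assume $\|u_n\|\to\infty$, set $v_n:=u_n/\|u_n\|$, and rule out vanishing of $(v_n)$ by Lemma 4.3 (whose hypothesis $c^*>0$ is granted by $c>0$ and the bound $c\le d_{V_0}+\gamma<d_{V_\infty}$); nonvanishing then yields a graph isometry $\tau_n$ and a nontrivial weak limit of $U_{\tau_n}v_n$, and Lemma 2.2 with Fatou forces $\int_{\mathcal G}F(|u_n|)/\|u_n\|^2\to\infty$, contradicting $I_\varepsilon(u_n)/\|u_n\|^2=O(1/\|u_n\|^2)$. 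Passing to a subsequence, $u_n\rightharpoonup u$ in $Y$ with $I'_\varepsilon(u)=0$, and the $(f_2)$ identity gives
\[
I_\varepsilon(u)=\int_{\mathcal G}\!\bigl(\tfrac12 f(|u|)|u|^2-F(|u|)\bigr)\,dx\ge 0.
\]

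Next I would set $\omega_n:=u_n-u$ and invoke Lemma 4.5 to obtain that $(\omega_n)$ is a $(PS)_{c^*}$ sequence for $I_\varepsilon$ with $c^*:=c-I_\varepsilon(u)\in[0,c]$ and $\omega_n\rightharpoonup 0$. The goal is $\omega_n\to 0$ in $Y$. If $c^*=0$, the identity $I_\varepsilon(\omega_n)-\tfrac12 I'_\varepsilon(\omega_n)[\omega_n]\to 0$ combined with the nonnegativity from $(f_2)$ yields $\int F(|\omega_n|)\to 0$ and $\int f(|\omega_n|)|\omega_n|^2\to 0$; then testing $I'_\varepsilon(\omega_n)$ against $\omega_n^{\pm}$ and using $|V_\varepsilon|<mc^2$ together with Lemma 2.1 (so that $\|w\|^2\pm\int V_\varepsilon|w|^2\,dx$ is an equivalent norm on $Y^\pm$) forces $\omega_n\to 0$ strongly.

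The delicate case is $c^*>0$. By Lemma 4.3 there exist $R,\eta>0$ and $y_n\in\mathcal G$ with $\int_{B_R(y_n)}|\omega_n|^2\,dx\ge\eta$. If $(y_n)$ were bounded, the local compactness $Y\hookrightarrow L^2_{\mathrm{loc}}$ and $\omega_n\rightharpoonup 0$ would give a contradiction, so $d(y_n,0)\to\infty$. For $n$ large, $y_n$ lies in the interior of a half-line of $\mathcal G$; apply shift-type isometries $\tau_n$ along that half-line as in Lemma 3.1 and set $\tilde\omega_n:=U_{\tau_n}\omega_n$. By $(V_1)$ the translated potentials $V(\varepsilon\tau_n^{-1}\cdot)$ converge locally to $V_\infty$, so $(\tilde\omega_n)$ is asymptotically a $(PS)_{c^*}$ sequence for the autonomous functional $J_{V_\infty}$ and admits a nontrivial weak limit $\tilde\omega$ with $J'_{V_\infty}(\tilde\omega)=0$. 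Since $\tilde\omega\in\mathcal N_{V_\infty}$, Theorem 3.1 and Proposition 3.2 give $J_{V_\infty}(\tilde\omega)\ge d_{V_\infty}$, while a Brezis-Lieb type decomposition together with the nonnegativity of $\tfrac12 f(|\omega_n|)|\omega_n|^2-F(|\omega_n|)$ yields $c^*\ge J_{V_\infty}(\tilde\omega)$. Combined with $c^*\le c\le d_{V_0}+\gamma=\tfrac12(d_{V_0}+d_{V_\infty})<d_{V_\infty}$ this is a contradiction, so $\omega_n\to 0$ in $Y$ and $u_n\to u$.

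The main obstacle is the infinity-bubble step: a noncompact quantum graph does not carry a transitive translation group, so the classical shift-and-extract argument must be adapted by exploiting the half-line structure at infinity and verifying that the Kirchhoff conditions at the base vertex of the escaping half-line become asymptotically irrelevant as $d(y_n,0)\to\infty$, so that the limit problem is genuinely the autonomous one on an unbounded component. Once this is in place, the strict inequality $c<d_{V_\infty}$ from Corollary 4.1 closes the comparison and gives the $(PS)_c$ property.
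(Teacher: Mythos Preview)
Your overall strategy is sound and close to the paper's, but there is a gap in the boundedness step. You propose to rule out vanishing of the normalized sequence $v_n:=u_n/\|u_n\|$ by invoking Lemma~4.3; however, Lemma~4.3 applies only to $(PS)_{c^*}$ sequences for $I_\varepsilon$, and $(v_n)$ is not one. Nonvanishing of $(u_n)$ (which \emph{is} a $(PS)_c$ sequence) does not transfer to $(v_n)$ either, since $\int_{B_R(y_n)}|v_n|^2=\|u_n\|^{-2}\int_{B_R(y_n)}|u_n|^2\to 0$. The paper avoids this issue entirely: instead of invoking Corollary~4.3(iii) and proving $(PS)_c$ for $I_\varepsilon$ on all of $Y$, it uses Corollary~4.3(ii), so that the $(PS)$ sequence $u_n=m_\varepsilon(w_n)$ lies in $\mathcal M_\varepsilon$, and boundedness follows immediately from the coercivity of $I_\varepsilon$ on $\mathcal M_\varepsilon$ (Proposition~4.1). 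You should simply take this shortcut.

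After boundedness, the two arguments diverge in presentation but not in substance. You split $\omega_n=u_n-u$ via Lemma~4.5, handle $c^*=0$ directly, and for $c^*>0$ recenter along escaping points $y_n$ to produce a nontrivial critical point of $J_{V_\infty}$, contradicting $c^*\le c<d_{V_\infty}$. The paper instead first assumes $u=0$ and runs the same escaping-mass argument on $(u_n)$ itself to force $d_{V_\infty}\le c$, a contradiction, hence $u\neq 0$; with $u\neq 0$ in hand it then applies the tail estimates of Lemmas~3.12--3.13 (adapted to $I_\varepsilon$) to pass the nonlinear term to the limit and conclude $u_n^\pm\to u^\pm$ via the equivalent norm $\|w\|^2+\int V_\varepsilon|w|^2$. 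Your route is the classical bubble-extraction picture; the paper's route trades the $c^*=0$ computation for the $L^1$-convergence machinery that was already developed in Section~3. Both are valid, and your identification of the half-line recentering issue is exactly what the paper relies on (Lemma~3.1).
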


\begin{proof}
	Let $(w_n)\subset S^{+}$ be a $(PS)_c$ sequence for $\Upsilon_{\varepsilon}$ with $c\le d_{V_0}+\gamma$, and set $u_n:=m_{\varepsilon}(w_n)\in\mathcal M_{\varepsilon}$. By Corollary~\ref{Cor4.3}\,(ii), $(u_n)$ is a $(PS)_c$ sequence for $I_{\varepsilon}$, hence $(u_n)$ is bounded in $Y$, $I_{\varepsilon}(u_n)\to c$, and $\|I'_{\varepsilon}(u_n)\|_{Y'}\to 0$. Up to a subsequence,
	\[
	u_n\rightharpoonup u \ \text{ in } Y, \qquad u_n(x)\to u(x) \ \text{ a.e. on } \mathcal G.
	\]
	A standard localization plus density argument gives $I'_{\varepsilon}(u)=0$ in $Y'$, and
	\[
	I_{\varepsilon}(u)-\frac{1}{2}I'_{\varepsilon}(u)[u]
	=\int_{\mathcal G}\Big(\frac{1}{2}f(|u|)|u|^{2}-F(|u|)\Big)\,dx \ \ge 0,
	\]
	so $I_{\varepsilon}(u)\ge 0$.
	
	Define $\omega_n:=u_n-u$. By Lemma~\ref{Lem4.5}
	\[
	I_{\varepsilon}(\omega_n)=I_{\varepsilon}(u_n)-I_{\varepsilon}(u)+o_n(1)\ \longrightarrow\ c^{*}:=c-I_{\varepsilon}(u),
	\quad\text{and}\quad \|I'_{\varepsilon}(\omega_n)\|_{Y'}=o_n(1).
	\]
	Thus $(\omega_n)$ is a $(PS)_{c^{*}}$ sequence for $I_{\varepsilon}$ with $c^{*}\le c\le d_{V_0}+\gamma$.
	
	Assume by contradiction that $u=0$. By Lemma~\ref{Lem4.3}, vanishing cannot occur at the level
	$c\le d_{V_0}+\gamma$, hence there exist $R>0$, $\eta>0$ and graph isometries $\tau_{y_n}$ with
	$|y_n|\to\infty$ such that
	\[
	\int_{B_R(0)} \big|\,U_{\tau_{y_n}} u_n(x)\,\big|^{2}\,dx \ \ge\ \eta \quad \text{for all } n.
	\]
	Arguing as in the proof of Lemma~\ref{Lem4.9},
	we obtain a nontrivial critical point $\tilde v\in Y$ of the autonomous functional $J_{V_\infty}$ with
	\[
	d_{V_\infty}\ \le\ J_{V_\infty}(\tilde v)
	\ \le\ \liminf_{n\to\infty}\Big(I_\varepsilon(u_n)-\tfrac12 I'_\varepsilon(u_n)[u_n]\Big)
	=\ c.
	\]
	Thus $d_{V_\infty}\le c\le d_{V_0}+\gamma=\tfrac{d_{V_0}+d_{V_\infty}}{2}<d_{V_\infty}$, a contradiction.
	Therefore $u\neq 0$.

	We now show $u_n\to u$ in $Y$.
	Using $I'_\varepsilon(u_n)[u_n^+]\to0$ and $I'_\varepsilon(u)[u^+]=0$ we write
	\[
	\|u_n^+\|^2+\int_{\mathcal G}V_\varepsilon|u_n^+|^2\,dx
	=\operatorname{Re}\!\int_{\mathcal G} f(|u_n|)u_n\,\overline{u_n^+}\,dx + o_n(1),
	\]
	\[
	\|u^+\|^2+\int_{\mathcal G}V_\varepsilon|u^+|^2\,dx
	=\operatorname{Re}\!\int_{\mathcal G} f(|u|)u\,\overline{u^+}\,dx.
	\]
	As in Lemma~\ref{Lem3.13},
	\[
	f(|u_n|)u_n\,\overline{u_n^+}\ \longrightarrow\ f(|u|)u\,\overline{u^+}
	\quad\text{in }L^1(\mathcal G),
	\]
	because $u_n\rightharpoonup u$ in $Y$ and $u_n\to u$ in $L^q_{\mathrm{loc}}$ for all $q<\infty$,
	while the tails are controlled by Lemma~\ref{Lem3.12}.
	Hence
	\[
	\|u_n^+\|^2+\int_{\mathcal G}V_\varepsilon|u_n^+|^2\,dx
	\ \longrightarrow\
	\|u^+\|^2+\int_{\mathcal G}V_\varepsilon|u^+|^2\,dx.
	\]
	Since $\|w\|_{*,\varepsilon}:=\big(\|w\|^2+\int_{\mathcal G}V_\varepsilon|w|^2\,dx\big)^{1/2}$
	is an equivalent norm on $Y$ (by $|V_\varepsilon|<mc^2$), uniform convexity yields
	$u_n^+\to u^+$ in $Y$. Repeating the argument with $u_n^-$ (testing $I'_\varepsilon$ on $Y^-$),
	we get $u_n^-\to u^-$ in $Y$. Therefore $u_n\to u$ in $Y$.
	
	We have shown that every $(PS)_c$ sequence for $I_\varepsilon$ with
	$c\le d_{V_0}+\gamma$ has a strongly convergent subsequence in $Y$;
	hence $I_\varepsilon$ satisfies $(PS)_c$ in this range. By
	Corollary~\ref{Cor4.3}(ii)-(iii), the same holds for $\Upsilon_\varepsilon$.
\end{proof}

\subsection{Barycenter map and localization}
\begin{Definition}\label{def:bary}
	Fix $\rho_0,r_0>0$ such that
	$\overline{B_{\rho_0}(z_i)}\cap\overline{B_{\rho_0}(z_j)}=\emptyset$ for $i\neq j$,
	$\bigcup_{i=1}^k B_{\rho_0}(z_i)\subset B_{r_0}(0)$,
	and set $K_{\rho_0}:=\bigcup_{i=1}^k\overline{B_{\rho_0}(z_i)}$.
	Define $\chi:\mathcal G\to \overline{B_{r_0}(0)}$ by
	\[
	\chi(x):=\begin{cases}
		x, & |x|\le r_0,\\
		r_0\,\dfrac{x}{|x|}, & |x|>r_0,
	\end{cases}
	\]
	and for $\varepsilon>0$ define $Q_\varepsilon:Y\setminus\{0\}\to \overline{B_{r_0}(0)}$ by
	\[
	Q_\varepsilon(u):=\frac{\displaystyle\int_{\mathcal G}\chi(\varepsilon x)\,|u(x)|^2\,dx}
	{\displaystyle\int_{\mathcal G}|u(x)|^2\,dx}.
	\]
\end{Definition}

\begin{Remark}\label{rem:chi}
	$\chi$ is $1$-Lipschitz and $\|\chi\|_{L^\infty}\le r_0$. Hence $Q_\varepsilon(u)\in \overline{B_{r_0}(0)}$ for all $u\neq 0$.
	Moreover, $Q_\varepsilon(\alpha u)=Q_\varepsilon(u)$ for all $\alpha\neq 0$.
	If $\tau$ is a graph isometry fixing $0$, then
	$Q_\varepsilon(U_\tau u)=\tau^{-1}\!\big(Q_\varepsilon(u)\big)$.
\end{Remark}

\begin{Lemma}\label{lem:Qc}
	The map $Q_\varepsilon:Y\setminus\{0\}\to \overline{B_{r_0}(0)}$ is continuous in the $Y$-topology.
	More precisely, if $u_n\to u$ in $Y$ with $u\neq 0$, then $Q_\varepsilon(u_n)\to Q_\varepsilon(u)$.
\end{Lemma}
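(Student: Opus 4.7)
The plan is to exploit two ingredients: the continuous embedding $Y \hookrightarrow L^{2}(\mathcal G,\mathbb C^{2})$ from Lemma \ref{lem2.1}, which upgrades $Y$-convergence of $u_n\to u$ into $L^2$-convergence, and the uniform bound $|\chi(\varepsilon x)|\le r_{0}$ built into Definition \ref{def:bary}. Together, these two facts should reduce the continuity of $Q_{\varepsilon}$ to a routine ratio-of-limits argument.

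First I would handle the denominator. The elementary estimate
$$\big|\|u_n\|_{L^2}^2 - \|u\|_{L^2}^2\big| \le \big(\|u_n\|_{L^2}+\|u\|_{L^2}\big)\,\|u_n-u\|_{L^2}$$
combined with $\|u_n-u\|_{L^2} \le (mc^2)^{-1/2}\|u_n-u\| \to 0$ yields $\int_{\mathcal G}|u_n|^2\,dx \to \int_{\mathcal G}|u|^2\,dx$. Since $u\neq 0$ forces $\|u\|_{L^2}^2 > 0$, the denominator stays bounded below by, say, $\tfrac{1}{2}\|u\|_{L^2}^2$ for all sufficiently large $n$.

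For the numerator I would use the pointwise inequality $\big||u_n|^2-|u|^2\big| \le (|u_n|+|u|)\,|u_n-u|$ together with $|\chi(\varepsilon x)|\le r_0$ and the Cauchy-Schwarz inequality to obtain
$$\Big|\int_{\mathcal G}\chi(\varepsilon x)\big(|u_n|^2-|u|^2\big)\,dx\Big| \le r_0\big(\|u_n\|_{L^2}+\|u\|_{L^2}\big)\,\|u_n-u\|_{L^2} \longrightarrow 0,$$
where the left-hand side is interpreted edge by edge in the local coordinates used to define $\chi$. Dividing a convergent numerator by a denominator bounded away from zero then yields $Q_\varepsilon(u_n)\to Q_\varepsilon(u)$.

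The only conceptual subtlety, rather than a genuine obstacle, is the interpretation of $\chi(\varepsilon x)$ as a "vector-valued" integrand paired with the scalar density $|u(x)|^2$: the integral is understood componentwise on each edge of $\mathcal G$ in the local coordinates attached to that edge. Since the estimate relies only on the scalar bound $|\chi|\le r_0$, nothing beyond Lemma \ref{lem2.1} and standard Cauchy-Schwarz is required; in particular, the argument is insensitive to the specific form of $\chi$ near $|x|=r_0$ and to the topology of $\mathcal G$.
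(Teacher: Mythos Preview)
Your proposal is correct and follows essentially the same approach as the paper's proof: both use the continuous embedding $Y\hookrightarrow L^{2}$ (Lemma~\ref{lem2.1}), the uniform bound $|\chi(\varepsilon x)|\le r_0$, and the pointwise estimate $\big||u_n|^2-|u|^2\big|\le (|u_n|+|u|)\,|u_n-u|$ followed by Cauchy--Schwarz to control the numerator, with the denominator handled identically. The paper's version is just more terse.
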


\begin{proof}
	Since $Y\hookrightarrow L^2(\mathcal G)$ continuously and $\|\chi(\varepsilon\cdot)\|_{L^\infty}\le r_0$,
	\[
	\left|\int_{\mathcal G}\chi(\varepsilon x)\big(|u_n|^2-|u|^2\big)\,dx\right|
	\le \|\chi(\varepsilon\cdot)\|_{L^\infty}\,\|\,|u_n|^2-|u|^2\,\|_{L^1}
	\le C\|u_n-u\|_{L^2}\big(\|u_n\|_{L^2}+\|u\|_{L^2}\big)\to 0.
	\]
	Similarly $\int|u_n|^2\to\int|u|^2>0$, hence $Q_\varepsilon(u_n)\to Q_\varepsilon(u)$.
\end{proof}

\begin{Lemma}\label{lem:Qloc}
	Let $u\in Y\setminus\{0\}$, $y\in\mathcal G$, and $R>0$. Then
	\[
	\big|\,Q_\varepsilon(u)-\chi(\varepsilon y)\,\big|
	\,\le\, \varepsilon R
	\,+\,\frac{2r_0}{\displaystyle\int_{\mathcal G}|u|^2\,dx}
	\int_{B_R(y)^c} |u(x)|^2\,dx.
	\]
	In particular, if most of the $L^2$ mass of $u$ lies in $B_R(y)$ and $\varepsilon R$ is small, then
	$Q_\varepsilon(u)$ is close to $\chi(\varepsilon y)$.
\end{Lemma}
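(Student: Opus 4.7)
The estimate is a direct triangle-inequality computation, and no compactness, convergence, or variational input is needed. The plan is first to rewrite $Q_\varepsilon(u)-\chi(\varepsilon y)$ as a single weighted integral, and then to split the domain of integration into $B_R(y)$ and $B_R(y)^c$, treating each piece with a different bound for the integrand $\chi(\varepsilon x)-\chi(\varepsilon y)$.

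Concretely, because $\chi(\varepsilon y)$ is independent of $x$ and $\int_{\mathcal G}|u|^2\,dx$ is precisely the normalization appearing in $Q_\varepsilon$, I would start from the identity
\[
Q_\varepsilon(u)-\chi(\varepsilon y)
=\frac{1}{\displaystyle\int_{\mathcal G}|u|^2\,dx}
\int_{\mathcal G}\bigl(\chi(\varepsilon x)-\chi(\varepsilon y)\bigr)\,|u(x)|^2\,dx.
\]
Taking absolute values and splitting the integral, on $B_R(y)$ I would use the $1$-Lipschitz property of $\chi$ recorded in Remark \ref{rem:chi} with respect to the path metric on $\mathcal G$, giving $|\chi(\varepsilon x)-\chi(\varepsilon y)|\le \varepsilon\,d(x,y)\le \varepsilon R$. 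On the complement $B_R(y)^c$ I would simply use the uniform bound $\|\chi\|_{L^\infty}\le r_0$ and the triangle inequality to get $|\chi(\varepsilon x)-\chi(\varepsilon y)|\le 2r_0$.

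Putting these two estimates together, the contribution of $B_R(y)$ is at most $\varepsilon R\int_{B_R(y)}|u|^2\,dx\le \varepsilon R\int_{\mathcal G}|u|^2\,dx$, and the contribution of $B_R(y)^c$ is at most $2r_0\int_{B_R(y)^c}|u|^2\,dx$. Dividing by the positive quantity $\int_{\mathcal G}|u|^2\,dx$ yields the claimed inequality. There is no real obstacle: the only points to keep in mind are that the Lipschitz bound for $\chi$ is taken with respect to the path metric (so $d(\varepsilon x,\varepsilon y)=\varepsilon\,d(x,y)$ and the scaling comes out correctly), and that the normalization $u\neq 0$ guarantees the denominator is strictly positive so that the division is legitimate.
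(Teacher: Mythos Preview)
Your proof is correct and follows exactly the same approach as the paper: rewrite $Q_\varepsilon(u)-\chi(\varepsilon y)$ as a single normalized integral of $\chi(\varepsilon x)-\chi(\varepsilon y)$ against $|u|^2$, split over $B_R(y)$ and its complement, and apply the Lipschitz bound $|\chi(\varepsilon x)-\chi(\varepsilon y)|\le \varepsilon\,d(x,y)$ on the ball and the uniform bound $2r_0$ outside. Your write-up is in fact slightly more detailed than the paper's (you make explicit the path-metric scaling and the positivity of the denominator), but the argument is identical.
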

\begin{proof}
	Write
	\[
	Q_\varepsilon(u)-\chi(\varepsilon y)
	=\frac{\int\big(\chi(\varepsilon x)-\chi(\varepsilon y)\big)|u|^2\,dx}{\int|u|^2\,dx}.
	\]
	Split the numerator over $B_R(y)\cup B_R(y)^c$ and use the Lipschitz bound
	$|\chi(\varepsilon x)-\chi(\varepsilon y)|\le \varepsilon\,|x-y|$ on $B_R(y)$ and
	$|\chi(\varepsilon x)-\chi(\varepsilon y)|\le 2r_0$ on $B_R(y)^c$.
\end{proof}

\begin{Lemma}\label{Lem4.9}
	There exist \(\alpha_{0}>0\) and \(\varepsilon_{0}>0\) such that: if \(w\in S^{+}\) and
	\(\Upsilon_{\varepsilon}(w)\le d_{V_{0}}+\alpha_{0}\), then \(Q_{\varepsilon}(w)\in K_{\rho_{0}/2}\) for all
	\(\varepsilon\in(0,\varepsilon_{0})\).
\end{Lemma}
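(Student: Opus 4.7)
I would prove the lemma by contradiction. Suppose it fails: then there exist $\varepsilon_n\to 0^+$ and $w_n\in S^+$ such that $\Upsilon_{\varepsilon_n}(w_n)\le d_{V_0}+1/n$ while $Q_{\varepsilon_n}(w_n)\notin K_{\rho_0/2}$. Set $u_n:=m_{\varepsilon_n}(w_n)\in\mathcal{M}_{\varepsilon_n}$, so that $I_{\varepsilon_n}(u_n)=\Upsilon_{\varepsilon_n}(w_n)$ and $u_n^+=s_n w_n$ with $s_n>0$; by the scale invariance in Remark~\ref{rem:chi}, $Q_{\varepsilon_n}(w_n)=Q_{\varepsilon_n}(u_n^+)$. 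From $c_{\varepsilon_n}\le I_{\varepsilon_n}(u_n)\le d_{V_0}+1/n$ and Lemma~\ref{Lem4.1} we have $I_{\varepsilon_n}(u_n)\to d_{V_0}$, and Proposition~\ref{Pro4.1} together with Lemma~\ref{Lem3.4}(b) shows that $(u_n)$ is bounded in $Y$ with $\|u_n^+\|\ge c_*>0$.

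Next I would apply Ekeland's variational principle to $\Upsilon_{\varepsilon_n}$ on $S^+$ to replace $(w_n)$ by a Palais--Smale sequence at the level $c_{\varepsilon_n}\to d_{V_0}$; Corollary~\ref{Cor4.3}(ii) then turns this into a $(PS)_{c_{\varepsilon_n}}$ sequence for $I_{\varepsilon_n}$. Since $c_{\varepsilon_n}<d_{V_0}+\gamma$ for $n$ large, the non-vanishing analysis of Lemma~\ref{Lem4.3} provides $R>0$, $\eta>0$ and points $y_n\in\mathcal G$ with
\[
\int_{B_R(y_n)}|u_n|^2\,dx \ \ge\ \eta \qquad \text{for all } n.
\]
I then split into two cases according to whether $(\varepsilon_n y_n)$ admits a convergent subsequence or $|\varepsilon_n y_n|\to\infty$.

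In the first case, $\varepsilon_n y_n\to y_\star\in\mathcal G$ along a subsequence; pick graph isometries $\tau_n$ with $\tau_n(y_n)=0$ and set $\tilde u_n:=U_{\tau_n}u_n$. By Lemma~\ref{lem3.1}, $(\tilde u_n)$ is bounded in $Y$, has the same energies, and satisfies $\int_{B_R(0)}|\tilde u_n|^2\ge\eta$; hence $\tilde u_n\rightharpoonup \tilde u\neq 0$. Because $V(\varepsilon_n\tau_n^{-1}x)\to V(y_\star)$ pointwise and boundedly, passing to the limit in $I'_{\varepsilon_n}(u_n)\to 0$ shows that $\tilde u$ is a nontrivial critical point of the autonomous functional $J_{V(y_\star)}$. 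A Fatou argument modeled on the end of the proof of Theorem~\ref{thm3.1} gives
\[
d_{V(y_\star)}\le J_{V(y_\star)}(\tilde u)\le \liminf_n I_{\varepsilon_n}(u_n)=d_{V_0},
\]
which combined with $V(y_\star)\ge V_0$ and the strict monotonicity of $\lambda\mapsto d_\lambda$ (Proposition~\ref{Pro3.2}) forces $V(y_\star)=V_0$, so $y_\star=z_i$ for some $i$. Strong convergence $\tilde u_n\to \tilde u$ in $Y$ (Proposition~\ref{Pro3.4} transferred to the autonomous limit) implies that $|u_n|^2$ concentrates on $B_R(y_n)$, and Lemma~\ref{lem:Qloc} then yields $Q_{\varepsilon_n}(u_n^+)\to \chi(z_i)=z_i$, contradicting $Q_{\varepsilon_n}(w_n)\notin K_{\rho_0/2}$. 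In the second case $|\varepsilon_n y_n|\to\infty$, the same translation argument produces a nontrivial critical point of $J_{V_\infty}$ at level $\le d_{V_0}$, hence $d_{V_\infty}\le d_{V_0}$, contradicting $V_0<V_\infty$ and Proposition~\ref{Pro3.2}.

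The main obstacle is the last strong-convergence step: one must upgrade $\tilde u_n\rightharpoonup\tilde u$ to $\tilde u_n\to\tilde u$ in $Y$ in order to exclude a dichotomy in which a positive fraction of the $L^2$ mass of $u_n$ escapes along a different half-line, which would invalidate the localization in Lemma~\ref{lem:Qloc}. This is handled by mimicking the autonomous Proposition~\ref{Pro3.4}: the perturbation $V_{\varepsilon_n}\circ\tau_n^{-1}-V(y_\star)\to 0$ uniformly on compact subsets of $\mathcal{G}$, so the analysis of $(PS)$ sequences for $I_{\varepsilon_n}$ reduces, at the bubble level, to the one for $J_{V(y_\star)}$, and the tight energy budget $I_{\varepsilon_n}(u_n)\to d_{V_0}$ prohibits multi-bubble scenarios since each nontrivial bubble costs at least $d_{V_0}$ of energy.
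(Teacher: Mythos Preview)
Your overall strategy is sound, but the paper takes a shorter route that avoids the obstacle you flag. Rather than analyzing $(PS)$ sequences for the $n$-dependent functionals $I_{\varepsilon_n}$, the paper uses the pointwise comparison $I_\varepsilon\ge J_{V_0}$ (from $V_\varepsilon\ge V_0$) to squeeze
\[
d_{V_0}\ \le\ \Theta_{V_0}(w_n):=J_{V_0}\big(\hat m_{V_0}(w_n)\big)\ \le\ \Upsilon_{\varepsilon_n}(w_n)\ \le\ d_{V_0}+o_n(1),
\]
so $(w_n)$ is already a minimizing sequence for the \emph{autonomous} reduced functional $\Theta_{V_0}$. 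After Ekeland (applied to $\Theta_{V_0}$, near $w_n$, so the barycenter constraint survives), $u_n^{0}:=\hat m_{V_0}(w_n)$ is a $(PS)_{d_{V_0}}$ sequence for $J_{V_0}$, and Proposition~\ref{Pro3.4} immediately gives strong convergence of $u_n^{0}$ or of a translated copy --- no drifting potential, no multi-bubble energy-budget argument. The homeomorphism $\hat m_{V_0}$ transfers this to strong convergence of $w_n$ (or $\tilde w_n$) in $S^{+}$, and the barycenter is then computed directly from $|w_n|^{2}$. Your direct approach via $I_{\varepsilon_n}$ can be pushed through, but the strong-convergence step you correctly single out as ``the main obstacle'' genuinely requires a separate concentration-compactness analysis for a sequence of functionals with varying potential, which the paper's reduction bypasses entirely. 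Two further remarks: your phrase ``at the level $c_{\varepsilon_n}$'' is dangerous --- Ekeland must be applied near $w_n$, not at the global minimum, or the barycenter constraint is lost; and since $Q_{\varepsilon_n}(w_n)=Q_{\varepsilon_n}(u_n^{+})$ is built from $|u_n^{+}|^{2}$, the concentration you feed into Lemma~\ref{lem:Qloc} must be of $|u_n^{+}|^{2}$ rather than $|u_n|^{2}$ (this follows from strong convergence together with Lemma~\ref{lem3.1}, but should be stated).
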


\begin{proof}
	Assume by contradiction that there exist \(\alpha_{n}\rightarrow0\), \(\varepsilon_{n}\rightarrow 0\), and \(w_{n}\in S^{+}\) such that
	\[
	\Upsilon_{\varepsilon_{n}}(w_{n})\le d_{V_{0}}+\alpha_{n}
	\quad\text{and}\quad
	Q_{\varepsilon_{n}}(w_{n})\notin K_{\rho_{0}/2}.
	\]
	
	For the autonomous problem with constant potential \(V_{0}\), set
	\[
	\hat{\Theta}_{V_{0}}(u):=J_{V_{0}}\big(\hat m_{V_{0}}(u)\big),\qquad
	\Theta_{V_{0}}:=\hat{\Theta}_{V_{0}}\big|_{S^{+}},
	\]
	where \(\hat m_{V_{0}}:S^{+}\to\mathcal N_{V_{0}}\) is the Nehari homeomorphism for \(J_{V_{0}}\).
	For each \(n\), write \(\hat m_{V_{0}}(w_{n})=t_{n}w_{n}+v_{n}\) with \(t_{n}\ge 0\) and \(v_{n}\in Y^{-}\).
	Using \(\Upsilon_{\varepsilon_{n}}(w_{n})=I_{\varepsilon_{n}}(m_{\varepsilon_{n}}(w_{n}))\) and that
	\(I_{\varepsilon}\ge J_{V_{0}}\) pointwise, we have
	\[
	d_{V_{0}}
	\le J_{V_{0}}(t_{n}w_{n}+v_{n})
	=\Theta_{V_{0}}(w_{n})
	\le \Upsilon_{\varepsilon_{n}}(w_{n})
	\le d_{V_{0}}+\alpha_{n},
	\]
	hence \(\Theta_{V_{0}}(w_{n})\to d_{V_{0}}\).
	By Ekeland's variational principle, we may assume that
	\(\Theta_{V_{0}}'(w_{n})\to 0\).
	Set \(u_{n}^{0}:=\hat m_{V_{0}}(w_{n})\in\mathcal N_{V_{0}}\). Then
	\[
	J_{V_{0}}(u_{n}^{0})\to d_{V_{0}},\qquad
	J_{V_{0}}'(u_{n}^{0})\to 0.
	\]
	
	Apply Proposition \ref{Pro3.4} to \((u_{n}^{0})\):
	either
	
	\smallskip
	\emph{(i)} \(u_{n}^{0}\to u^{0}\neq 0\) strongly in \(Y\); or
	
	\smallskip
	\emph{(ii)} there exists a sequence of graph isometries \(\tau_{n}\) with \(|\tau_{n}(0)|\to\infty\) such that
	\(\tilde u_{n}^{0}:=U_{\tau_{n}}u_{n}^{0}\to \tilde u^{0}\neq 0\) in \(Y\).
	
	If \emph{(i)} holds, then $w_{n}\to w\neq 0$ in $Y$ because $\hat m_{V_{0}}:S^{+}\to\mathcal N_{V_{0}}$ is a homeomorphism.
	We now show directly that $Q_{\varepsilon_n}(w_n)\to 0=Q_0(w)$.
	Write
	\[
	Q_{\varepsilon_n}(w_n)-Q_0(w)
	=\frac{\int \chi(\varepsilon_n x)\big(|w_n|^2-|w|^2\big)\,dx}{\int |w_n|^2\,dx}
	+\frac{\int \big(\chi(\varepsilon_n x)-\chi(0)\big)|w|^2\,dx}{\int |w_n|^2\,dx}
	+\Big(\frac{1}{\int |w_n|^2}-\frac{1}{\int |w|^2}\Big)\!\int \chi(0)|w|^2\,dx.
	\]
	Since $\|\chi(\varepsilon_n\cdot)\|_{L^\infty}\le r_0$ and $|w_n|^2\to|w|^2$ in $L^1(\mathcal G)$, the first term goes to $ 0$.
	Because $\chi(\varepsilon_n x)\to \chi(0)=0$ pointwise and $|\chi(\varepsilon_n x)|\le r_0$, the second term goes to  $ 0$ by dominated convergence.
	Finally $\int |w_n|^2\to\int |w|^2>0$, so the third term vanishes.
	Hence $Q_{\varepsilon_n}(w_n)\to 0\in K_{\rho_0/2}$, which contradicts \(Q_{\varepsilon_{n}}(w_{n})\notin K_{\rho_{0}/2}\).
	
	Thus \emph{(ii)} holds. Define the translated sequences
	\[
	\tilde w_{n}:=U_{\tau_{n}}w_{n}\in S^{+},\qquad
	\tilde u_{n}:=U_{\tau_{n}}\,m_{\varepsilon_{n}}(w_{n}),\qquad
	\tilde u_{n}^{0}:=U_{\tau_{n}}u_{n}^{0}.
	\]
	Since $J_{V_0}$ is invariant under graph isometries and $U_{\tau_n}$ preserves $Y^\pm$, the autonomous Nehari map is equivariant:
	$\hat m_{V_0}(U_{\tau_n}y)=U_{\tau_n}\hat m_{V_0}(y)$ for $y\in S^+$; as $\hat m_{V_0}$ is a homeomorphism with continuous inverse on $S^+$, from $\tilde u_n^{0}=\hat m_{V_0}(\tilde w_n)\to \tilde u^{0}$ in $Y$ we obtain $\tilde w_n\to \tilde w$ in $Y^{+}$.
	By Lemma \ref{lem3.1}, \(U_{\tau_{n}}\) preserves \(Y^{\pm}\) and the functionals \(J_{V_{0}}\), while
	\[
	I_{\varepsilon_{n}}(\tilde u_{n})
	=\frac{1}{2}\Big(\|\tilde u_{n}^{+}\|^{2}-\|\tilde u_{n}^{-}\|^{2}\Big)
	+\frac{1}{2}\int_{\mathcal G}V\big(\varepsilon_{n}\tau_{n}^{-1}x\big)|\tilde u_{n}|^{2}\,dx
	-\int_{\mathcal G}F(|\tilde u_{n}|)\,dx.
	\]
	
	We distinguish two subcases:
	
	\smallskip
	\emph{(I)} $|\varepsilon_{n}\tau_{n}^{-1}(0)|\to\infty$.
	By \((V_{1})\), \(V(\varepsilon_{n}\tau_{n}^{-1}x)\to V_{\infty}\) uniformly on compact sets. Using \(\tilde w_{n}^{+}\to \tilde w^{+}\) and choosing \(s>0\), \(\hat z\in Y^{-}\) so that
	\(s\tilde w^{+}+\hat z\in\mathcal N_{V_{\infty}}\), we get
	\[
	\begin{aligned}
		d_{V_{\infty}}
		&\le J_{V_{\infty}}(s\tilde w^{+}+\hat z)
		=\lim_{n\to\infty}\Big[\frac{s^{2}}{2}\|\tilde w_{n}^{+}\|^{2}-\frac{1}{2}\|\hat z\|^{2}
		+\frac{1}{2}\!\int V(\varepsilon_{n}\tau_{n}^{-1}x)|s\tilde w_{n}^{+}+\hat z|^{2}
		-\!\int F(|s\tilde w_{n}^{+}+\hat z|)\Big]\\
		&=\lim_{n\to\infty} I_{\varepsilon_{n}}(s w_{n}+\tilde z_{n})
		\le \lim_{n\to\infty} I_{\varepsilon_{n}}(m_{\varepsilon_{n}}(w_{n}))
		=\lim_{n\to\infty}\Upsilon_{\varepsilon_{n}}(w_{n})
		= d_{V_{0}},
	\end{aligned}
	\]
	a contradiction since \(d_{V_{\infty}}>d_{V_{0}}\).  Here $\tilde z_n:=U_{\tau_n}^{-1}\hat z\in Y^{-}$ so that $U_{\tau_n}(s w_n+\tilde z_n)=s\tilde w_n+\hat z$.

	\smallskip
	\emph{(II)} \(\varepsilon_{n}\tau_{n}^{-1}(0)\to y\) for some \(y\in\mathcal G\).
	Arguing as above and using the continuity of \(V\), one finds that \(\tilde u^{0}\) is a nontrivial critical point of
	\(J_{V(y)}\), so \(d_{V(y)}\le d_{V_{0}}\).
	If \(V(y)>V_{0}\), then \(d_{V(y)}>d_{V_{0}}\), a contradiction. Hence \(V(y)=V_{0}\),
	and by \((V_{2})\) we must have \(y=z_{i}\) for some \(1\le i\le k\).
	Finally, using the barycenter definition and \(U_{\tau_{n}}\)-invariance of \(L^{2}\)-norm,
	\[
	Q_{\varepsilon_{n}}(w_{n})
	=\frac{\int \chi(\varepsilon_{n}x)|w_{n}|^{2}}{\int |w_{n}|^{2}}
	=\frac{\int \chi(\varepsilon_{n}\tau_{n}^{-1}x)|\tilde w_{n}|^{2}}{\int |\tilde w_{n}|^{2}}
	\longrightarrow \chi(y)=y=z_{i}\in K_{\rho_{0}/2},
	\]
	contradicting \(Q_{\varepsilon_{n}}(w_{n})\notin K_{\rho_{0}/2}\).
	
	Both subcases are impossible, so the lemma follows.
\end{proof}

Next, we fix the sets
    $$
        \Omega_{\varepsilon}^{i}:=\left\{u \in S^{+} ;\left|Q_{\varepsilon}(u)-z_{i}\right|<\rho_{0}\right\} \quad \text { and } \quad \partial \Omega_{\varepsilon}^{i}:=\left\{u \in S^{+} ;\left|Q_{\varepsilon}(u)-z_{i}\right|=\rho_{0}\right\},
    $$
and the numbers
    $$
        \alpha_{\varepsilon}^{i}=\inf _{u \in \Omega_{\varepsilon}^{i}} \Upsilon_{\varepsilon}(u) \text { and } \tilde{\alpha}_{\varepsilon}^{i}=\inf _{u \in \partial \Omega_{\varepsilon}^{i}} \Upsilon_{\varepsilon}(u) \text {. }
    $$

\begin{Lemma}\label{Lem4.10}
	For each \(i\in\{1,\dots,k\}\),
	\[
	\limsup_{\varepsilon\to 0}\,\Upsilon_{\varepsilon}\!\big(\tilde w_{\varepsilon}^{\,i}\big)
	=\limsup_{\varepsilon\to 0}\,I_{\varepsilon}\!\big(u_{\varepsilon}^{\,i}\big)\ \le\ d_{V_{0}},
	\]
	where \(u_{\varepsilon}^{\,i}:=m_{\varepsilon}\!\big(\tilde w_{\varepsilon}^{\,i}\big)\).
\end{Lemma}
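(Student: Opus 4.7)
The plan is to exploit that $u_\varepsilon^{\,i}=m_\varepsilon(\tilde w_\varepsilon^{\,i})$ realizes, by construction, the global maximum of $I_\varepsilon$ on the affine cone $\hat Y(\tilde w_\varepsilon^{\,i})$, so it is enough to produce an explicit family $\tilde w_\varepsilon^{\,i}\in S^+$ along which $\sup_{\hat Y(\tilde w_\varepsilon^{\,i})} I_\varepsilon \to d_{V_0}$. The natural candidate is to transport a ground state $w_0\in\mathcal N_{V_0}$ of the autonomous functional $J_{V_0}$ (supplied by Theorem \ref{thm3.1}) so that its bulk sits near the rescaled minimum $z_i/\varepsilon$, where $V(\varepsilon x)\to V(z_i)=V_0$ pointwise. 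For $i=1$, since $z_1=0$, no transport is needed: one takes $\tilde w_\varepsilon^{\,1}=w_0^+/\|w_0^+\|$ and the argument reduces to the one used in Corollary \ref{Cor4.2} and Lemma \ref{Lem4.1}.

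For $i\ge 2$, I would transport $w_0$ using a graph isometry (or a shift along the edge carrying $z_i$, combined with a cut-off near vertices) to a spinor concentrated around $z_i/\varepsilon$, and set $\tilde w_\varepsilon^{\,i}$ to be the $S^+$-normalization of its positive part. Writing $u_\varepsilon^{\,i}=t_\varepsilon\tilde w_\varepsilon^{\,i}+v_\varepsilon$ with $t_\varepsilon\ge 0$, $v_\varepsilon\in Y^-$ as provided by Lemma \ref{Lem3.6}, Proposition \ref{Pro4.1} yields boundedness of $(t_\varepsilon,v_\varepsilon)$, so that up to a subsequence $t_\varepsilon\to t_\infty$ and $v_\varepsilon\rightharpoonup v_\infty$ in $Y$. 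Pulling back via the inverse of the graph isometry brings the analysis onto $\hat Y(w_0^+)$, using the equivariance of $Y^\pm$ under isometries proved in Lemma \ref{lem3.1}. The key calculation is then the decomposition
\[
I_\varepsilon(u_\varepsilon^{\,i})
= J_{V_0}(u_\varepsilon^{\,i})
+ \frac{1}{2}\int_{\mathcal G}\bigl(V(\varepsilon x)-V_0\bigr)\,|u_\varepsilon^{\,i}|^{2}\,dx.
\]
The first term satisfies $J_{V_0}(u_\varepsilon^{\,i})\le \sup_{\hat Y(w_0^+)} J_{V_0} = d_{V_0}$ by Lemma \ref{Lem3.3}, while the perturbation term vanishes in the limit by dominated convergence: after pull-back, the integrand tends pointwise to $\bigl(V(z_i)-V_0\bigr)|w_0|^{2}\equiv 0$ and is uniformly bounded by $(V_\infty-V_0)|w_0|^{2}\in L^{1}(\mathcal G)$, thanks to $(V_1)$-$(V_2)$ and the continuity of $V$.

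The main obstacle lies in making the transport step rigorous for $i\ge 2$: unlike in Euclidean space, $\mathcal G$ carries no global translation action, so for each $z_i$ one must choose an admissible edge (a half-line being the simplest case) on which $z_i/\varepsilon$ is well-defined for small $\varepsilon$, and then truncate $w_0$ compatibly with the Kirchhoff vertex conditions of Definition \ref{def2.2}. The resulting cut-off error in the $Y$-norm must be controlled using the decay of $w_0$ at infinity along the graph, which follows from the mass gap of the Dirac operator encoded in Lemma \ref{lem2.1}. Once this technical construction is in place, the desired bound $\limsup_{\varepsilon\to 0} I_\varepsilon(u_\varepsilon^{\,i})\le d_{V_0}$ follows exactly as in the proof of Lemma \ref{Lem4.1}, and the equality $\Upsilon_\varepsilon(\tilde w_\varepsilon^{\,i})=I_\varepsilon(u_\varepsilon^{\,i})$ is immediate from the definition of $\Upsilon_\varepsilon$.
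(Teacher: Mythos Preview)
Your overall strategy---transport a ground state of $J_{V_0}$ toward $z_i$, normalize its positive part to obtain $\tilde w_\varepsilon^{\,i}\in S^+$, and exploit that $u_\varepsilon^{\,i}=m_\varepsilon(\tilde w_\varepsilon^{\,i})$ is the maximizer of $I_\varepsilon$ on $\hat Y(\tilde w_\varepsilon^{\,i})$---matches the paper. The identity $I_\varepsilon(u_\varepsilon^{\,i})=J_{V_0}(u_\varepsilon^{\,i})+\tfrac12\int_{\mathcal G}(V(\varepsilon x)-V_0)|u_\varepsilon^{\,i}|^2\,dx$ together with the clean bound $J_{V_0}(u_\varepsilon^{\,i})\le\sup_{\hat Y(w_0^+)}J_{V_0}=d_{V_0}$ (after pull-back by the isometry, using Lemma~\ref{lem3.1}) is correct.

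The gap is in your treatment of the perturbation term. You claim that after pull-back the integrand $(V(\varepsilon\,\cdot)-V_0)\,|u_\varepsilon^{\,i}|^2$ tends pointwise to $(V(z_i)-V_0)\,|w_0|^2$ and is dominated by $(V_\infty-V_0)\,|w_0|^2\in L^1$. Neither assertion is justified: $u_\varepsilon^{\,i}=t_\varepsilon\tilde w_\varepsilon^{\,i}+v_\varepsilon$ with $v_\varepsilon\in Y^-$ determined by $m_\varepsilon$, and you only have $v_\varepsilon\rightharpoonup v_\infty$ \emph{weakly} in $Y$. There is no reason for $|u_\varepsilon^{\,i}|$ to be pointwise controlled by $|w_0|$, nor for $u_\varepsilon^{\,i}\to w_0$ a.e. Since $V\ge V_0$ the perturbation term is nonnegative, so the inequality $I_\varepsilon(u_\varepsilon^{\,i})\le d_{V_0}+\text{(perturbation)}$ is of no use unless the perturbation is actually shown to vanish---and your dominated-convergence argument does not establish this.

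The paper proceeds differently: it does not isolate the perturbation but bounds $\limsup I_\varepsilon(u_\varepsilon^{\,i})$ term by term along the subsequence $t_\varepsilon\to t_0$, $v_\varepsilon\rightharpoonup v$. The $+\tfrac12 t_\varepsilon^2$ piece converges; the $-\tfrac12\|v_\varepsilon\|^2$ piece drops by weak lower semicontinuity of the norm; the $-\int F$ piece drops by Fatou; and the potential contribution is handled via dominated convergence against the \emph{fixed} direction $\tilde w_\varepsilon^{\,i}$ (which, unlike $v_\varepsilon$, is strongly convergent). This yields $\limsup_{\varepsilon\to 0}I_\varepsilon(u_\varepsilon^{\,i})\le J_{V_0}(t_0 w_*+v)$, and then one invokes $\max_{\hat Y(w_*)}J_{V_0}=d_{V_0}$. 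If you wish to keep your decomposition, you must first upgrade $v_\varepsilon\rightharpoonup v_\infty$ to strong convergence (cf.\ the argument of Corollary~\ref{Cor4.2}); as written, the dominated-convergence step fails.
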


\begin{proof}
	Fix a ground state $w_{0}\in\mathcal N_{V_{0}}$ of the autonomous functional $J_{V_{0}}$ and set
	\[
	w_{*}:=\frac{w_{0}^{+}}{\|w_{0}^{+}\|}\in S^{+}.
	\]
	By Lemma~\ref{Lem3.6}, we have
	$\hat m_{V_{0}}(w_{*})=w_{0}$ and $J_{V_{0}}(w_{0})=d_{V_{0}}$.
	For each \(i\), choose a graph isometry \(\tau_{i}\) with \(\tau_{i}(0)=z_{i}\) and define
	\(\tilde w_{\varepsilon}^{\,i}:=U_{\tau_{i}}w_{*}\in S^{+}\).
	Let
	\[
	u_{\varepsilon}^{\,i}:=m_{\varepsilon}\!\big(\tilde w_{\varepsilon}^{\,i}\big)
	=t_{\varepsilon}^{\,i}\,\tilde w_{\varepsilon}^{\,i}+v_{\varepsilon}^{\,i},\qquad
	t_{\varepsilon}^{\,i}>0,\ v_{\varepsilon}^{\,i}\in Y^{-}.
	\]
	By Proposition \ref{Pro4.1} and
	\(\Upsilon_{\varepsilon}(\tilde w_{\varepsilon}^{\,i})=I_{\varepsilon}(u_{\varepsilon}^{\,i})\),
	the sequence \(\big(u_{\varepsilon}^{\,i}\big)_{\varepsilon}\subset\mathcal M_{\varepsilon}\) is bounded in \(Y\).
	Hence, along a sequence \(\varepsilon\to 0\), we may assume
	\[
	t_{\varepsilon}^{\,i}\to t_{0}\ge 1,\qquad v_{\varepsilon}^{\,i}\rightharpoonup v\ \text{ in }Y.
	\]
	Using the orthogonality of the \(Y^{+}\oplus Y^{-}\) decomposition,
	\[
	\| (u_{\varepsilon}^{\,i})^{+}\|^{2}= (t_{\varepsilon}^{\,i})^{2}\|\tilde w_{\varepsilon}^{\,i}\|^{2},\qquad
	\|(u_{\varepsilon}^{\,i})^{-}\|^{2}=\|v_{\varepsilon}^{\,i}\|^{2},
	\]
	and therefore
	\[
	\begin{aligned}
		I_{\varepsilon}\!\big(u_{\varepsilon}^{\,i}\big)
		&=\frac{(t_{\varepsilon}^{\,i})^{2}}{2}\,\|\tilde w_{\varepsilon}^{\,i}\|^{2}
		-\frac{1}{2}\,\|v_{\varepsilon}^{\,i}\|^{2}
		+\frac{1}{2}\int_{\mathcal G}V(\varepsilon x)\,\big|t_{\varepsilon}^{\,i}\tilde w_{\varepsilon}^{\,i}+v_{\varepsilon}^{\,i}\big|^{2}\,dx
		-\int_{\mathcal G}F\!\left(\left|t_{\varepsilon}^{\,i}\tilde w_{\varepsilon}^{\,i}+v_{\varepsilon}^{\,i}\right|\right)\,dx.
	\end{aligned}
	\]
	Arguing exactly as in Lemma \ref{Lem4.1}, we obtain the upper bound
	\[
	\limsup_{\varepsilon\to 0} I_{\varepsilon}\!\big(u_{\varepsilon}^{\,i}\big)
	\le \frac{t_{0}^{2}}{2}\,\|w_{*}\|^{2}-\frac{1}{2}\,\|v\|^{2}
	+\frac{V_{0}}{2}\int_{\mathcal G}\big|t_{0}w_{*}+v\big|^{2}\,dx
	-\int_{\mathcal G}F\!\left(\left|t_{0}w_{*}+v\right|\right)\,dx
	= J_{V_{0}}(t_{0}w_{*}+v).
	\]
	Finally, since \(J_{V_{0}}\big(\hat m_{V_{0}}(w_{*})\big)=J_{V_{0}}(w_{0})=d_{V_{0}}\) and
	\(\hat m_{V_{0}}\) maximizes \(J_{V_{0}}\) on \(\hat Y(w_{*})\),
	\[
	J_{V_{0}}(t_{0}w_{*}+v)\ \le\ J_{V_{0}}\big(\hat m_{V_{0}}(w_{*})\big)\ =\ d_{V_{0}}.
	\]
	Therefore
	\[
	\limsup_{\varepsilon\to 0}\,\Upsilon_{\varepsilon}\!\big(\tilde w_{\varepsilon}^{\,i}\big)
	=\limsup_{\varepsilon\to 0} I_{\varepsilon}\!\big(u_{\varepsilon}^{\,i}\big)\ \le\ d_{V_{0}},
	\]
	which completes the proof.
\end{proof}

\begin{Lemma}\label{Lem4.11}
	There exists $\varepsilon_{0}>0$ such that, for every $\varepsilon\in(0,\varepsilon_{0})$ and each $i\in\{1,\dots,k\}$,
	\[
	\alpha_{\varepsilon}^{i}<d_{V_{0}}+\gamma
	\quad\text{and}\quad
	\alpha_{\varepsilon}^{i}<\tilde{\alpha}_{\varepsilon}^{i},
	\]
	where $\gamma=\frac{1}{2}\big(d_{V_{\infty}}-d_{V_{0}}\big)>0$.
\end{Lemma}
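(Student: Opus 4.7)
The plan is to prove both inequalities by combining the existence of competitors with near-optimal energy and correct barycenter (from Lemma \ref{Lem4.10}) with the barrier provided by Lemma \ref{Lem4.9}.

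First I will verify that the test function $\tilde w_\varepsilon^i$ from Lemma \ref{Lem4.10} lies in $\Omega_\varepsilon^i$ for $\varepsilon$ small. The construction $\tilde w_\varepsilon^i = U_{\tau_i} w_*$ places the $L^2$ mass of $\tilde w_\varepsilon^i$ near $z_i$; after the appropriate change of variables in the barycenter integral, $Q_\varepsilon(\tilde w_\varepsilon^i)$ reduces to $\chi$ evaluated at a point that converges to $z_i$ as $\varepsilon \to 0$ by dominated convergence (using $\|\chi\|_\infty \le r_0$ and the fact that $w_*$ is fixed). In particular $|Q_\varepsilon(\tilde w_\varepsilon^i) - z_i| < \rho_0$ for $\varepsilon$ small, i.e.\ $\tilde w_\varepsilon^i \in \Omega_\varepsilon^i$. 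Combined with the bound $\limsup_{\varepsilon \to 0} \Upsilon_\varepsilon(\tilde w_\varepsilon^i) \le d_{V_0}$ from Lemma \ref{Lem4.10}, this immediately yields
$$\alpha_\varepsilon^i \;\le\; \Upsilon_\varepsilon(\tilde w_\varepsilon^i) \;\le\; d_{V_0} + o_\varepsilon(1),$$
so $\alpha_\varepsilon^i < d_{V_0} + \gamma$ for all sufficiently small $\varepsilon$; this is the first inequality.

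For the second inequality I will produce a uniform lower bound on $\tilde\alpha_\varepsilon^i$ via Lemma \ref{Lem4.9}. For any $w \in \partial \Omega_\varepsilon^i$ we have $|Q_\varepsilon(w) - z_i| = \rho_0$, and the disjointness assumption $\overline{B_{\rho_0}(z_i)} \cap \overline{B_{\rho_0}(z_j)} = \emptyset$ (which forces $|z_i - z_j| > 2\rho_0$) gives, by the triangle inequality, $|Q_\varepsilon(w) - z_j| \ge |z_i - z_j| - \rho_0 > \rho_0 > \rho_0/2$ for every $j \neq i$. Together with $|Q_\varepsilon(w) - z_i| = \rho_0 > \rho_0/2$, this shows $Q_\varepsilon(w) \notin K_{\rho_0/2}$. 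The contrapositive of Lemma \ref{Lem4.9} then forces $\Upsilon_\varepsilon(w) > d_{V_0} + \alpha_0$ for every such $w$ (whenever $\varepsilon < \varepsilon_0$), whence $\tilde\alpha_\varepsilon^i \ge d_{V_0} + \alpha_0$. Combining with the estimate $\alpha_\varepsilon^i \le d_{V_0} + o_\varepsilon(1)$ from the previous paragraph and shrinking $\varepsilon_0$ once more, we conclude $\alpha_\varepsilon^i < d_{V_0} + \alpha_0 \le \tilde\alpha_\varepsilon^i$.

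The main obstacle is the first step: justifying that the barycenter $Q_\varepsilon(\tilde w_\varepsilon^i)$ actually localizes at $z_i$. This requires carefully tracking the graph-isometry convention used in Lemma \ref{Lem4.10} through the change of variables in the definition of $Q_\varepsilon$, and matching it with the scaling $V_\varepsilon(x)=V(\varepsilon x)$ that underlies the construction. Once this localization is in hand, both inequalities follow from a routine combination of Lemmas \ref{Lem4.9} and \ref{Lem4.10} together with the elementary geometric separation provided by the pairwise disjointness of the balls $\overline{B_{\rho_0}(z_j)}$.
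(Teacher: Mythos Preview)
Your proposal is correct and follows essentially the same approach as the paper: use the competitor from Lemma~\ref{Lem4.10} to bound $\alpha_\varepsilon^i$ from above by $d_{V_0}+o_\varepsilon(1)$, and use Lemma~\ref{Lem4.9} to bound $\tilde\alpha_\varepsilon^i$ from below by $d_{V_0}+\alpha_0$. You are in fact more careful than the paper in two places: you explicitly verify that $\tilde w_\varepsilon^i\in\Omega_\varepsilon^i$ via the barycenter localization (the paper uses $\tilde w_\varepsilon^i$ as a competitor without checking this), and you spell out via the triangle inequality and disjointness why $Q_\varepsilon(w)\notin K_{\rho_0/2}$ for $w\in\partial\Omega_\varepsilon^i$, whereas the paper only writes $|Q_\varepsilon(u)-z_i|=\rho_0>\rho_0/2$ and implicitly relies on the disjointness of the balls $\overline{B_{\rho_0}(z_j)}$ to handle the other indices.
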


\begin{proof}
	Let $u_{0}\in\mathcal N_{V_{0}}$ be a ground state of $J_{V_{0}}$, so that
	\[
	J_{V_{0}}(u_{0})=d_{V_{0}},\qquad J_{V_{0}}'(u_{0})=0.
	\]
	Choose $w\in S^{+}$ with $\hat m_{V_{0}}(w)=u_{0}$. Fix $i\in\{1,\dots,k\}$ and, for $\varepsilon>0$, define
	\[
	\tilde w_{\varepsilon}^{\,i}(x):=w\!\left(\frac{\varepsilon x-z_{i}}{\varepsilon}\right)\in S^{+},
	\qquad
	u_{\varepsilon}^{i}:=m_{\varepsilon}\!\big(\tilde w_{\varepsilon}^{\,i}\big).
	\]
	By Lemma~\ref{Lem4.10},
	\[
	\limsup_{\varepsilon\to 0}\Upsilon_{\varepsilon}\!\big(\tilde w_{\varepsilon}^{\,i}\big)
	=\limsup_{\varepsilon\to 0} I_{\varepsilon}(u_{\varepsilon}^{i})
	\le d_{V_{0}}.
	\]
	Hence there exists $\varepsilon_{1}>0$ such that for all $\varepsilon\in(0,\varepsilon_{1})$,
	\[
	\Upsilon_{\varepsilon}\!\big(\tilde w_{\varepsilon}^{\,i}\big)<d_{V_0}+\frac{1}{4}\alpha_{*},
	\]
	where $\alpha_{*}>0$ is the constant from Lemma~\ref{Lem4.9}.
	Set
	\[
	\alpha_{0}:=\min\{\alpha_{*},\,2\gamma\}.
	\]
	Then, shrinking $\varepsilon_{1}$ if necessary,
	\begin{equation}\label{eq:Lem415-upper}
		\alpha_{\varepsilon}^{i}
		=\inf_{u\in\Omega_{\varepsilon}^{i}}\Upsilon_{\varepsilon}(u)
		\ \le\
		\Upsilon_{\varepsilon}\!\big(\tilde w_{\varepsilon}^{\,i}\big)
		< d_{V_{0}}+\frac{\alpha_{0}}{4},
		\qquad \varepsilon\in(0,\varepsilon_{1}),
	\end{equation}
	and in particular $\alpha_{\varepsilon}^{i}<d_{V_0}+\gamma$ for $\varepsilon\in(0,\varepsilon_{1})$.
	
	To prove $\alpha_{\varepsilon}^{i}<\tilde\alpha_{\varepsilon}^{i}$, take any $u\in\partial\Omega_{\varepsilon}^{i}$. Then
	\[
	u\in S^{+},\qquad |Q_{\varepsilon}(u)-z_{i}|=\rho_{0}>\frac{\rho_{0}}{2},
	\]
	so $Q_{\varepsilon}(u)\notin K_{\rho_{0}/2}$. By Lemma~\ref{Lem4.9} there exists $\varepsilon_{2}\in(0,\varepsilon_{1}]$ such that
	\[
	\Upsilon_{\varepsilon}(u)>d_{V_{0}}+\alpha_{0}\qquad \forall\,u\in\partial\Omega_{\varepsilon}^{i},\ \forall\,\varepsilon\in(0,\varepsilon_{2}),
	\]
	whence
	\[
	\tilde\alpha_{\varepsilon}^{i}
	=\inf_{u\in\partial\Omega_{\varepsilon}^{i}}\Upsilon_{\varepsilon}(u)
	\ \ge\ d_{V_{0}}+\alpha_{0}.
	\]
	Combining this with \eqref{eq:Lem415-upper} yields
	\[
	\alpha_{\varepsilon}^{i}<\tilde\alpha_{\varepsilon}^{i}\qquad \text{for all }\varepsilon\in\bigl(0,\varepsilon_{0}\bigr),
	\]
	where $\varepsilon_{0}:=\min\{\varepsilon_{1},\varepsilon_{2}\}$. The proof is complete.
\end{proof}

\subsection{Proof of Theorem~\ref{Theorem1.1}.}

By Lemma~\ref{Lem4.11}, there exists \(\varepsilon_{0}>0\) such that for every \(\varepsilon\in(0,\varepsilon_{0})\) and each \(i\in\{1,\dots,k\}\),
\[
\alpha_{\varepsilon}^{i}<d_{V_{0}}+\gamma
\quad\text{and}\quad
\alpha_{\varepsilon}^{i}<\tilde\alpha_{\varepsilon}^{i},
\qquad
\gamma=\frac{d_{V_{\infty}}-d_{V_{0}}}{2}>0.
\]
By Ekeland's variational principle there exists a \((PS)_{\alpha_{\varepsilon}^{i}}\) sequence
\((w_{n}^{i})\subset \Omega_{\varepsilon}^{i}\subset S^{+}\) for \(\Upsilon_{\varepsilon}\), i.e.
\[
\Upsilon_{\varepsilon}(w_{n}^{i})\to \alpha_{\varepsilon}^{i},
\qquad
\Upsilon_{\varepsilon}'(w_{n}^{i})\to 0\ \text{in }(T_{w_{n}^{i}}S^{+})'.
\]
Since \(\alpha_{\varepsilon}^{i}<d_{V_{0}}+\gamma\), the compactness Lemma~\ref{Lem4.6} yields
\(w_{n}^{i}\to w_{\varepsilon}^{i}\in \Omega_{\varepsilon}^{i}\) with
\[
\Upsilon_{\varepsilon}'(w_{\varepsilon}^{i})=0,
\qquad
\Upsilon_{\varepsilon}(w_{\varepsilon}^{i})=\alpha_{\varepsilon}^{i}.
\]
By Corollary~\ref{Cor4.3}, \(u_{\varepsilon}^{i}:=m_{\varepsilon}(w_{\varepsilon}^{i})\in\mathcal M_{\varepsilon}\subset Y\) is a critical point of \(I_{\varepsilon}\).
Moreover, since the sets \(\overline{B_{\rho_{0}}(z_{i})}\) are pairwise disjoint and
\(Q_{\varepsilon}(w_{\varepsilon}^{i})\in\overline{B_{\rho_{0}}(z_{i})}\), we obtain \(w_{\varepsilon}^{i}\neq w_{\varepsilon}^{j}\) and hence
\(u_{\varepsilon}^{i}\neq u_{\varepsilon}^{j}\) for \(i\neq j\). This proves the multiplicity: for all \(\varepsilon\in(0,\varepsilon_{0})\),
\(I_{\varepsilon}\) possesses at least \(k\) distinct nontrivial critical points in \(Y\).

Fix \(i\in\{1,\dots,k\}\) and let \(\varepsilon_{n}\downarrow 0\).
Set \(w_{n}:=w_{\varepsilon_{n}}^{\,i}\in S^{+}\) and \(u_{n}:=u_{\varepsilon_{n}}^{\,i}=m_{\varepsilon_{n}}(w_{n})\).
By Lemma~\ref{Lem4.1} we have \(c_{\varepsilon_{n}}\to d_{V_0}\).
Since \(c_{\varepsilon}\le \inf_{S^{+}}\Upsilon_{\varepsilon}\le \alpha_{\varepsilon}^{i}\) and Lemma~\ref{Lem4.10} provides a matching upper bound,
\[
d_{V_0}\ \le\ \liminf_{n\to\infty}\alpha_{\varepsilon_{n}}^{i}
\ \le\ \limsup_{n\to\infty}\alpha_{\varepsilon_{n}}^{i}
\ \le\ d_{V_0},
\]
hence \(\alpha_{\varepsilon_{n}}^{i}\to d_{V_{0}}\) and \(I_{\varepsilon_{n}}(u_{n})=\Upsilon_{\varepsilon_{n}}(w_{n})\to d_{V_{0}}\).
The sequence \((u_{n})\) is bounded in \(Y\); by Lemma~\ref{Lem3.4} and Lemma~\ref{lem2.4} there exist \(r,\eta>0\)
and points \(y_{n}\in\mathcal G\) such that
\[
\int_{B_{r}(y_{n})}|u_{n}^{+}|^{2}\,dx\ \ge\ \eta\qquad\text{for all }n.
\]
Let \(\tau_{n}\) be a graph isometry with \(\tau_{n}(y_{n})=0\) and set
\(\tilde u_{n}:=U_{\tau_{n}}u_{n}\).
Then \((\tilde u_{n})\) is bounded in \(Y\) and nonvanishing on \(B_{r}(0)\).
Passing to a subsequence, set \(\bar z:=\lim_{n\to\infty}\varepsilon_{n}y_{n}\in\overline{B_{r_0}(0)}\).
By continuity of \(V\), \(V\big(\varepsilon_{n}(\cdot+y_{n})\big)\to V(\bar z)\) uniformly on compact sets; hence
\[
\|J'_{V(\bar z)}(\tilde u_{n})\|_{Y'}\ \longrightarrow\ 0,
\qquad
J_{V(\bar z)}(\tilde u_{n})\ \longrightarrow\ d_{V_{0}}.
\]
Applying Proposition~\ref{Pro3.4} to \(J_{V(\bar z)}\), we get (up to a subsequence) \(\tilde u_{n}\to \tilde u\) strongly in \(Y\) with \(\tilde u\neq 0\).
Moreover, by the monotonicity of \(\lambda\mapsto d_{\lambda}\) (Proposition~\ref{Pro3.2}),
\[
d_{V(\bar z)}\ \le\ \lim_{n\to\infty}J_{V(\bar z)}(\tilde u_{n})=d_{V_0},
\]
hence \(V(\bar z)=V_0\) and \(d_{V(\bar z)}=d_{V_0}\).
Using the definition of \(Q_{\varepsilon}\), the change of variables, and the strong convergence,
\[
Q_{\varepsilon_{n}}(u_{n})
=\frac{\int_{\mathcal G}\chi(\varepsilon_{n}(x+y_{n}))\,|\tilde u_{n}(x)|^{2}\,dx}
{\int_{\mathcal G}|\tilde u_{n}(x)|^{2}\,dx}
\ \longrightarrow\ \chi(\bar z).
\]
By Lemma~\ref{lem:Qloc} applied to \(u=u_{n}\) and \(y=y_{n}\),
\[
Q_{\varepsilon_{n}}(u_{n})\ \longrightarrow\ \chi(\bar z).
\]
Since \(w_{n}\in\Omega_{\varepsilon_{n}}^{i}\), we have \(Q_{\varepsilon_{n}}(w_{n})\in \overline{B_{\rho_{0}}(z_{i})}\) for all \(n\).
Because \(V(\bar z)=V_0\) and the set of minima of \(V\) is \(\{z_{1},\dots,z_{k}\}\) by \((V_{2})\),
we must have \(\chi(\bar z)\in\{z_{1},\dots,z_{k}\}\).
The disjointness of the balls \(\overline{B_{\rho_{0}}(z_{j})}\) then forces
\(\chi(\bar z)=z_{i}\); since \(|z_{i}|\le r_{0}\) and \(\chi\) is the identity on \(B_{r_{0}}(0)\),
we conclude \(\bar z=z_{i}\), i.e.
\[
\varepsilon_{n}y_{n}\ \longrightarrow\ z_{i}.
\]
Because \(|z_{i}|\le r_{0}\) and \(\chi(x)=x\) on \(B_{r_{0}}(0)\), it follows that \(\bar z=z_{i}\), i.e.
\[
\varepsilon_{n}y_{n}\ \longrightarrow\ z_{i}.
\]
Finally, given \(\delta>0\), choose \(R>0\) so that \(\int_{B_{R}(0)^{c}}|w_{0}|^{2}dx<\delta/2\).
By \(\tilde u_{n}\to w_{0}\) in \(Y\),
\[
\int_{B_{R}(0)}|\tilde u_{n}|^{2}dx\ \ge\ \Big(\int_{\mathcal G}|\tilde u_{n}|^{2}dx\Big)-\delta
\quad\text{for large }n.
\]
Undoing the isometry \(\tau_{n}\) yields
\[
\int_{B_{R}(y_{n})}|u_{n}|^{2}dx\ \ge\ \Big(\int_{\mathcal G}|u_{n}|^{2}dx\Big)-\delta,
\]
showing that \(u_{\varepsilon_{n}}^{\,i}\) concentrates, in \(L^{2}\), inside balls centered at \(y_{n}\) with
\(\varepsilon_{n}y_{n}\to z_{i}\). This completes the proof. \qed

\section*{Acknowledgment}

We express our gratitude to the anonymous referee for their meticulous review of our manuscript and valuable feedback provided for its enhancement.
This work is supported by National Natural Science Foundation of China (12301145,12561020,12261107) and Yunnan Fundamental Research Projects (202301AU070144, 202401AU070123,202301AU070159). M. Ruzhansky is also supported by FWO Odysseus 1 Grant G.0H94.18N: Analysis and Partial Differential Equations and the Methusalem programme of the Ghent University Special Research Fund (Grant Number 01M01021).

\medskip
{\bf Data availability:}  Data sharing is not applicable to this article as no new data were created or analyzed in this study.

\medskip
{\bf Conflict of Interests:} The author declares that there is no conflict of interest.

\bibliographystyle{plain}
\bibliography{reference}

\end{document}